\documentclass[a4paper, oneside]{amsart}
\usepackage{amsmath,amsthm,amsfonts,amssymb}
\usepackage{mathtools}
\usepackage[pdftex,bookmarks=true]{hyperref} 
\usepackage{graphicx}
\usepackage{subcaption}
\usepackage{enumitem}
\usepackage{tikz-cd}
\usepackage{bbm}
\usepackage{hyperref}
\usepackage[a4paper, textheight=24cm, textwidth=17cm, top=3cm]{geometry}

\newcommand{\R}{\mathbb{R}}
\newcommand{\Z}{\mathbb{Z}}

\makeatletter
\let\origsection\section
\renewcommand\section{\@ifstar{\starsection}{\nostarsection}}

\newcommand\nostarsection[1]
{\sectionprelude\origsection{#1}\sectionpostlude}

\newcommand\starsection[1]
{\sectionprelude\origsection*{#1}\sectionpostlude}

\newcommand\sectionprelude{%
  \vspace{1em}
}

\newcommand\sectionpostlude{%
  \vspace{1em}
}
\makeatother

\DeclareMathOperator{\wdim}{d}

\numberwithin{equation}{subsection}

\newtheoremstyle{referencedlemma}
  {12pt plus 4pt minus 2pt}   
  {12pt plus 4pt minus 2pt}   
  {\itshape}  
  {0pt}       
  {\bfseries} 
  {.}         
  {5pt plus 1pt minus 1pt} 
  {\thmname{#1}\thmnumber{ #2}\thmnote{ #3}} 

\theoremstyle{referencedlemma}
\newtheorem{lemref}[equation]{Lemma}

\theoremstyle{plain} 
\newtheorem{thm}{Theorem}

\newtheorem{lem}[equation]{Lemma}

\newtheorem*{theorem}{Theorem}

\theoremstyle{remark}

\title{Graded-fusion 2-categories and quantum homotopy invariants of 4-manifolds}
\author{Matthew Cellot}

\begin{document}

\begin{abstract}
We introduce 3-group-graded extensions of fusion 2-categories, where 3-groups are modeled by \mbox{2-crossed} modules.  From this data, we derive a state-sum invariant of 4-manifolds equipped with a homotopy class of maps to a homotopy~3-type, or equivalently of flat 3-bundles over 4-manifolds.  This invariant is nontrivial and generalizes the Douglas--Reutter invariant of 4-manifolds.  To construct our invariant, we encode homotopy classes of maps to the classifying space of a 3-group~$\sigma$ via $\sigma$-colorings of triangulations, and we generalize Pachner's theorem in this context. 
\end{abstract}

\maketitle

\section{Introduction}

\noindent Quantum topology is a branch of mathematics that emerged in the 1980s following groundbreaking work by Jones, Drinfeld, and Witten, which significantly renewed topology, particularly in low dimensions.  Drawing on ideas from quantum physics, their framework has led to the construction of new manifold invariants based on quantum groups, called quantum invariants.   One of the early successes in this direction is the Turaev--Viro--Barrett--Westbury invariant of closed 3-manifolds,  defined as a state sum using quantum 6j-symbols associated with a fusion category, see \cite{turaev_state_1992, barrett_invariants_1996}.  These quantum invariants fit together to form a 3-dimensional topological quantum field theory (TQFT)---that is, a functorial assignment of modules and linear maps to surfaces and 3-dimensional cobordisms, respectively.  

The Turaev--Viro state-sum approach was generalized by Turaev and Virelizier~\cite{turaev_3-dimensional_2012} to yield quantum invariants of 3-manifolds equipped with a (flat) $G$-bundle, for a discrete group $G$, using the data of a $G$-fusion category. Moreover, they showed that these invariants fit into a  homotopy quantum field theory (HQFT)---that is, a TQFT for manifolds equipped with a homotopy class of maps to a fixed target space, which in this case is the Eilenberg--MacLane space $K(G,1)$.  Sözer and Virelizier further generalized this construction to yield a 3-dimensional HQFT with target a connected homotopy 2-type, modeled by a crossed module $\chi$, using the data of a $\chi$-fusion category.  The resulting invariants are known as quantum homotopy invariants.

The Turaev--Viro state-sum approach has since been adapted to dimension 4 by Douglas and Reutter \cite{douglas_fusion_2018}.  They derive quantum invariants of closed 4-manifolds using the data of a fusion 2-category.  The Douglas--Reutter invariants generalize many 4-dimensional state-sum invariants including the Crane--Yetter--Kauffman invariant~\cite{crane_state-sum_1997}, the (twisted) Yetter--Dijkgraaf--Witten invariant \cite{yetter_tqfts_1993, martins_yetters_2007}, the Mackaay invariant \cite{mackaay_spherical_1999}, and the Cui invariant~ \cite{cui_four_2019}. Fusion 2-categories have since found applications to TQFTs via the cobordism hypothesis \cite{decoppet_dualizability_2024, decoppet_drinfeld_2025} and to the theory of braided fusion categories \cite{johnson-freyd_minimal_2023}. 

The aim of this paper is to adapt the Douglas--Reutter state-sum approach to obtain invariants of 4-manifolds equipped with a homotopy class of maps to a connected homotopy 3-type.   We model 3-types by 2-crossed modules, as introduced by Conduché \cite{conduche_modules_1984}.  A~2-crossed module $\sigma$ is given by a complex of groups
\begin{center}
\begin{tikzcd}L\arrow[r] & E \arrow[r] & H\end{tikzcd}
\end{center}
together with an action of $H$ on $L$ and $E$, and a pairing $E\times E \to L$, called the Peiffer lifting, all satisfying compatibility conditions.  We introduce the notion of a \emph{$\sigma$-graded linear monoidal 2-category} in which the objects, \mbox{1-morphisms}, and 2-morphisms are graded by~$H$, $E$, and $L$, respectively, in such a way that the tensor product is compatible with the action of $H$, and the interchange~2-isomorphism is compatible with the Peiffer lifting.  A \emph{$\sigma$-fusion 2-category} is a $\sigma$-graded linear monoidal 2-category with duals satisfying suitable semisimplicity and finiteness conditions.  Such structures should be viewed as graded extensions of fusion 2-categories; although $\sigma$-fusion 2-categories are not necessarily fusion 2-categories, they each contain a neutral component which is a fusion 2-category, see Section~\ref{Definition_Graded-fusion 2-categories}. 

A \emph{$\sigma$-manifold} is a closed oriented smooth 4-manifold equipped with a homotopy class of maps to the classifying space of the 2-crossed module $\sigma$.  This data is equivalent to that of a 4-manifold equipped with a flat 3-bundle for the 3-group modeled by $\sigma$, see \cite{faria_martins_fundamental_2011, jurco_nonabelian_2011, szabo_higher_2025, gagliardo_principal_2025, Cellot_thesis_2026}. We model a $\sigma$-manifold $(W,g)$ using the notion of a \emph{combinatorial $\sigma$-manifold}, that is a pair $(K,\phi)$, where~$K$ is a triangulation of $W$ and $\phi$ is a morphism from a 2-crossed module associated with $K$ to $\sigma$.  From any spherical $\sigma$-fusion~2-category $\mathcal{C}$ with invertible dimensions,  we derive a scalar invariant $\mathrm{HDR}_\mathcal{C}(W,g)$ as a sum over the labelings of the edges of $K$ by simple objects of $\mathcal{C}$ and of the triangles of~$K$ by simple 1-morphisms of $\mathcal{C}$, such that the gradings are compatible with~$\phi$. Our main result is the following:

\begin{theorem}
The state sum $\mathrm{HDR}_\mathcal{C}(W,g)$ is an invariant of $\sigma$-manifolds. 
\end{theorem}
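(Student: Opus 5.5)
The plan is to reduce invariance of $\mathrm{HDR}_\mathcal{C}(W,g)$ to a finite list of local moves on combinatorial $\sigma$-manifolds. Each move is then checked against the algebraic coherences of a spherical $\sigma$-fusion 2-category, following the Douglas--Reutter strategy but with all labels constrained by the grading.

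Concretely, I first establish a ``$\sigma$-Pachner theorem.'' Two combinatorial $\sigma$-manifolds $(K,\phi)$ and $(K',\phi')$ represent the same $\sigma$-manifold $(W,g)$ if and only if they are related by a finite sequence of three kinds of moves:
\begin{enumerate}[label=(\roman*)]
\item \emph{relabelings}, i.e.\ gauge transformations of $\phi$ at a single vertex, edge, or triangle by elements of $H$, $E$, and $L$ respectively, which do not change the homotopy class;
\item \emph{colored Pachner moves} $1$--$5$, $2$--$4$, $3$--$3$, in which the coloring on the new simplices is the unique (up to gauge) extension of the coloring on the boundary of the $5$-simplex;
\item changes of the ordering or branching of the triangulation.
\end{enumerate}
For this I would realize the classifying space $B\sigma$ as a simplicial set whose $1$-, $2$- and $3$-simplices carry the $H$, $E$, $L$ data. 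A $\sigma$-coloring of $K$ is then a simplicial map $K\to B\sigma$, and the homotopy classes $[W,B\sigma]$ are computed by colorings modulo gauge. The proof then has two parts. Given two colorings of a fixed $K$ inducing homotopic maps, a homotopy on $K\times[0,1]$, suitably triangulated and colored, yields a sequence of gauge moves; this uses that $B\sigma$ is a $3$-type, so the colored simplicial set is a Kan complex with unique fillers above dimension $3$. Given two triangulations, I apply ordinary Pachner moves and extend the coloring across each $5$-simplex, which is possible and unique up to gauge because $\pi_4(B\sigma)=\pi_5(B\sigma)=0$.

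Next I show that the state sum is unchanged under each move. For gauge moves (i), an $H$-relabeling at a vertex amounts to applying the graded tensor action on labels: the grading-$h$ piece of $\mathcal{C}$ is identified with its conjugate via tensoring with invertible objects, or rather via the equivalences between graded components. An $E$-move on an edge is absorbed by reindexing the simple 1-morphism labels on adjacent triangles. An $L$-move on a triangle changes the relevant 2-morphism spaces by a canonical isomorphism, and the compatibility of the interchanger with the Peiffer lifting ensures the 10j-symbols are intertwined correctly. For the Pachner moves (ii), I use the graded 10j-symbols, namely evaluations of the spherical graphical calculus on the boundary of a $4$-simplex. The argument then follows Douglas--Reutter: the $3$--$3$ and $2$--$4$ moves follow from the pentagonator coherence (the ``$4$-simplex equation''), together with semisimplicity and the dimension normalisations of the sums over simple objects and 1-morphisms in each graded component. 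The $1$--$5$ move additionally uses the global dimension of the neutral component, together with a lemma that every graded component has the same dimension as the neutral one. That lemma requires invertible dimensions, and the nonemptiness of components in the image of the grading. Ordering changes (iii) follow from sphericality.

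I expect the main obstacle to be the $\sigma$-Pachner theorem, specifically controlling homotopy classes rather than just colorings. This requires showing that gauge moves alone realize all homotopies, which needs a careful cellular model of $B\sigma$ from the 2-crossed module and a relative extension argument over $K\times[0,1]$. On the algebraic side, the delicate point is that an $L$-gauge move produces Peiffer-lifting factors that must cancel across the two 4-simplices sharing a triangle. I would first test this on the case where $L$ is trivial, which reduces to crossed-module gradings, before handling the general Peiffer lifting.
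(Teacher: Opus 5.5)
\textbf{The gap: gauge moves cannot be checked by relabeling.} Your architecture (a colored Pachner theorem, then Douglas--Reutter local identities for the $(1,5)$, $(2,4)$, $(3,3)$ moves, plus independence of orderings) matches the paper. Step (i), however, does not work: invariance under gauge moves cannot be proved by relabeling states one at a time.

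\begin{itemize}
\item A generating $H$-gauge at a vertex $x$ changes the degree of each edge at $x$ from $h$ to $r(x)^{-1}h$, or to $h\,r(x)$, depending on orientation. A $\sigma$-fusion 2-category supplies no equivalence between $\prescript{}{h}{\mathcal{C}_1^1}$ and $\prescript{}{r(x)^{-1}h}{\mathcal{C}_1^1}$, and it need not contain invertible objects in nontrivial degrees. The axioms only make components nonempty. Different components can have different numbers of simple objects with different weighted dimensions, as already happens for graded fusion categories of Tambara--Yamagami type. The only relation available is equality of total dimensions (Lemma \ref{Lemma_B5}).
\item An $E$-move on an edge $b$ changes the degree of $b$ itself by $\partial(s(b))$, not only the degrees of the adjacent triangles. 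Moreover, $\mathcal{C}^1_e(X,Y)$ and $\mathcal{C}^1_{ef}(X,Y)$ need not have the same number of simples; Lemma \ref{nonzero_Hom_categories} only gives nonemptiness. So ``reindexing triangle labels'' with the edge labels fixed cannot match the two sums.
\end{itemize}

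In general there is no weight-preserving bijection of states. Gauge invariance holds only for the total sum, and your proposed mechanism does not prove it.

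\textbf{What the paper does instead.} Gauge moves never have to be checked directly, because they are composites of colored Pachner moves. This requires that a colored Pachner move allow \emph{every} extension of the coloring to the new simplices, not one extension up to gauge as in your (ii). Lemma \ref{Pachner_(1,5)} shows these extensions are freely parametrized, by $H\times E^4\times L^6$ for the $(1,5)$-move. The paper then proceeds in four steps.
\begin{enumerate}
\item It writes any gauge morphism as a composite of generators supported on a single vertex, edge, or triangle (Lemma \ref{Lemma gauge-groupoid generators}). This needs care, because composition involves the connecting maps $\omega^{(s,s')}$; your list (i) silently assumes this decomposition.
\item It uses ordinary Pachner moves so that some vertex $x$ of the support has link $\partial\Delta^4$.
\item It realizes each generator by the sequence of $(1,5)$, $(2,4)$, $(3,3)$, $(4,2)$, $(5,1)$ moves in Section \ref{proof colored Pachner}. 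This creates a new vertex $y$ with $\phi\langle xy\rangle=r(x)$, or with the suitably twisted $s$-datum on $\langle xy1\rangle$, resp.\ $t$-datum on $\langle xy12\rangle$, and then deletes $x$.
\item Since the local identities of Section \ref{Invariance triangulation} hold for every coloring of $\partial\Delta^5$, gauge invariance follows without any relabeling argument.
\end{enumerate}

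Separately, your Kan-complex argument for ``homotopic iff gauge equivalent'' is a different route from the paper's Theorem \ref{homotopy classification theorem}, which is built on Faria Martins' canonical CW complexes and quadratic derivations. Your route is plausible, but as you anticipate it still has to be carried out.
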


This theorem appears as Theorem \ref{maintheorem} in Section \ref{Section. State sum}.  In the case where $\sigma$ is a trivial 2-crossed module,  so that its classifying space is contractible,  the invariant $\mathrm{HDR}_\mathcal{C}(W, W \to *)$ coincides with the Douglas--Reutter invariant of $W$ (see Section \ref{Ex.Douglas-Reutter}).  We show through examples that the quantum homotopy invariants $\mathrm{HDR}_\mathcal{C}$ are nontrivial and can even distinguish homotopy classes of phantom maps,  i.e. maps that induce trivial homomorphisms on homotopy groups (see Section \ref{Ex.nontrivial}).  We expect the invariants of Mochida \cite{mochidaInvariantsFlatConnections2026} and of Bridges--Cui \cite{bridges_involutory_2025} to be special cases of our construction when $\sigma = (1\to 1 \to G)$, for a group~$G$.  Our invariants are expected to be part of a 4-dimensional HQFT, the details of which are left for future work.

In order to prove the invariance of the state sum,  we introduce the fundamental 2-crossed module $\Pi_3(K)$ of a simplicial complex $K$ and the classifying space $B\sigma$ of a 2-crossed module $\sigma$.  Building on the work of Faria Martins \cite{martins_fundamental_2011},  we show the following:

\begin{theorem}[Homotopy Classification Theorem]
Let $K$ be a canonical simplicial complex and let $\sigma$ be a 2-crossed module of groups. There is a canonical bijection
$$
[\lvert K\rvert , B\sigma] \cong [\Pi_3(K), \sigma],
$$
where the left-hand side denotes homotopy classes of maps and the right-hand side denotes gauge-equivalence classes of morphisms of 2-crossed modules. 
\end{theorem}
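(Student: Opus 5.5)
The plan is to pass through simplicial groups and Kan's loop group, following the strategy of Faria Martins for the fundamental crossed module and the Brown--Higgins classification for 2-types.

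\textbf{Step 1: simplicial models.} I would realize $B\sigma$ as the geometric realization of the nerve of the simplicial group $G(\sigma)$ associated with $\sigma$ by Conduché's equivalence between 2-crossed modules and simplicial groups of Moore complex of length $2$. Write $\overline{W}$ for the classifying simplicial set functor. Then, up to homotopy, $B\sigma = \lvert \overline{W} G(\sigma)\rvert$. Since $\overline{W}G(\sigma)$ is a Kan complex, the adjunction between geometric realization and the singular complex, together with simplicial approximation, gives
$$[\lvert K\rvert, B\sigma] \cong [K, \overline{W}G(\sigma)],$$
where $K$ is regarded as a simplicial set via an ordering of its vertices. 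This is the role of the canonical-complex hypothesis. Kan's adjunction $\mathbb{G} \dashv \overline{W}$ then identifies the right-hand side with homotopy classes of simplicial group maps $\mathbb{G}K \to G(\sigma)$ (after reducing to the pointed case, or using the groupoid version if $K$ is not reduced).

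\textbf{Step 2: truncation.} Because $G(\sigma)$ has Moore complex concentrated in degrees $\le 2$, any map $\mathbb{G}K \to G(\sigma)$ factors through a $2$-truncation of $\mathbb{G}K$. This truncation is the quotient by the appropriate higher Peiffer and boundary subgroups, and its associated 2-crossed module should be exactly $\Pi_3(K)$. Concretely, $\Pi_3(K)$ is presumably generated by the edges, triangles and tetrahedra of $K$, modulo relations coming from the $2$- and $3$-simplices and a maximal tree. I would prove a universal property: morphisms of 2-crossed modules $\Pi_3(K) \to \sigma$ correspond bijectively to simplicial group maps $\mathbb{G}K \to G(\sigma)$. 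This mirrors Faria Martins' result that $\Pi_3(K)$ is the 2-truncation of the Moore complex of $\mathbb{G}K$, modulo boundaries.

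\textbf{Step 3: homotopies versus gauge transformations.} This is the step I expect to be the main obstacle. Simplicial homotopies between maps $\mathbb{G}K \to G(\sigma)$, equivalently maps out of $K \times \Delta^1$, must be matched with the paper's gauge equivalences. A gauge equivalence consists of vertex labels in $H$, edge labels in $E$ and triangle labels in $L$, subject to the modified cocycle conditions involving the Peiffer lifting. I would check this by treating $K \times \Delta^1$ with its standard triangulation, whose prisms carry the data of a homotopy. Applying Step 2 to the colouring of $K \times \Delta^1$ should then give exactly the gauge transformation formulas, with Peiffer lifting terms arising from the shuffle decomposition of prisms. The remaining tasks are to verify that gauge equivalence is an equivalence relation, which follows from composability of homotopies, and that the bijection is independent of the choices of maximal tree and base point.

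Composing the bijections of Steps 1--3 yields the canonical bijection $[\lvert K\rvert, B\sigma] \cong [\Pi_3(K), \sigma]$.
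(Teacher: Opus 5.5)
Your route is genuinely different from the paper's and is viable in outline. However, the steps you mark as ``should be'' are where all the work lies.

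The paper never leaves the geometric setting in which $\Pi_3(K)$ and $B\sigma$ are defined. It first reduces to connected $K$. It then uses $(r,s,t)=(1,s,t)\cdot(r,1,1)$ and the conjugation action $\psi\mapsto\psi\cdot h$ to show that $\phi,\phi'$ are gauge equivalent iff there are $x\in K_0$ and $h\in H$ such that the vertex-group restrictions $\phi_x$ and $\phi'_x\cdot h$ are homotopic in Faria Martins' pointed sense. Faria Martins' Lemmas 3.12 and 3.16 turn this into pointed homotopy of the realizations. The basepoint is then removed using $[\lvert K\rvert,B\sigma]\cong[\lvert K\rvert,B\sigma]_*/\pi_1(B\sigma)$ with $\pi_1(B\sigma)\cong H/\partial(E)$, together with $B(\phi'_x\cdot h)\simeq B(\phi'_x)\cdot[h]$, which comes from Ellis' realization construction. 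Surjectivity is the realizability lemma. So all the homotopy theory is imported for exactly the objects in the statement, and the new content is only the groupoid-to-pointed and pointed-to-free bookkeeping.

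Your simplicial route (Kan loop groupoid, $\overline{W}$, Conduch\'e's equivalence, cylinder $K\times\Delta^1$) treats several vertices and free homotopies uniformly. The vertical prism edges carry $r$, so no separate $\pi_1$-action argument is needed, and you get for free that gauge equivalence is an equivalence relation. Only the groupoid option of your Step 1 is consistent with this; the reduced option lands in pointed classes, and you would again have to quotient by $\pi_1(B\sigma)$ as the paper does. The price is three comparisons the paper never needs:
(i) $B\sigma\simeq\lvert\overline{W}G(\sigma)\rvert$, i.e.\ that Faria Martins' geometric $\Omega_3$ and Conduch\'e's algebraic functor model the same 3-type;
(ii) an isomorphism between $\Pi_3(K)$ and Conduch\'e's 2-crossed module of the 2-truncation of $\mathbb{G}K$;
(iii) the prism computation of Step 3, which is in effect a simplicial re-derivation of Faria Martins' Lemma 3.16 and is comparable in size to the 4-simplex computation at the end of the paper.

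Comparison (ii) is load-bearing, and your description of it is off in two ways.
\begin{itemize}
\item The canonical-complex hypothesis is not about vertex orderings. It supplies the distinguished vertices $p_t$ and the decompositions $t=t^+\cup t^-$ that define the triad $(\lvert K\rvert,K_2^-,K_2^+)$, hence $\Pi_3(K)$ and its Peiffer lifting, which is a generalized Whitehead product.
\item $\Pi_3(K)$ is not a quotient by relations coming from 2- and 3-simplices and a maximal tree. In the groupoid setting it has free generators $B_1$, $B_2$, $B_3$ at its three levels: a free groupoid, a totally free pre-crossed module, and tetrahedra at the $L$-level. The only relations are the 4-simplex identities.
\end{itemize}
The adjunction half of your Step 2 is fine. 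To get the isomorphism, however, you must show that the 2-truncation of $\mathbb{G}K$ has the same universal property, with the same boundary formulas $d_1$, $d_2$, $d_3$. Concretely, you must match the simplicial boundary of a 4-simplex, written with Conduch\'e's Peiffer lifting and based at the minimal vertex, against the topological one, based at the $p_t$ via the pointing paths. Until that is done, Step 2 is an assertion rather than a proof step.
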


This theorem appears as Theorem \ref{homotopy classification theorem} in Section \ref{Section. Classification of homotopies}.  Recall from \cite{lickorish_simplicial_1999} that two combinatorial manifolds are piecewise-linear homeomorphic if and only if they are related by a finite sequence of Pachner moves.  We use the Homotopy Classification Theorem to generalize this result:

\begin{theorem}[Colored Pachner Theorem]
Two combinatorial $\sigma$-manifolds are equivalent if and only if they are related by a finite sequence of $\sigma$-colored Pachner moves.
\end{theorem}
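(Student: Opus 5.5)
The plan is to prove the two implications separately, working throughout with the Homotopy Classification Theorem. Recall the setup. A combinatorial $\sigma$-manifold $(K,\phi)$ represents the $\sigma$-manifold $(|K|,g_\phi)$, where $g_\phi$ is the homotopy class corresponding to the gauge class $[\phi]\in[\Pi_3(K),\sigma]$ under the bijection $[|K|,B\sigma]\cong[\Pi_3(K),\sigma]$. Two combinatorial $\sigma$-manifolds $(K,\phi)$ and $(K',\phi')$ are \emph{equivalent} when there is an orientation-preserving PL homeomorphism $f\colon|K|\to|K'|$ with $g_{\phi'}\circ f\simeq g_\phi$. The $\sigma$-colored Pachner moves are of two kinds. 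The first kind consists of gauge transformations of $\phi$ on a fixed $K$. The second kind consists of Pachner moves $K\leadsto K'$, where the new coloring $\phi'$ agrees with $\phi$ on all simplices outside the changed star and is extended arbitrarily, compatibly with the 2-crossed module relations, on the new interior simplices.

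\textbf{Sufficiency.} Gauge moves preserve $[\phi]$ by definition, and hence preserve $g_\phi$. For a Pachner move, let $B$ be the PL $4$-ball that is replaced, let $h\colon|K|\to|K'|$ be the canonical PL homeomorphism fixing the complement of $B$, and consider the common complex $K\cup_{\partial B}K'$ obtained by gluing the two triangulations of $B$ along $\partial B$. I will show that $\phi$ and $\phi'$ glue to a morphism defined on $\Pi_3$ of the cone over this union, i.e.\ on a triangulated $5$-simplex. By naturality of the Homotopy Classification Theorem with respect to simplicial inclusions, the classes $g_\phi$ and $g_{\phi'}\circ h$ then agree on $|K\setminus B|$ and extend over the $5$-ball. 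Since $B\sigma$ is a $3$-type and $\pi_4(B\sigma)=0$, the two maps are homotopic rel $\partial B$.

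\textbf{Necessity.} Suppose $(K,\phi)$ and $(K',\phi')$ are equivalent. By Pachner's theorem there is a sequence of uncolored Pachner moves $K=K_0\leadsto\cdots\leadsto K_n=K'$ realizing a PL homeomorphism isotopic to $f$. I color each $K_i$ inductively. At each move the new interior simplices are colored by some extension of $\phi_{i-1}$. Such an extension exists because the changed region is a ball, so $\Pi_3$ of the new star relative to its boundary imposes no obstruction on edges, triangles and tetrahedra interior to a ball. Concretely, I choose a maximal tree and propagate the colors. The end result is a coloring $\phi_n$ on $K'$ with $g_{\phi_n}=g_{\phi}\circ f^{-1}\simeq g_{\phi'}$. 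The Homotopy Classification Theorem, applied on the fixed complex $K'$, then says $\phi_n$ and $\phi'$ are gauge equivalent, and therefore related by gauge moves.

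\textbf{Main obstacle.} I expect the hardest step to be the extension and uniqueness lemma underlying both directions. Given a Pachner move and a coloring on the complement, extensions to the new interior simplices must exist, and any two extensions must differ by a gauge transformation supported on interior vertices, edges and triangles. I would attack this through the relative version of the classification bijection for the pair $(B,\partial B)$, reducing it to $\pi_i(B\sigma)$ acting on maps of $(D^k,S^{k-1})$ for $k\le 4$. I would first check whether the paper's gauge transformations are local enough to be supported in $B$, and if they are not, refine the Pachner-move definition to allow the needed local gauge change.
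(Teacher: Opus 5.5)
Your sufficiency direction is fine and is essentially the paper's argument via $\pi_4(B\sigma)=0$. The gap is in your definition of the moves and in the last step of necessity.

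In the paper, a $\sigma$-colored Pachner move is \emph{only} a canonical Pachner move $K\leadsto L$ together with a coloring of $L$ that agrees with the old one outside the new simplices. Gauge transformations on a fixed complex are not moves. By declaring them a ``first kind'' of move you prove the weaker statement ``equivalent iff related by colored Pachner moves and gauge transformations''. Your final sentence (``$\phi_n$ and $\phi'$ are gauge equivalent, and therefore related by gauge moves'') therefore does not produce a sequence of $\sigma$-colored Pachner moves.

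This is the whole content of the theorem, not a formality. Take $K=K'$, $f=\mathrm{id}$ and $\phi'=\phi\cdot(r,1,1)$ with $r$ nontrivial at a single vertex $x$: your procedure performs no moves and stops with two different colorings. Your extension/uniqueness lemma does not help, because it only concerns simplices interior to a move, while this gauge transformation lives on simplices of $K$ that no move in your sequence touches. ``Refining the Pachner-move definition'' is also not available, since the definition is part of the statement.

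What is missing is a mechanism that turns gauge transformations into honest colored moves. The paper supplies it in three steps.
\begin{enumerate}
\item Lemma~\ref{Lemma gauge-groupoid generators} factors any morphism of $\mathcal{G}(K,\sigma)$ into generating quadratic derivations, each nontrivial on a single vertex, edge or triangle. This needs care because composition involves the connecting maps $\omega^{(s,s')}$; it rests on $(1,s,1)\cdot(1,s',1)=(1,s\otimes s',1)\cdot(1,1,\omega^{(s,s')}(d_1))$.
\item For a generating derivation on a simplex with vertex $x$, one first applies Pachner moves $M$ making the link of $x$ equal to $\partial\Delta^4$. The transported derivation is then supported in the star of $x$, and one conjugates by $M$.
\item At such an $x$, the five moves $(1,5)$ on $\langle x1234\rangle$, then $(2,4)$, $(3,3)$, $(4,2)$, $(5,1)$ slide $x$ to a new vertex $y$. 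Choosing the free new colors $\phi\langle xy\rangle=r(x)$ (resp.\ $\phi\langle xy1\rangle=\prescript{\phi\langle x1\rangle}{}{s(\langle x1\rangle)^{-1}}$, resp.\ $\phi\langle xy12\rangle=\phi\langle x12\rangle\triangleright t(\langle x12\rangle)$), with all other new colors trivial, produces exactly $\phi\cdot(r,s,t)$.
\end{enumerate}

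A lesser point concerns your existence of extensions (``propagate along a tree, a ball imposes no obstruction''). It leaves one identity unchecked. In the $(1,5)$ move the new tetrahedra are already forced by four of the new $4$-simplices, so the $4$-simplex identity for $\langle 12345\rangle$ must then be verified separately. The paper does this by a direct Peiffer-lifting computation in Section~\ref{Section_Proof of Lemma 8.3.1}, not by the relative classification theorem you invoke, which the paper does not prove.
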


This theorem appears as Theorem \ref{Theorem - Colored Pachner} in Section \ref{Section-Pachner}.  The proof of the invariance of our state sum reduces to checking that it is invariant under $\sigma$-colored Pachner moves. 

This paper is organized as follows. In Section \ref{Section. 2-crossed modules}, we extend the definition of 2-crossed modules to the context of groupoids and we introduce the notion of a free 2-crossed module. In Section \ref{Section. Classification of homotopies}, we define the funda\-mental \mbox{2-crossed} module of a simplicial complex and the classifying space of a 2-crossed module, and we prove the Homotopy Classification Theorem.  Section \ref{Section. Graded monoidal 2-categories} is devoted to the definition of monoidal 2-categories graded by 2-crossed modules.  In Section \ref{Section. Graded-fusion 2-categories},  we define the notion of a graded-fusion 2-category. In Section \ref{Dimensions in graded-fusion 2-categories}, we introduce dimensions in graded-fusion 2-categories.  In Section \ref{Section. 10j-symbols},  we introduce the colored 10j-symbols. We use these in Section \ref{Section. State sum} to derive a state-sum invariant of $\sigma$-manifolds.  In Section \ref{Section-Pachner}, we prove the Colored Pachner Theorem.  Section \ref{Section. Proof of main theorem} is devoted to the proof of the invariance of the state sum of Theorem \ref{maintheorem}. Section \ref{Section_Proof of Lemma 8.3.1} contains a proof of Lemma \ref{Pachner_(1,5)}.

Throughout this paper, $\mathbbm{k}$ is a nonzero commutative ring. 

\section{Crossed modules and 2-crossed modules}\label{Section. 2-crossed modules}
\noindent The notion of a 2-crossed module was originally introduced by Conduché \cite{conduche_modules_1984} as a model for pointed homotopy~3-types.  In order to work in an unpointed setting, we extend the definition of a 2-crossed module to the context of groupoids. We then introduce free 2-crossed modules. 

\subsection{Notation} For any groupoid $H$, we denote by $H_0$ the set of objects of $H$, by $H(x,y)$ the set of morphisms from an object $x$ to an object $y$, and by $H(x)$ the group of endomorphisms of an object $x$. In a groupoid, the composition of a morphism $h\colon x \to y$ and a morphism $k\colon y\to z$ is denoted $hk\colon x\to z$.

\subsection{Crossed modules of groupoids}
In this section, we recall some definitions due to Brown and Higgins~\cite{brown_algebra_1981}.  

A groupoid is \emph{totally disconnected} if all of its morphisms are endomorphisms. If $E$ is a totally disconnected groupoid, we write
$$
E = (E(x))_{x\in E_0}.
$$

A (left) \emph{action} of a groupoid $H$ on a groupoid $E$ with the same set of objects as $H$ is a family of maps
$$
\{H(x,y) \times E(y) \to E(x), \ (h,e) \mapsto \prescript{h}{}{e}\}_{x,y\in H_0}
$$
such that 
$$
\prescript{k}{}(ef) = \prescript{k}{}e \prescript{k}{}f, \quad \quad \prescript{hk}{}e = \prescript{h}{}(\prescript{k}{}e), \quad\quad \prescript{1}{}e = e
$$
for all $x,y,z \in H_0$, $h\in H(x,y)$, $k \in H(y,z)$, and $e,f\in E(z)$. We say that a groupoid ${H}$ \emph{acts on} a groupoid ${E}$ if $H$ and $E$ have the same set of objects and there is an action of ${H}$ on ${E}$. 

A \emph{pre-crossed module} is a morphism $\partial\colon {E}\to {H}$ from a totally disconnected groupoid ${E}$ to a groupoid $H$ together with an action of ${H}$ on ${E}$ such that $\partial$ is the identity on objects and preserves the action of $H$, where~$H$ acts on itself by conjugation.

Let ${E}$ and ${E}'$ be totally disconnected groupoids with actions of ${H}$ and ${H}'$ respectively, and let~$\phi_1 : {H} \to {H}'$ be a morphism of groupoids. A morphism $\phi_2 : {E} \to {E}'$ is \emph{$\phi_1$-equivariant} if it coincides with $\phi_1$ on objects and satisfies
\begin{equation*}
\phi_2(\prescript{h}{}e) = \prescript{\phi_1(h)}{}\phi_2(e)
\end{equation*}
for all $x,y \in H_0$, $h\in H(x,y)$, and $e\in E(y)$.

A \emph{morphism} from a pre-crossed module $\partial \colon {E}\to {H}$ to a pre-crossed module $\partial'\colon {E}'\to {H}'$ is a pair of groupoid morphisms $\phi = (\phi_1\colon {H}\to {H}', \phi_2\colon {E}\to {E}')$ making the square 
\begin{center}
\begin{tikzcd}[row sep = large, column sep = large]
{E} \arrow[r, "\partial"]\arrow[d, "\phi_2"] & {H}\arrow[d, "\phi_1"]\\
{E}' \arrow[r, "\partial'"] & {H}'
\end{tikzcd}
\end{center}
commutative and such that $\phi_2$ is $\phi_1$-equivariant.

A \emph{crossed module} is a pre-crossed module $\partial\colon {E}\to {H}$ that also satisfies the \emph{Peiffer identity}:
$$
\prescript{\partial e}{}f = efe^{-1}
$$
for all $x \in H_0$ and $e,f \in E(x)$. A \emph{crossed module of groups} is a crossed module with set of objects a singleton.

\subsection{2-crossed modules of groupoids} 

A \emph{2-crossed module} is a tuple\begin{tikzcd}\sigma = (L \arrow[r, "\delta"] & E \arrow[r, "\partial"] & H, \ \omega)\end{tikzcd}where 
\begin{itemize}
\item $\partial \colon E \to H$ is a pre-crossed module
\item $L$ is a totally disconnected groupoid and $H$ acts on $L$
\item $\delta$ is a morphism of groupoids that is the identity on objects and preserves the action of $H$, 
\item the \emph{Peiffer lifting} $\omega$ is a collection of set maps
$$
\omega = (\omega_x \colon E(x)\times E(x) \to L(x))_{x\in E_0}
$$
that preserve the action of $H$,
\end{itemize}
satisfying the following conditions:
\begin{align*}
\partial \circ \delta(l) &= 1_x, \quad &\delta \left(\omega_x(e,f)\right) = efe^{-1}\prescript{\partial (e)}{}{f^{-1}}, \quad \omega_x(\delta (l), \delta (k)) &= lkl^{-1}k^{-1},\\
\omega_x(\delta (l), e)\omega_x(e, \delta (l)) &= l\prescript{\partial (e)}{}l^{-1}, \quad &\omega_x(ef,g) = \omega_x(e,fgf^{-1})\prescript{\partial (e)}{}\omega_x(f,g),\quad \omega_x(e,fg) &= \omega_x(e,f) (\prescript{\partial (e)}{}f)\triangleright \omega_x(e,g),
\end{align*}
for all $x\in H_0$, $e, f, g\in E(x)$, and $l,k\in L(x)$, where we set 
\begin{equation*}
e \triangleright l = l\omega_x(\delta(l)^{-1}, e).
\end{equation*}
Notice that $\triangleright$ defines an action of ${E}$ on ${L}$, and that together with this action, the morphism $\delta \colon {L}\to {E}$ defines a crossed module. A \emph{2-crossed module of groups} is a 2-crossed module with set of objects a singleton.

\subsection{Morphisms of 2-crossed modules}
A \emph{morphism} of 2-crossed modules $\phi \colon \sigma \to \sigma'$, where
$$
\begin{tikzcd}\sigma = ({L} \arrow[r, "\delta"] & {E} \arrow[r, "\partial"] & {H}, \ \omega)\end{tikzcd}\quad\text{and} \quad\begin{tikzcd}\sigma' = ({L}' \arrow[r, "\delta'"] & {E}' \arrow[r, "\partial'"] & {H}', \ \omega')\end{tikzcd}
$$
is a triple $\phi = (\phi_1\colon  {H} \to {H}', \phi_2 \colon  {E} \to {E}', \phi_3 \colon {L} \to {L}')$ of groupoid morphisms making the diagram
\begin{center}
\begin{tikzcd}[row sep=large,column sep=large]
{E}\times_{E_0}{E} \arrow[r, "\omega"]\arrow[d, "{(\phi_2, \phi_2)}"] & {L} \arrow[r, "\delta"] \arrow[d, "\phi_3"] & {E} \arrow[r, "\partial"] \arrow[d,"\phi_2"] & {H}\arrow[d, "\phi_1"]\\
{E'}\times_{E'_0} {E'}\arrow[r, "\omega'"] & {L'} \arrow[r, "\delta'"] & {E'} \arrow[r, "\partial'"] & {H'}
\end{tikzcd}
\end{center}
commutative and such that the morphisms $\phi_2$ and $\phi_3$ are $\phi_1$-equivariant. Here, given a totally disconnected groupoid $G$, we denote by $G\times_{G_0} G$ the totally disconnected groupoid with set of objects $G_0$ and with
$$
(G\times_{G_0} G)(x) = G(x) \times G(x),
$$
 for all $x\in G_0$. 

\subsection{Special cases}

1) If a 2-crossed module is of the form
$$
\begin{tikzcd}\sigma = (\overline{H_0} \arrow[r, "1"] & E \arrow[r, "\partial"] & H, \ \omega), \end{tikzcd}
$$
where $\overline{H_0}$ denotes the discrete groupoid with set of objects $H_0$, then the morphism $\partial \colon E\to H$ together with the action of $H$ on $E$ defines a crossed module.  Conversely, any crossed module induces a 2-crossed module of the above form. 

2) If a 2-crossed module is of the form
$$
\begin{tikzcd}\sigma = (L \arrow[r, "\delta"] & E \arrow[r] & 1, \ \omega),\end{tikzcd}
$$
where $1$ denotes the trivial groupoid, then the morphism $\delta\colon L\to E$ together with the action $\triangleright$ of $E$ on $L$ defines a braided crossed module of groups (in other words, the associated monoidal category is braided). 

3) Let 
$$
\begin{tikzcd}\sigma = (L \arrow[r, "\delta"] & E \arrow[r, "\partial"] & H, \ \omega),\end{tikzcd}
$$
be a 2-crossed module of groups such that $\mathrm{Ker}(\partial) / \mathrm{Im}(\delta) = 1$. Then
$$
1 \to \mathrm{Ker}(\delta) \to L \to E \to H \to \mathrm{Coker}(\partial) \to 1
$$
is an exact sequence of groups and determines a cohomology class in $H^4(\mathrm{Coker}(\partial); \mathrm{Ker}(\delta))$.  Such a 2-crossed module is equivalently characterized by the tuple
$$
(\mathrm{Coker}(\partial), \mathrm{Ker}(\delta), \rho, \gamma),
$$
where $\rho\colon \mathrm{Coker}(\partial) \to \mathrm{Aut}(\mathrm{Ker}(\delta))$ is induced by the action of $H$ on $L$, and the class $\gamma\in H^4(\mathrm{Coker}(\partial); \mathrm{Ker}(\delta))$ is induced by the above exact sequence, see details in \cite{eilenberg_homology_1949, conduche_modules_1984}. 

4) If a 2-crossed module is of the form
$$
\begin{tikzcd}\sigma = (L \arrow[r, "\delta"] & E \arrow[r, "\partial"] & H, 1),\end{tikzcd}
$$
where $1$ denotes the trivial Peiffer lifting, then\begin{tikzcd}L \arrow[r, "\delta"] & E \arrow[r, "\partial"] & H\end{tikzcd}is a crossed complex of length 3 (see~\cite{brown_nonabelian_2011}). Conversely, any crossed complex of length 3 induces a 2-crossed module with trivial Peiffer lifting. 

\subsection{Examples}

1) Given a 2-crossed module\begin{tikzcd}\sigma = (L \arrow[r, "\delta"] & E \arrow[r, "\partial"] & H, \ \omega),\end{tikzcd}the truncation $\partial \colon E \to H$ is a pre-crossed module. This functor has a left adjoint that associates to any pre-crossed module $\partial \colon E\to H$ a 2-crossed module
$$
\begin{tikzcd}(\{E, E\} \arrow[r] & E \arrow[r, "\partial"] & H, \ \omega),\end{tikzcd}
$$
where $\{E, E\} = (\{E,E\}(x))_{x\in H_0}$ and $\{E,E\}(x)$ denotes the free group on the set $\{(e,f) \mid e,f \in E(x)\}$ quotiented by the 2-crossed module relations. The details of this construction can be found in \cite{carrasco_coproduct_2016}.

2) To any crossed square (of groups)
$$
\begin{tikzcd}
L\arrow[d, "\lambda"]\arrow[r, "{\lambda'}"] & N \arrow[d, "\mu"]\\
M \arrow[r, "\nu"] & P
\end{tikzcd}
$$
with function $h\colon M\times N \to L$ (see details in \cite{ellis_crossed_1993}), we associate a 2-crossed module of groups
$$
\begin{tikzcd}
L\arrow[r, "{(\lambda, \lambda'^{-1})}"] & M\rtimes N \arrow[r, "\mu\nu"] & P
\end{tikzcd}
$$
with Peiffer lifting $\omega$ defined by
$$
\omega((m,n),(m',n')) = h(m, nn'n^{-1}).
$$
for all $m, m' \in M$ and $n, n'\in N$.

3) The main example of a 2-crossed module arises from a triad $(X,A,B)$ of topological spaces and $X_0 \subset A\cap B$ a non-empty set.  The \emph{fundamental 2-crossed module} of $(X, A, B; X_0)$ is the 2-crossed module
$$
\Pi_3(X,A,B; X_0) = (\pi_3(X, A, B; X_0) \to \pi_2(A, A\cap B; X_0) \to \pi_1(B; X_0), \ \omega),
$$
where $\pi_3(X,A,B; X_0) = (\pi_3(X,A,B; x))_{x\in X_0}$, and $\pi_3(X,A,B; x)$ is given by the homotopy classes of maps from a pointed 3-ball that send the southern hemisphere to $A$, the northern hemisphere to $B$, and the basepoint to $x$, the map
$$
\pi_3(X, A, B; X_0) \to \pi_2(A, A\cap B; X_0)
$$
is induced by the restriction to the southern hemisphere, and the map $\pi_2(A, A\cap B; X_0) \to \pi_1(B; X_0)$ is induced by the inclusion $A\cap B \subset B$.  The Peiffer lifting $\omega$ is defined via the generalized Whitehead product, see~\cite{martins_fundamental_2011, ellis_crossed_1993}.  The details of this construction for certain triads of simplicial complexes can be found in Section~\ref{Section_Fundamental 2-crossed module}.

\subsection{Free 2-crossed modules}\label{Free 2-crossed modules}

Let $H_0$ be a set. An \emph{$H_0$-set} is a family of sets $B = (B(x))_{x\in H_0}$ indexed by~$H_0$.  An \emph{$H_0$-map} from an $H_0$-set $B = (B(x))_{x\in H_0}$ to an $H_0$-set $C = (C(x))_{x\in H_0}$ is a family of set maps
$$
f = (f_x\colon B(x) \to C(x))_{x\in H_0}.
$$ 

A pre-crossed module $\partial \colon E \to H$ is \emph{totally free} if $H$ is a free groupoid and there is an $H_0$-set $B$, called a \emph{basis}, and an~$H_0$-map $f\colon B\to E$ such that the following universal property is satisfied: for every pre-crossed module of the form $\partial' \colon E'\to H$ and for every $H_0$-map $f'\colon B \to E'$ such that $\partial' \circ f' = \partial \circ f$, there exists a unique morphism $\phi \colon E \to E'$ such that $\phi \circ f = f'$ and $(\phi, \mathrm{Id}_H)$ is a morphism of pre-crossed modules.  We obtain the following commutative diagram
$$
\begin{tikzcd}[row sep = large, column sep = large]
B\arrow[r, "f"] \arrow[swap, dr, "{f'}"] & E \arrow[r,  "\partial"]\arrow[d, dashed, "\phi"] & H\arrow[d, equal] \\
& E' \arrow[r, "{\partial'}"] & H.
\end{tikzcd}
$$

A 2-crossed module\begin{tikzcd}\sigma = ({L} \arrow[r, "\delta"] & {E} \arrow[r, "\partial"] & {H}, \ \omega)\end{tikzcd}is \emph{free} if $\partial \colon E \to H$ is a totally free pre-crossed module.  

\subsection{The model category of 2-crossed modules of groups}
The \emph{homotopy groups} of a 2-crossed module of groups\begin{tikzcd}\sigma = (L\arrow[r, "\delta"] & E \arrow[r, "\partial"] & H, \ \omega)\end{tikzcd}are the groups
$$
\pi_1(\sigma) = \mathrm{Coker}(\partial)\quad\quad \pi_2(\sigma) = \mathrm{Ker}(\partial) / \mathrm{Im}(\delta) \quad\quad \pi_3(\sigma ) = \mathrm{Ker}(\delta).
$$
Cabello and Garzón \cite{cabello1994quillen} show that the category of 2-crossed modules of groups is a Quillen model category with \emph{weak equivalences} given by the morphisms that induce isomorphisms at the level of homotopy groups.  A \emph{fibration} is a morphism of 2-crossed modules
\begin{center}
\begin{tikzcd}[row sep=large,column sep=large]
{L} \arrow[r, "\delta"] \arrow[d, "\phi_3"] & {E} \arrow[r, "\partial"] \arrow[d,"\phi_2"] & {H}\arrow[d, "\phi_1"]\\
{L'} \arrow[r, "\delta'"] & {E'} \arrow[r, "\partial'"] & {H'}
\end{tikzcd}
\end{center}
such that $\phi_2$ and $\phi_3$ are surjective.  A \emph{cofibrant object} is a free 2-crossed module of groups.  The homotopy category of~2-crossed modules is equivalent to the homotopy category of connected homotopy 3-types (i.e., pointed connected CW complexes with trivial $n$-th homotopy groups for $n>3$).

\section{The Homotopy Classification Theorem}\label{Section. Classification of homotopies}

\noindent In this section we introduce the fundamental 2-crossed module $\Pi_3(K)$ of a canonical simplicial complex $K$, and we define the classifying space $B\sigma$ of a 2-crossed module of groups $\sigma$.  The main result of this section is:

\begin{thm}[Homotopy Classification Theorem]\label{homotopy classification theorem}
Let $K$ be a canonical simplicial complex and let $\sigma$ be a~2-crossed module of groups. There is a canonical bijection
$$
[\lvert K\rvert , B\sigma] \cong [\Pi_3(K), \sigma]
$$
where the left-hand side is the set of homotopy classes of maps from the underlying space of $K$ to the classifying space of $\sigma$, and the right-hand side is the set of gauge-equivalence classes of morphisms of 2-crossed modules.
\end{thm}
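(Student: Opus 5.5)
The plan is to reduce the statement to two facts: a model-categorical adjunction between simplicial sets and 2-crossed modules, and the freeness of $\Pi_3(K)$.

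First, I will model $B\sigma$ as the geometric realization of the nerve of $\sigma$. Concretely, $\sigma$ corresponds, via Conduché's equivalence, to a simplicial group $G(\sigma)$ of Moore length~$2$, and I set $B\sigma = \lvert \overline{W} G(\sigma)\rvert$. I will take $\Pi_3(K)$ to be the 2-crossed module of groupoids built from the skeletal triad. Its groupoid $H$ is free on the edges of $K$ relative to a maximal tree, or more canonically the free groupoid on the oriented edges modulo the ordering. Its pre-crossed module $E\to H$ is totally free on the triangles, and $L$ is generated by the tetrahedra modulo the 2-crossed relations. This is in the spirit of Faria Martins' $\Pi_3(X,A,B)$ applied to $(\lvert K^{(3)}\rvert, \lvert K^{(2)}\rvert,\lvert K^{(2)}\rvert)$, so that $\Pi_3(K)$ is a free 2-crossed module in the sense of Section~\ref{Free 2-crossed modules}.

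The key steps, in order, are as follows.

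\textbf{Step 1 (morphisms).} Identify morphisms $\Pi_3(K)\to\sigma$ with simplicial maps from $K$, viewed as a simplicial set via the canonical ordering, to $\overline{W}G(\sigma)$. A $3$-simplex of the nerve is exactly a labelling of vertices by the single object, of edges by $H$, of faces by $E$ and of tetrahedra by $L$, subject to the boundary conditions. These conditions match the defining relations of $\Pi_3(K)$, and by the universal property of totally free pre-crossed modules, a morphism is determined freely by its values on edges, triangles and tetrahedra. The higher simplices add no data because the nerve is $4$-coskeletal, or rather its simplices above dimension~$3$ are determined by the lower ones; the relations coming from $4$-simplices impose exactly the $\delta$-compatibility on $L$.

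\textbf{Step 2 (homotopies).} Identify homotopies $K\times\Delta^1\to \overline{W}G(\sigma)$, after triangulating the prism, with gauge transformations between morphisms $\Pi_3(K)\to\sigma$. These transformations consist of an $H$-label on each vertex, an $E$-label on each edge and an $L$-label on each triangle, satisfying the expected identities. By definition, gauge equivalence is then the equivalence relation generated by them.

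\textbf{Step 3 (realization).} Since $\overline{W}G(\sigma)$ is a Kan complex, the simplicial homotopy classes $[K,\overline{W}G(\sigma)]$ biject with $[\lvert K\rvert, B\sigma]$ via realization and the simplicial approximation theorem. A canonical simplicial complex has a canonical vertex ordering, which is needed to view $K$ as a simplicial set. Combining the three steps gives the bijection, and naturality makes it canonical.

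The main obstacle is Step~1, and specifically the claim that the free 2-crossed module generated by $K$ corepresents maps into the nerve. The difficulty is that the $L$-level relations involve the Peiffer lifting, so the boundary of a tetrahedron in $E$ is not simply a product of face labels; it involves conjugations and $\omega$-terms coming from the ordering of faces. I would first handle this by quoting Faria Martins' computation of $\Pi_3$ for CW triads via $\pi_3(X,A,B)$, which is free on the $3$-cells by a higher Seifert--van Kampen theorem due to Brown, Ellis and Martins. I would then check directly that the face formula for $\overline{W}$ in Conduché's length-$2$ simplicial group reproduces that boundary formula. If this direct comparison becomes unwieldy, I would instead argue homotopically. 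Since $\Pi_3(K)$ is cofibrant and $\sigma$ is fibrant in the Cabello--Garzón model structure, $[\Pi_3(K),\sigma]$ computes maps in the homotopy category. That homotopy category is equivalent to $3$-types, under which $\Pi_3(K)$ corresponds to the $3$-type of $\lvert K\rvert$. The remaining work would be to check that gauge equivalence agrees with left homotopy via a cylinder object, extending the model structure to the groupoid setting by working one component at a time.
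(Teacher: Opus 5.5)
Your primary strategy (model $B\sigma$ by $\lvert\overline{W}G(\sigma)\rvert$, identify colorings with simplicial maps and gauge transformations with simplicial homotopies, then use that $\overline{W}G(\sigma)$ is Kan) is a genuinely different route from the paper's and could in principle work. As written, however, the two identifications that carry all the content are asserted rather than proved, and one is mischaracterized.

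\textbf{Step~2 is not ``by definition''.} In the paper, gauge equivalence means $\phi'=\phi\cdot(r,s,t)$ for a quadratic derivation. This derivation acts by explicit formulas for $\phi'_1,\phi'_2,\phi'_3$, with Peiffer-lifting terms in the conditions on $t$. A simplicial homotopy $K\times\Delta^1\to\overline{W}G(\sigma)$ does not carry that data. Over an edge, the prism has two nondegenerate triangles, and each can be filled with an arbitrary $E$-label (this determines the diagonal and then the top edge). Over a triangle, $\Delta^2\times\Delta^1$ has three tetrahedra and two interior triangles, all of them labelled. So homotopies and quadratic derivations are not in bijection, and you must prove two things:
\begin{itemize}
\item every $(r,s,t)$ is realized by some homotopy;
\item every homotopy from $\phi$ to $\phi'$ yields some $(r,s,t)$ with $\phi'=\phi\cdot(r,s,t)$.
\end{itemize}
The second requires extracting $s$ and $t$ as specific products of labels. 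You then have to verify the derivation identities and the $L$-level formula using the $4$-simplex relations in $\Delta^3\times\Delta^1$. This is a Peiffer-lifting computation comparable to the one in Section~\ref{Section_Proof of Lemma 8.3.1}.

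\textbf{Step~1.} You rightly flag Step~1 as the main obstacle, but it also remains unproved, and two factual slips feed into it.
\begin{itemize}
\item A canonical simplicial complex carries no vertex ordering, only a distinguished vertex $p_t$ on each triangle. You must choose an ordering, relate the minimal vertex of each triangle to $p_t$ through pointings, and prove independence of the ordering to get a canonical bijection.
\item The triad is $(\lvert K\rvert,K_2^-,K_2^+)$, not $(\lvert K^{(3)}\rvert,\lvert K^{(2)}\rvert,\lvert K^{(2)}\rvert)$. The latter is degenerate, since with $A=B$ one gets $\pi_2(A,A\cap B)=\pi_2(A,A)=0$.
\end{itemize}

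\textbf{The model of $B\sigma$.} The paper's $B\sigma$ is a canonical CW complex with $\Omega_3(B\sigma)\cong Q(\sigma)$. Identifying it with $\lvert\overline{W}G(\sigma)\rvert$ requires knowing that Faria Martins' $\Omega_3$ and Conduch\'e's functor present the same $3$-type. That is a nontrivial comparison, essentially the input the paper takes from Faria Martins.

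\textbf{The fallback.} Your fallback has a more basic gap. The Cabello--Garz\'on homotopy category is equivalent to \emph{pointed} connected $3$-types. So even granting cofibrancy and a cylinder computation, maps $\Pi_3(K,x)\to\sigma$ up to left homotopy compute $[\lvert K\rvert,B\sigma]_*$, not the free classes in the statement. ``Working one component at a time'' also does not reduce $\Pi_3(K)$ to a 2-crossed module of groups, since a component still has many vertices.

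The passage from pointed to free classes is precisely what the paper's proof supplies:
\begin{itemize}
\item It writes $(r,s,t)=(1,s,t)\cdot(r,1,1)$ and notes $(\phi\cdot(r,1,1))_x=\phi_x\cdot r(x)$. Gauge equivalence then becomes pointed homotopy of $\phi_x$ and $\phi'_x\cdot h$ for some $h\in H$.
\item It converts this into pointed homotopy of geometric realizations by Faria Martins' Lemmas~3.12 and~3.16.
\item It matches the $H$-action $\psi\mapsto\psi\cdot h$ with the action of $\pi_1(B\sigma)\cong H/\partial(E)$ on $[\lvert K\rvert,B\sigma]_*$ (via Ellis' construction), using $[\lvert K\rvert,B\sigma]\cong[\lvert K\rvert,B\sigma]_*/\pi_1(B\sigma)$.
\end{itemize}
The real advantage of your primary route is that free homotopy is built in through the vertical edge labels $r(v)$. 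That advantage only materializes once Steps~1 and~2 are actually proved.
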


A similar result is shown in \cite{martins_fundamental_2011} for pointed homotopy classes of maps.   Theorem \ref{homotopy classification theorem} provides a useful combinatorial description of the set of homotopy classes of maps from $K$ to the classifying space of $\sigma$ and will be an essential ingredient in the proof of Theorem \ref{Theorem - Colored Pachner}.  Its proof builds on the work of Brown--Loday \cite{brown_van_1987}, Berger \cite{bergerDoubleLoopSpaces1999}, Ellis \cite{ellis_crossed_1993}, and Faria Martins~\cite{martins_fundamental_2011}.

\subsection{Simplicial complexes}\label{simplicialcomplex}
In this section, we recall classical notions relating to simplicial complexes. An \emph{$n$-simplex} (or simply a \emph{simplex}), for some nonnegative integer $n$, is the convex hull $(v_0, \ldots, v_n)$ of $n+1$ affinely independent points $v_0, \ldots v_n$ in $\mathbb{R}^N$, for some $N\geqslant n$.  The \emph{standard $n$-simplex} is the convex hull $\Delta^n$ of the standard basis vectors of $\R^{n+1}$.  An \emph{$m$-face} (or simply a \emph{face}) of an $n$-simplex $s$, for some nonnegative integer~$m\leqslant n$, is the convex hull of a subset of size $m+1$ of the~$n+1$ points that define $s$.  In particular, an $m$-face of a simplex is an $m$-simplex.  A \emph{vertex} is a $0$-simplex, an \emph{edge} is a $1$-simplex, a \emph{triangle} is a $2$-simplex, and a \emph{tetrahedron} is a $3$-simplex.  

A \emph{geometric simplicial complex} (or simply a \emph{simplicial complex}) is a set $K$ of simplices in $\R^N$, for some nonnegative integer $N$, such that every face of a simplex in $K$ is also a simplex in $K$,  the intersection of any two simplices in $K$ is either empty or a face of both of these simplices, and $K$ is locally finite, i.e.,  for every point $x$ in a simplex in $K$, there is a neighborhood of $x$ in $\R^N$ that intersects a finite number of simplices in~$K$.  

Let $K$ denote a simplicial complex in $\R^N$. We denote by $K^{(i)}$ the set of all $i$-simplices in~$K$.  A \emph{simplicial subcomplex} of $K$ is a simplicial complex $L$ such that every simplex in $L$ is a simplex in $K$.  The \emph{underlying space} of $K$ is the union $\lvert K \rvert$ in $\R^N$ of all of the simplices in $K$
$$
\lvert K \rvert = \bigcup_{s\in K}s,
$$
with the topology induced by the topology of $\R^N$.  Notice that a simplicial complex is finite if and only if its underlying space is compact.  The \emph{$n$-skeleton} of a simplicial complex $K$ is the union $K_n$ in $\R^N$ of all of the $i$-simplices in $K$ such that $0 \leqslant i\leqslant n$
$$
K_n = \bigcup_{i\leqslant n}\bigcup_{s\in K^{(i)}} s. 
$$
This induces a filtration of topological spaces $K_0 \subset K_1 \subset K_2 \subset \cdots \subset \lvert K \rvert$. 

A \emph{subdivision} of a simplicial complex $K$ is a simplicial complex $K'$ such that every simplex in $K'$ is a subset of a simplex in $K$, and every simplex in $K$ is a finite union of simplices in $K'$.  Notice that a simplicial complex and any of its subdivisions have the same underlying space.  A \emph{piecewise-linear map} from a simplicial complex~$K$ to a simplicial complex $L$ is a continuous map $f\colon \lvert K\rvert \to \lvert L \rvert$ such that there exists a subdivision $K'$ of $K$ for which $f$ is affine when restricted to each simplex in $K'$.

Let $S$ be a subset of a simplicial complex $K$.  The \emph{star} of $S$ is the set $\operatorname{St}(S)$ of simplices in $K$ that have a face in $S$.  The \emph{closure} of $S$ is the smallest simplicial subcomplex $\mathrm{Cl}(S)$ of $K$ containing $S$.  The \emph{link} of $S$ is
$$
\operatorname{Lk}(S) = \operatorname{Cl}(\operatorname{St}(S))\setminus \operatorname{St}(S).
$$

\subsection{Canonical simplicial complexes}\label{Canonical simplicial complexes}A \emph{canonical simplicial complex} is a simplicial complex $K$ together with the choice,  for every triangle $t$ of $K$,  of a distinguished vertex $p_t$ and of a decomposition of $t$ into subspaces~$t^+$ and~$t^-$ as follows.  Let~$D$ be a closed disk embedded in $t$ such that $D\cap \partial t = \{p_t\}$.  We set 
$$
t^- = \partial t \cup D \quad \quad t^+ = t \setminus \mathring{D},
$$
where $\mathring{D}$ denotes the interior of $D$.  

\begin{figure}[h]
\centering
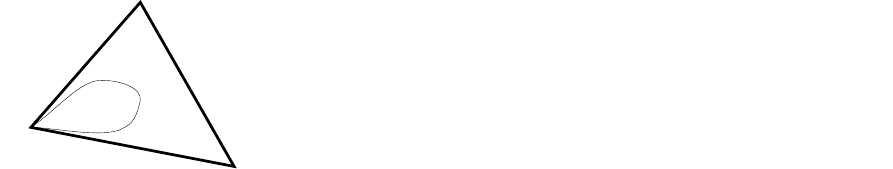
\end{figure}

Let $K$ be a canonical simplicial complex. Denote by $K_2^-$ and $K_2^+$ the subspaces of $K_2$ defined by
\begin{align*}
K_2^- &= K_1 \cup \left(\bigcup\limits_{t \in K^{(2)}} t^-\right), &
K_2^+ &= K_1 \cup \left(\bigcup\limits_{t \in K^{(2)}} t^+\right).
\end{align*}
Notice that $K_2 = K_2^- \cup K_2^+$, and that the inclusions $K_1 \xhookrightarrow{} K_2^-$ and $K_1\xhookrightarrow{} K_2^+$ induce homotopy equivalences.

\subsection{Homotopy groupoids} The \emph{fundamental groupoid} of a topological space $X$ is the groupoid $\pi_1(X)$ with set of objects $X$ and with morphisms from $x$ to $y$ given by the homotopy classes relative to the boundary of paths from $x$ to $y$, where the composition is induced by the concatenation of paths. The \emph{fundamental groupoid of~$X$ with chosen basepoints in $A$}, denoted by $\pi_1(X;A)$, for any subspace $A\subset X$, is the full subcategory of~$\pi_1(X)$ with set of objects $A$.

The \emph{$n$-th relative homotopy groupoid} of a triple of topological spaces $A\subset Y \subset X$ for $n\geqslant 2$ is the totally disconnected groupoid $\pi_n(X, Y;A)$ with set of objects $A$ and with sets of morphisms:
\begin{equation*}
\pi_n(X,Y;A)(x) = \pi_n(X,Y,x)
\end{equation*}
for all $x\in A$, where $\pi_n(X,Y,x)$ denotes the $n$-th relative homotopy group of the pair $Y\subset X$ with basepoint~\mbox{$x\in Y$}.  Notice that there is an action of the fundamental groupoid $\pi_1(Y;A)$ on the $n$-th relative homotopy groupoid $\pi_n(X,Y;A)$ for all $n\geqslant 2$.

The \emph{$n$-th triad homotopy groupoid} of a quadruple of spaces $Y \subset X$, $Z\subset X$, $A\subset Y\cap Z$ for~$n\geqslant 3$ is the totally disconnected groupoid $\pi_n(X,Y,Z;A)$ with set of objects $A$ and sets of morphisms:
\begin{equation*}
\pi_n(X,Y,Z;A)(x) = \pi_n(X,Y,Z,x)
\end{equation*}
for all $x\in A$, where $\pi_n(X,Y,Z,x)$ denotes the $n$-th triad homotopy group, see \cite{blakers_homotopy_1949}. Notice that there is an action of the fundamental groupoid $\pi_1(Y\cap Z; A)$ on the $n$-th triad homotopy groupoid $\pi_n(X,Y,Z;A)$ for all~$n\geqslant 3$.

\subsection{The fundamental 2-crossed module of a canonical simplicial complex}\label{Section_Fundamental 2-crossed module}
In this section, we adapt the definition of Faria Martins \cite{martins_fundamental_2011} of the fundamental 2-crossed module of groups of a reduced CW complex to the context of simplicial complexes and groupoids. The \emph{fundamental 2-crossed module} of a canonical simplicial complex $K$ is the 2-crossed module:
\[
\begin{tikzcd}
\Pi_3(K) = (\pi_3(\lvert K\rvert , K_2^-, K_2^+; K_0) \arrow[r, "d_2"] & \pi_2(K_2^-, K_2^-\cap K_2^+; K_0) \arrow[r, "d_1"] & \pi_1(K_1; K_0), \ \omega)
\end{tikzcd}
\]
where the maps $d_1$, $d_2$, and $\omega$ are defined as follows. The map $d_1 \colon \pi_2(K_2^-, K_2^-\cap K_2^+; K_0) \to \pi_1(K_1; K_0)$ is the composition of the connecting homomorphism
$$
\pi_2(K_2^-, K_2^-\cap K_2^+; K_0)\to \pi_1(K_2^-\cap K_2^+; K_0)
$$
from the long exact sequence of the pair $(K_2^-, K_2^-\cap K_2^+)$ with the morphism $\pi_1(K_2^- \cap K_2^+; K_0) \to \pi_1(K_2^+; K_0)$ induced by the inclusion~\mbox{$K_2^-\cap K_2^+ \subset K_2^+$} and the isomorphism $\pi_1(K_2^+; K_0) \cong \pi_1(K_1; K_0)$ induced by the inclusion $K_1 \subset K_2^+$. The map $d_2 \colon \pi_3(\lvert K\rvert, K_2^-, K_2^+; x) \to \pi_2(K_2^-, K_2^-\cap K_2^+; x)$ is the connecting homomorphism from the long exact sequence of the triad $(\lvert K \rvert, K_2^-, K_2^+)$.  The map
$$
\omega = \left(\omega_x \colon \pi_2(K_2^-, K_2^-\cap K_2^+; x)\times \pi_2(K_2^-, K_2^-\cap K_2^+; x)\to  \pi_3(\lvert K\rvert, K_2^-, K_2^+; x)\right)
$$
is defined via the generalized Whitehead product for the triad $(\lvert K \rvert, K_2^-, K_2^+)$, see details in \cite[Definition~3.1]{martins_fundamental_2011}.  We deduce from the discussion in \cite[Theorem 2]{ellis_crossed_1993} and from \cite{brown_van_1987} that the fundamental 2-crossed module of a canonical simplicial complex is free in the sense of Section \ref{Free 2-crossed modules}.  

We deduce from the freeness of the fundamental 2-crossed module that if $K$ and $K'$ are canonical simplicial complexes with the same underlying simplicial complex, then their fundamental 2-crossed modules are isomorphic.

\subsection{Colorings}

Let $\sigma$ be a 2-crossed module of groups. A \emph{$\sigma$-coloring} of a canonical simplicial complex $K$ is a morphism of 2-crossed modules $\Pi_3(K) \to \sigma$.  A \emph{$\sigma$-colored simplicial complex} is a pair $(K,\phi)$, where $K$ is a canonical simplicial complex and $\phi$ is a $\sigma$-coloring of $K$.

\subsection{Ordered simplices}\label{Section_Ordered simplices}Let $K$ be a canonical simplicial complex. An \emph{order} of an $i$-simplex of $K$, for~$i\neq 2$, is  a choice of linear order on its vertices. An \emph{order} of a triangle of~$K$ is a choice of cyclic order on its vertices. An $i$-simplex of $K$ is \emph{ordered} if it is endowed with an order.

A \emph{pointing} of an ordered $i$-simplex $s$ of $K$, for $i\geqslant 3$, is the choice, for every triangle $t$ of $s$, of a homotopy class of paths in the boundary of $t$ from the vertex of $t$ that is minimal for the order of $s$ to the distinguished vertex of $t$. A \emph{pointed ordered} $i$-simplex, for $i\geqslant 3$, is an ordered $i$-simplex together with a choice of pointing. The \emph{pointed vertex} of a pointed $i$-simplex, for $i\geqslant 3$, is the vertex that is minimal for the order.

\subsection{Choices of ordered simplices of canonical simplicial complexes}\label{Subsection_Choices of ordered simplices}
A \emph{choice of ordered simplices} of a canonical simplicial complex~$K$ is a sequence $B_* = (B_n)_{n\geqslant 1}$ such that
\begin{itemize}
\item for $i = 1, 2$, $B_i$ is a subset of the ordered $i$-simplices of $K$ containing exactly one choice of order for each $i$-simplex,
\item for $i\geqslant 3$, $B_i$ is a subset of the pointed ordered $i$-simplices of $K$ containing exactly one choice of pointing and order for each $i$-simplex.
\end{itemize}

Let $B_* = (B_n)_{n\geqslant 1}$ be a choice of ordered simplices of a canonical simplicial complex $K$. There is a canonical injective map $B_1\to \pi_1(K_1; K_0)$ that sends an ordered edge to its homotopy class.  Therefore, we will often identify $B_1$ with its image in $\pi_1(K_1; K_0)$.  There is an injective map $B_2\to  \pi_2(K_2^-, K_2^-\cap K_2^+; K_0)$ that sends an ordered triangle with distinguished vertex $p\in K_0$ to its homotopy class in~\mbox{$\pi_2(K_2^-, K_2^-\cap K_2^+; p)$}, and we use this to identify $B_2$ with its image in $ \pi_2(K_2^-, K_2^-\cap K_2^+; K_0)$.  There is an injective map~\mbox{$B_3\to \pi_3(K_3, K_2^-, K_2^+; K_0)$} that sends a pointed ordered tetrahedron $\tau$ with pointed vertex $p$ to its homotopy class in $\pi_3(K_3, K_2^-, K_2^+;p)$, and we use this to identify $B_3$ with its image in $\pi_3(K_3, K_2^-, K_2^+;K_0)$.  Similarly, we identify $B_4$ with its image in~$\pi_4(K_4, K_3; K_0)$.   

Notice that $\pi_1(K_1;K_0)$ is a free groupoid with basis $B_1$, and that~\mbox{$d_1\colon \pi_2(K_2^-, K_2^-\cap K_2^+; K_0)\to \pi_1(K_1; K_0)$} is a totally free pre-crossed module with basis $B_2$.  Therefore, by freeness, a $\sigma$-coloring of $K$ is uniquely determined by its values on $B_*$, see \cite{brown_van_1987}.

Conversely, a triple of maps $c = (c_1 \colon B_1 \to H, c_2\colon B_2 \to E, c_3\colon B_3\to L)$ induces a $\sigma$-coloring of $K$ if it verifies the \emph{triangle identity}:
$$
\partial (c_2(t)) = \widehat{c_1}(d_1(t))
$$
for all $t\in B_2$, where $\widehat{c_1}\colon \pi_1(K_1; K_0)\to H$ is the unique morphism of groupoids that extends $c_1$, the \emph{tetrahedron identity}:
$$
\delta (c_3(\tau)) = \widehat{c_2}(d_2(\tau))
$$
for all $\tau \in B_3$, where $\widehat{c_2}\colon  \pi_2(K_2^-, K_2^-\cap K_2^+; K_0)\to E$ is the unique $\widehat{c_1}$-equivariant morphism of groupoids that extends $c_2$, and the \emph{4-simplex identity}:
$$
1  =\widehat{c_3}(d_3(s))
$$
for all $s\in B_4$, where $\widehat{c_3}\colon \pi_3(K_3, K_2^-, K_2^+; K_0)\to L$ denotes the unique morphism of groupoids that coincides with $c_3$ such that $(\widehat{c_1}, \widehat{c_2}, \widehat{c_3})$ is a morphism of 2-crossed modules.

\subsection{The classifying space of a 2-crossed module of groups}\label{Section_Classifying space}

In order to define the classifying space of a 2-crossed module of groups, we first recall the notion of a canonical CW complex.  We refer the reader to Ellis \cite{ellis_crossed_1993} and Faria Martins \cite{martins_fundamental_2011} for a more detailed presentation of canonical CW complexes.  We denote
$$
D = \{z\in \mathbb{C} \mid \lvert z - 1 \rvert \leqslant 1\}\quad \quad D^- = \{z\in \mathbb{C} \mid \lvert z - \frac{1}{2}\rvert \leqslant \frac{1}{2}\} \quad \quad D^+ = D \setminus \mathrm{int}(D^-).
$$
This defines a triad $(D, D^-, D^+)$ with basepoint $z = 0$, as illustrated in Figure \ref{fig:disk triad}.

\begin{figure}[h]
\centering
\begingroup%
  \makeatletter%
  \providecommand\color[2][]{%
    \errmessage{(Inkscape) Color is used for the text in Inkscape, but the package 'color.sty' is not loaded}%
    \renewcommand\color[2][]{}%
  }%
  \providecommand\transparent[1]{%
    \errmessage{(Inkscape) Transparency is used (non-zero) for the text in Inkscape, but the package 'transparent.sty' is not loaded}%
    \renewcommand\transparent[1]{}%
  }%
  \providecommand\rotatebox[2]{#2}%
  \newcommand*\fsize{\dimexpr\f@size pt\relax}%
  \newcommand*\lineheight[1]{\fontsize{\fsize}{#1\fsize}\selectfont}%
  \ifx\svgwidth\undefined%
    \setlength{\unitlength}{61.64958563bp}%
    \ifx\svgscale\undefined%
      \relax%
    \else%
      \setlength{\unitlength}{\unitlength * \real{\svgscale}}%
    \fi%
  \else%
    \setlength{\unitlength}{\svgwidth}%
  \fi%
  \global\let\svgwidth\undefined%
  \global\let\svgscale\undefined%
  \makeatother%
  \begin{picture}(1,0.97623124)%
    \lineheight{1}%
    \setlength\tabcolsep{0pt}%
    \put(0,0){\includegraphics[width=\unitlength,page=1]{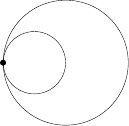}}%
    \put(0.19558052,0.46950297){\makebox(0,0)[lt]{\lineheight{1.25}\smash{\begin{tabular}[t]{l}$D^-$\end{tabular}}}}%
    \put(0.64324987,0.46950297){\makebox(0,0)[lt]{\lineheight{1.25}\smash{\begin{tabular}[t]{l}$D^+$\end{tabular}}}}%
  \end{picture}%
\endgroup%

\caption{The triad structure on the disk}
\label{fig:disk triad}
\end{figure}

We also define a triad structure on the sphere $S^2$, taking $(S^2)^+$ to be the northern hemisphere and $(S^2)^-$ to be the southern hemisphere, with a basepoint on the equator, as illustrated in Figure \ref{fig:sphere triad}. 

\begin{figure}[h]
\centering
\begingroup%
  \makeatletter%
  \providecommand\color[2][]{%
    \errmessage{(Inkscape) Color is used for the text in Inkscape, but the package 'color.sty' is not loaded}%
    \renewcommand\color[2][]{}%
  }%
  \providecommand\transparent[1]{%
    \errmessage{(Inkscape) Transparency is used (non-zero) for the text in Inkscape, but the package 'transparent.sty' is not loaded}%
    \renewcommand\transparent[1]{}%
  }%
  \providecommand\rotatebox[2]{#2}%
  \newcommand*\fsize{\dimexpr\f@size pt\relax}%
  \newcommand*\lineheight[1]{\fontsize{\fsize}{#1\fsize}\selectfont}%
  \ifx\svgwidth\undefined%
    \setlength{\unitlength}{61.68975169bp}%
    \ifx\svgscale\undefined%
      \relax%
    \else%
      \setlength{\unitlength}{\unitlength * \real{\svgscale}}%
    \fi%
  \else%
    \setlength{\unitlength}{\svgwidth}%
  \fi%
  \global\let\svgwidth\undefined%
  \global\let\svgscale\undefined%
  \makeatother%
  \begin{picture}(1,0.97559562)%
    \lineheight{1}%
    \setlength\tabcolsep{0pt}%
    \put(0,0){\includegraphics[width=\unitlength,page=1]{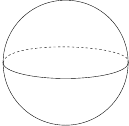}}%
    \put(0.34646858,0.67812078){\makebox(0,0)[lt]{\lineheight{1.25}\smash{\begin{tabular}[t]{l}$(S^2)^+$\end{tabular}}}}%
    \put(0.35237325,0.17909077){\makebox(0,0)[lt]{\lineheight{1.25}\smash{\begin{tabular}[t]{l}$(S^2)^-$\end{tabular}}}}%
    \put(0,0){\includegraphics[width=\unitlength,page=2]{Sphere_triad.pdf}}%
  \end{picture}%
\endgroup%

\caption{The triad structure on the sphere}
\label{fig:sphere triad}
\end{figure}

If $X$ is a CW complex, we denote by $X_n$ its $n$-skeleton, for $n\geqslant 0$. A CW complex $X$ is \emph{canonical} if it has a unique 0-cell $*$, taken as its basepoint, the attaching map $S^1 \to X_1$ of every 2-cell is pointed, thus defining a triad $(X_2, X_2^-, X_2^+)$ induced by the triad structure on $D$, and every 3-cell is attached via a pointed triad map~$(S^2, (S^2)^-, (S^2)^+) \to (X_2, X_2^-, X_2^+)$. 

The \emph{fundamental 2-crossed module} of a canonical CW complex $X$, as defined by Faria Martins \cite{martins_fundamental_2011}, is the 2-crossed module of groups
$$
\begin{tikzcd}
\Omega_3(X) = (\pi_3(X, X_2^-, X_2^+, *) \arrow[r, "\delta"] & \pi_2(X_2^-, X_2^-\cap X_2^+, *) \arrow[r, "\partial"]& \pi_1(X_1, *), \ \omega)
\end{tikzcd}
$$
where the boundary maps $\delta$ and $\partial$ and the Peiffer lifting $\omega$ are defined analogously to the maps in Section \ref{Section_Fundamental 2-crossed module}. For instance, if $S^2$ denotes the 2-sphere with the triad structure described above, then we get
$$
\begin{tikzcd}
\Omega_3(S^2) = (\Z \arrow[r, "0"] & \Z \arrow[r, "0"] & 0, \ \omega),
\end{tikzcd}
$$
where the Peiffer lifting is defined by $\omega(n,m) = nm$ for all $n,m \in \Z$.  The following realization result is a direct consequence of the work of Faria Martins \cite{martins_fundamental_2011}:

\begin{lem}
If $\sigma$ is a free 2-crossed module of groups, then there exists a canonical CW complex $X$ whose fundamental 2-crossed module $\Omega_3(X)$ is isomorphic to $\sigma$. 
\end{lem}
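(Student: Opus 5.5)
Given a free 2-crossed module of groups $\sigma = (L \xrightarrow{\delta} E \xrightarrow{\partial} H, \omega)$, the plan is to build $X$ cell by cell from the free data. First I fix a free basis $S_1$ of $H$ and a basis $S_2 \to E$ of the totally free pre-crossed module $\partial\colon E \to H$. I start with a single $0$-cell $*$ and attach one $1$-cell for each element of $S_1$, so that $X_1 = \bigvee_{S_1} S^1$ and $\pi_1(X_1,*) \cong H$. For each $b \in S_2$, I take a pointed loop representing $\partial(b)$ in $\pi_1(X_1,*)$ and use it as the pointed attaching map of a $2$-cell. This produces $X_2$ with its triad $(X_2, X_2^-, X_2^+)$ induced by $(D,D^-,D^+)$.

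Next I identify the pre-crossed module $\pi_2(X_2^-, X_2^-\cap X_2^+,*) \to \pi_1(X_1,*)$. The pair $(X_2^-, X_2^-\cap X_2^+)$ is obtained, up to homotopy, by attaching the small disks $D^-$ along pointed loops, and $X_2^-\cap X_2^+$ deformation retracts to $X_1$ with whiskers. I expect this to give the totally free pre-crossed module on the images of the basis $S_2$, by the relative Hurewicz/Whitehead argument for free pre-crossed modules used in Faria Martins' paper (in the pre-crossed setting, Peiffer relations are not imposed because the two hemispheres are separated). Uniqueness of totally free objects then gives an isomorphism with $\partial\colon E\to H$ compatible with the $H$-action.

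For the top level, Faria Martins shows that $\pi_3(X_2, X_2^-, X_2^+,*)$, together with $\delta$ and $\omega$, is the free 2-crossed module on the pre-crossed module $E\to H$, namely $\{E,E\} \to E \to H$ (Example 1 of Section~\ref{Section. 2-crossed modules}). The canonical map $\{E,E\} \to L$ need not be surjective, and its image need not have the right kernel. I therefore attach $3$-cells by pointed triad maps $(S^2,(S^2)^-,(S^2)^+) \to (X_2,X_2^-,X_2^+)$. First I attach cells realizing generators $l$ of $L$: the boundary of each such $3$-cell is chosen so that its class in $\pi_2(X_2^-,X_2^-\cap X_2^+)$ equals $\delta(l)$, which is possible because $\partial\delta(l)=1$. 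I then attach further $3$-cells to kill the kernel of the resulting surjection onto $L$. Faria Martins' computation of $\pi_3$ of a triad after attaching $3$-cells is a "van Kampen"-type statement in the style of Brown--Loday: attaching $3$-cells amounts to taking a pushout of 2-crossed modules. With it, $\Omega_3(X)$ is identified with the quotient/extension that equals $\sigma$.

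The main obstacle is this last step: controlling $\pi_3$ of the triad after attaching $3$-cells, and showing that the relations and new generators produce exactly $L$ with the correct $\omega$ and $E$-action. There is also a subtlety in realizing relations that lie in $\ker\delta$ by triad maps from $S^2$. I would invoke the colimit theorem of Brown--Loday and Ellis for triad homotopy groups, which is the input Faria Martins uses. If direct realization of $\ker\delta$ by $3$-cells fails, I would instead allow higher cells, which do not change the truncated $\Omega_3$ data except through $\pi_3$. Alternatively, I would cite Faria Martins' realization theorem directly: every free 2-crossed module arises this way up to isomorphism.
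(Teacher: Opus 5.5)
Your construction of the $2$-skeleton, and your use of one $3$-cell for each element of $L$, are fine. The step ``attach further $3$-cells to kill the kernel of the resulting surjection onto $L$'' cannot work.

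By the very result of Faria Martins you rely on, when a canonical complex has cells of dimension at most $3$, the group $\pi_3(X_3,X_2^-,X_2^+,*)$ is the totally free $2$-crossed module on the $3$-cells over $\partial\colon E\to H$. Attaching more $3$-cells therefore only adds free generators; it never imposes a relation.

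A concrete obstruction is the $2$-crossed module $(\mathbb{Z}/2\mathbb{Z}\to 0\to 0)$ from the nontriviality example, which is free in the paper's sense with empty bases. A canonical complex with cells only in dimensions $0$ and $3$ is a wedge of $3$-spheres. Its triad group reduces to $\pi_3(X)$, which is free abelian. Since $\Omega_3(X)$ uses the whole space $X$ in the triad group, $4$-cells are both allowed and necessary here (one needs $S^3\cup_2 e^4$). Your hedge that you ``would instead allow higher cells'' points the right way, but nothing in the proposal shows that higher cells produce exactly $L$ with the right structure.

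The missing argument runs as follows.
\begin{enumerate}
\item Let $q\colon \pi_3(X_3,X_2^-,X_2^+,*)\to L$ be the surjection induced by your $3$-cells; it is the identity on $E$ and $H$. Since $q$ commutes with the boundaries into $E$, its kernel $N$ lies in $\ker\delta$.
\item The sequence $\pi_3(X_3,X_2^+)\to\pi_3(X_3,X_2^-,X_2^+)\to\pi_2(X_2^-,X_2^-\cap X_2^+)$ is exact. Also $\pi_3(X_3,X_2^+)\cong\pi_3(X_3)$, because $X_2^+\simeq X_1$ is aspherical. Hence each element of $N$ is represented by a map $S^3\to X_3$. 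Attach a $4$-cell along such a representative for every element of $N$.
\item Three facts hold for the resulting $X_4$. First, $\pi_4(X_4,X_3)$ is the free $\mathbb{Z}[\pi_1(X_3)]$-module on the $4$-cells. Second, $\pi_3(X_3,X_2^+)\to\pi_3(X_4,X_2^+)$ is onto, with kernel the image of $\pi_4(X_4,X_3)$. Third, $\pi_2(X_3,X_2^+)\cong\pi_2(X_4,X_2^+)$.
\item A diagram chase in the two triad sequences then shows that $\pi_3(X_4,X_2^-,X_2^+,*)$ is the quotient by the $\pi_1$-submodule generated by these classes. That submodule is $N$ itself, because $N$ is central and $H$-stable, and $H$ acts on $\ker\delta$ through $H/\partial(E)$.
\item The lower two levels only see $X_2$, and $\delta$, $\omega$ and the actions descend to the quotient. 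This gives $\Omega_3(X_4)\cong\sigma$.
\end{enumerate}

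A smaller point: $X_2^-\cap X_2^+$ is not homotopy equivalent to $X_1$. It is $X_1$ with one circle $\partial D^-$ wedged at $*$ for each $2$-cell. These circles are exactly what make $\pi_2(X_2^-,X_2^-\cap X_2^+,*)$ free on $H\times S_2$.

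For comparison, the paper gives no construction at all. It simply calls the lemma a direct consequence of Faria Martins' work, which is your final fallback.
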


For an arbitrary~2-crossed module of groups $\sigma$ that does not satisfy this freeness condition, we replace $\sigma$ up to weak equivalence using the cofibrant replacement functor.  Following the Quillen model structure on the category of 2-crossed modules of groups defined by Cabello and Garzón \cite{cabello1994quillen}, we associate to a 2-crossed module of groups $\sigma$ a cofibrant replacement $p \colon Q(\sigma) \ \tilde{\to}\ \sigma$, where~$p$ is a trivial fibration. A \emph{classifying space} of a~2-crossed module of groups $\sigma$ is a canonical CW complex~$B\sigma$ such that $\pi_n(B\sigma) = 0$ for all~$n>3$ together with an isomorphism $\Omega_3(B\sigma) \cong Q(\sigma)$.  The following is a consequence of the above discussion.

\begin{lem}
Every 2-crossed module of groups admits a classifying space, and every connected CW complex such that $\pi_n(X) = 0$ for all $n>3$ is homotopy equivalent to the classifying space of a 2-crossed module. 
\end{lem}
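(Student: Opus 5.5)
The lemma has two parts, existence and homotopy-type realization, and I would treat them separately, reducing each to the realization lemma for free 2-crossed modules together with the Cabello--Garzón model structure.

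\emph{Existence.} Given a 2-crossed module of groups $\sigma$, take the cofibrant replacement $p\colon Q(\sigma)\to\sigma$. Cofibrant objects are free 2-crossed modules, so $Q(\sigma)$ is free. The preceding lemma then gives a canonical CW complex $X$ with $\Omega_3(X)\cong Q(\sigma)$.

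The catch is that $X$ may have nontrivial $\pi_n$ for $n>3$. I would kill these by attaching cells of dimension $\geqslant 5$ inductively, in the usual Postnikov fashion, to obtain $B\sigma \supset X$ with $\pi_n(B\sigma)=0$ for $n>3$. This keeps the 4-skeleton, so it also keeps the 2-skeleton with its triad structure and the 3-skeleton. The groups in $\Omega_3$ are $\pi_1(X_1)$, $\pi_2(X_2^-,X_2^-\cap X_2^+)$ and $\pi_3(X,X_2^-,X_2^+)$. The first two depend only on the 2-skeleton. The triad group $\pi_3$ is unchanged by adding cells of dimension $\geqslant 5$, by cellular approximation: both maps of $3$-balls and their homotopies land in the 4-skeleton. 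Hence $\Omega_3(B\sigma)=\Omega_3(X)\cong Q(\sigma)$, and $B\sigma$ is canonical since its low-dimensional cells are those of $X$.

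\emph{Second claim.} Let $Y$ be a connected CW complex with $\pi_n(Y)=0$ for $n>3$. The steps are as follows.
\begin{enumerate}
\item Replace $Y$ up to homotopy equivalence by a canonical CW complex $Y'$. First collapse a maximal tree to get a single 0-cell. Then homotope the attaching maps of 2-cells to be pointed, and cellularly homotope the attaching maps of 3-cells into triad maps $(S^2,(S^2)^-,(S^2)^+)\to(Y_2',Y_2'^-,Y_2'^+)$. Changing attaching maps by homotopy does not change homotopy type, and this is the step where I would lean on Ellis and Faria Martins, who show every CW complex is equivalent to a canonical one.
\item Set $\tau=\Omega_3(Y')$. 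Its homotopy groups are $\pi_1,\pi_2,\pi_3$ of $Y'$, by the Brown--Loday/Faria Martins computation, and it is free.
\item Since $\tau$ is already free, we may take $Q(\tau)\to\tau$ so that $Q(\tau)\simeq\tau$. More simply, by the existence part any classifying space $B\tau$ is a 3-type with $\Omega_3(B\tau)\simeq \tau$.
\item To compare $Y'$ with $B\tau$, use the equivalence between the homotopy category of 2-crossed modules and that of connected 3-types. Both $Y'$ and $B\tau$ are 3-types whose fundamental 2-crossed modules are weakly equivalent to $\tau$. The equivalence of homotopy categories then gives $Y'\simeq B\tau$, and therefore $Y\simeq B\tau$.
\end{enumerate}

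The main obstacle is step 4 together with the invariance claim in the existence part. One must know that $\Omega_3$ of a canonical 3-type represents its image under that equivalence of homotopy categories, and not just a 2-crossed module with the right homotopy groups. I would cite Faria Martins's theorem that $\Omega_3(X)$ models the 3-type of $X$ and that the Conduché/Cabello--Garzón equivalence is implemented by $\Omega_3$ on canonical complexes.
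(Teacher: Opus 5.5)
Your proposal is correct and follows the argument the paper intends: the paper only says the lemma is a consequence of the preceding discussion (freeness of the cofibrant replacement $Q(\sigma)$, the realization lemma for free 2-crossed modules, and the equivalence between the homotopy categories of 2-crossed modules and connected 3-types), and you make that discussion explicit. You also supply steps the paper leaves implicit: killing $\pi_n$ for $n>3$ with cells of dimension $\geqslant 5$ without changing $\Omega_3$, passing to a canonical complex, and noting that step 4 needs $\Omega_3$ to realize that equivalence, which Faria Martins's realization results (also used in Lemma~\ref{Lemma Realizability}) together with Whitehead's theorem provide.
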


\subsection{Geometric realizations of a coloring}\label{geometric realization}
Let $K$ be a canonical simplicial complex and let $X$ be a canonical CW complex.  A \emph{bifiltered map} from $K$ to $X$ is a continuous map~$f\colon \lvert K\rvert \to X$ such that 
$$
f(K_2^-)\subset X_2^- \quad \quad f(K_2^+) \subset X_2^+, \quad \quad f(K_n) \subset X_n\  \text{for all $n\geqslant 0$}. 
$$
Any bifiltered map $K\to X$ clearly induces a morphism of 2-crossed modules $\Pi_3(K) \to \Omega_3(X)$.  Now fix $\sigma$ a~\mbox{2-crossed} module of groups.  The \emph{induced $\sigma$-coloring} of a bifiltered map $f$ from $K$ to a classifying space~$B\sigma$ is the morphism $\Pi_3(f) \colon \Pi_3(K) \to \sigma$ obtained by composing the induced morphism~$\Pi_3(K) \to \Omega_3(B\sigma)$ with the trivial fibration $\Omega_3(B\sigma)\  \tilde{\to}\  \sigma$. A \emph{geometric realization} of a $\sigma$-coloring $\phi$ of $K$ is a bifiltered map $B\phi\colon K\to B\sigma$ such that the induced $\sigma$-coloring $\Pi_3(f)$ is equal to $\phi$ in the homotopy category of 2-crossed modules of groups. 

\begin{lem}\label{Lemma Realizability} 
Every $\sigma$-coloring of $K$ admits a geometric realization, and any map from $K$ to the classifying space of $\sigma$ is homotopic to the geometric realization of a $\sigma$-coloring of $K$. 
\end{lem}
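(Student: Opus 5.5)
We prove the two assertions of Lemma~\ref{Lemma Realizability} separately. Both rest on the freeness of $\Pi_3(K)$ (Section~\ref{Section_Fundamental 2-crossed module}), on the presentation of colorings by their values on a choice of ordered simplices $B_*$ (Section~\ref{Subsection_Choices of ordered simplices}), and on the trivial fibration $p\colon \Omega_3(B\sigma)\cong Q(\sigma)\ \tilde\to\ \sigma$.

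\textbf{Existence of a geometric realization.} Let $\phi\colon \Pi_3(K)\to\sigma$ be a $\sigma$-coloring. Since $\Pi_3(K)$ is free, hence cofibrant, and $p$ is a trivial fibration, the lifting property in the model structure of Cabello--Garz\'on gives a lift $\tilde\phi\colon \Pi_3(K)\to \Omega_3(B\sigma)$ with $p\circ\tilde\phi=\phi$. (The model structure is stated for groups; since $\pi_1(K_1;K_0)$ is a free groupoid, I would first contract a maximal tree to pass to the group setting, or note that the lifting is done generator by generator.) Then I build a bifiltered map $f$ skeleton by skeleton, using the data $\tilde\phi$ on $B_*$.
\begin{enumerate}
\item Send $K_0$ to the unique $0$-cell $*$.
\item Send each ordered edge $e\in B_1$ to a loop representing $\tilde\phi_1(e)\in\pi_1(B\sigma_1,*)$.
\item For $t\in B_2$, the triangle identity guarantees that $\tilde\phi_2(t)\in\pi_2(B\sigma_2^-,B\sigma_2^-\cap B\sigma_2^+,*)$ has boundary $f(\partial t)$. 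This lets us extend $f$ to $t^-$ by a representative. We extend over $t^+$ using that $t^+$ deformation retracts onto $\partial t^+\setminus \mathrm{int}(D\cap t^+)$ relative to suitable parts, so that $t^+$ lands in $B\sigma_1\subset B\sigma_2^+$.
\item For tetrahedra, the tetrahedron identity shows that the triad map on $\partial\tau$ represents $\delta(\tilde\phi_3(\tau))$. So $f$ extends over $\tau$ as a representative of $\tilde\phi_3(\tau)$ in $\pi_3(B\sigma,B\sigma_2^-,B\sigma_2^+)$, landing in $B\sigma_3$ by cellular approximation of triads.
\item Extending over $4$-simplices and higher cells requires the obstruction in $\pi_3(B\sigma_3,*)$, resp.\ $\pi_n(B\sigma)$ for $n\ge4$, to vanish. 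For $4$-simplices the obstruction dies in $\pi_3(B\sigma)$ by the $4$-simplex identity. Higher obstructions vanish because $\pi_n(B\sigma)=0$ for $n>3$, and we may push images into $B\sigma_n$ by cellular approximation.
\end{enumerate}
By construction $\Pi_3(f)=p\circ\tilde\phi=\phi$.

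\textbf{Every map is homotopic to a realization.} Let $g\colon|K|\to B\sigma$ be any map. First homotope $g$ to a cellular map, sending $K_0$ to $*$; this is possible because $B\sigma$ is connected, so we can choose paths from $g(v)$ to $*$ and use the homotopy extension property. Next homotope $g$ to be bifiltered. On each triangle we push $t^+$ into $B\sigma_1$ via the homotopy equivalence $K_1\hookrightarrow K_2^+$, which keeps $t^-$ in $B\sigma_2$. We then need $g(t^-)\subset B\sigma_2^-$ and $g(K_2^+)\subset B\sigma_2^+$; this uses triad cellular approximation, as in Ellis and Faria Martins, and relies on the relative connectivity of the triads involved. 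The resulting bifiltered map $f$ induces $\Pi_3(f)$, and $f$ is by definition a geometric realization of the coloring $\Pi_3(f)$.

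\textbf{Main obstacle.} I expect the hardest step to be the relative triad cellular approximation: deforming an arbitrary cellular map so that $K_2^{\pm}$ lands in $B\sigma_2^{\pm}$ while controlling the $3$-cells, together with ensuring that the extension over $3$-simplices stays within the triad. For this I would adapt the blob/triad arguments of Brown--Loday and Ellis, working cell by cell with the homotopy extension property.
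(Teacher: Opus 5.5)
There is a concrete error in your skeletal realization, at the triangle step. Your overall architecture is the paper's own. You lift $\phi$ through the trivial fibration $p$ using cofibrancy of the free $\Pi_3(K)$ and realize the lift as a bifiltered map, as in Faria Martins. For the converse you use bifiltered approximation.

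The error comes from a misreading of the canonical decomposition. Here $t^-=\partial t\cup D_t$ with $D_t\cap\partial t=\{p_t\}$, so $t^-$ is not a disk bounded by $\partial t$. Instead, $t^+$ is the pinched annulus between $\partial t$ and $\partial D_t$. The circle $\partial D_t\subset K_2^-\cap K_2^+$ must go into $B\sigma_2^-\cap B\sigma_2^+$, which is $B\sigma_1$ with one extra circle (the image of $\partial D^-$) wedged on at $*$ for each $2$-cell.

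Insisting that $f(t^+)\subset B\sigma_1$ forces $f(\partial D_t)\subset B\sigma_1$. Since $B\sigma_2^-$ is $B\sigma_1$ with the images of the small disks $D^-$ wedged on at $*$, we have $\pi_2(B\sigma_2^-,B\sigma_1)=0$. So the class realized on $t$ is trivial. Moreover, $f(\partial t)$ is homotopic in $B\sigma_1$ to $f(\partial D_t)$, hence null in the free group $\pi_1(B\sigma_1)$. Thus step 3 as written only handles colorings that are trivial on $B_2$, and the extension you ask for does not exist as soon as $\tilde\phi_1(d_1t)\neq 1$.

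The fix is as follows. First map $D_t$ by an arbitrary representative $(D_t,\partial D_t,p_t)\to(B\sigma_2^-,B\sigma_2^-\cap B\sigma_2^+,*)$ of $\tilde\phi_2(t)$; there is no constraint from $\partial t$ here. Then extend over $t^+$ into $B\sigma_2^+$, not into $B\sigma_1$. This extension exists exactly because the triangle identity says $f(\partial D_t)$ and $f(\partial t)$ agree in $\pi_1(B\sigma_2^+)\cong\pi_1(B\sigma_1)$. That is where the triangle identity enters, rather than to say that $\tilde\phi_2(t)$ ``has boundary $f(\partial t)$''.

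With this correction your steps 4 and 5 are fine in substance. They use that $B\sigma_2^{\pm}$ are aspherical, so the part of $\partial\tau$ in $K_2^+$ imposes no extra condition, and $\pi_3(B\sigma)$ injects into $\pi_3(B\sigma,B\sigma_2^-,B\sigma_2^+)$.

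The same misreading appears in the converse. Homotoping $t^+$ into $B\sigma_1$ is possible but leads to a dead end. Afterwards $g(\partial D_t)\subset B\sigma_1$ is homotopic to $g(\partial t)$, which is typically nontrivial in $\pi_1(B\sigma_1)$, while $B\sigma_2^-$ retracts onto $B\sigma_1$. So $g|_{D_t}$ can never be pushed into $B\sigma_2^-$ rel $t^+$. Drop that normalization and apply Faria Martins' bifiltered approximation theorem directly to the cellular map. This is exactly what the paper does, and what you rightly identify as the real content.

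Your last observation is correct and slightly more economical than the paper's argument. With the definition used here, a bifiltered map is by definition a geometric realization of its own induced coloring. So the appeal to Faria Martins' Lemma 3.16 is not needed for the statement as phrased.
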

\begin{proof}
If $K$ is connected, it follows from \cite{martins_fundamental_2011} and the model structure on the category of 2-crossed modules of groups that if $\sigma$ is a 2-crossed module of groups and $x\in K_0$, then any morphism $\phi\colon \Pi_3(K, x) \to \sigma$ admits a bifiltered map $f\colon K \to B\sigma$ that induces $\phi$. If $K$ is not connected, we apply the above construction to every connected component of $K$.  

Now let $f$ be a map from $K$ to a classifying space $B\sigma$ of $\sigma$.  By the bifiltered approximation theorem in~\cite{martins_fundamental_2011},~$f$ is homotopic to a bifiltered map $\bar{f}$ that induces a morphism of 2-crossed modules
$$
\Omega_3(\bar{f})\colon \Pi_3(K,x) \to \Omega_3(B\sigma).
$$
By \cite[Lemma 3.16]{martins_fundamental_2011}, $f$ is homotopic to the geometric realization of $\Omega_3(\bar{f})$. 
\end{proof}

\subsection{Quadratic derivations}

In this section, we extend the notion of a quadratic derivation defined by Faria Martins \cite{martins_fundamental_2011} to the context of groupoids.  Let
$$
\begin{tikzcd}\sigma = (L\arrow[r, "\delta"] & E\arrow[r, "\partial"] & H, \ \omega)\end{tikzcd} \quad\text{and} \quad \begin{tikzcd}\sigma' = (L'\arrow[r, "\delta'"] & E'\arrow[r, "\partial'"] & H', \ \omega')\end{tikzcd}
$$
be 2-crossed modules, and let $\phi = (\phi_1, \phi_2, \phi_3)$ be a morphism of 2-crossed modules from $\sigma$ to $\sigma'$.  A \emph{quadratic $\phi$-derivation} is a triple of set maps $(r\colon H_0\to H', s\colon H\to E', t\colon E\to L')$ such that 
$$
r(x) \in H'(\phi(x)),\quad\quad s(h) \in E'(\phi(y)),\quad\quad t(e) \in L'(\phi(x)),
$$
for all $x,y\in H_0$, $h\in H(x,y)$, and $e\in E(x)$, satisfying the following conditions:
$$
s(hk) = \left(\prescript{\phi_1(k)^{-1}}{}{s(h)}\right)s(k),
$$
for all $x, y, z\in H_0$, $h\in H(x,y)$, and $k \in H(y,z)$,  
$$
t(ef) = \left(\phi_2(f)s(d_1 (f))\right)^{-1}\triangleright \left(\omega(\phi_2 (f), \prescript{\phi_1(d_1f)^{-1}}{}{s(d_1(e))^{-1}}) t(e)\right)t(f),
$$
for all $x\in H_0$, $e,f\in E(x)$, and
$$
t(\prescript{h}{}{e}) = \prescript{\phi_1(h)}{}{\left(\left(\left(s(h)s(d_1(e))^{-1}\right)\triangleright \omega\big(\phi_2(e)^{-1}, s(h)^{-1}\big)^{-1}\right) \omega\big(s(h), s(d_1(e))^{-1}\phi_2(e)^{-1}\big)\prescript{\partial (s(h))}{}{t(e)} \right)},
$$
for all $x,y \in H_0$, $h\in H(x,y)$, and $e\in E(y)$.

The \emph{action} of a quadratic $\phi$-derivation $(r,s,t)$ is the triple
$$
\phi\cdot (r,s,t) = (\phi_1' \colon H\to H', \phi_2'\colon E \to E', \phi_3'\colon L\to L'),
$$
such that 
$$
\phi'_1(h) = r(x)^{-1}\phi_1(h)\partial' (s(h))r(y), \quad \quad \phi'_2(e) = \prescript{r(x)^{-1}}{}{\big(}\phi_2(e) s(\partial(e))\delta'( t(e))\big), \quad \quad \phi'_3(l) = \prescript{r(x)^{-1}}{}{\big(} \phi_3(l) t(\delta(l))\big),
$$
for all $x, y \in H_0$, $h\in H(x,y)$, $e\in E(x)$,  and $l \in L(x)$.  A simple computation shows that $\phi\cdot (r,s,t)$ is a morphism of 2-crossed modules (see  \cite[Theorem 2.13]{martins_fundamental_2011}).  We say that two morphisms $\phi$ and $\phi'$ from $\sigma$ to~$\sigma'$ are \emph{homotopic} if there exists a quadratic $\phi$-derivation $(r,s,t)$ such that
$$
\phi\cdot (r,s,t) = \phi'.
$$
When $H$ is a free groupoid, this defines an equivalence relation (see \cite{gohla_pointed_2013} for the group case). 

\subsection{The gauge groupoid}\label{gauge-groupoid} Let $K$ be a canonical simplicial complex with $B_*$ a choice of ordered simplices,  let\begin{tikzcd}\sigma = (L\arrow[r, "\delta"] & E \arrow[r, "\partial"] & H, \ \omega)\end{tikzcd}be a 2-crossed module of groups, and let $\phi$ be a $\sigma$-coloring of $K$. It follows from the freeness of $\Pi_3(K)$ that a quadratic $\phi$-derivation $(r,s,t)$ is uniquely determined by the restrictions~\mbox{$(r\colon K_0\to H,  s_{\vert B_1}\colon B_1 \to E, t_{\vert B_2}\colon B_2\to L)$}. Moreover, any triple of set maps
$$
(r'\colon K_0\to H, s'\colon B_1\to E, t'\colon B_2\to L)
$$ defines a quadratic $\phi$-derivation.  

In order to define the composition of quadratic derivations, we introduce a map defined by Gohla and Faria Martins \cite{gohla_pointed_2013}. Let $s$ and $s'$ denote maps $\pi_1(K_1; K_0)\to E$ such that 
$$
s(hk) = \left(\prescript{\phi_1(k)^{-1}}{}{s(h)}\right)s(k)\quad \quad \text{and}\quad \quad s'(hk) = \left(\prescript{\partial (s(k))^{-1}\phi_1(k)^{-1}}{}{s'(h)}\right)s'(k)
$$
for all $x,y,z \in K_0$, $h\in \pi_1(K_1; K_0)(x,y)$, and $k\in \pi_1(K_1; K_0)(y,z)$.  The \emph{connecting map} of $s$ and $s'$ is the map 
$$
\omega^{(s,s')}\colon \pi_1(K_1; K_0)\to L
$$
defined by $\omega^{(s, s')}(1_x) = 1$, $\omega^{(s,s')}(b) = 1$ for all $b\in B_1$, and 
$$
\omega^{(s,s')}(hk) = \omega^{(s,s')}(k)\left(s'(k)^{-1}\triangleright\left( \prescript{\phi_1(k)^{-1}}{}{\omega\left(\prescript{\phi_1(k)}{}{s(k)^{-1}}, \delta\left( \omega^{(s,s')}(h)\right) s'(h)^{-1}\right)}\prescript{\partial (s(k))^{-1}\phi_1(k)^{-1}}{}{\omega^{(s,s')}(h)}  \right)\right)
$$
for all $x,y,z \in K_0$, $h\in \pi_1(K_1; K_0)(x,y)$, and $k\in \pi_1(K_1; K_0)(y,z)$. 

\begin{lem}\label{Lemma composition}
If $(r,s,t)$ is a quadratic $\phi$-derivation and $(r',s',t')$ is a quadratic $\phi\cdot (r,s,t)$-derivation, then
$$
(\phi\cdot (r,s,t))\cdot (r',s',t') = \phi \cdot (r'', s'', t'')
$$
where $(r'', s'', t'')$ is the unique quadratic $\phi$-derivation satisfying
$$
r''(x) = r(x)r'(x)\quad \quad s''(b_1) = s(b_1)\prescript{r(y)}{}{s'(b_1)}\quad \quad t''(b_2) = \omega^{(s, \prescript{r}{}{s'})}(\partial (b_2)) \left(\prescript{r(x)}{}{s'(\partial (b_2))^{-1}}\triangleright t(b_2)\right)\prescript{r(x)}{}{t'(b_2)}
$$
for all $x,y,z \in K_0$, $b_1\in B_1(x,y)$, $b_2 \in B_2(x)$, where $\prescript{r}{}{s'}$ denotes the map defined by $\prescript{r}{}{s'}(b) = \prescript{r(y)}{}{s'(b)}$ for all~$x,y\in K_0$, $b\in B_1(x,y)$, and 
$$
s'(hk) =\left( \prescript{\partial (s(k))^{-1}\phi_1(k)^{-1}r(y)}{}{s'(h)}\right)\prescript{r(z)}{}{s'(k)}
$$
for all $x,y, z\in K_0$, $h\colon x\to y$, and $k\colon y\to z$. 
\end{lem}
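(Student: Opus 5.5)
The plan is to compute the composite action directly at each level and to use freeness of $\Pi_3(K)$ to reduce everything to generators. By the discussion in Section~\ref{gauge-groupoid}, a quadratic $\phi$-derivation is uniquely determined by the triple $(r, s_{\vert B_1}, t_{\vert B_2})$, and any such triple arises. So there is a unique quadratic $\phi$-derivation $(r'',s'',t'')$ with the prescribed values on $K_0$, $B_1$ and $B_2$. Likewise a morphism $\Pi_3(K)\to\sigma$ is determined by its values on $B_1$, $B_2$ and $B_3$. It therefore suffices to prove three things: the first components of both sides agree on $B_1$; the second components agree on $B_2$; and the third components agree on $B_3$. In fact the third component will follow from the other two, as explained below.

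\emph{First and second components.} Write $\psi=\phi\cdot(r,s,t)$.
\begin{itemize}
\item For $b_1\in B_1(x,y)$, expand
$$\psi\cdot(r',s',t')_1(b_1)=r'(x)^{-1}r(x)^{-1}\phi_1(b_1)\partial(s(b_1))\,r(y)\,\partial(s'(b_1))\,r'(y).$$
Rewriting $r(y)\partial(s'(b_1))=\partial(\prescript{r(y)}{}{s'(b_1)})\,r(y)$ gives exactly $\phi\cdot(r'',s'',t'')_1(b_1)$.
\item Before the second component, one must check that the extension of $s''$ to all of $\pi_1(K_1;K_0)$ is the one in the statement. That is, $s''(h)=s(h)\prescript{r}{}{s'}(h)$ holds on all morphisms once $s'$ is extended by the twisted cocycle rule written at the end of the lemma. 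This is an induction on word length in the free groupoid, using the cocycle rules for $s$ and $s'$ and the compatibility $\phi_1'=\ldots$.
\item For $b_2\in B_2(x)$, expand
$$\psi'_2(b_2)=\prescript{r'(x)^{-1}}{}{\big(}\psi_2(b_2)\,s'(\partial b_2)\,\delta(t'(b_2))\big),$$
with $\psi_2(b_2)=\prescript{r(x)^{-1}}{}{\big(}\phi_2(b_2)s(\partial b_2)\delta(t(b_2))\big)$.
\item Pull $r(x)^{-1}$ outside, and commute $\delta(t(b_2))$ past $\prescript{r(x)}{}{s'(\partial b_2)}$ using $e\,\delta(l)\,e^{-1}=\delta(e\triangleright l)$, which holds because $(L\to E,\triangleright)$ is a crossed module.
\item Here $s''(\partial b_2)$ agrees with $s(\partial b_2)\prescript{r(x)}{}{s'(\partial b_2)}$ only up to the defect $\delta(\omega^{(s,\prescript{r}{}{s'})}(\partial b_2))$. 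This defect is exactly how the connecting map $\omega^{(s,s')}$ is designed.
\end{itemize}
So the key sub-claim is the identity
$$s(h)\,\prescript{r}{}{s'}(h)=\big(s\cdot\prescript{r}{}{s'}\big)(h)\,\delta\big(\omega^{(s,\prescript{r}{}{s'})}(h)\big)$$
for all $h$, where the first factor on the right is the pointwise product on generators, extended by the cocycle rule. I would prove it by induction on word length using the recursive definition of $\omega^{(s,s')}$. The inductive step uses $\delta\omega(e,f)=efe^{-1}\prescript{\partial e}{}f^{-1}$ to convert the Peiffer-lifting term into the commutator that appears when reordering $s(k)$ past $s'(h)$. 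This follows the group case of Gohla–Faria Martins~\cite{gohla_pointed_2013}.

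\emph{Third component.} This comes for free: both sides are morphisms of 2-crossed modules by \cite[Theorem 2.13]{martins_fundamental_2011}, and we have shown they agree on the first two levels. Since $\delta$ is injective on neither side in general, I would instead check the values on $B_3$ by expressing $t''(d_2\tau)$ through the $t$-cocycle rule. An alternative is to argue that the third level of a morphism out of the free object $\Pi_3(K)$ is determined by $\widehat{c_3}$ on $B_3$, together with the formula $\phi'_3(l)=\prescript{r(x)^{-1}}{}{(\phi_3(l)t(\delta l))}$. Here $t''$ on $d_2(B_3)$ is forced by the $t$-rules from its values on $B_2$.

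The main obstacle is the inductive verification of the connecting-map identity together with the $t''$ rule. I expect lengthy but mechanical manipulations with the 2-crossed module axioms, notably the rules $\omega(ef,g)$ and $\omega(e,fg)$. I would handle these by working first for $h,k\in B_1$ and then propagating the result by the cocycle rules.
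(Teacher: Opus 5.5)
Your route is the paper's. Both sides are morphisms out of the free $\Pi_3(K)$, so you compare them on $B_1$, on $B_2$, and at the third level. The paper's proof consists of exactly these three expansions, with the final equalities simply asserted.

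Your level-2 analysis is more explicit than the paper's. The identity $s(h)\,\prescript{r}{}{s'}(h)=s''(h)\,\delta\big(\omega^{(s,\prescript{r}{}{s'})}(h)\big)$, with $s''$ the derivation extension of its values on $B_1$, is precisely what the recursive definition of the connecting map is built to satisfy. Your induction step reproduces the paper's recursion once you use $\omega(e,fg)=\big((efe^{-1})\triangleright\omega(e,g)\big)\omega(e,f)$, $\omega(e,f)=e\triangleright\omega(e^{-1},\prescript{\partial e}{}{f})^{-1}$ and $\prescript{\partial e}{}{l}=(e\triangleright l)\,\omega(e,\delta(l^{-1}))$. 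Substituting this identity into the expansion at $b_2\in B_2$ gives exactly the stated $t''(b_2)$.

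Two points need fixing.

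\emph{The pointwise claim for $s''$.} Your bullet asserting $s''(h)=s(h)\,\prescript{r}{}{s'}(h)$ for all morphisms $h$ is false. Since $\partial\colon E\to H$ is only a pre-crossed module, the pointwise product violates the derivation rule on a composite $hk$ by an inserted factor of the form $\delta\omega(s(k),\,\cdot\,)$. That is exactly why your next bullet, correctly, needs $\omega^{(s,\prescript{r}{}{s'})}$. Delete the bullet; it contradicts your own key sub-claim.

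\emph{The third level.} It does not come for free from agreement on the first two levels. The remark ``$t''$ on $d_2(B_3)$ is forced by the $t$-rules'' is also not yet an argument. Expanding the composite, its third component is $l\mapsto\prescript{r''(x)^{-1}}{}{\big(\phi_3(l)\,t(d_2l)\,\prescript{r(x)}{}{t'(d_2l)}\big)}$. So what you must prove is
\[
t''(d_2\tau)=t(d_2\tau)\,\prescript{r(x)}{}{t'(d_2\tau)}\qquad\text{for } \tau\in B_3,
\]
where $t,t',t''$ denote the extensions to all of $\pi_2$ by the $t$-rules.

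One way to do this:
\begin{enumerate}
\item Consider the map $e\mapsto\omega^{(s,\prescript{r}{}{s'})}(d_1e)\,\big(\prescript{r(x)}{}{s'(d_1e)^{-1}}\triangleright t(e)\big)\,\prescript{r(x)}{}{t'(e)}$ on all of $\pi_2$.
\item Show it satisfies both $t$-rules relative to $(r'',s'')$. It then agrees with $t''$, since the two coincide on $B_2$.
\item Set $e=d_2\tau$. There $d_1e=1$, which makes the connecting-map factor and the $s'$-twist trivial.
\end{enumerate}

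The paper's own proof asserts this third-level equality without comment, so this is where both arguments are thin. Your write-up should still state the target identity explicitly rather than the contradictory ``for free'' remark.
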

\begin{proof}
Let $b_1\in B_1(x,y)$. We have
\begin{align*}
(\phi\cdot (r,s,t) )\cdot (r',s',t') (b_1) &= r'(x)^{-1}r(x)^{-1}\phi_1(b_1)\partial (s(b_1)) r(y)\partial \big(\prescript{r(y)}{}{s'(b_1)}\big)r'(y)\\
&= \phi\cdot (r'', s'', t'')(b_1)
\end{align*}
Let $b_2 \in B_2(x)$. We have
\begin{align*}
(\phi\cdot (r,s,t))\cdot (r',s',t') (b_2) &= \prescript{r'(x)^{-1}}{}{\left(\prescript{r(x)^{-1}}{}{\left(\phi_2(b_2)s(d_1 (b_2))\delta (t(b_2))\right)}s'(d_1 (b_2))\delta (t'(b_2))\right) }\\
&= \phi \cdot (r'', s'', t'')(b_2)
\end{align*}
Let $b_3 \in \pi_3(K, K_2^-, K_2^+; x)$. We have
\begin{align*}
(\phi\cdot (r,s,t))\cdot (r',s',t'))(b_3) &= \prescript{r'(x)^{-1}}{}{\left(\prescript{r(x)^{-1}}{}{\big (}\phi_3(b_3) t(d_2 (b_3))\big ) t'(d_2 (b_3))\right)}\\
&= \phi \cdot (r'', s'', t'').\qedhere
\end{align*}
\end{proof}

Let $\mathcal{G}(K,\sigma)$ be the category with objects the $\sigma$-colorings of $K$ and with morphisms from a $\sigma$-coloring $\phi$ to a $\sigma$-coloring $\phi'$ given by the quadratic~$\phi$-derivations~$(r,s,t)$ such that
$$
\phi' = \phi \cdot (r,s,t).
$$
The composition of morphisms $(r,s,t) \colon \phi \to \phi'$ and $(r',s',t')\colon \phi' \to \phi''$ is the quadratic~$\phi$-derivation
$$
(r, s, t)\cdot (r',s',t') \colon \phi \to \phi''
$$
defined in Lemma \ref{Lemma composition}.

\begin{lem}
The category $\mathcal{G}(K,\sigma)$ is a groupoid. 
\end{lem}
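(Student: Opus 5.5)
We need to show that $\mathcal{G}(K,\sigma)$ is a category, i.e.\ composition is associative with identities, and that every morphism is invertible. The plan relies throughout on the remark in Section~\ref{gauge-groupoid}: by freeness of $\Pi_3(K)$, a quadratic $\phi$-derivation is uniquely determined by its restriction $(r, s_{\vert B_1}, t_{\vert B_2})$, and any such triple of set maps extends to a quadratic $\phi$-derivation. Consequently, two quadratic $\phi$-derivations coincide as soon as they agree on $K_0$, $B_1$ and $B_2$, and all identities can be checked on these generators.

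\textbf{Identities.} The identity at $\phi$ will be the trivial derivation $(1,1,1)$, with $r(x)=1$, $s(b_1)=1$, $t(b_2)=1$. From the action formulas one sees immediately that $\phi\cdot(1,1,1)=\phi$. To show it is a unit for composition, we plug it into the formulas of Lemma~\ref{Lemma composition}. For $r''$ and $s''$ this is immediate. For $t''$ we first check that the connecting map $\omega^{(s,s')}$ is identically $1$ whenever $s$ or $s'$ is trivial. This follows by induction on word length in $\pi_1(K_1;K_0)$, using the defining recursion together with $\omega(1,e)=\omega(e,1)=1$, which in turn follows from the Peiffer lifting axioms ($\omega(ef,g)$ with $e=f=1$, and similarly in the second variable). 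The remaining factors in $t''$ then reduce to $t$ or $t'$.

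\textbf{Associativity.} The composite is characterized as the unique derivation $(r'',s'',t'')$ with $\phi\cdot(r'',s'',t'')=(\phi\cdot(r,s,t))\cdot(r',s',t')$. Both bracketings of a triple composite therefore produce derivations $D_1,D_2$ from $\phi$ with $\phi\cdot D_1=\phi\cdot D_2$. This does not directly force $D_1=D_2$, since the action need not be free. We therefore verify equality on generators instead. On $r$ and on $s_{\vert B_1}$ the check is a direct computation using the action of $H$. On $t_{\vert B_2}$ we need a cocycle-type identity for the connecting maps,
$$
\omega^{(s,s's'')}\,\cdot(\text{correction})=\omega^{(ss',s'')}\,\omega^{(s,s')},
$$
suitably twisted by the $r$'s. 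We expect this to be the \textbf{main obstacle}. We will first try to prove it by induction on the length of $\partial(b_2)$ as a word in $B_1^{\pm1}$, using the recursion for $\omega^{(s,s')}$ and the 2-crossed module axioms for $\omega(ef,g)$ and $\omega(e,fg)$. Alternatively, one can reduce it to the group-case result of Gohla--Faria Martins~\cite{gohla_pointed_2013}, applied vertexwise after conjugating by $r$. If the direct computation proves unwieldy, a fallback is to \emph{define} the composite via its values on $K_0,B_1,B_2$ given in Lemma~\ref{Lemma composition} and check associativity only there. Since the $t''$ formula involves only $\partial(b_2)$, the identity needed is exactly the cocycle identity for the $\omega^{(\cdot,\cdot)}$ evaluated on loops.

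\textbf{Inverses.} Given $(r,s,t)\colon\phi\to\phi'$, we construct the inverse by solving the composition equations on generators. We set $r'(x)=r(x)^{-1}$ and $s'(b_1)=\prescript{r(y)^{-1}}{}{s(b_1)^{-1}}$, so that $r''=1$ and $s''=1$ on $B_1$. We then choose $t'(b_2)$ so that $t''(b_2)=1$; this is solvable because $L$ is a group and $t'(b_2)$ enters the formula for $t''$ as a final right factor conjugated by $r(x)$. Since arbitrary set maps on $(K_0,B_1,B_2)$ extend to quadratic derivations, $(r',s',t')$ is a morphism $\phi'\to\phi\cdot(r,s,t)\cdot(r',s',t')=\phi$, and it is a right inverse. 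The same construction applied to $(r',s',t')$ gives it a right inverse as well. Associativity then shows that left and right inverses agree, so every morphism is invertible and $\mathcal{G}(K,\sigma)$ is a groupoid.
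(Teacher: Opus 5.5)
Your proposal matches the paper's proof. Unitality is immediate, and associativity is checked on $K_0$, $B_1$, $B_2$. The $t$-component reduces to a cocycle-type identity for the connecting maps $\omega^{(\cdot,\cdot)}$, which the paper handles by combining both of your suggested routes: it cites Gohla--Faria Martins' Equation~(89), then proves by induction on the length of $d_1(b_2)$ that the connecting maps are compatible with conjugation by $r$. For inverses, the paper writes explicit left and right inverses and uses associativity to identify them. Your argument (solve for $t'$ so that the composite is trivial on generators, then take a right inverse of the right inverse) is an equivalent and slightly cleaner packaging.
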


We call $\mathcal{G}(K,\sigma)$ the \emph{gauge groupoid} of $K$ with target $\sigma$.  Two $\sigma$-colorings of $K$ are \emph{gauge equivalent} if they are in the same connected component of the gauge groupoid. 

\begin{proof}
The composition of morphisms is clearly unital. Let us check that it is associative.  Let
$$
(r,s,t) \colon \phi \to \phi', \quad (r',s',t')\colon \phi' \to \phi'', \quad (r'',s'',t'') \colon \phi'' \to \phi'''
$$
be quadratic derivations. We set 
$$
((r,s,t)\cdot (r',s',t')) \cdot (r'',s'',t'') = (r_1, s_1, t_1)\quad\text{and} \quad
(r,s,t)\cdot ((r',s',t')\cdot (r'',s'',t'')) = (r_2, s_2, t_2).
$$
Straightforward computations show that $r_1(x) = r_2 (x)$ for all $x\in K_0$ and that $s_1(b_1) = s_2(b_1)$ for all $b_1\in B_1$.  

For any morphism $\psi \colon \Pi_3(K)\to \sigma$, any quadratic $\psi$-derivation~\mbox{$(\rho, \sigma, \tau)$}, and any quadratic \mbox{$\psi \cdot (\rho, \sigma, \tau)$-deri}\-vation~\mbox{$(\rho', \sigma', \tau')$}, we denote by $\sigma\otimes \sigma'$ the map $\pi_1(K_1; K_0) \to E$ defined by $(\sigma\otimes \sigma')(b_1) = \sigma(b_1) \prescript{\rho(y)}{}{\sigma'(b_1)}$ for all $x,y\in K_0$ and~$b_1 \in B_1(x,y)$ satisfying
$$
(\sigma\otimes \sigma')(hk) = \left(\prescript{\psi(k)^{-1}}{}{(\sigma\otimes\sigma')(h)}\right)(\sigma\otimes \sigma')(k)
$$
for all $x,y,z\in K_0$, $h\colon x\to y$, and $k \colon y \to z$. Let $x\in K_0$ and $b_2 \in B_2(x)$. 

\begin{align*}
t_1(b_2) &= \omega^{(s\otimes s', \prescript{rr'}{}{s''})}(d_1 (b_2))\left(\prescript{r(x)r'(x)}{}{s''(d_1(b_2))}^{-1}\triangleright \omega^{(s, \prescript{r}{}{s'})}(d_1(b_2))\right)\\ 
& \quad \left( \left( \prescript{r(x)r'(x)}{}{s''(d_1(b_2))^{-1}}\prescript{r(x)}{}{s'(d_1(b_2))^{-1}}\right) \triangleright t(b_2)\right) \left(\prescript{r(x)r'(x)}{}{s''(d_1(b_2))^{-1}}\triangleright \prescript{r(x)}{}{t'(b_2)}\right)\prescript{r(x)r'(x)}{}{t''(b_2)},
\end{align*}
and 
\begin{align*}
t_2(b_2) &= \omega^{(s, \prescript{r}{}{(s'}\otimes s'') )}(d_1(b_2))\prescript{r(x)}{}{\omega^{(s', \prescript{r'}{}{s''})}(d_1(b_2))}\left(\left( \prescript{r(x)r'(x)}{}{s''(d_1(b_2))^{-1}}\prescript{r(x)}{}{s'(d_1(b_2))}\right) \triangleright t(b_2)\right)\\
& \quad \left( \prescript{r(x)r'(x)}{}{s''(d_1(b_2))^{-1}}\triangleright \prescript{r(x)}{}{t'(b_2)}\right)\prescript{r(x)r'(x)}{}{t''(b_2)}.
\end{align*}
Therefore, $t_1(b_2) = t_2(b_2)$ is equivalent to the equation
$$
\omega^{(s\otimes s', \prescript{rr'}{}{s''})}(d_1 (b_2))\left(\prescript{r(x)r'(x)}{}{s''(d_1(b_2))}^{-1}\triangleright \omega^{(s, \prescript{r}{}{s'})}(d_1(b_2))\right) = \omega^{(s, \prescript{r}{}{(s'}\otimes s''))}(d_1(b_2))\prescript{r(x)}{}{\omega^{(s', \prescript{r'}{}{s''})}(d_1(b_2))}.
$$
By \cite[Equation (89)]{gohla_pointed_2013}, it suffices to show that 
$$
\prescript{r(x)}{}{\omega^{(s', \prescript{r'}{}{s})}}(d_1(b_2)) = \omega^{(\prescript{r}{}{s'}, \prescript{rr'}{}{s''})}(d_1(b_2)).
$$
This equality can easily be shown by induction on the length of $d_1(b_2)$ written in the basis $B_1$.  

Let us show that all morphisms in the gauge groupoid are isomorphisms. Let $(r,s,t) \colon \phi \to \phi'$. A right inverse of $(r,s,t)$ is given by the $\phi'$-derivation $(\bar{r}, \bar{s}, \bar{t})$ with 
$$
\bar{r}(x) = r(x)^{-1},\quad \quad 
\bar{s}(b_1) = \prescript{r(y)^{-1}}{}{s(b_1)^{-1}},\quad \quad 
\bar{t}(b_2) =\left( \prescript{r^{-1}}{}{\bar{s}(d_1(b_2))^{-1}}\triangleright \prescript{r(x)^{-1}}{}{t(b_2)^{-1}}\right)\prescript{r(x)^{-1}}{}{\omega^{(s, s^{-1})}(d_1(b_2))}
$$
for all $x, y \in K_0$, $b_1 \in B_1(x,y)$, and $b_2\in B_2(x)$. Similarly, a left inverse of $(r,s,t)$ is given by the quadratic $\phi'$-derivation $(\bar{r}, \bar{s}, \tilde{t})$ with 
$$
\tilde{t}(b_2) = \prescript{r(x)^{-1}}{}{s(d_1(b_2))}\triangleright \big (\omega^{(\bar{s}, \prescript{r^{-1}}{}{s})}(d_1(b_2))\prescript{r(x)^{-1}}{}{t(b_2)^{-1}}\big ) 
$$
for all $x\in K_0$ and $b_2\in B_2(x)$. The associativity tells us that in fact $\bar{t} = \tilde{t}$. 
\end{proof}

\subsection{Proof of the Homotopy Classification Theorem}

Let $K$ be a canonical simplicial complex, let~$B_*$ be a choice of ordered simplices of $K$, and let $\sigma$ be a 2-crossed module of groups with classifying space $B\sigma$. Without loss of generality, we can suppose that $K$ is connected because 
$$
[\lvert K\rvert , B\sigma] = \prod_{i}\, [\lvert K_{(i)}\rvert , B\sigma] \quad \text{and} \quad [\Pi_3(K), \sigma] = \prod_i [\Pi_3(K_{(i)}), \sigma],
$$
where $K_{(i)}$ denotes the $i$-th connected component of $K$. 

Let $\phi$ and $\phi'$ be $\sigma$-colorings of $K$.  By definition, $\phi$ and $\phi'$ are gauge equivalent if and only if there exists a quadratic~$\phi$-derivation $(r,s,t)$ such that $\phi' = \phi\cdot (r,s,t)$.  We will say that $\phi$ and $\phi'$ are \emph{pointed gauge-equivalent} if there is a point $x\in K_0$ such that the restrictions $\phi_x \colon \Pi_3(K,x)\to \sigma$ and $\phi_x'\colon \Pi_3(K, x)\to \sigma$ are homotopic in the sense of~\cite[Definition 2.14]{martins_fundamental_2011}. Notice that $(r,s,t) = (1,s,t)\cdot (r,1,1)$, and therefore $\phi$ and  $\phi'$ are gauge equivalent if and only if there is a map $r\colon K_0\to H$ such that $\phi$ and $\phi'\cdot (r,1,1)$ are pointed gauge-equivalent.   

For any $h\in H$, and for any morphism $\psi \colon \Pi_3(K,x)\to \sigma$, we denote by $\psi\cdot h\colon  \Pi_3(K,x)\to \sigma$ the morphism defined by
$$
(\psi\cdot h)_1(a) = h^{-1}\psi_1(a) h\quad \quad (\psi\cdot h)_2(b) = \prescript{h^{-1}}{}{\psi_2(b)} \quad \quad (\psi\cdot h)_3(c) = \prescript{h^{-1}}{}{\psi_3(c)},
$$
for all $a\in \pi_1(K_1;x)$, $b\in \pi_2(K_2^-, K_2^-\cap K_2^+; x)$, and $c\in \pi_3(K,K_2^-, K_2^+; x)$.  This defines a right group action of~$H$ on the set of morphisms $\Pi_3(K,x) \to \sigma$.  Notice that for any $\sigma$-coloring $\phi$ of $K$,  for any map $r\colon K_0 \to H$, and for any $x\in K_0$, we have
$$
(\phi\cdot (r,1,1))_x = \phi_x \cdot r(x).
$$
Therefore, two $\sigma$-colorings $\phi$ and $\phi'$ of $K$ are gauge equivalent if and only if there exist $x\in K_0$ and $h\in H$ such that $\phi_x$ and $\phi'_x\cdot h$ are homotopic in the sense of \cite[Definition 2.14]{martins_fundamental_2011}.  By \cite[Lemma 3.12]{martins_fundamental_2011}, we know that $\phi_x$ and $\phi'_x\cdot h$ admit geometric realizations, denoted by $B(\phi_x)$ and $B(\phi'_x\cdot h)$ respectively. By \cite[Lemma~3.16]{martins_fundamental_2011}, we deduce that $\phi$ and $\phi'$ are gauge equivalent if and only if there is an element $h\in H$ such that~$B(\phi_x)$ and $B(\phi'_x\cdot h)$ are pointed homotopic maps. 

Let $[\lvert K\rvert ,B\sigma]_*$ denote the set of pointed homotopy classes of maps from $(\lvert K\rvert , x)$ to $(B\sigma, *)$.  The fundamental group of $B\sigma$ acts on $[\lvert K\rvert,B\sigma]_*$ and
$$
[\lvert K\rvert ,B\sigma] \cong [\lvert K\rvert , B\sigma]_*/\pi_1(B\sigma; *).
$$
Moreover, recall that $\pi_1(B\sigma, *) \cong H/\partial(E)$. A careful study of the construction of a geometric realization of a morphism $\phi'_x \colon \Pi_3(K,x)\to \sigma$ (see \cite[Proposition 9]{ellis_crossed_1993}) shows that for any $h\in H$,  $B(\phi'_x\cdot h)$ and $B(\phi'_x)\cdot [h]$ are pointed homotopic.  We deduce that two $\sigma$-colorings $\phi$ and $\phi'$ of $K$ are gauge equivalent if and only if there exists $h\in H$ such that $B\phi'\cdot h$ and $B\phi$ are pointed homotopic,  where $B\phi$ and $B\phi'$ denote their geometric realizations. We conclude that $\phi$ and $\phi'$ are gauge equivalent if and only if $B\phi'$ and $B\phi$ are homotopic. 

\section{Graded monoidal 2-categories}\label{Section. Graded monoidal 2-categories}

\noindent The notion of a monoidal category graded by a 2-group, or equivalently by a crossed module, was introduced by Sözer and Virelizier \cite{sozer_monoidal_2023, sozer_hopf_2024} in order to construct 3-dimensional homotopy quantum field theories with target a homotopy 2-type. We categorify their construction: we define the notion of a monoidal 2-category graded by a 3-group, or equivalently by a 2-crossed module. Recall that $\mathbbm{k}$ is a nonzero commutative ring. 

\subsection{Linear categories}For any objects $X$, $Y$ in a category $C$, we denote by~$\mathrm{Hom}_C(X,Y)$ the set of morphisms from $X$ to $Y$, and we denote by $1_X$ the identity of $X$.  A category $C$ is \emph{$\mathbbm{k}$-linear} (or simply \emph{linear}) if its morphism sets carry the structure of $\mathbbm{k}$-modules such that the composition of morphisms is bilinear.  A \emph{linear functor}~$F\colon C \to D$ between $\mathbbm{k}$-linear categories is a functor such that for all objects $X$, $Y$ of $C$, the induced map
$$
\mathrm{Hom}_C(X,Y) \to \mathrm{Hom}_D(F(X), F(Y))
$$
is a morphism of $\mathbbm{k}$-modules.  A $\mathbbm{k}$-linear category $C$ is \emph{$\mathbbm{k}$-additive} (or simply \emph{additive}) if any finite (possibly empty) family of objects has a direct sum in $C$.  A \emph{zero object} in a linear category $C$ is an object $0$ that is both initial and terminal.  Any additive category has a zero object that is unique up to isomorphism.  An object $X$ in an additive category $C$ is a zero object if and only if $\mathrm{Hom}_C(X, X)$ is the zero $\mathbbm{k}$-module.  

\subsection{Categories graded by a set}\label{Section_Set-graded}

Let $E$ denote a set. An \emph{$E$-graded category} (over $\mathbbm{k}$) is a $\mathbbm{k}$-additive category~$C$ endowed with a family $\{C_e\}_{e\in E}$ of full $\mathbbm{k}$-additive subcategories such that every object of $C$ is a direct sum of a finite family of objects in~$\bigcup_{e\in E} C_e$, and 
$$
\mathrm{Hom}_C(X,Y) = \{0\}
$$
for all $X\in C_e$ and $Y\in C_f$ such that $e\neq f\in E$. In that case, we write 
$$
C = \bigoplus_{e\in E}C_e.
$$
An object $X$ of $C$ is \emph{homogeneous of degree $e$} if $X\in C_e$ for some $e\in E$.  When $X$ is nonzero, such an $e$ is uniquely determined, denoted $\vert X \rvert$, and is called the \emph{degree} of $X$.  If $E$ is a group, the category $C_1$ is called the \emph{neutral component} of $C$. 

\subsection{Hom-graded categories}\label{Hom-graded categories}
In this section, we recall some definitions related to Hom-graded categories and we refer the reader to \cite{sozer_monoidal_2023} for more details.  Let $L$ be a group. An \emph{$L$-Hom-graded category} (over~$\mathbbm{k}$) is a category enriched over the monoidal category of $L$-graded $\mathbbm{k}$-modules and $\mathbbm{k}$-linear grading-preserving homomorphisms. In particular, in such a category $C$, the set of morphisms between two objects $X$,$Y$ is an $L$-graded $\mathbbm{k}$-module:
$$
\mathrm{Hom}_C(X,Y) = \bigoplus_{l\in L}\mathrm{Hom}_C^l(X,Y).
$$
The morphisms in $\mathrm{Hom}_C^l(X,Y)$ are \emph{homogeneous of degree $l$}.  The \emph{$1$-subcategory} of an $L$-Hom-graded category~$C$ is the category $C^1$ with the same objects as $C$ and with morphisms
$$
\mathrm{Hom}_{C^1}(X,Y) = \mathrm{Hom}_C^1(X,Y),
$$
for all objects $X$ and $Y$. 

Let $C$ be an $L$-Hom-graded category, and let $l\in L$. An object $Z$ of $C$ is an \emph{$l$-direct sum} of a finite family~$\{X_i\}_{i\in I}$ of objects of $C$ if there is a family $(p_i\colon Z \to X_i, q_i\colon X_i \to Z)_{i\in I}$ of morphisms such that $p_i$ is homogeneous of degree $l^{-1}$, $q_i$ is homogeneous of degree $l$,
$$
1_Z = \sum_{i\in I} q_i\circ p_i, \quad \text{and} \quad  1_{X_i} = p_i \circ q_i,
$$
for all $i\in I$, and $p_i \circ q_j = 0$ for all $i\neq j \in I$.  We denote such an $l$-direct sum by
$$
Z = \bigoplus_{i\in I}^l X_i.
$$
Such an $l$-direct sum, if it exists, is unique up to a homogeneous isomorphism of degree 1.  An $L$-Hom-graded category is \emph{$L$-additive} if any finite (possibly empty) family of objects has an $l$-direct sum for all $l\in L$. 

\subsection{Linear 2-categories} A 2-category is \emph{$\mathbbm{k}$-linear} (or simply \emph{linear}) if its 2-morphism sets are \mbox{$\mathbbm{k}$-mod}ules such that horizontal and vertical composition of 2-morphisms is bilinear. A \emph{linear 2-functor} between linear 2-categories is a 2-functor that is linear at the level of Hom-categories and such that the horizontal composition is bilinear.  

A \emph{linear monoidal 2-category} is a monoidal 2-category whose underlying 2-category is linear such that the left and right tensor products are linear. 

\subsection{Direct sums in linear 2-categories} An object $Z$ in a linear 2-category $\mathcal{C}$ is a \emph{direct sum} of a finite family $\{X_i\}_{i\in I}$ of objects of~$\mathcal{C}$ if there are \emph{projection} 1-morphisms $\{p_i \colon Z \to X_i\}_{i\in I}$, \emph{inclusion} \mbox{1-morphisms}~\mbox{$\{q_i \colon X_i\to Z\}_{i\in I}$}, and 2-iso\-morphisms~$\{\varphi\colon p_i\circ q_i\ \tilde{\Rightarrow}\ 1_{X_i}\}_{i\in I}$ such that $p_i\circ q_j$ is a zero 1-morphism for all $i\neq j \in I$ and the direct sum~$\bigoplus_{i\in I}(q_i \circ p_i)$ exists and is isomorphic to $1_Z$ in $\mathcal{C}(Z,Z)$. 

\subsection{Zero objects and zero 1-morphisms}

Let $\mathcal{C}$ be a linear 2-category. A \emph{zero 1-morphism} in $\mathcal{C}$ is a 1-morphism~\mbox{$0\colon X \to Y$} that is a zero object in the linear category $\mathcal{C}(X,Y)$.  A \emph{zero object} in $\mathcal{C}$ is an object~$0$ such that for every object $Y$ of $\mathcal{C}$, the categories $\mathcal{C}(0,Y)$ and $\mathcal{C}(Y,0)$ are terminal. Notice that an object $X$ of~$\mathcal{C}$ is a zero object if and only if $1_X$ is a zero 1-morphism.  

\subsection{Hom-graded 2-categories}
Let $\delta \colon L \to E$ be a crossed module of groups with the action of $E$ on $L$ denoted by $\triangleright$.  A \emph{$\delta$-Hom-graded 2-category} (over $\mathbbm{k}$) is a $\mathbbm{k}$-linear 2-category $\mathcal{C}$ such that the category $\mathcal{C}(X,Y)$ is~$L$-Hom-graded for all objects $X$ and $Y$,  the 1-subcategory $\mathcal{C}(X,Y)^1$ of $\mathcal{C}(X,Y)$ is $E$-graded for all objects $X$ and $Y$, and $\mathcal{C}$ satisfies the following conditions, where a 1-morphism $f\colon X\to Y$ is said to be \emph{homogeneous} if it is homogeneous in $\mathcal{C}(X,Y)^1$:
\begin{itemize}
\item for all homogeneous 1-morphisms $f, g\colon X\to Y$ and for all $l\in L$ such that $\delta(l)\lvert f \rvert \neq \lvert g \rvert$,
$$
\mathrm{Hom}_{\mathcal{C}(X,Y)}^l(f,g) = \{0\},
$$
\item  if $f\colon X \to Y$ and $g\colon Y\to Z$ are homogeneous 1-morphisms, then the composite $g\circ f$ is homogeneous of degree $\lvert g\circ f\rvert =  \lvert g \rvert \lvert f\rvert$,
\item all nonzero identity 1-morphisms are homogeneous of degree 1,
\item the horizontal composition of 2-morphisms is grading preserving: $$
    \circ \colon \mathrm{Hom}_{\mathcal{C}(Y,Z)}^l(h,i) \times \mathrm{Hom}_{\mathcal{C}(X,Y)}^k(f,g) \to \mathrm{Hom}_{\mathcal{C}(X,Z)}^{l(\lvert f \rvert \triangleright k)}(h\circ f, i\circ g)
$$
for all objects $X, Y,Z$, for all $f,g\colon X\to Y$, $h,i\colon Y\to Z$, and $l,k\in L$, whenever $f$ is homogeneous.
\end{itemize}

Notice that a one-object $\delta$-Hom-graded 2-category is a $\delta$-graded monoidal category in the sense of Sözer and Virelizier \cite{sozer_monoidal_2023}.

\subsection{Equivalences in Hom-graded 2-categories}

Let $\delta \colon L \to E$ be a crossed module of groups with the action of $E$ on $L$ denoted by $\triangleright$, and let $(l,e) \in L\times E$. An~\emph{$(l,e)$-equivalence} in a $\delta$-Hom-graded 2-category~$\mathcal{C}$ is an equivalence $(f,g,\eta, \varepsilon)$ such that:
\begin{itemize}
\item $f\colon X\to Y$ is a homogeneous 1-morphism of degree $e$,
\item $g\colon Y\to X$ is a homogeneous 1-morphism of degree $\delta(l)e^{-1}$,
\item $\eta\colon 1_X\Rightarrow g\circ f$ is a homogeneous 2-isomorphism of degree $l$,
\item $\varepsilon\colon f\circ g \Rightarrow 1_Y$ is a homogeneous 2-isomorphism of degree $e\triangleright l^{-1}$.
\end{itemize}

\subsection{Direct sums in Hom-graded 2-categories}\label{Direct sums}
Let $\delta \colon L\to E$ be a crossed module of groups with the action of $E$ on $L$ denoted by $\triangleright$, and let~$(l,e) \in L\times E$. An object $Z$ of a $\delta$-Hom-graded 2-category $\mathcal{C}$ is an~\emph{$(l,e)$-direct sum} of a finite family of objects~$\{X_i\}_{i\in I}$ if there is a family $(p_i, q_i, \varphi_i)_{i\in I}$ such that for all $i,j\in I$:
\begin{itemize}
\item  $p_i\colon Z\to X_i$ is a homogeneous 1-morphism of degree $e^{-1}\delta (l)^{-1}$,
\item $q_i\colon X_i\to Z$ is a homogeneous 1-morphism of degree $e$, 
\item $\varphi_i\colon p_i\circ q_i\Rightarrow 1_{X_i}$ is a homogeneous 2-isomorphism of degree $e^{-1}\triangleright l$,
\item $p_i\circ q_j$ is a zero 1-morphism when $i\neq j$, and
\end{itemize} 
$$
1_Z = \bigoplus_{i\in I}^{l}(q_i \circ p_i),
$$
in the $L$-Hom-graded category $\mathcal{C}(Z,Z)$. 
Such an $(l,e)$-direct sum, if it exists, is denoted by
\begin{equation*}
Z = \boxplus_{i\in I}^{(l,e)}X_i.
\end{equation*}
If in addition $\mathcal{C}$ is locally $L$-additive (meaning that each Hom-category $\mathcal{C}(X,Y)$ is $L$-additive, see Section \ref{Hom-graded categories}) then such an $(l,e)$-direct sum is unique up to~$(1,1)$-equivalence. 

\subsection{Subcategories of Hom-graded 2-categories}\label{Subsection_1 and (1,1)-subcategories}Let $\delta\colon L \to E$ be a crossed module of groups. The \emph{\mbox{$1$-sub}\-category} of a $\delta$-Hom-graded 2-category~$\mathcal{C}$ is the 2-category $\mathcal{C}^1$ whose objects are the objects of $\mathcal{C}$ and such that~\mbox{$\mathcal{C}^1(X,Y) = \mathcal{C}(X,Y)^1$} is the 1-subcategory of $\mathcal{C}(X,Y)$ for all objects $X$ and $Y$.

The \emph{$(1,1)$-subcategory} of a $\delta$-Hom-graded 2-category~$\mathcal{C}$ is the 2-category $\mathcal{C}^1_1$ whose objects are the objects of~$\mathcal{C}$ and such that $\mathcal{C}^1_1(X,Y) = \mathcal{C}^1(X,Y)_1$ is the neutral component of the $E$-graded category $\mathcal{C}^1(X,Y)$ for all objects $X$ and $Y$. 

\subsection{Linear monoidal 2-categories}
We work in the setting of \emph{semistrict monoidal 2-categories} (or simply \emph{monoidal \mbox{2-cate}gories}), meaning that the underlying 2-category is strict and the monoidal associators and unitors are strict, but there may be a nontrivial interchange 2-isomorphism between two ways of taking the monoidal product of 1-morphisms. Such categories appear in the work of Gordon-Power-Street \cite{gordon_coherence_1995} and are sometimes called ``Gray monoids''.  Recall that such monoidal 2-categories consist of a 2-category $\mathcal{C}$, an identity object $\mathbbm{1}$, left and right tensor 2-functors $X\otimes -$, $- \otimes X$ for every object $X$, and interchange 2-isomorphisms
$$
\phi_{f,g} \colon (f\otimes Y') \circ (X \otimes g) \ \tilde{\Rightarrow} \ (X'\otimes g) \circ (f\otimes Y),
$$
for all 1-morphisms $f\colon X \to X'$, $g\colon Y \to Y'$, subject to certain compatibility conditions.  We refer the reader to the presentation given by Douglas--Reutter \cite{douglas_fusion_2018}.  A \emph{$\mathbbm{k}$-linear monoidal 2-category} is a (semistrict) monoidal 2-category whose underlying 2-category is $\mathbbm{k}$-linear such that for any object $X$ the left and right tensor 2-functors are linear.

Any weakly monoidal (weak) bicategory may be strictified to obtain a monoidal 2-category, see \cite{gordon_coherence_1995}. Similarly, $\mathbbm{k}$-linear weakly monoidal bicategories can be strictified to obtain $\mathbbm{k}$-linear monoidal \mbox{2-cate}\-gories. Therefore, we permit ourselves to work in the semistrict setting without loss of generality. 

\subsection{Monoidal 2-categories graded by a group}\label{Section_group-graded}
Let $\mathcal{C}$ be a $\mathbbm{k}$-linear monoidal 2-category. A \emph{full subcategory} of $\mathcal{C}$ is a $\mathbbm{k}$-linear~2-category $\mathcal{D}$ such that every object of $\mathcal{D}$ is an object of $\mathcal{C}$ and $\mathcal{D}(X,Y) = \mathcal{C}(X,Y)$ for all objects $X$ and~$Y$ of~$\mathcal{D}$, where the compositions in $\mathcal{D}$ are induced by the compositions in $\mathcal{C}$.

Let $H$ be a group. An \emph{$H$-graded 2-category} (over $\mathbbm{k}$) is a $\mathbbm{k}$-linear monoidal 2-category $\mathcal{C}$ endowed with a family $\{\prescript{}{h}{\mathcal{C}}\}_{h\in H}$ of full subcategories such that 
\begin{itemize}
\item every object of $\mathcal{C}$ is a direct sum of a finite family of objects in $\bigcup_{h\in H}\prescript{}{h}{\mathcal{C}}$, 
\item if $X\in \prescript{}{h}{\mathcal{C}}$ and $Y\in \prescript{}{k}{\mathcal{C}}$ with $h\neq k\in H$, then $\mathcal{C}(X,Y)$ is the terminal category,
\item if $X\in \prescript{}{h}{\mathcal{C}}$ and $Y\in \prescript{}{k}{\mathcal{C}}$, then $X\otimes Y \in \prescript{}{hk}{\mathcal{C}}$,
\item $\mathbbm{1} \in \prescript{}{1}{\mathcal{C}}$.
\end{itemize}
The monoidal 2-category $\prescript{}{1}{\mathcal{C}}$ is called the \emph{neutral component} of $\mathcal{C}$. We say that an object $X\in \mathcal{C}$ is \emph{homogeneous} if $X$ is nonzero and $X\in \prescript{}{h}{\mathcal{C}}$ for some $h\in H$. Such an $h$ is uniquely determined by $X$, denoted $\lvert X \rvert$, and is called the \emph{degree} of $X$. 

\subsection{Monoidal 2-categories graded by a 2-crossed module} Let
$$
\begin{tikzcd}
\sigma = (L \arrow{r}{\delta} & E \arrow{r}{\partial} & H,\ \omega)
\end{tikzcd}
$$
be a~2-crossed module of groups.  A \emph{$\sigma$-graded monoidal 2-category} (or simply a \emph{$\sigma$-graded 2-category}) is a $\mathbbm{k}$-linear monoidal 2-category $\mathcal{C}$ that is~\mbox{$\delta$-Hom-graded}, whose $(1,1)$-subcate\-gory~$\mathcal{C}^1_1$ is $H$-graded, and that satisfies the following conditions, where an object $X$ of $\mathcal{C}$ is said to be \emph{homogeneous} if it is homogeneous in $\mathcal{C}_1^1$: 
\begin{itemize}
    \item if $X$ is a homogeneous object and $f$ is a homogeneous 1-morphism, then $X\otimes f$ is homogeneous of degree $\prescript{\lvert X\rvert }{}{\lvert f \rvert}$, and $f\otimes X$ is homogeneous of degree $\lvert f \rvert$;
    \item if $X$ is a homogeneous object and $\alpha$ is a homogeneous 2-morphism,  then $ X\otimes \alpha$ is homogeneous of degree~$\prescript{\lvert X \rvert}{}{\lvert \alpha\rvert}$, and $ \alpha\otimes X$ is homogeneous of degree $\lvert \alpha\rvert$;
    \item if $f\colon X\to X'$ and $g\colon Y\to Y'$ are homogeneous 1-morphisms and $X$ is a homogeneous object, then the interchanger 2-isomorphism
    \begin{equation*}
    \phi_{f,g} \colon (f\otimes Y')\circ (X\otimes g) \Rightarrow (X'\otimes g)\circ (f\otimes Y)
    \end{equation*}
    is homogeneous of degree $\lvert \phi_{f,g}\rvert = \omega(\lvert f \rvert, \prescript{\lvert X\rvert }{}{\lvert g \rvert })^{-1}$;
    \item if $f\colon X\to Y$ is a homogeneous 1-morphism, then $\partial(\lvert f \rvert ) \lvert X \rvert = \lvert Y \rvert$ whenever $X$ and $Y$ are homogeneous objects. 
\end{itemize}

\subsection{Subcategories of graded 2-categories}\label{Section-neutral component} Let $\sigma$ be a 2-crossed module of groups.  The \emph{neutral component} of a $\sigma$-graded 2-category $\mathcal{C}$ is the $\mathbbm{k}$-linear monoidal 2-category $\prescript{}{1}{\mathcal{C}}_1^1$ whose objects are the objects of the neutral component of $\mathcal{C}^1_1$ and their $(1,1)$-direct sums (see Section \ref{Subsection_1 and (1,1)-subcategories}), and such that $\prescript{}{1}{\mathcal{C}_1^1}(X,Y) = \mathcal{C}_1^1(X,Y)$ for all objects~$X$ and~$Y$ of~$\prescript{}{1}{\mathcal{C}_1^1}$. 

\subsection{Spherical 2-categories}\label{Subsection_Spherical 2-categories}A \emph{planar pivotal 2-category} is a $\mathbbm{k}$-linear 2-category  together with the functorial choice, for every 1-morphism $f\colon X \to Y$,  of a right adjoint $f^*\colon Y\to X$, of a \emph{unit} $\eta_f \colon 1_X\Rightarrow f^* \circ f$,  and of a \emph{counit}~$\varepsilon_f \colon f \circ f^*\Rightarrow 1_Y$,  such that the adjoint is involutive $f^{**} = f$ and the left and right mates of all 2-morphisms agree, see \cite[Definition 2.2.1]{douglas_fusion_2018}.  A \emph{monoidal planar pivotal 2-category} is a $\mathbbm{k}$-linear monoidal~\mbox{2-cate}\-gory equipped with a planar pivotal structure that is compatible with the tensor product, see~\cite[Definition~2.2.3]{douglas_fusion_2018}. 

A \emph{pivotal 2-category} is a monoidal planar pivotal 2-category together with the choice, for every object $X$, of a right dual $X^\#$, of \emph{folds} $i_X \colon \mathbbm{1}\to X^\#\otimes X$ and $e_X \colon X \otimes X^\#\to \mathbbm{1}$, and of \emph{cusps}
$$
C_X \colon (e_X \otimes X) \circ (X \otimes i_X) \Rightarrow 1_X \quad \text{and} \quad D_X \colon 1_{X^\#} \Rightarrow (X^\#\otimes e_X) \circ (i_X\otimes X^\#),
$$
satisfying compatibility conditions, see \cite[Definition 2.2.4]{douglas_fusion_2018}.

Let $\mathcal{D}$ be a pivotal 2-category, let $f\colon X \to Y$ be a 1-morphism in $\mathcal{D}$, and let~$\alpha\colon f \Rightarrow f$ be a 2-endomorphism.  The \emph{back 2-spherical trace} of $\alpha$ is the 2-endomorphism $\mathrm{Tr}_B(\alpha) \colon 1_{\mathbbm{1}} \Rightarrow 1_\mathbbm{1}$ defined by 
$$
\mathrm{Tr}_B(\alpha) = \varepsilon_{e_Y} \cdot \left( 1_{e_Y} \circ \left( \left(\varepsilon_f \cdot \left( \alpha \circ 1_{f^*} \right)\cdot \eta_{f^*} \right)\otimes Y^{\#} \right) \circ 1_{i_{Y^\#}} \right) \cdot \eta_{i_{Y^\#}}.
$$
The \emph{front 2-spherical trace} of $\alpha$ is the 2-endomorphism $\mathrm{Tr}_F(\alpha) \colon 1_{\mathbbm{1}} \Rightarrow 1_\mathbbm{1}$ defined by 
$$
\mathrm{Tr}_F(\alpha) = \varepsilon_{e_{Y^\#}} \cdot \left( 1_{e_{Y^\#}} \circ \left(Y^\#\otimes  \left(\varepsilon_f \cdot \left( \alpha \circ 1_{f^*} \right)\cdot \eta_{f^*} \right) \right) \circ 1_{i_{Y}} \right) \cdot \eta_{i_{Y}}.
$$
A \emph{spherical 2-category} is a pivotal 2-category such that the front 2-spherical trace and the back 2-spherical trace of any 2-endomorphism $\alpha$ are equal, then denoted $\mathrm{Tr}(\alpha)$ and called the \emph{trace} of $\alpha$. 

\subsection{Spherical graded 2-categories}\label{Subsection_Spherical graded 2-categories}
Let
$$
\begin{tikzcd}
\sigma = (L \arrow{r}{\delta} & E \arrow{r}{\partial} & H,\ \omega)
\end{tikzcd}
$$
be a~2-crossed module of groups.  A \emph{pivotal $\sigma$-graded 2-category} is a pivotal 2-category that is $\sigma$-graded, such that 
\begin{itemize}
\item the dual $X^\#$ of a homogeneous object $X$ is homogeneous of degree $\lvert X^\#\rvert = \lvert X \rvert^{-1}$,
\item the adjoint $f^*$ of a homogeneous 1-morphism $f$ is homogeneous of degree $\lvert f^*\rvert = \lvert f \rvert^{-1}$,
\item the units, counits, folds,  and cusps are all homogeneous of degree $1$.
\end{itemize}
Notice that the 1-subcategory, the $(1,1)$-subcategory, and the neutral component of a pivotal~$\sigma$-graded \mbox{2-category} are pivotal 2-categories (see Sections \ref{Subsection_1 and (1,1)-subcategories} and \ref{Section-neutral component}). 

A \emph{spherical} $\sigma$-graded 2-category is a pivotal~$\sigma$-graded 2-category whose 1-subcategory is spherical. In other words, in a spherical $\sigma$-graded 2-category,  the front 2-spherical trace and the back 2-spherical trace of any degree-1 homogeneous 2-endomorphism $\alpha$ are equal, then denoted $\mathrm{Tr}(\alpha)$ and called the \emph{trace} of $\alpha$.  

\subsection{Example}\label{Ex: graded 2-category associated to a 2-crossed module}
Any 2-crossed module of groups
$$
\begin{tikzcd}\sigma = (L\arrow[r, "\delta"] & E\arrow[r, "\partial"] & H, \ \omega)\end{tikzcd}
$$
induces a $\mathbbm{k}$-linear monoidal 2-category $\widehat{\mathcal{G}}_\sigma$ with set of objects the group $H$, with 1-morphisms
$$
\widehat{\mathcal{G}}_\sigma(h,k) = \{e\in E \mid \partial (e) = kh^{-1}\}
$$
for any objects $h$ and $k$, and with 2-morphisms
$$
\mathrm{Hom}_{\widehat{\mathcal{G}}_\sigma(h,k)}(e,f) = \mathbbm{k}\{l \in L \mid \delta( l) = fe^{-1}\}
$$
for any objects $h, k$ and for all $e,f \colon h \to k$.  The tensor product of two objects of $\widehat{\mathcal{G}}_\sigma$ is given by their product in $H$, the composition of 1-morphisms $e\colon h\to k$ and $f\colon k \to l$ is given by their product $fe\colon h\to l$ in $E$, and the (vertical) composition of 2-morphisms $l \colon e\Rightarrow f$ and $m\colon f\Rightarrow g$ is given by their product $ml\colon e\Rightarrow g$ in~$L$.  Let $\mathcal{G}_\sigma$ be the locally additive completion of $\widehat{\mathcal{G}}_\sigma$.  In other words,~$\mathcal{G}_\sigma$ has the same objects as $\widehat{\mathcal{G}}_\sigma$, the 1-morphisms of $\mathcal{G}_\sigma$ from an object $h$ to an object $k$ are given by finite~(possibly empty) sequences of 1-morphisms in $\widehat{\mathcal{G}}_\sigma(h,k)$, and the 2-morphisms of $\mathcal{G}_\sigma$ are matrices of~2-morphisms of $\widehat{\mathcal{G}}_\sigma$. We call $\mathcal{G}_\sigma$ the \emph{linearization} of $\sigma$. The linearization of $\sigma$ is a spherical $\sigma$-graded 2-category, where the dual of an object $h\in H$ is the object $h^{-1}$ and the adjoint of a 1-morphism $e\in E$ is the 1-morphism~$e^{-1}$.

\subsection{Example}\label{pushforward}Let
$$
\begin{tikzcd}\sigma = (L\arrow[r, "\delta"] & E \arrow[r, "\partial"] & H, \ \omega)\end{tikzcd} \quad\text{and} \quad \begin{tikzcd}\sigma' = (L'\arrow[r, "\delta'"] & E' \arrow[r, "\partial'"] & H', \ \omega')\end{tikzcd}
$$
be 2-crossed modules of groups,  let $\mathcal{C}$ be a $\sigma$-graded 2-category, and let
$$
\phi = (\phi_1\colon H \to H', \phi_2\colon E\to E', \phi_3\colon L \to L')
$$
be a morphism of 2-crossed modules from $\sigma$ to $\sigma'$. The \emph{pushforward} of $\mathcal{C}$ along $\phi$ is the $\sigma'$-graded 2-category~$\phi_*(\mathcal{C})$ with the same underlying linear monoidal~2-category as $\mathcal{C}$ and with $\sigma'$-grading defined as follows. For all $l'\in L'$ and for all 1-morphisms~$f, g\colon X\to Y$:
$$
\mathrm{Hom}_{\phi_*(\mathcal{C})(X,Y)}^{l'}(f,g) =\!\!\! \bigoplus_{l\in \phi_3^{-1}(l')}\!\! \!\mathrm{Hom}_{\mathcal{C}(X,Y)}^l(f,g).
$$ 
For all $e'\in E'$, the degree-$e'$ homogeneous 1-morphisms in $\phi_*(\mathcal{C})$ are the homogeneous 1-morphisms in $\mathcal{C}$ of degree $e\in \phi_2^{-1}(e')$ and their 1-direct sums. For all $h'\in H'$, the degree-$h'$ homogeneous objects in $\phi_*(\mathcal{C})$ are the homogeneous objects in $\mathcal{C}$ of degree $h\in \phi_1^{-1}(h')$. Notice that if $\mathcal{C}$ is spherical, then so is $\phi_*(\mathcal{C})$.

\subsection{Example}For any crossed module $\chi\colon E\to H$, Gainutdinov, Runkel, and Wang~\cite{gainutdinov_constructions_2026} define a notion of~$\chi$-crossed braided monoidal category. A $\chi$-crossed braided monoidal category is an $E$-graded monoidal category~$C = \bigoplus_{e\in E}C_e$ together with an $H$-action on $C$ such that $h\in H$ maps $C_e$ to $C_{(\prescript{h}{}{e})}$, and a $\chi$-crossed braiding
$$
\{c_{U,V} \colon U\otimes V \to (\chi(\lvert U\rvert)\cdot V)\otimes U\}_{U,V \in C},
$$
satisfying compatibility conditions. To any $\chi$-crossed braided category $C$ we associate a $\sigma$-graded 2-category $\mathcal{C}$, with
$$
\begin{tikzcd}
\sigma = (1\arrow[r] & E\arrow[r, "\chi"] & H, \ 1).
\end{tikzcd}
$$
The objects of $\mathcal{C}$ are the elements of $H$, and 
$$
\mathcal{C}(h,k) =\!\!\!\!\! \bigoplus_{e \in \chi^{-1}(kh^{-1})}\!\!\!\!\! C_e
$$
for all $h,k\in H$. The composition of 1-morphisms in $\mathcal{C}$ is given by their tensor product in $C$, and the interchange~2-isomorphism in $\mathcal{C}$ is given by the braiding in $C$.

\section{Graded-fusion 2-categories}\label{Section. Graded-fusion 2-categories}

\noindent Fusion 2-categories were introduced by Douglas and Reutter \cite{douglas_fusion_2018} as a categorification of the notion of a fusion category.  In this section, we introduce graded-fusion 2-categories as a categorification of the notion of a graded-fusion category defined by Sözer and Virelizier \cite{sozer_monoidal_2023}.  Throughout this section, 
\begin{center}
\begin{tikzcd}\sigma =(L \arrow[r, "\delta"] & E \arrow[r, "\partial"] & H, \ \omega)\end{tikzcd}
\end{center}
is a 2-crossed module of groups. Recall that $\mathbbm{k}$ denotes a nonzero commutative ring.  

\subsection{Semisimple categories}

An object $X$ in a $\mathbbm{k}$-linear category $D$ is \emph{simple} if $\mathrm{Hom}_D(X,X)$ is a free \mbox{$\mathbbm{k}$-mod}ule of rank 1 (with basis~$\{1_X\}$).  A $\mathbbm{k}$-linear category $D$ is \emph{semisimple} if every object of $D$ is a direct sum of a finite family of simple objects, and for any non-isomorphic simple objects $X$ and $Y$ of $D$,  $\mathrm{Hom}_D(X,Y) = \{0\}$.  

\subsection{Graded-semisimple categories}
Let $L$ be a group. Recall that the 1-subcategory of an $L$-Hom-graded category $C$ is the category $C^1$ with the same objects as $C$ and only the degree-1 homogeneous morphisms, see Section \ref{Hom-graded categories}. An $L$-Hom-graded category $C$ is \emph{$L$-semisimple} if $C^1$ is semisimple, and for all $l\in L$, every object of~$C$ is an $l$-direct sum of a finite family of simple objects of~$C^1$.  Notice that for $L = 1$, we recover the definition of a semisimple category. However, in general, an~\mbox{$L$-semi}simple category need not be semisimple.  In particular, a simple object in the 1-subcategory $C^1$ of an $L$-Hom-graded category $C$ is not necessarily simple in $C$. 

\subsection{Simple objects and 1-morphisms}

We say that a 1-morphism $f\colon X \to Y$ in a $\mathbbm{k}$-linear~2-category~$\mathcal{C}$ is \emph{simple} if $\mathrm{Hom}_{\mathcal{C}(X,Y)}(f,f)$ is a free $\mathbbm{k}$-module of rank 1 (with basis $\{1_f\}$).  We say that an object $X$ in a~$\mathbbm{k}$-linear 2-category is \emph{simple} if $1_X$ is simple. 

\subsection{Semisimple 2-categories} A $\mathbbm{k}$-linear 2-category $\mathcal{D}$ is \emph{semisimple} if 
\begin{itemize}
\item it is \emph{locally semisimple}, that is for all objects $X$ and $Y$ of $\mathcal{D}$, the category $\mathcal{D}(X,Y)$ is semisimple,
\item it is \emph{locally additive}, that is for all objects $X$ and $Y$ of $\mathcal{D}$, the category $\mathcal{D}(X,Y)$ is additive,
\item every 1-morphism of $\mathcal{D}$ admits a left adjoint and a right adjoint,
\item every object of $\mathcal{D}$ is a direct sum of a finite family of simple objects.
\end{itemize}
Notice that a presemisimple 2-category in the sense of \cite[Definition 1.2.7]{douglas_fusion_2018} is a semisimple 2-category in the above sense. 

Let $\mathcal{D}$ be a semisimple 2-category and let $X$ and $Y$ be objects of $\mathcal{D}$. A set $I$ of simple 1-morphisms of~$\mathcal{D}(X,Y)$ is \emph{representative} if every simple 1-morphism in $\mathcal{D}(X,Y)$ is isomorphic to a unique element of $I$.  Each~1-morphism in $\mathcal{D}(X,Y)$ is then a direct sum of a finite family of 1-morphisms in $I$, and $\mathrm{Hom}_{\mathcal{D}(X,Y)}(i,j) = 0$ for any distinct elements $i,j\in I$. 

A set $J$ of simple objects of a semisimple 2-category $\mathcal{D}$ is \emph{representative} if every simple object of $\mathcal{D}$ is equivalent to a unique element of $J$.  Each object of $\mathcal{D}$ is then a direct sum of a finite family of objects in $J$.  

Contrary to the case of semisimple categories,  non-equivalent simple objects of a semisimple 2-category $\mathcal{D}$ may have nonzero 1-morphisms between them, see for instance \cite[Example 1.4.14]{douglas_fusion_2018}. We say that two simple objects $X$ and $Y$ in $\mathcal{D}$ are \emph{disconnected} if $\mathcal{D}(X,Y)$ is the terminal category, and we say that they are \emph{connected} otherwise. A set $K$ of disconnected simple objects of $\mathcal{D}$ is \emph{representative} if every simple object of $\mathcal{D}$ is connected to a unique element of $K$.

\subsection{Graded-semisimple 2-categories}\label{Section_Graded-semisimple 2-categories} Recall from Section \ref{Subsection_1 and (1,1)-subcategories} that the 1-subcategory of a $\sigma$-graded \mbox{2-cate}\-gory~$\mathcal{C}$ is the linear monoidal 2-category $\mathcal{C}^1$ with the same objects and 1-morphisms as $\mathcal{C}$, and with only degree-1 homogeneous 2-morphisms.  A $\sigma$-graded 2-category $\mathcal{C}$ is \emph{$\sigma$-semisimple} (over $\mathbbm{k}$) if 
\begin{itemize}
\item it is \emph{locally $L$-semisimple}, that is for all objects $X$ and $Y$ of $\mathcal{C}$, the category $\mathcal{C}(X,Y)$ is $L$-semisimple,
\item it is \emph{locally $L$-additive}, that is for all objects $X$ and $Y$ of $\mathcal{C}$, the category $\mathcal{C}(X,Y)$ is $L$-additive (see Section \ref{Hom-graded categories}),
\item every 1-morphism of $\mathcal{C}$ admits a left adjoint and a right adjoint in $\mathcal{C}^1$,
\item for all $(l,e)\in L\times E$, every object of $\mathcal{C}$ is a $(l,e)$-direct sum of a finite family of simple objects of $\mathcal{C}^1$ (see Section~\ref{Direct sums}). 
\end{itemize}
Note that for $\sigma$ trivial, we recover the definition of a semisimple 2-category.  The $1$-subcategory $\mathcal{C}^1$, the $(1,1)$-subcategory $\mathcal{C}_1^1$, and the neutral component $\prescript{}{1}{\mathcal{C}_1^1}$ of a $\sigma$-semisimple 2-category $\mathcal{C}$ are all semisimple (see Sections~\ref{Subsection_1 and (1,1)-subcategories} and \ref{Section-neutral component}).  However,  in general, a $\sigma$-semisimple~2-category need not be semisimple.  In particular, a simple object in~$\mathcal{C}^1$ is not necessarily simple in $\mathcal{C}$.  

\begin{lem}
Let $\mathcal{C}$ be a $\sigma$-semisimple 2-category and let $\mathcal{C}^1$ denote its 1-subcategory. Any simple object in~$\mathcal{C}^1$ is $(1,1)$-equivalent to a homogeneous object, and any simple 1-morphism in $\mathcal{C}^1$ is homogeneous.
\end{lem}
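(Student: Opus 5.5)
We prove the second assertion first, since the first one reduces to it. Let $f\colon X\to Y$ be a simple 1-morphism in $\mathcal{C}^1$, so $\mathrm{Hom}^1_{\mathcal{C}(X,Y)}(f,f)=\mathbbm{k}\,1_f$. The Hom-graded structure says that $\mathcal{C}(X,Y)^1$ is $E$-graded. Hence $f$ is a finite direct sum $f\cong\bigoplus_{i} f_i$ in $\mathcal{C}(X,Y)^1$ with $f_i$ homogeneous of degree $e_i$, and we may discard zero summands.

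The idempotents $q_i\circ p_i$ of this decomposition lie in $\mathrm{Hom}^1(f,f)$. Since this module has rank one with basis $1_f$, each $q_i\circ p_i$ is a scalar multiple of $1_f$. The relations $p_i\circ q_i=1_{f_i}\neq 0$ and $p_i\circ q_j=0$ for $i\neq j$ then force at most one nonzero summand, because two orthogonal nonzero idempotents cannot both be multiples of $1_f$. (We use that the ring $\mathbbm{k}$, with $1_f$ free of rank one, has no nontrivial idempotent multiples of $1_f$ compatible with orthogonality; we take this as implicit in the rank-one hypothesis and may need to state it.) Therefore $f\cong f_1$ through degree-$1$ 2-isomorphisms.

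It remains to see that $f$ itself lies in $\mathcal{C}(X,Y)^1_{e_1}$. This holds because the grading pieces are full additive subcategories, and we read homogeneity as closed under isomorphism in $\mathcal{C}(X,Y)^1$. If the definition is strict about this, we argue up to isomorphism in the same way the paper treats degrees.

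For objects, let $X$ be simple in $\mathcal{C}^1$, meaning $1_X$ is a simple 1-morphism in $\mathcal{C}^1$. By the previous paragraph $1_X$ is homogeneous; nonzero identities have degree $1$ in any case. We use that $\mathcal{C}^1_1$ is $H$-graded, so $X$ is a direct sum, in $\mathcal{C}^1_1$, of homogeneous objects $X_j\in{}_{h_j}\mathcal{C}$ with projections $p_j$, inclusions $q_j$ and 2-isomorphisms $\varphi_j\colon p_j\circ q_j\Rightarrow 1_{X_j}$, all of degree $1$, and with $1_X\cong\bigoplus_j q_j\circ p_j$ in $\mathcal{C}(X,X)^1_1$.

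Simplicity of $1_X$ in $\mathcal{C}^1(X,X)$ means $1_X$ is indecomposable there, by the argument for 1-morphisms above, so exactly one summand $q_j\circ p_j$ is nonzero. The others are zero 1-morphisms, and then $1_{X_k}\cong p_k q_k p_k q_k$ is zero, so $X_k$ is a zero object and can be dropped. With a single summand $X_1$, we get $1_X\cong q_1\circ p_1$ and $p_1\circ q_1\cong 1_{X_1}$, both through degree-$1$ 2-isomorphisms between degree-$1$ 1-morphisms. This is precisely an $(l,e)=(1,1)$-equivalence between $X$ and the homogeneous object $X_1$.

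The main obstacle is the decomposition step inside $\mathcal{C}(X,X)^1$. We must check that the direct sum of 1-morphisms coming from the $H$-grading is compatible with the $E$-grading, that is, that all structure maps have degree $1$. We must also make the indecomposability argument rigorous over a general commutative ring $\mathbbm{k}$, where rank one alone does not exclude idempotents. To handle this we would use the specific structure of direct sums: the $q_i\circ p_i$ are summands of $1_f$, so a nonzero scalar $\lambda$ with $\lambda^2=\lambda$ and $\lambda 1_f\neq 0$ having an orthogonal complement gives a contradiction with $\mathrm{Hom}(f_i,f_j)=0$ across different degrees.
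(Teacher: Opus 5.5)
Your overall argument is the paper's, with the two assertions proved in the opposite order. You decompose using the $E$-grading of $\mathcal{C}^1(X,Y)$ (resp.\ the $H$-grading of $\mathcal{C}^1_1$), use simplicity to keep a single nonzero summand, kill the other summands via $1_{X_k}\cong p_k\circ q_k\circ p_k\circ q_k$, and read off a $(1,1)$-equivalence. The paper passes from $f\cong f_{i_0}$ to homogeneity of $f$ the same way you do: it views $f$ as a $1$-direct sum of degree-$\lvert f_{i_0}\rvert$ pieces, with zero summands lying in every degree. Your worry about the degrees of the structure maps is unfounded, because a direct sum in $\mathcal{C}^1_1$ uses only degree-$1$ 1-morphisms and 2-morphisms by definition.

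The one step needing care is ``simple $\Rightarrow$ only one nonzero summand'', which the paper asserts without comment. Over a field, or any ring without nontrivial idempotents, your idempotent argument already settles it. When $\mathbbm{k}$ has nontrivial idempotents, you are right that rank-one freeness of $\operatorname{End}^1(f)$ does not give it, so the claim in your parenthesis is false. Your proposed repair also fails.

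\emph{Why the repair fails.} Take $\mathbbm{k}=A\times B$ and a splitting $f=f_1\oplus f_2$ into pieces of different degrees. Suppose $\operatorname{End}^1(f_1)\cong A$, $\operatorname{End}^1(f_2)\cong B$, and the projection/inclusion 2-morphisms satisfy $\iota_1\cdot\pi_1=(1,0)1_f$ and $\iota_2\cdot\pi_2=(0,1)1_f$. This is consistent both with $\operatorname{End}^1(f)=\mathbbm{k}1_f$ and with vanishing cross-degree Homs. In the object case all summands $q_j\circ p_j$ have degree $1$, so no cross-degree argument is available there at all.

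\emph{The correct repair uses local semisimplicity.} Write $\iota_i\cdot\pi_i=\lambda_i1_f$, so $\lambda_i^2=\lambda_i$ and $\lambda_i\lambda_j=0$ for $i\neq j$. For $a\in\operatorname{End}^1(f_i)$ we have $a=\pi_i\cdot(\iota_i\cdot a\cdot\pi_i)\cdot\iota_i$ and $\iota_i\cdot a\cdot\pi_i=(\iota_i\cdot\pi_i)\cdot(\iota_i\cdot a\cdot\pi_i)\in\lambda_i\mathbbm{k}1_f$. Hence $\operatorname{End}^1(f_i)$ is killed by $1-\lambda_i$. If $f_i\neq 0$, it is a nonempty finite direct sum of simples of $\mathcal{C}^1(X,Y)$. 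So $\operatorname{End}^1(f_i)$ contains a copy of the faithful module $\operatorname{End}^1(s)\cong\mathbbm{k}$, which forces $\lambda_i=1$. Two nonzero summands would then give $1=\lambda_i\lambda_j=0$, contradicting $\mathbbm{k}\neq 0$. The same argument applied to $1_X\cong\bigoplus_j q_j\circ p_j$ in the semisimple category $\mathcal{C}^1(X,X)$ settles the object case.
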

\begin{proof}
Let $X$ denote a simple object in $\mathcal{C}^1$.  We know that $X$ is a $(1,1)$-direct sum of a finite family of homogeneous objects~$\{X_i\}_{i\in I}$. Let $(p_i \colon X\to X_i, \ q_i\colon X_i\to X)_{i\in I}$ denote the 1-morphisms of this direct sum.  We have 
$$
1_X = \bigoplus^1_{i\in I} (q_i\circ p_i),
$$
 but $1_X$ is simple in $\mathcal{C}^1$, therefore there is a unique $i_0 \in I$ such that $q_{i_0}\circ p_{i_0}$ is nonzero.  For $i\neq i_0$, $q_i \circ p_i = 0$, therefore $p_i\circ q_i \circ p_i = 0$. Moreover, for $i\neq i_0$, $1_{X_i} \cong p_i\circ q_i$, therefore $p_i = 0$, and therefore $X_i$ is a zero object.  We deduce that $X$ is $(1,1)$-equivalent to a homogeneous object $X_{i_0}$. 
 
Let $f$ be a simple 1-morphism in $\mathcal{C}^1$.  We know that $f$ is a 1-direct sum of a finite family of homogeneous 1-mor\-phisms~$\{f_i\}_{i\in I}$.  By simplicity of $f$, there is a unique $i_0\in I$ such that $f\cong f_{i_0}$. Therefore $f$ is a 1-direct sum of a finite family of homogeneous 1-morphisms of degree $\lvert  f_{i_0}\rvert$, and $f$ is homogeneous. 
\end{proof}

Let $\mathcal{C}$ be a $\sigma$-semisimple 2-category.  Recall that for all objects $X$ and $Y$, the category $\mathcal{C}^1(X,Y)$ is graded by (the set) $E$ (see Section \ref{Section_Set-graded}), and so decomposes as
$$
\mathcal{C}^1(X,Y) = \bigoplus_{e\in E} \mathcal{C}^1_e(X,Y),
$$
where $\mathcal{C}^1_e(X,Y) = \mathcal{C}^1(X,Y)_e$ is the category whose objects are the degree-$e$ homogeneous 1-morphisms from $X$ to $Y$, and whose morphisms are the homogeneous 2-morphisms of degree 1. 
Notice that~$\mathcal{C}^1_e(X,Y)$ is semisimple for all~\mbox{$e\in E$.} Recall that the $(1,1)$-subcategory $\mathcal{C}_1^1$ of a~$\sigma$-semisimple 2-category $\mathcal{C}$ is $H$-graded (see Section~\ref{Section_group-graded}), and for all $h\in H$, the 2-category $\prescript{}{h}{\mathcal{C}_1^1}$ is semisimple. 

We list some useful properties of $\sigma$-semisimple 2-categories:
\begin{lemref}[\cite{sozer_monoidal_2023}]\label{Lemma SV}
Let $\mathcal{C}$ be a $\sigma$-semisimple 2-category (or a locally $L$-semisimple 2-category). Then,
\begin{enumerate}
\item For any simple 1-morphism $f$ of $\mathcal{C}^1$ and for all $l\in L$, there is a simple 1-morphism $g$ of $\mathcal{C}^1$ and a degree-$l$~2-isomorphism from $f$ to $g$.
\item For any simple 1-morphisms $f,g\colon X\to Y$ of $\mathcal{C}^1$ and for all $l\in L$,  $\mathrm{Hom}^l_{\mathcal{C}(X,Y)}(f,g) \cong \mathbbm{k}$ if there is a degree-$l$ homogeneous 2-isomorphism from $f$ to $g$,  and $\mathrm{Hom}^l_{\mathcal{C}(X,Y)}(f,g) = 0$ otherwise.
\item For any 1-morphisms $f,g \colon X \to Y$ and for all $l,k \in L$,  there is a $\mathbbm{k}$-linear isomorphism
$$
\operatorname{Hom}_{\mathcal{C}(X,Y)}^{kl}(f,g)\cong \bigoplus\limits_{i\in I}\operatorname{Hom}_{\mathcal{C}(X,Y)}^{k}(i, g)\otimes \operatorname{Hom}_{\mathcal{C}(X,Y)}^{l}(f,i),
$$
where $I$ is any representative set of simple 1-morphisms of $\mathcal{C}^1(X,Y)$.
\end{enumerate}

\end{lemref}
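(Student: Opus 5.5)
All three parts concern only a single $L$-Hom-graded category $C=\mathcal{C}(X,Y)$, which is $L$-semisimple. So I will argue inside $C$, using its 1-subcategory $C^1$ (semisimple) and the $l$-direct sums granted by $L$-semisimplicity. This is the argument of Sözer--Virelizier for graded-fusion categories, transported to each Hom-category.

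\emph{Part (1).} Fix a simple $f$ of $C^1$ and $l\in L$. By $L$-semisimplicity, $f$ is an $l$-direct sum $f=\bigoplus^l_{i\in I} g_i$ of simple objects $g_i$ of $C^1$. The structure maps are $p_i\colon f\to g_i$ of degree $l^{-1}$ and $q_i\colon g_i\to f$ of degree $l$. The identity $1_f=\sum_i q_i\circ p_i$ is of degree $1$. Each $q_i\circ p_i$ lies in $\mathrm{Hom}^1(f,f)=\mathbbm{k}1_f$, so $q_i\circ p_i=\lambda_i 1_f$. It is idempotent, because $p_iq_ip_i=p_i$, so $\lambda_i^2=\lambda_i$. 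The $q_ip_i$ are mutually orthogonal, so $\lambda_i\lambda_j=0$ for $i\neq j$, and $\sum_i\lambda_i=1$.

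The step needing care is that $\mathbbm{k}$ is only a commutative ring, so an idempotent $\lambda_i$ need not be $0$ or $1$. I will first try to dodge this: $g_i$ is simple, so $1_{g_i}=p_iq_i$ is nonzero. Then $\lambda_i p_i = p_i q_i p_i = p_i$, and multiplying by $q_i$ on the right gives $\lambda_i 1_{g_i}=1_{g_i}$. Since $\mathrm{Hom}^1(g_i,g_i)$ is free of rank $1$, this forces $\lambda_i=1$ for every $i$. Then $\sum_i\lambda_i=1$ forces $|I|=1$. (If $I$ were empty, $1_f=0$, contradicting the simplicity of $f$ with $\mathbbm{k}\neq 0$.) Hence $p\colon f\to g$ and $q\colon g\to f$ are mutually inverse, and $q$ is a degree-$l$ isomorphism $g\to f$. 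Applying this with $l^{-1}$ in place of $l$ gives a degree-$l$ 2-isomorphism from $f$ to some simple $g$.

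\emph{Part (2).} Suppose first there is a degree-$l$ isomorphism $\alpha\colon f\to g$. Composition with $\alpha^{-1}$ (of degree $l^{-1}$) gives a $\mathbbm{k}$-linear bijection $\mathrm{Hom}^l(f,g)\cong\mathrm{Hom}^1(f,f)\cong\mathbbm{k}$.

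Otherwise, use part (1) to pick a simple $g'$ and a degree-$l$ isomorphism $\beta\colon f\to g'$. Then $\mathrm{Hom}^l(f,g)\cong\mathrm{Hom}^1(g',g)$. If $g'\cong g$ in $C^1$, composing would give a degree-$l$ isomorphism $f\to g$, which we assumed does not exist. So $g'\not\cong g$ in $C^1$, and semisimplicity of $C^1$ gives $\mathrm{Hom}^1(g',g)=0$.

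\emph{Part (3).} First reduce to simple $f,g$. By $L$-semisimplicity, $f$ and $g$ are $1$-direct sums of simples of $C^1$. Both sides of the claimed isomorphism are additive in $f$ and in $g$ with respect to $1$-direct sums, since the structure maps have degree $1$ and so preserve the $L$-grading.

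For simple $f,g$, the map is composition, $\beta\otimes\alpha\mapsto\beta\circ\alpha$. Using part (1), write $f$ as degree-$l$ isomorphic to a unique $i_0\in I$. Then $\mathrm{Hom}^l(f,i)\neq 0$ only when $i=i_0$, by part (2) and uniqueness of representatives. The right side therefore collapses to $\mathrm{Hom}^k(i_0,g)\otimes\mathrm{Hom}^l(f,i_0)\cong\mathrm{Hom}^k(i_0,g)$. Composing with the degree-$l$ isomorphism $f\to i_0$ identifies this with $\mathrm{Hom}^{kl}(f,g)$.

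Alternatively, and more cleanly, one can give the inverse map directly. Decompose $1_g$ using a degree-$1$ direct sum of $g$ into elements of $I$, and insert this decomposition into a degree-$kl$ morphism. I expect the main obstacle to be bookkeeping: checking that degrees multiply in the order $kl$ rather than $lk$, and that the composite map is independent of the choices made.
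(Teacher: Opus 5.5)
Your proof is correct. The paper gives no proof of its own and simply cites S\"ozer--Virelizier; your argument, run inside each $L$-semisimple Hom-category $\mathcal{C}(X,Y)$, is the standard one behind that citation.

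Two small points should be tightened.

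\begin{itemize}
\item In part (1), the identity $\sum_i\lambda_i=1$ only gives $|I|\cdot 1=1$ in $\mathbbm{k}$. That does not force $|I|=1$ in positive characteristic. What rules out $|I|\geq 2$ is your orthogonality relation $\lambda_i\lambda_j=0$ for $i\neq j$, together with $\lambda_i=1$ and $\mathbbm{k}\neq 0$.
\item In parts (2) and (3) you use that the inverse of a homogeneous degree-$l$ isomorphism is homogeneous of degree $l^{-1}$. This holds because identities have degree $1$, and it deserves a line of justification.
\end{itemize}
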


A $\sigma$-semisimple 2-category $\mathcal{C}$ is \emph{finite} if 
\begin{itemize}
\item for all objects $X$, $Y$ of $\mathcal{C}$ and for all $e\in E$,  any representative set of simple 1-morphisms of~$\mathcal{C}^1_e(X,Y)$ is finite,
\item for all $e\in \mathrm{Ker}(\partial)$, any representative set of simple 1-morphisms of~$\mathcal{C}_e^1(\mathbbm{1}, \mathbbm{1})$ is non-empty,
\item for all $h\in H$,  any representative set of simple objects of $\prescript{}{h}{\mathcal{C}_1^1}$ is finite and non-empty.
\end{itemize}
We can rephrase the above definition as follows. A $\sigma$-semisimple 2-category $\mathcal{C}$ is \emph{finite} if for all objects $X$ and $Y$ of $\mathcal{C}$, and for all $e\in E$,  the set of degree-$1$ homogeneous 2-isomorphism classes of degree-$e$ simple 1-morphisms of~$\mathcal{C}^1$ from $X$ to $Y$ is finite; for all $e\in \mathrm{Ker}(\partial)$, the set of degree-$e$ homogeneous endomorphisms of $\mathbbm{1}$ is non-empty; and for all~$h\in H$, the set of $(1,1)$-equivalence classes of degree-$h$ homogeneous simple objects of $\mathcal{C}^1$ is finite and non-empty.

\subsection{Graded-fusion 2-categories}\label{Definition_Graded-fusion 2-categories}

A \emph{$\sigma$-fusion 2-category} (over $\mathbbm{k}$) is a finite $\sigma$-semisimple 2-category $\mathcal{C}$ such that all objects have left and right duals in the $(1,1)$-subcategory of $\mathcal{C}$, and the monoidal unit is simple in $\mathcal{C}^1$.  A \emph{spherical~$\sigma$-fusion 2-category} is a $\sigma$-fusion 2-category that is spherical as a $\sigma$-graded 2-category.

\begin{lem}\label{nonzero_Hom_categories} Let  $X$, $Y$ be objects in a $\sigma$-fusion 2-category $\mathcal{C}$,  let $e\in E$,  and $f\in \mathrm{Ker}(\partial)$. If~$\mathcal{C}^1_e(X,Y)$ contains a nonzero 1-morphism, then so does $\mathcal{C}^1_{ef}(X,Y)$.  In particular, for any simple object $X$ of~$\mathcal{C}^1$ and for any~\mbox{$e\in \mathrm{Ker}(\partial)$,} there is a simple 1-morphism in~$\mathcal{C}^1_e(X,X)$.
\end{lem}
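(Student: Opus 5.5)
The approach is to transport a nonzero degree-$e$ 1-morphism to a nonzero degree-$ef$ 1-morphism by composing with a degree-$f$ endomorphism of the monoidal unit. The finiteness axiom guarantees that such an endomorphism exists, and the tensor structure lets us act with it.

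First, finiteness of $\mathcal{C}$ gives, for $f\in \mathrm{Ker}(\partial)$, a simple 1-morphism $u\colon \mathbbm{1}\to\mathbbm{1}$ homogeneous of degree $f$. Next, let $g\colon X\to Y$ be a nonzero 1-morphism in $\mathcal{C}^1_e(X,Y)$. By semisimplicity of $\mathcal{C}^1_e(X,Y)$ we may take $g$ simple, and we may also reduce to the case where $X$ (or $Y$) is homogeneous, by decomposing $X$ and $Y$ as $(1,1)$-direct sums of homogeneous simple objects and restricting to a nonzero component. Here we use the lemma stating that simple objects of $\mathcal{C}^1$ are $(1,1)$-equivalent to homogeneous ones. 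Since the inclusions and projections in such direct sums have degree $1$, composing with them preserves the $E$-degree.

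Now we form $g' = g\circ (u\otimes X)\colon X\to Y$, using the strict unitality $\mathbbm{1}\otimes X = X$. By the grading axioms, $u\otimes X$ is homogeneous of degree $\lvert u\rvert = f$, so $g'$ is homogeneous of degree $ef$. (If we instead used $X\otimes u$, its degree would be $\prescript{\lvert X\rvert}{}{f}$, which again lies in $\mathrm{Ker}(\partial)$; one could then adjust the choice of $u$ by acting with $\lvert X\rvert^{-1}$.) Using $u\otimes X$ on the right avoids this.

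The main obstacle is showing that $g'$ is nonzero. For this we use adjoints. The 1-morphism $u$ has a right adjoint $u^*$ of degree $f^{-1}$, and $u^*\circ u$ receives the unit $\eta_u\colon 1_{\mathbbm{1}}\Rightarrow u^*\circ u$. Since $u$ is simple, it is nonzero, and the adjunction triangle identities show that $\eta_u$ is nonzero. Because $\mathbbm{1}$ is simple in $\mathcal{C}^1$, $1_{\mathbbm{1}}$ is simple, so $\eta_u$ is a split monomorphism (by semisimplicity of $\mathcal{C}^1_1(\mathbbm{1},\mathbbm{1})$). That is, $1_{\mathbbm{1}}$ is a direct summand of $u^*\circ u$. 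Tensoring with $X$ (which is linear and 2-functorial) shows that $1_X$ is a direct summand of $(u^*\otimes X)\circ(u\otimes X)$. Hence $g$ is a direct summand of $g\circ (u^*\otimes X)\circ(u\otimes X) = (g\circ(u^*\otimes X))\circ(u\otimes X)$.

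This displays $g$ as a summand of something built from $u\otimes X$. To reach exactly degree $ef$, we apply the same argument with $u$ and $u^*$ interchanged: $g = g\circ 1_X$ is a summand of $\bigl(g\circ (u\otimes X)\bigr)\circ (u^*\otimes X)$, using the unit of $u^*\dashv u^{**}=u$. Since $g\neq 0$, this forces $g\circ(u\otimes X)\neq 0$, and this 1-morphism lies in $\mathcal{C}^1_{ef}(X,Y)$.

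For the final statement, take $X$ simple and $e=1$. Then $1_X$ is nonzero of degree $1$, and applying the above with $f$ gives a nonzero 1-morphism in $\mathcal{C}^1_f(X,X)$. Decomposing it by semisimplicity of $\mathcal{C}^1_f(X,X)$ yields a simple summand, as required.
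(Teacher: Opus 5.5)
Your argument is correct, but it takes a different route from the paper's.

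\textbf{The paper's route.} It works on the target side. It first takes $Y$ simple and postcomposes $g$ with $h\otimes Y$, for a simple $h\colon\mathbbm{1}\to\mathbbm{1}$ of degree $f$. It shows $(h\otimes Y)\circ g\neq 0$ by invoking (the proof of) Douglas--Reutter's Proposition~1.2.19 twice. The first use reduces through the simple object $Y$ to $h\otimes Y\neq 0$. The second comes after composing with the fold $e_Y$ of a dual $Y^\#$, and reduces to $h\circ e_Y\neq 0$. It then handles arbitrary $Y$ by choosing a summand $Y_i$ with $p_i\circ g\neq 0$.

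\textbf{Your route.} You precompose with $u\otimes X$ and get nonvanishing from one structural fact: the (co)unit of $u$ splits off $1_{\mathbbm{1}}$. The strict 2-functor $-\otimes X$ transports this splitting and exhibits $g$ as a retract of $g\circ(u\otimes X)\circ(u^*\otimes X)$. This avoids duals of objects, folds, and any ``composition through a simple object'' result. Simplicity of $X$ or $Y$ plays no role in the nonvanishing; your reduction to homogeneous $X$ is only bookkeeping for the grading axiom on $u\otimes X$. A side benefit is that precomposition lands in degree $ef$ on the nose. Postcomposition with $h\otimes Y$ actually gives degree $\lvert h\otimes Y\rvert\,\lvert g\rvert = fe$, which matches the statement only after using that $\mathrm{Ker}(\partial)$ is normal in $E$.

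\textbf{Two steps to tighten.}

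First, $\mathbbm{k}$ is an arbitrary nonzero commutative ring, so ``a nonzero morphism out of the simple object $1_{\mathbbm{1}}$'' does not by itself give a split monomorphism (think of $2\colon\mathbb{Z}\to\mathbb{Z}$). The correct reason is that the adjunction isomorphism $\mathrm{End}(u)\cong\mathrm{Hom}(1_{\mathbbm{1}},u^*\circ u)$ sends $1_u$ to $\eta_u$. So $\eta_u$ is a basis of a free rank-one module. This forces $1_{\mathbbm{1}}$ to occur exactly once in the semisimple decomposition of $u^*\circ u$, with $\eta_u$ a unit multiple of its inclusion.

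Second, $u^{**}=u$ is a pivotal axiom, and the lemma is stated for $\sigma$-fusion 2-categories without a pivotal structure. You do not need it. By the same basis argument, the counit $\varepsilon_u\colon u\circ u^*\Rightarrow 1_{\mathbbm{1}}$ is a basis of $\mathrm{Hom}(u\circ u^*,1_{\mathbbm{1}})\cong\mathrm{End}(u)$, hence a split epimorphism. Then $\varepsilon_u\otimes X$ directly exhibits $1_X$ as a retract of $(u\otimes X)\circ(u^*\otimes X)$. This also makes your first pass with $\eta_u$ unnecessary.
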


\begin{proof}
We first suppose $Y$ is simple in $\mathcal{C}^1$.  Let $g\colon X\to Y$ be a nonzero degree-$e$ homogeneous 1-morphism, and let~$h\colon \mathbbm{1}\to \mathbbm{1}$ be a degree-$f$ homogeneous 1-morphism.  We can suppose that $g$ and $h$ are simple in $\mathcal{C}^1$. Then the~1-morphism $(h\otimes Y) \circ g \colon X\to Y$ is homogeneous of degree $ef$.  By the proof of \cite[Proposition~1.2.19]{douglas_fusion_2018}, the equation $(h\otimes Y) \circ g = 0$ implies that $h\otimes Y = 0$.  Let $Y^\#$ denote a right dual of $Y$ and let~$e \colon Y \otimes Y^\# \to \mathbbm{1}$ denote the associated fold.  We have
\begin{align*}
h\otimes Y = 0 & \Rightarrow h \otimes Y \otimes Y^\# = 0\\
& \Rightarrow (1 \otimes e) \circ (h\otimes Y \otimes Y^\#) = 0\\
& \Rightarrow h\circ e = 0.
\end{align*}
By the proof of \cite[Proposition 1.2.19]{douglas_fusion_2018} $h\circ e = 0$ implies that $h = 0$,  and therefore $(h\otimes Y) \circ g$ is nonzero. 

We no longer suppose that $Y$ is simple, and we write $Y =\boxplus^{(1,1)}_{i\in I} Y_i $, where $I$ is a finite set and $Y_i$ is simple in $\mathcal{C}^1$ for all $i\in I$.  We denote by $p_i \colon Y\to Y_i$ and $q_i\colon Y_i \to Y$ the projection and inclusion 1-morphisms for the~$(1,1)$-direct sum.  Let $g\colon X\to Y$ be a degree-$e$ simple 1-morphism, and let~$h\colon \mathbbm{1}\to \mathbbm{1}$ be a degree-$f$ simple 1-morphism. Choose any~$i\in I$ such that $p_i \circ g \colon X \to Y_i$ is nonzero.  By the above discussion, since $Y_i$ is simple,  the 1-morphism
$$
(h\otimes Y_i) \circ p_i \circ g\colon X \to Y_i
$$ 
is nonzero and homogeneous of degree $ef$, and therefore $q_i \circ (h\otimes Y_i) \circ p_i \circ g\colon X \to Y$ is nonzero and homogeneous of degree $ef$. 
\end{proof}

Notice that the 2-crossed module\begin{tikzcd}\sigma = (L\arrow[r, "\delta"] & E \arrow[r, "\partial"] & H, \ \omega)\end{tikzcd}induces a crossed module $(\delta\colon L \to \mathrm{Ker}(\partial))$, where $\mathrm{Ker}(\partial)$ acts on $L$ via $\triangleright$.  If $X$ is a simple object of the 1-subcategory $\mathcal{C}^1$ of a $\sigma$-fusion 2-category $\mathcal{C}$, then by Lemma \ref{nonzero_Hom_categories}, the endomorphism category $\mathcal{C}(X,X)$ is a $(\delta \colon L \to \mathrm{Ker}(\partial))$-fusion category in the sense of Sözer and Virelizier, see~\cite[Section 4.10]{sozer_monoidal_2023}.  In particular, $\mathcal{C}_1^1(X,X)$ is a fusion category. Moreover, up to idempotent and additive completion, the neutral component of a $\sigma$-fusion 2-category is a fusion 2-category in the sense of Douglas and Reutter when $\mathbbm{k}$ is an algebraically closed field of characteristic 0, see \cite[Definition~2.1.6]{douglas_fusion_2018}.  However, in general, a $\sigma$-fusion 2-category is not necessarily a fusion~2-category.

\subsection{The linearization of a 2-crossed module}\label{Ex. associated fusion}The linearization $\mathcal{G}_\sigma$ of $\sigma$ (see Section \ref{Ex: graded 2-category associated to a 2-crossed module}) is a spherical $\sigma$-fusion 2-category.  A representative set of simple objects of $(\mathcal{G}_\sigma)^1$ is given by $H$, and for all $h,k\in H$, a representative set of simple 1-morphisms of $(\mathcal{G}_\sigma)^1(h,k)$ is given by $\{e\in E \mid \partial(e) = kh^{-1}\}$.

\subsection{The pushforward of a $\sigma$-fusion 2-category}\label{Ex. pushforward fusion} Let
$$
\begin{tikzcd}\sigma' = (L'\arrow[r, "\delta'"] & E'\arrow[r, "\partial'"] & H,' \ \omega')\end{tikzcd}
$$
be a 2-crossed module of groups, and let $\phi = (\phi_1, \phi_2, \phi_3)$ be a morphism from $\sigma$ to $\sigma'$. Recall from Section~\ref{pushforward} the definition of the push\-forward~$\phi_*(\mathcal{C})$ of a $\sigma$-graded 2-category $\mathcal{C}$ along $\phi$. Notice that if $\phi \colon \sigma\to \sigma'$ is a morphism of 2-crossed modules such that 
\begin{itemize}
\item $\phi_1 \colon H \to H'$, $\phi_2\colon E \to E'$, and $\phi_3\colon L \to L'$ are surjective,
\item $\mathrm{Ker}(\phi_1)$ and $\mathrm{Ker}(\phi_2)$ are finite,
\item $\mathrm{Ker}(\phi_3) \cap \mathrm{Ker}(\delta) = 1$,
\end{itemize}
then the pushforward $\phi_*(\mathcal{C})$ of a $\sigma$-fusion 2-category $\mathcal{C}$ is a $\sigma'$-fusion 2-category.

\section{Dimensions in graded-fusion 2-categories}\label{Dimensions in graded-fusion 2-categories}
\noindent In this section we define the (quantum) dimensions of objects and 1-morphisms in spherical $\sigma$-fusion \mbox{2-categories}.  We use these to derive a notion of categorical dimension for spherical $\sigma$-fusion 2-categories and we prove some useful properties of dimensions. Throughout this section, let
$$
\begin{tikzcd}
\sigma = (L \arrow{r}{\delta} & E \arrow{r}{\partial} & H,\ \omega)
\end{tikzcd}
$$
be a~2-crossed module of groups, and let $\mathcal{C}$ be a spherical $\sigma$-fusion 2-category over $\mathbbm{k}$.  Recall from Section \ref{Subsection_1 and (1,1)-subcategories} that the~\mbox{1-subcategory} $\mathcal{C}^1$ of $\mathcal{C}$ has the same objects and 1-morphisms as $\mathcal{C}$, and only degree-1 homogeneous 2-morphisms. Recall that $\mathbbm{1}$ is a simple object in $\mathcal{C}^1$, and therefore we identify~$\mathrm{Hom}_{\mathcal{C}(\mathbbm{1}, \mathbbm{1})}^1(1_\mathbbm{1}, 1_\mathbbm{1})$ and  $\mathbbm{k}$. 

\subsection{Dimensions of objects and 1-morphisms}\label{Dimensions of objects and 1-morphisms}
The \emph{dimension} of a 1-morphism $f$ in $\mathcal{C}$ is 
$$
\dim (f) = \mathrm{Tr}(1_f) \in \mathbbm{k},
$$ 
where $\mathrm{Tr}(1_f)$ denotes the trace of $1_f$ (see Sections \ref{Subsection_Spherical 2-categories} and \ref{Subsection_Spherical graded 2-categories}). The \emph{dimension} of an object $X$ in $\mathcal{C}$ is
$$
\dim(X) = \dim(1_X) = \mathrm{Tr}(1_{1_X}) \in \mathbbm{k}.
$$

\begin{lem}\label{nonzero dimension of simple 1-morphisms}
The dimensions of simple objects and simple~1-morphisms in $\mathcal{C}^1$ are invertible in $\mathbbm{k}$. 
\end{lem}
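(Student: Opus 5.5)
We reduce to the ungraded setting of Douglas--Reutter by working entirely in the $1$-subcategory $\mathcal{C}^1$. Every ingredient in the trace (units, counits, folds, cusps, identities) is homogeneous of degree $1$, so $\mathrm{Tr}(1_f)$ is computed inside $\mathcal{C}^1$. The plan is to show that $\dim(f)$ times a suitable second scalar equals $1$ in $\mathbbm{k}$. The template is the fusion-category argument: for a simple $f$, the composites $\varepsilon_f\cdot\eta_{f^*}$-type bubbles lie in one-dimensional Hom spaces, and nondegeneracy of the pairing forces them to be nonzero multiples of identities.

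\textbf{Simple 1-morphisms.} Let $f\colon X\to Y$ be simple in $\mathcal{C}^1$. We may take $X$ and $Y$ homogeneous and simple in $\mathcal{C}^1$, decomposing via $(1,1)$-direct sums and the Lemma stating that simple objects of $\mathcal{C}^1$ are $(1,1)$-equivalent to homogeneous ones.

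1. Since $\mathcal{C}^1(X,X)$ is semisimple and $1_X$ is simple (Lemma \ref{nonzero_Hom_categories} and the remark that $\mathcal{C}^1_1(X,X)$ is fusion), the 2-morphisms $\eta_f\colon 1_X\Rightarrow f^*\circ f$ and $\eta_f^\dagger$, the right mate of $\varepsilon_{f^*}$, are nonzero.
2. Their composite $1_X\Rightarrow 1_X$ is therefore a scalar $\lambda\cdot 1_{1_X}$. Nonvanishing of $\lambda$ follows from the zigzag identities: $1_f$ factors through $f\circ f^*\circ f$, and semisimplicity lets us split $1_X$ off $f^*\circ f$.
3. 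Spherical traces in $\mathcal{C}^1$ then give $\dim(f)=\lambda\,\dim(X)$ after applying the object-level trace to the bubble.

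This reduces the 1-morphism case to showing that $\lambda$ and $\dim(X)$ are invertible. A cleaner route is available because $\mathcal{C}^1_1(X,X)$ is a (pivotal, spherical) fusion category over $\mathbbm{k}$ in which the 2-categorical trace restricts to a spherical trace. By the Sözer--Virelizier result for graded fusion categories, and the standard fact (Etingof--Nikshych--Ostrik, or \cite[Proposition~2.3.x]{douglas_fusion_2018}) that simple objects in a spherical fusion category with $\mathrm{End}(\mathbbm 1)=\mathbbm k$ have invertible dimension, we obtain $\dim_{X}(f)$ invertible whenever $f\in \mathcal{C}^1(X,X)$. For $f\colon X\to Y$ with $X\neq Y$, we apply this to the simple summands of $f^*\circ f$ and use the left/right mate compatibility.

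\textbf{Simple objects.} For $X$ simple in $\mathcal{C}^1$, $\dim(X)=\mathrm{Tr}(1_{1_X})$ is expressed through the fold $i_X$: $\dim(X)=\dim(i_X)$ up to the scalars coming from $\eta,\varepsilon$ of the fold. The argument then follows \cite[Proposition~2.3.x]{douglas_fusion_2018}. $X^\#\otimes X$ receives the nonzero 1-morphism $i_X$ from the simple unit $\mathbbm{1}$. Projecting $i_X$ onto a simple summand gives a simple 1-morphism out of $\mathbbm 1$ whose dimension is invertible by the previous step, and the cusp isomorphisms $C_X,D_X$ identify $\dim(X)$ with the dimension of that component times an invertible scalar.

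\textbf{Main obstacle.} The hard step is invertibility, as opposed to mere nonvanishing, over a general commutative ring $\mathbbm{k}$. I expect to handle it with the identity $\dim(f)\dim(f^*)=$ (global dimension of a hom fusion category)$^{-1}$-type relations. More concretely, the counit/unit composite is an isomorphism $1_X\cong$ summand of $f^*\circ f$. Freeness of rank one gives scalars $a,b$ with $ab=1$, and sphericality shows $\dim(f)=a$ (up to $\dim(X)$), which yields invertibility directly.
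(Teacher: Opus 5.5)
Your key mechanism, isolated in your last paragraph, is the right one and is the paper's. By adjunction and simplicity of $f$, the degree-$1$ 2-morphisms between the identity of a simple object and $f^*\circ f$ (or $f\circ f^*$), in either direction, form free rank-one modules spanned by a unit and a counit. So any splitting $\alpha\cdot\beta=1$ with $\alpha=x\varepsilon$, $\beta=y\eta$ forces $xy\lambda=1$, where $\lambda$ is the bubble scalar. As written, your step 2 only gives nonvanishing, which is not enough over a ring, and your ``$ab=1$, $\dim(f)=a$'' attaches invertibility to the wrong scalar.

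\textbf{First gap: the reduction to simple endpoints.} You run the argument at the source, which gives $\dim(f)=\lambda\dim(X)$. This needs $X$ simple and $\dim(X)$ invertible. Your reduction to simple endpoints is only asserted. A simple $f\colon X\to Y$ between non-simple objects is isomorphic to $q_j\circ g\circ p_i$ with $g$ simple between simple summands. Comparing $\dim(f)$ with $\dim(g)$ requires the dimensions of $p_i$, $q_j$, $X_i$, $Y_j$, which is the statement you are proving. Lemma~\ref{Lemma_B1} would do this, but it comes later and presupposes the present lemma. The general case is genuinely used afterwards, e.g.\ for $p_i,q_i$ in Lemma~\ref{Lemma_B3} and for $s_{k_i}\colon X\to Y$ in Lemma~\ref{The trace pairing is nondegenerate}.

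\textbf{Second gap: the object case.} Your 1-morphism argument applied to $f=1_X$ is a tautology ($\lambda=1$), so you need input from the unit. The fold argument you sketch does not supply it: projecting $i_X$ onto a simple summand and invoking the cusps does not express $\dim(X)$ as an invertible multiple of one component's dimension. What is true is that the fold is itself simple, since duality gives $\operatorname{End}^1(e_X)\cong\operatorname{End}^1(1_X)\cong\mathbbm{k}$. Moreover, $\dim(X)=\mathrm{tr}_R(1_{e_X})$ is literally a bubble over the simple unit.

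The paper makes this uniform. It replaces $f$ by $\tilde f=e_Y\circ(f\otimes Y^\#)\colon X\otimes Y^\#\to\mathbbm{1}$, which is simple by the same duality. It runs the splitting argument at the simple target $\mathbbm{1}$. Since the outer trace over $\mathbbm{1}$ is trivial, $\dim(f)=\mathrm{tr}_R(1_{\tilde f})$. This needs no hypothesis on $X,Y$ and yields objects as the case $f=1_X$.

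Your ``cleaner route'' through $\mathcal{C}^1_1(X,X)$ does not replace this. The results you cite are field statements giving nonvanishing, not invertibility over a general $\mathbbm{k}$. In any case they give $\dim(f)=\dim_{\mathcal{C}(X,X)}(f)\,\dim(X)$, which again needs $\dim(X)$ invertible.
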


\begin{proof}
First, we show that if $f\colon X \to Y$ denotes a simple 1-morphism in $\mathcal{C}^1$ with $Y$ a simple object in $\mathcal{C}^1$, then the right planar trace $\mathrm{tr}_R(1_f)$ (as defined in \cite{douglas_fusion_2018}) is invertible in $\mathbbm{k}$. We adapt the proof of \cite[Lemma~2.3.10]{douglas_fusion_2018}. By definition, $\mathrm{tr}_R(1_f) = \varepsilon_f\cdot \eta_{f^*} = \lambda 1_{1_Y}$ for some $\lambda \in \mathbbm{k}$. Note that by adjunction and by simplicity of~$f$:
$$
\mathrm{Hom}_{\mathcal{C}(Y,Y)}^1(f\circ f^* , 1_Y) \cong \mathrm{Hom}_{\mathcal{C}(X,Y)}^1(f,f) \cong \mathbbm{k},
$$
and $\{\varepsilon_f\}$ is a basis of $\mathrm{Hom}_{\mathcal{C}(Y,Y)}^1(f\circ f^* , 1_Y)$. Similarly,  $\mathrm{Hom}_{\mathcal{C}(Y,Y)}^1( 1_Y, f\circ f^*) = \mathbbm{k}\{\eta_{f^*}\}$. Moreover, by local \mbox{$L$-semi}\-simplicity of $\mathcal{C}$ and by simplicity of $Y$,  $1_Y$ is a direct summand in the 1-direct sum decomposition of~$f\circ f^*$ into simple 1-morphisms.  Therefore, there are 2-morphisms $\alpha \in \mathrm{Hom}_{\mathcal{C}(Y,Y)}^1(f\circ f^* , 1_Y)$ and~$\beta \in \mathrm{Hom}_{\mathcal{C}(Y,Y)}^1(1_Y, f\circ f^*)$ such that $\alpha\cdot \beta = 1_{1_Y}$. We write $\alpha = x\varepsilon_f$ and $\beta = y \eta_{f^*}$, for some scalars $x, y \in \mathbbm{k}$. We have
$$
1_{1_Y} = \alpha\cdot \beta = (xy) \varepsilon_f\cdot \eta_{f^*} = (xy \lambda) 1_{1_Y},
$$
therefore $\lambda$ is invertible.

Now let $f\colon X\to Y$ be a simple 1-morphism in $\mathcal{C}^1$, where $X$ and $Y$ are not necessarily simple.  By the duality between $Y$ and $Y^\#$, we have 
\begin{equation*}
\operatorname{End}_{\mathcal{C}(X\otimes Y^\#, \mathbbm{1})}^1(e_Y\circ (f\otimes Y^\#)) \cong \operatorname{End}_{\mathcal{C}(X,Y)}^1(f) \cong \mathbbm{k}.
\end{equation*}
Therefore,  $e_Y\circ (f\otimes Y^\#) \colon X\otimes Y^\#\to \mathbbm{1}$ is a simple 1-morphism in $\mathcal{C}^1$.  By the above discussion, the right trace~\mbox{$\operatorname{tr}_R(1_{e_Y\circ (f\otimes Y^\#)})$} is invertible, and the dimension of $f$ is 
$$
\operatorname{Tr}(1_f) = \operatorname{tr}_R (e_Y\circ (f\otimes Y^\#)) = \operatorname{tr}_R (1_{e_Y\circ (f\otimes Y^\#)}).
$$
Therefore $\dim(f)$ is invertible in $\mathbbm{k}$. As a consequence, if $X$ is a simple object in $\mathcal{C}^1$, then $\dim(X) = \dim(1_X)$ is invertible in $\mathbbm{k}$. 
\end{proof}

For any 1-morphisms $f, g\colon X\to Y$ in $\mathcal{C}$ and for all $l\in L$,  the \emph{trace pairing} is the map
$$
\langle \cdot, \cdot\rangle \colon \operatorname{Hom}_\mathcal{C}^l(f,g) \otimes \operatorname{Hom}_\mathcal{C}^{l^{-1}}(g,f) \to \mathbbm{k} \quad \quad \alpha\otimes \beta \mapsto \langle \alpha, \beta\rangle = \mathrm{Tr}(\alpha\cdot \beta).
$$

\begin{lem}\label{The trace pairing is nondegenerate}
The trace pairing $\langle \cdot, \cdot \rangle$ is nondegenerate.
\end{lem}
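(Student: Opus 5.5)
We reduce to the case where $f$ and $g$ are simple 1-morphisms of $\mathcal{C}^1$, and there use Lemma~\ref{nonzero dimension of simple 1-morphisms}.

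\emph{Reduction to simple 1-morphisms.} Since $\mathcal{C}(X,Y)$ is $L$-semisimple, we write $f=\bigoplus^1_{i} f_i$ and $g=\bigoplus^1_j g_j$ as $1$-direct sums of simple 1-morphisms of $\mathcal{C}^1$. The structure maps of a $1$-direct sum have degree $1$, so they identify $\operatorname{Hom}^l(f,g)$ with $\bigoplus_{i,j}\operatorname{Hom}^l(f_i,g_j)$, and similarly $\operatorname{Hom}^{l^{-1}}(g,f)$ with $\bigoplus_{i,j}\operatorname{Hom}^{l^{-1}}(g_j,f_i)$. The trace is linear and cyclic, $\mathrm{Tr}(\alpha\cdot\beta)=\mathrm{Tr}(\beta\cdot\alpha)$. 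Cyclicity holds by sphericality of $\mathcal{C}^1$ together with the standard mate argument of \cite{douglas_fusion_2018}, applied to $\alpha\cdot\beta$, which is homogeneous of degree $1$. Using it, the pairing becomes block-diagonal: the $(i,j)$ block of $\operatorname{Hom}^l(f,g)$ pairs only with the $(i,j)$ block of $\operatorname{Hom}^{l^{-1}}(g,f)$, and the restricted pairing is $\mathrm{Tr}(\alpha_{ij}\cdot\beta_{ji})$ computed in $\operatorname{End}(f_i)$. Hence it suffices to treat $f$ and $g$ simple in $\mathcal{C}^1$.

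\emph{Simple case.} By Lemma~\ref{Lemma SV}(2), $\operatorname{Hom}^l(f,g)$ is either $0$ or free of rank one, spanned by a degree-$l$ 2-isomorphism $\theta\colon f\Rightarrow g$. In the first case, $\operatorname{Hom}^{l^{-1}}(g,f)$ is also $0$, since a nonzero element would be an isomorphism whose inverse has degree $l$; the pairing between zero modules is trivially nondegenerate. In the second case, $\operatorname{Hom}^{l^{-1}}(g,f)$ is spanned by $\theta^{-1}$, and
$$\langle\theta,\theta^{-1}\rangle=\mathrm{Tr}(\theta\cdot\theta^{-1})=\mathrm{Tr}(1_g)=\dim(g).$$
Lemma~\ref{nonzero dimension of simple 1-morphisms} says $\dim(g)$ is invertible in $\mathbbm{k}$. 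A pairing of rank-one free modules whose value on the basis elements is a unit is perfect, which gives nondegeneracy. The direct sum of perfect pairings is perfect, which finishes the proof.

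\emph{Main obstacle.} The delicate step is justifying cyclicity of the trace for composites $\alpha\cdot\beta$ where $\alpha$ and $\beta$ individually have nontrivial $L$-degree. Sphericality is assumed only for degree-$1$ 2-endomorphisms, so we must check that sliding $\beta$ around via mates stays inside degree-$1$ computations. The plan is to use that units, counits, folds and cusps have degree $1$, together with the grading rule for horizontal composition, to track degrees through the mate construction. If this fails, a fallback is to avoid cyclicity altogether: directly compute the cross terms $\mathrm{Tr}(q_j\alpha p_i\cdot q_{i'}\beta p_{j'})$ and show they vanish unless $i=i'$, $j=j'$, using $p\circ q=0$ for distinct summands.
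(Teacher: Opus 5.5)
Your proof is correct and follows the same strategy as the paper's: decompose into simple 1-morphisms of $\mathcal{C}^1$, use cyclicity of the trace to reduce to diagonal blocks, and conclude from the invertibility of dimensions of simple 1-morphisms (Lemma~\ref{nonzero dimension of simple 1-morphisms}). The differences are bookkeeping and a minor point about cyclicity. The paper absorbs the degree $l$ into a graded direct-sum decomposition of $f$, so each block is a scalar in $\operatorname{Hom}^1(s,s)\cong\mathbbm{k}$; you use $1$-direct sums and Lemma~\ref{Lemma SV}(2), which has the small bonus of showing the pairing is perfect, as the canonical copairing needs over a general $\mathbbm{k}$. The cyclicity you flag as the main obstacle, which the paper uses without comment, is not a real difficulty: it follows from the planar pivotal structure of $\mathcal{C}$, imposed on 2-morphisms of all degrees, and $\mathrm{Tr}$ is only evaluated on the degree-$1$ composites $\alpha\cdot\beta$ and $\beta\cdot\alpha$, so sphericality plays no role there.
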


\begin{proof}
We adapt the proof of \cite[Proposition 2.3.14]{douglas_fusion_2018}. Let $\{s_i\}_{i\in I}$ be a representative set of simple 1-morphisms in $\mathcal{C}^1(X,Y)$.  By local $L$-semisimplicity of $\mathcal{C}$,  we can write
$$
f = \bigoplus^l_{i\in I}s_{k_i} \quad \text{and} \quad g = \bigoplus^1_{j\in J} s_{l_j}.
$$
We denote by $(\pi_i, \iota_i)_{i\in I}$ the projection and inclusion 2-morphisms for $f$, and by $(\pi_j', \iota_j')_{j\in J}$ the projection and inclusion~2-morphisms for $g$.  

Suppose $\alpha \colon f \Rightarrow g$ is a degree-$l$ homogeneous 2-morphism such that for any degree-$l^{-1}$ homogeneous 2-morphism $\beta \colon g \Rightarrow f$, we have $\langle \alpha, \beta\rangle = 0$.  Every degree-$l^{-1}$ homogeneous 2-morphism $g\Rightarrow f$ is a linear combination of the 2-morphisms $\iota_i \cdot \pi_j'$ such that $k_i = l_j$.  By assumption, $0  = \operatorname{Tr}(\alpha \cdot \iota_i \cdot \pi_j') = \operatorname{Tr}(\pi_j' \cdot \alpha \cdot \iota_i)$. By simplicity of $s_{k_i} = s_{l_j}$,  $$ \pi_j' \cdot \alpha \cdot \iota_i = \lambda_{i,j}1_{s_{k_i}}$$ for some scalar $\lambda_{i,j} \in \mathbbm{k}$. Therefore $0 = \lambda_{i,j}\operatorname{Tr}(1_{s_{k_i}}) = \lambda_{i,j}\operatorname{dim}(s_{k_i})$.  From Lemma \ref{nonzero dimension of simple 1-morphisms}, we deduce that~$\lambda_{i,j} = 0$ for all $i\in I$ and $j\in J$.  Therefore,
\begin{align*}
\alpha &= 1_g \cdot \alpha \cdot 1_f
= \left(\sum_{j\in J}\iota_j' \cdot \pi_j'\right) \cdot \alpha \cdot \left(\sum_{i\in I} \iota_i \cdot \pi_i\right)
= \sum_{i\in I, \ j \in J}\iota_j' \cdot \left( \pi_j'\cdot \alpha \cdot \iota_i \right) \cdot \pi_i
= 0.\qedhere
\end{align*}
\end{proof}

\subsection{Weighted dimensions of objects}\label{Weighted dimension}

Recall that $\mathcal{C}^1$ denotes the 1-subcategory of $\mathcal{C}$, see Section \ref{Subsection_1 and (1,1)-subcategories}. Recall that for all objects $X$, $Y$ of $\mathcal{C}$ and $e\in E$,  $\mathcal{C}_e^1(X,Y)$ is the category whose objects are the degree-$e$ homogeneous 1-morphisms from $X$ to $Y$, and whose morphisms are homogeneous 2-morphisms of degree 1, see Section \ref{Section_Graded-semisimple 2-categories}.  For simple objects $X$, $Y$ of $\mathcal{C}^1$ and $e\in E$, the \emph{dimension} of the category $\mathcal{C}_e^1(X,Y)$ is
$$
\dim(\mathcal{C}_e^1(X,Y)) = \frac{1}{\dim(X)\dim(Y)}\sum_{f} \dim(f)^2,
$$
where $f$ runs over a representative set of simple 1-morphisms in $\mathcal{C}_e^1(X,Y)$.  It is well defined; see Lemma \ref{nonzero dimension of simple 1-morphisms}.

The \emph{weighted dimension} of a simple object $X$ of $\mathcal{C}^1$ is
$$
\wdim(X) =\dim(X) \dim(\mathcal{C}^1_1(X,X))\operatorname{n}(X) \in \mathbbm{k},
$$
where $\operatorname{n}(X)$ denotes the number of $(1,1)$-equivalence classes of simple objects that are connected to $X$ in $\mathcal{C}^1_1$.  

\subsection{Categorical dimensions}\label{Dimensions of 2-categories}

Recall that the $(1,1)$-subcategory of $\mathcal{C}$ is $H$-graded (see Section \ref{Section_group-graded}) and that~$\prescript{}{h}{\mathcal{C}_1^1}$ denotes the linear 2-category with degree-$h$ homogeneous objects and degree-1 homogeneous \mbox{1-mor}\-phisms and 2-morphisms. Let $h\in H$ such that the dimensions $\dim(\mathcal{C}_1^1(X,X))\in \mathbbm{k}$ are invertible for all degree-$h$ homogeneous simple objects $X$.  The \emph{dimension} of the 2-category $\prescript{}{h}{\mathcal{C}_1^1}$ is 
$$
\dim(\prescript{}{h}{\mathcal{C}_1^1}) = \sum_{X}\dim(\mathcal{C}_1^1(X,X))^{-1}\in \mathbbm{k},
$$
where $X$ runs over a representative set of disconnected simple objects in $\prescript{}{h}{\mathcal{C}^1_1}$. From \cite[Proposition~1.2.27]{douglas_fusion_2018}, we deduce that if $X$ and $Y$ are simple objects of $\mathcal{C}^1$ that are connected in $\mathcal{C}_1^1$, then
$$
\dim(\mathcal{C}_1^1(X,X)) = \dim(\mathcal{C}_1^1(Y,Y)).
$$
Therefore, the dimension of $\prescript{}{h}{\mathcal{C}_1^1}$ does not depend on the choice of representative set of disconnected simple objects of $\prescript{}{h}{\mathcal{C}_1^1}$. 

We say that $\mathcal{C}$ has \emph{invertible dimensions} if
\begin{itemize}
\item the dimensions $\dim({\mathcal{C}_1^1}(X,X))$ are invertible in $\mathbbm{k}$ for all simple objects $X$ of $\mathcal{C}^1$,
\item the categorical dimension $\dim(\prescript{}{1}{\mathcal{C}_1^1})$ is invertible in $\mathbbm{k}$,
\item for every simple object $X$ of $\mathcal{C}^1$, the number of $(1,1)$-equivalence classes of simple objects of $\mathcal{C}^1$ that are connected to $X$ in $\mathcal{C}_1^1$ is invertible in $\mathbbm{k}$.
\end{itemize}
By \cite[Theorem 2.3]{etingofFusionCategories2005},  any spherical $\sigma$-fusion 2-category has invertible dimensions when $\mathbbm{k}$ is an algebraically closed field of characteristic zero.

\subsection{Example} Let $\mathcal{G}_\sigma$ denote the linearization of $\sigma$ (see Sections \ref{Ex: graded 2-category associated to a 2-crossed module} and~\ref{Ex. associated fusion}).  For any $h\in H$, 
$$
\dim((\mathcal{G}_\sigma)_1^1(h,h)) = 1, \quad\quad  \text{and}\quad\quad\dim(\prescript{}{1}{(\mathcal{G}_\sigma)_1^1}) = 1.
$$

\subsection{Dimension formulas}\label{Section_Dimension_formulas} In this section, we suppose that for every simple object $X$ in $\mathcal{C}$, the dimension $\dim(\mathcal{C}_1^1(X,X))$ is invertible in $\mathbbm{k}$ and that the categorical dimension $\dim(\prescript{}{1}{\mathcal{C}_1^1})$ is invertible in $\mathbbm{k}$.  We prove useful formulas relating the dimensions of objects and~1-morphisms in $\mathcal{C}$.  These are adapted from the lemmas in \cite[Appendix B]{douglas_fusion_2018}. 

\begin{lemref}[\cite{douglas_fusion_2018}]\label{Lemma_B1}
Let $g\colon X\to Y$ and $f\colon Y \to Z$ be 1-morphisms in $\mathcal{C}$, and assume $Y$ is simple in $\mathcal{C}^1$. Then
$$
\dim(f\circ g) = \dim(f)\dim(g)\dim(Y)^{-1}.
$$
\end{lemref}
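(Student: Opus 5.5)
We prove the formula following \cite[Lemma B.1]{douglas_fusion_2018}. The only ingredients are simplicity of $Y$ in $\mathcal{C}^1$, the planar pivotal structure, and sphericality of $\mathcal{C}^1$. All units, counits, folds and cusps are homogeneous of degree $1$, so every 2-morphism below lives in $\mathcal{C}^1$. Hence only the 1-subcategory is involved, and the $L$-grading plays no role.

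\textbf{Step 1: dimension via the planar trace at $Y$.} First reduce to the case where $f$ and $g$ are homogeneous. Write $f$ and $g$ as $1$-direct sums of simple 1-morphisms in $\mathcal{C}^1$. Both sides are additive in $f$ and in $g$ separately: traces of identities on direct sums add, and composition distributes over $1$-direct sums. So we may assume $f$ and $g$ are simple; in any case, homogeneous suffices. Use the adjunction $g\dashv g^*$ with unit $\eta_g$ and counit $\varepsilon_g$. Since $Y$ is simple in $\mathcal{C}^1$, the endomorphism space $\mathrm{End}^1_{\mathcal{C}(Y,Y)}(1_Y)$ is identified with $\mathbbm{k}$. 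The right planar trace $\mathrm{tr}_R(1_g)=\varepsilon_g\cdot\eta_{g^*}$ lies in this space, so it is a scalar $\lambda_g\in\mathbbm{k}$.

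\textbf{Step 2: factor the trace of $f\circ g$.} Compute $\dim(f\circ g)=\mathrm{Tr}(1_{f\circ g})$ using the definition of the back 2-spherical trace. In that expression the adjoint $(f\circ g)^*=g^*\circ f^*$ appears, with counit $\varepsilon_{f\circ g}=\varepsilon_f\cdot(1_f\circ\varepsilon_g\circ 1_{f^*})$; a similar formula holds for the unit. The inner bubble $\varepsilon_g\cdot\eta_{g^*}$ sits on the $1_Y$ strand between $f$ and $f^*$. By Step 1 it equals $\lambda_g\,1_{1_Y}$, and we can pull it out as a scalar. This gives
\[
\dim(f\circ g)=\lambda_g\,\mathrm{Tr}(1_f)=\lambda_g\dim(f).
\]
Applying the same argument with $f=1_Y$ gives $\dim(g)=\lambda_g\dim(Y)$. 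Lemma \ref{nonzero dimension of simple 1-morphisms} shows $\dim(Y)$ is invertible, so $\lambda_g=\dim(g)\dim(Y)^{-1}$, which yields the formula.

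\textbf{Main obstacle.} The delicate point is the identity $\dim(g)=\lambda_g\dim(Y)$. The trace of $1_g$ is a spherical trace taken at the target $Y$. When we close off the bubble $\mathrm{tr}_R(1_g)$, we obtain $\lambda_g\,\mathrm{Tr}(1_{1_Y})$, but only after we verify that the compatibility of the planar pivotal structure with $\otimes$ lets us slide the bubble past the folds $e_Y$ and $i_{Y^\#}$. This is the computation in \cite[Lemma B.1]{douglas_fusion_2018}, and we transport it verbatim.

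We must also check that the interchangers $\phi$ used in these manipulations do not obstruct the argument. They can carry the nontrivial degree $\omega(\cdot,\cdot)^{-1}$. In our case they only occur with one argument an identity 1-morphism, or with a fold that has degree $1$, and $\omega(1,-)=\omega(-,1)=1$. So every 2-morphism involved is of degree $1$, and we remain in the spherical 2-category $\mathcal{C}^1$ throughout.
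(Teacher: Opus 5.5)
Your argument is correct, and it is essentially the proof of \cite[Lemma B.1]{douglas_fusion_2018}; the paper gives no proof of its own and simply cites that lemma. You observe that $\mathrm{tr}_R(1_g)=\lambda_g 1_{1_Y}$ in $\mathrm{End}^1(1_Y)\cong\mathbbm{k}$, functoriality of adjoints gives $\mathrm{tr}_R(1_{f\circ g})=\lambda_g\,\mathrm{tr}_R(1_f)$, and specializing to $f=1_Y$ yields $\lambda_g=\dim(g)\dim(Y)^{-1}$.

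The ``main obstacle'' you flag is not a real one. The back trace is by definition a linear function of the right planar trace, and your own $f=1_Y$ specialization already gives $\dim(g)=\lambda_g\dim(Y)$ with no sliding past folds. Likewise, the reduction to simple $f,g$ and the discussion of interchangers are unnecessary.
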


\begin{lem}\label{Lemma_B2}
Let $f\colon X\to Y$ be a 1-morphism in $\mathcal{C}$. Then, for all $l\in L$,
$$
\sum_{h} \dim(\mathrm{Hom}_{\mathcal{C}(X,Y)}^l(h,f))\dim(h) = \dim(f),
$$
where $h$ runs over a representative set of simple 1-morphisms in $\mathcal{C}^1(X,Y)$. 
\end{lem}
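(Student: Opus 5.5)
We decompose $f$ into simple 1-morphisms in an $l$-direct sum, compute both sides in terms of this decomposition, and compare. By local $L$-semisimplicity of $\mathcal{C}$ (Section \ref{Section_Graded-semisimple 2-categories}), choose a representative set $I$ of simple 1-morphisms in $\mathcal{C}^1(X,Y)$ and write
$$
f = \bigoplus^{l^{-1}}_{j\in J} s_{k_j}, \qquad s_{k_j}\in I,
$$
with projection and inclusion 2-morphisms $(\pi_j,\iota_j)_{j\in J}$. Here $\pi_j$ is homogeneous of degree $l$ and $\iota_j$ of degree $l^{-1}$, so $\pi_j\colon f\Rightarrow s_{k_j}$ has degree $l$ and $\iota_j\cdot\pi_j$ has degree $1$. (If the convention for $l$-direct sums gives the opposite assignment, we use the $l$-direct sum with $l$ replaced by $l^{-1}$; the argument is symmetric.) We will prove two identities:
\begin{itemize}
\item[(a)] $\dim(\mathrm{Hom}^l_{\mathcal{C}(X,Y)}(h,f)) = \#\{j\in J \mid s_{k_j}=h\}$ for each $h\in I$;
\item[(b)] $\dim(f)=\sum_{j\in J}\dim(s_{k_j})$.
\end{itemize}
Summing (a) against $\dim(h)$ over $h\in I$ then gives (b), which is the claim.

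For (a), use the decomposition $1_f=\sum_j \iota_j\cdot\pi_j$. Any $\alpha\in\mathrm{Hom}^l(h,f)$ then decomposes as
$$
\alpha=\sum_j \iota_j\cdot(\pi_j\cdot\alpha),
$$
where $\pi_j\cdot\alpha\in \mathrm{Hom}(h,s_{k_j})$ is homogeneous of degree $l\cdot l$ under the convention above. Here lies the main obstacle: the degrees must be arranged so that the middle factor lands in degree $1$. Concretely, we should take the $l$-direct sum whose inclusions have degree $l$ and projections degree $l^{-1}$, so that $\pi_j\cdot\alpha$ has degree $1$. Then $\mathrm{Hom}^1(h,s_{k_j})$ is $\mathbbm{k}$ if $h=s_{k_j}$ and $0$ otherwise, by simplicity in $\mathcal{C}^1$ and part (2) of Lemma \ref{Lemma SV}. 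The maps $\alpha\mapsto(\pi_j\cdot\alpha)_j$ and $(\lambda_j)_j\mapsto\sum_j\lambda_j\,\iota_j$ are then mutually inverse, using $\pi_i\cdot\iota_j=\delta_{ij}$. This shows that $\mathrm{Hom}^l(h,f)$ is free of rank equal to the multiplicity of $h$, so its dimension is that multiplicity. Note that "$\dim$" of a free $\mathbbm{k}$-module is its rank, viewed in $\mathbbm{k}$. Alternatively, (a) follows directly from part (3) of Lemma \ref{Lemma SV} with $k=1$.

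For (b), use linearity of the trace:
$$
\dim(f)=\mathrm{Tr}(1_f)=\sum_j \mathrm{Tr}(\iota_j\cdot\pi_j)=\sum_j\mathrm{Tr}(\pi_j\cdot\iota_j)=\sum_j\dim(s_{k_j}).
$$
The middle equality is cyclicity of the spherical trace for composable 2-morphisms of inverse degrees. This is the second delicate point. $\mathrm{Tr}$ is only defined on degree-$1$ endomorphisms, and cyclicity for $\iota_j,\pi_j$ of degrees $l^{\pm1}$ should follow from the mate/planar-pivotal description of $\mathrm{Tr}_B$, as in \cite[Appendix B]{douglas_fusion_2018}: one slides $\pi_j$ around the cap $\varepsilon_f$ and cup $\eta_{f^*}$, using that units and counits have degree $1$. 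Combining (a) and (b) yields the formula.
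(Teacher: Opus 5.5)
Your argument is correct and is essentially the paper's proof. You decompose $f$ as an $l$-direct sum of simples, identify $\mathrm{Hom}^l_{\mathcal{C}(X,Y)}(h,f)\cong\bigoplus_j\mathrm{Hom}^1_{\mathcal{C}(X,Y)}(h,f_j)$ to read off multiplicities, and then apply linearity and cyclicity of the trace to $1_f=\sum_j\iota_j\cdot\pi_j$.

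On the two points you flagged: the paper's definition of an $l$-direct sum already gives projections of degree $l^{-1}$ and inclusions of degree $l$. So the convention you eventually settle on is exactly the paper's $\bigoplus^l$, and the opening $l^{-1}$ choice (which is not symmetric, as you found) should simply be dropped. Cyclicity also causes no trouble: $\mathrm{Tr}_B$ is defined on all 2-endomorphisms of the underlying pivotal 2-category, and both $\iota_j\cdot\pi_j$ and $\pi_j\cdot\iota_j$ have degree $1$.
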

Notice that we allow an infinite sum in Lemma \ref{Lemma_B2} because $\dim(\mathrm{Hom}_{\mathcal{C}(X,Y)}^l(h,f))= 0$ for all but finitely many $h$. 

\begin{proof}
We write $ f = \bigoplus^l_{j\in J} f_j$ with $f_j$ simple 1-morphisms in $\mathcal{C}^1(X,Y)$ for all $j\in J$.  We denote by
$$
\pi_j \colon f \Rightarrow f_j, \quad \text{and} \quad \iota_j \colon f_j \Rightarrow f
$$
the projection and inclusion 2-morphisms for this $l$-direct sum. Let $h$ be a simple 1-morphism in $\mathcal{C}^1(X,Y)$. Recall that 
$$
\mathrm{Hom}_{\mathcal{C}(X,Y)}^l(h,f) \cong \mathrm{Hom}_{\mathcal{C}(X,Y)}^l(h, \bigoplus_{j\in J}^l f_j) \cong  \bigoplus_{j\in J} \mathrm{Hom}_{\mathcal{C}(X,Y)}^1(h,f_j).
$$ 
We denote by $J_h\subset J$ the set of indices such that $f_j$ is isomorphic to $h$ in $\mathcal{C}^1(X,Y)$.  Notice that 
$$
\dim(\mathrm{Hom}_{\mathcal{C}(X,Y)}^l(h,f)) = \lvert J_h\rvert.
$$
Therefore, 
\begin{align*}
\dim(f) = \mathrm{Tr}(1_f) &= \mathrm{Tr}(\sum_{j\in J} \iota_j \cdot \pi_j)\\
&= \sum_{j\in J}\mathrm{Tr}(\pi_j\cdot \iota_j)\\
&= \sum_{j\in J} \dim(f_j)\\
& = \sum_{h} \lvert J_h\rvert \dim(h),
\end{align*}
where in the last equality $h$ runs over a representative set of simple 1-morphisms in $\mathcal{C}^1(X,Y)$. 
\end{proof}

\begin{lem}\label{Lemma_B3}
Let $f\colon X\to Y$ be a 1-morphism in $\mathcal{C}$. Then, for all $e\in E$,
$$
\sum_W \sum_{g\colon W\to X} \dim(f\circ g)\dim(g) \wdim(W)^{-1} = \dim(f),
$$
where $W$ runs over a representative set of homogeneous simple objects in $\mathcal{C}^1$, and $g$ runs over a representative set of degree-$e$ simple 1-morphisms in $\mathcal{C}^1(W,X)$. 
\end{lem}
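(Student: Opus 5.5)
We reduce to the case where $X$ is a simple object of $\mathcal{C}^1$ and then mimic \cite[Lemma B.3]{douglas_fusion_2018}. Using the $(1,1)$-direct sum decomposition of $X$ into simple objects of $\mathcal{C}^1$, and additivity of $\dim$ over $1$-direct sums (as in the proof of Lemma~\ref{Lemma_B2}), both sides split over the summands $X_i$ of $X$. A degree-$e$ simple $1$-morphism $W\to X$ factors through exactly one inclusion $q_i$, and $q_i$ has degree $1$. So it suffices to treat $X$ simple in $\mathcal{C}^1$. We may also assume $f$ is nonzero and homogeneous, or simply carry $f$ along, since only $\dim$ of composites with $f$ enters.

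For $X$ simple, we apply Lemma~\ref{Lemma_B1} to each term: $\dim(f\circ g)=\dim(f)\dim(g)\dim(X)^{-1}$. The left-hand side becomes
\[
\dim(f)\,\dim(X)^{-1}\sum_W \wdim(W)^{-1}\sum_{g\in\mathcal{C}^1_e(W,X)}\dim(g)^2 .
\]
By the definition of $\dim(\mathcal{C}^1_e(W,X))$, the inner sum equals $\dim(W)\dim(X)\dim(\mathcal{C}^1_e(W,X))$. Substituting $\wdim(W)=\dim(W)\dim(\mathcal{C}_1^1(W,W))\operatorname{n}(W)$, we are reduced to proving
\[
\sum_W \frac{\dim(\mathcal{C}^1_e(W,X))}{\dim(\mathcal{C}^1_1(W,W))\operatorname{n}(W)}=1 .
\]

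Only $W$ of degree $\partial(e)^{-1}\lvert X\rvert$ contribute. The key step is to show two things. First, $\dim(\mathcal{C}^1_e(W,X))$ vanishes unless $W$ lies in a single connected component. Second, on that component $\dim(\mathcal{C}^1_e(W,X))=\dim(\mathcal{C}^1_1(W,W))$. Given these, the $\operatorname{n}(W)$ representatives $W$ in that component each contribute $1/\operatorname{n}(W)$, which gives $1$.

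For the first claim, a nonzero degree-$e$ simple $g_0\colon W_0\to X$ exists. Lemma~\ref{nonzero_Hom_categories} and the finiteness axiom give a nonzero degree-$e$ $1$-morphism landing in $X$ of the correct degree, using $\mathcal{C}^1_{e'}(\mathbbm{1},\mathbbm{1})\neq0$ tensored with degree-compatible objects. Then $g\mapsto g_0^*\circ g$ identifies degree-$e$ maps $W\to X$ with degree-$1$ maps $W\to W_0$ up to the connectedness relation, so $W$ must be connected to $W_0$ in $\mathcal{C}^1_1$.

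The second claim is the main obstacle. I would mimic \cite[Proposition 1.2.27]{douglas_fusion_2018} in the graded setting. Post-composition with $g_0$ induces a Morita-type equivalence between the module category $\mathcal{C}^1_1(W,W_0)$ over the fusion category $\mathcal{C}^1_1(W_0,W_0)$ and $\mathcal{C}^1_e(W,X)$ over $\mathcal{C}^1_1(X,X)$, up to the degree shift by $e$. The argument then applies the dimension identity $\sum_g \dim(g)^2=\dim(W)\dim(X)\dim(\mathcal{C}^1_1(X,X))$ for an indecomposable bimodule category, as in the ungraded case, where the hypothesis that $\dim(\mathcal{C}^1_1(X,X))$ is invertible is used. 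Connected simple objects have equal $\dim(\mathcal{C}^1_1(-,-))$, which finishes the computation.
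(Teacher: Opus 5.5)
Your reduction to simple $X$ and the reformulation as $\sum_W \dim(\mathcal{C}^1_e(W,X))\,\dim(\mathcal{C}^1_1(W,W))^{-1}\operatorname{n}(W)^{-1}=1$ are sound. The splitting over summands needs more than additivity, namely $\dim(q_i\circ g')=\dim(q_i)\dim(g')\dim(X_i)^{-1}$ and $\dim(q_i)\dim(f\circ q_i)\dim(X_i)^{-1}=\dim(f\circ q_i\circ p_i)$, but it works. The gap is in the graded part, which is the only content of the lemma beyond Douglas--Reutter.

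\textbf{Existence.} Your argument for a nonzero degree-$e$ 1-morphism into $X$ fails when $\partial(e)\neq 1$. Lemma~\ref{nonzero_Hom_categories} only shifts degrees by $\mathrm{Ker}(\partial)$. Tensoring objects with 1-morphisms in $\mathcal{C}^1_{e'}(\mathbbm{1},\mathbbm{1})$, $e'\in\mathrm{Ker}(\partial)$, also only produces degrees in $\mathrm{Ker}(\partial)$.

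\textbf{Morita step.} Post-composition with an arbitrary simple $g_0\colon W_0\to X$ is not an equivalence $\mathcal{C}^1_1(W,W_0)\to\mathcal{C}^1_e(W,X)$; in general it is neither full nor essentially surjective. There is also no monoidal functor $\mathcal{C}^1_1(W_0,W_0)\to\mathcal{C}^1_1(X,X)$ along which it could be equivariant.

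\textbf{Closing step.} Your appeal to ``connected simple objects have equal $\dim(\mathcal{C}^1_1(-,-))$'' does not apply. Since $\lvert W\rvert=\partial(e)^{-1}\lvert X\rvert$, $W$ and $X$ are disconnected in $\mathcal{C}^1_1$ whenever $\partial(e)\neq 1$. What you would need is the graded statement $\dim(\mathcal{C}^1_1(X,X))=\dim(\mathcal{C}^1_1(W,W))$ across a degree-$e$ connection, which is exactly what is unproven.

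The missing idea is the last axiom of $\sigma$-semisimplicity: every object is a $(1,e)$-direct sum of simple objects of $\mathcal{C}^1$. The paper applies it directly to $X$, writing $X=\boxplus^{(1,e)}_{i}X_i$ with $q_i$ of degree $e$ and $p_i$ of degree $e^{-1}$. Then $g\mapsto p_{\alpha(g)}\circ g$ is a bijection from simple degree-$e$ 1-morphisms $W\to X$ to the disjoint union over $i$ of simple degree-$1$ 1-morphisms $W\to X_i$. Lemma~\ref{Lemma_B1} and the ungraded identity $\sum_{h\colon W\to X_i}\dim(h)^2=\dim(W)\dim(X_i)\dim(\mathcal{C}^1_1(W,W))$ (for $W$ connected to $X_i$ in $\mathcal{C}^1_1$, and $0$ otherwise) give $\dim(f\circ q_i\circ p_i)$ for each connected pair. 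The weights $\operatorname{n}(W)^{-1}$ together with $\sum_i\dim(f\circ q_i\circ p_i)=\dim(f)$ then finish the proof.

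In your own setup the fix is short. For $X$ simple, the $(1,e)$-decomposition collapses to a single $(1,e)$-equivalence $q\colon X'\to X$. From $1_X\cong q\circ p$ and $\dim(p)=\dim(q)$ you get $\dim(q)^2=\dim(X)\dim(X')$, hence $\dim(\mathcal{C}^1_e(W,X))=\dim(\mathcal{C}^1_1(W,X'))$. This settles both of your claims (existence and the value on the component of $X'$) with no graded Morita theory.
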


Notice that the sum over the $W$ in Lemma \ref{Lemma_B3} is infinite, but this is not an issue because
$$
\sum_{g\colon W\to X} \dim(f\circ g)\dim(g) \wdim(W)^{-1}= 0
$$
for all but finitely many $W$. 

\begin{proof}
Let $W$ be a degree-$h$ homogeneous simple object in $\mathcal{C}^1$.  We decompose $X$ into a~$(1,e)$-direct sum
$$
X =  \boxplus_{i\in I}^{(1,e)}X_i,
$$
with $p_i \colon X \to X_i$ homogeneous of degree $e^{-1}$, $q_i\colon X_i \to X$ homogeneous of degree $e$, and $\phi_i \colon p_i\circ q_i\ \tilde{ \Rightarrow} \ 1_{X_i}$ homogeneous of degree 1, for all $i\in I$.  If $g \colon W\to X$ is a degree-$e$ simple 1-morphism in $\mathcal{C}^1$, then there exists a unique index $\alpha(g) \in I$ such that $p_{\alpha(g)} \circ g \neq 0$. In fact, there is a bijection
\begin{align*}
\left\{\text{iso classes of simples in $\mathcal{C}_e^1(W,X)$}\right\} & \leftrightarrow  \bigsqcup_{i\in I}\left \{\text{iso classes of simples in $\mathcal{C}_1^1(W,X_i)$}\right\}
\end{align*}
given by sending a degree-$e$ simple 1-morphism $g$ in $\mathcal{C}^1(W,X)$ to $p_{\alpha(g)} \circ g$,  and a degree-$1$ simple 1-morphism $h$ in $\mathcal{C}^1(W, X_i)$ to $q_i\circ h$.  Therefore, 
\begin{align*}
\sum_{g\colon W \to X} \dim(f\circ g) \dim(g) &= \sum_{i\in I} \sum_{h\colon W\to X_i} \dim(f\circ q_i \circ h) \dim(q_i\circ h)\\
&= \sum_{i\in I} \sum_{h\colon W\to X_i} \frac{\dim(q_i) \dim(h)}{\dim(X_i)}\dim(f\circ q_i\circ h),
\end{align*}
where the second equality follows from Lemma \ref{Lemma_B1}. Therefore,
\begin{align*}
\sum_{g\colon W \to X} \dim(f\circ g) \dim(g) &= \sum_{i \in I_W} \frac{\dim(q_i)}{\dim(X_i)}\dim(f\circ q_i) \dim(W) \dim(\mathcal{C}^1_1(W,W))
\end{align*}
where $I_W \subset I$ is the set of indices such that $X_i$ and $W$ are connected.  Notice that $q_i$ is adjoint to~$p_i$, therefore 
$$
\dim(q_i) = \dim(p_i),
$$
and 
$$
\frac{\dim(q_i)}{\dim(X_i)}\dim(f\circ q_i) = \dim(f\circ q_i) \dim(p_i) \dim(X_i)^{-1} = \dim (f\circ q_i \circ p_i).
$$
We conclude that
\begin{align*}
\sum_{W}\sum_{g\colon W\to X} \frac{\dim(f\circ g)\dim(g)}{\wdim(W)} 
&= \sum_{W} \frac{1}{\mathrm{n}(W)}\sum_{i\in I_W} \dim(f\circ q_i\circ p_i)\\
&= \sum_{i\in I} \dim(f\circ q_i\circ p_i)\\
&= \dim(f)
\end{align*}
where the last equality results from the fact that for any 1-morphisms $h$ and $k$, $\dim(h\oplus k) = \dim(h) +\dim(k)$. 
\end{proof}

\begin{lem}\label{Lemma_B4}
For all $e\in E$, 
$$
\sum_{X}\sum_{f\colon X\to Y} \frac{\dim(f)^2}{\wdim(X)} = \dim(Y),
$$
where $X$ runs over a representative set of homogeneous simple objects in $\mathcal{C}^1$, and $f$ runs over a representative set of degree-$e$ simple 1-morphisms in $\mathcal{C}^1(X,Y)$. 
\end{lem}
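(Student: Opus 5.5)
The plan is to apply Lemma \ref{Lemma_B3} with $f = 1_Y$, the identity 1-morphism of $Y$, and with the given $e\in E$. With this choice, Lemma \ref{Lemma_B3} gives
$$
\sum_W \sum_{g\colon W\to Y} \dim(1_Y\circ g)\dim(g)\,\wdim(W)^{-1} = \dim(1_Y) = \dim(Y).
$$
Here $W$ runs over a representative set of homogeneous simple objects of $\mathcal{C}^1$, and $g$ runs over a representative set of degree-$e$ simple 1-morphisms in $\mathcal{C}^1(W,Y)$.

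Since the underlying 2-category is strict, $1_Y\circ g = g$, so each summand equals $\dim(g)^2\wdim(W)^{-1}$. After renaming $W$ to $X$ and $g$ to $f$, this is exactly the claimed identity.

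The only points to check are the following.
\begin{itemize}
\item The standing hypotheses of Section \ref{Section_Dimension_formulas} (invertibility of the relevant dimensions) are those under which Lemma \ref{Lemma_B3} was proved, so it applies verbatim to $f=1_Y$.
\item The identity $1_Y$ is a legitimate input to Lemma \ref{Lemma_B3}, which holds for an arbitrary 1-morphism $f\colon X\to Y$.
\item The weighted dimensions $\wdim(X)$ are invertible, so the left-hand side makes sense. This follows from Lemma \ref{nonzero dimension of simple 1-morphisms} together with the invertibility of $\dim(\mathcal{C}^1_1(X,X))$ and of $\operatorname{n}(X)$.
\item The sum over $X$ has only finitely many nonzero terms, as remarked after Lemma \ref{Lemma_B3}.
\end{itemize}

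If $Y$ is zero, both sides vanish. Otherwise no further case distinction is needed, since Lemma \ref{Lemma_B3} already handles non-simple $Y$ through its $(1,e)$-direct sum decomposition. I expect no real obstacle: the content lies entirely in Lemma \ref{Lemma_B3}, and this statement is its specialization to an identity 1-morphism.
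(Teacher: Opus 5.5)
Your proposal is correct and is exactly the paper's proof: specialize Lemma \ref{Lemma_B3} to the identity 1-morphism on the target object, use $1_Y\circ g = g$ and $\dim(1_Y)=\dim(Y)$, and rename the summation variables. The paper writes ``$f = 1_X$'' in the notation of Lemma \ref{Lemma_B3}, which is the same as your $1_Y$ in the notation of the statement.
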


\begin{proof}
Apply Lemma \ref{Lemma_B3} with $f = 1_X$. 
\end{proof}

\begin{lem}\label{Lemma_B5}
Let $Z$ be a simple object in $\mathcal{C}^1$.  Then, for all $h,k\in H$ and for all $e\in E$ such that $$
\partial(e) = \lvert Z \rvert k^{-1}h^{-1},
$$
we have
$$
\sum_{X, Y} \sum_{f\colon X\otimes Y\to Z} \frac{\dim(f)^2}{\dim(Z)\wdim(X)\wdim(Y)} = \dim(\prescript{}{1}{\mathcal{C}^1_1}),
$$
where $X$ runs over a representative set of degree-$h$ homogeneous simple objects in $\mathcal{C}^1$, $Y$ runs over a representative set of degree-$k$ homogeneous simple objects in $\mathcal{C}^1$, and $f$ runs over a representative set of degree-$e$ simple~1-morphisms in $\mathcal{C}^1(X\otimes Y, Z)$. 
\end{lem}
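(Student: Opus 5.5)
We follow the strategy of \cite[Lemma B.5]{douglas_fusion_2018}, adapted to the grading by means of Lemma \ref{Lemma_B3} and Lemma \ref{Lemma_B4}. The plan is to collapse the sum over $Y$ and $f$ using duality and Lemma \ref{Lemma_B4}, and then to recognize the remaining sum over $X$ as a categorical dimension.

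\emph{Step 1: reduce to 1-morphisms out of $Y$.} Fix a degree-$h$ homogeneous simple object $X$ of $\mathcal{C}^1$, and let $X^\#$ be its dual, which has degree $h^{-1}$. Using the fold $i_X$ and the cusps, we would build a bijection between isomorphism classes of simple 1-morphisms $f\colon X\otimes Y\to Z$ of degree $e$ and simple 1-morphisms $\tilde f = (X^\#\otimes f)\circ(i_X\otimes Y)\colon Y\to X^\#\otimes Z$. Since the folds are of degree $1$, the grading rules for $X^\#\otimes -$ give $\tilde f$ degree $\prescript{h^{-1}}{}{e}$. Sphericality, as in the proof of Lemma \ref{nonzero dimension of simple 1-morphisms}, should give $\dim(\tilde f)=\dim(f)$. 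We then apply Lemma \ref{Lemma_B4} to the target $X^\#\otimes Z$ with the fixed degree $\prescript{h^{-1}}{}{e}$, with $Y$ running over all homogeneous simples. Only those $Y$ with $\partial(\prescript{h^{-1}}{}{e})\,k=h^{-1}\lvert Z\rvert$, i.e.\ of degree $k$, can contribute, and the condition $\partial(e)=\lvert Z\rvert k^{-1}h^{-1}$ is exactly what makes this consistent. The result is
\[
\sum_{Y}\sum_{f}\frac{\dim(f)^2}{\wdim(Y)}=\dim(X^\#\otimes Z).
\]

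\emph{Step 2: the sum over $X$.} The left-hand side now equals $\dim(Z)^{-1}\sum_X \dim(X^\#\otimes Z)\wdim(X)^{-1}$, with $X$ running over degree-$h$ simples. Applying Lemma \ref{Lemma_B3} once more, or equivalently evaluating $\dim(X^\#\otimes Z)=\dim(e_{X}\circ\ldots)$ by decomposing $X^\#\otimes Z$ into $(1,1)$-simple summands, we rewrite $\dim(X^\#\otimes Z)$ as
\[
\sum_{g\colon Z\to X}\frac{\dim(g)^2}{\dim(Z)}\cdot(\text{correction}).
\]
This turns the expression into
\[
\sum_X \frac{\dim(X)\dim(\mathcal{C}^1_{e'}(Z,X))}{\wdim(X)}
=\sum_X\frac{\dim(\mathcal{C}^1_{e'}(Z,X))}{\dim(\mathcal{C}^1_1(X,X))\,\mathrm{n}(X)}
\]
for a suitable $e'$. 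By Lemma \ref{nonzero_Hom_categories} together with \cite[Proposition~1.2.27]{douglas_fusion_2018}, we expect $\dim(\mathcal{C}^1_{e'}(Z,X))$ to equal $\dim(\mathcal{C}^1_1(X,X))$ when $X$ is connected to a suitable translate of $Z$, and to vanish otherwise. Grouping the $X$ into connected classes then cancels the factor $\mathrm{n}(X)$ and leaves $\sum_X\dim(\mathcal{C}^1_1(X,X))^{-1}$ over a representative set of disconnected simples. This sum still has to be identified with $\dim(\prescript{}{1}{\mathcal{C}_1^1})$.

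\emph{Main obstacle.} The main obstacle is this last identification, which is independence of the grading. We must show that $\dim(\prescript{}{h}{\mathcal{C}_1^1})=\dim(\prescript{}{1}{\mathcal{C}_1^1})$ for every $h\in H$, and similarly that shifting 1-morphism degrees by $\mathrm{Ker}(\partial)$ does not change $\dim(\mathcal{C}^1_{e'}(Z,X))$. We would try to establish this by tensoring with a simple object $A$ of degree $h$, which exists by finiteness. The induced functor $A\otimes-$ together with $A^\#\otimes -$ should give a correspondence between connected components of $\prescript{}{1}{\mathcal{C}^1_1}$ and those of $\prescript{}{h}{\mathcal{C}^1_1}$, weighted by the invertible $\dim(\mathcal{C}^1_1(A,A))$. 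This mirrors the usual proof that all graded components of a faithfully graded fusion category have equal dimension. The degree-bookkeeping in Step 1, namely checking that the twisted degrees $\prescript{h^{-1}}{}{e}$ are fixed uniformly so that Lemma \ref{Lemma_B4} applies, is routine but must be done carefully.
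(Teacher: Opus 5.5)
Your Step~1 is correct and is exactly the second half of the paper's argument. Dualizing $f\colon X\otimes Y\to Z$ to $Y\to X^\#\otimes Z$ (of degree $\prescript{h^{-1}}{}{e}$) and applying Lemma~\ref{Lemma_B4} gives $\sum_{Y,f}\dim(f)^2/\wdim(Y)=\dim(X^\#\otimes Z)$.

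The gap is in Step~2, and there are two problems with it.
\begin{itemize}
\item Your rewriting of $\dim(X^\#\otimes Z)$ as $\sum_{g\colon Z\to X}\dim(g)^2$ (up to an unspecified ``correction'') is false. This is the claim that the left-hand side becomes $\sum_X \dim(X)\dim(\mathcal{C}^1_{e'}(Z,X))/\wdim(X)$. Take $2\mathrm{Vect}_G$ with $\sigma$ trivial: there $\dim(X^\#\otimes Z)=1$ for all $X,Z\in G$, whereas $\mathcal{C}^1(Z,X)=0$ unless $X=Z$.
\item Your expectation does not produce the stated answer either. If $\dim(\mathcal{C}^1_{e'}(Z,X))=\dim(\mathcal{C}^1_1(X,X))$ for $X$ connected to $Z$ and $0$ otherwise, the sum collapses to $\sum_{X\sim Z}\mathrm{n}(X)^{-1}=1$. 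It does not give $\sum\dim(\mathcal{C}^1_1(X,X))^{-1}$ over all disconnected classes. For $2\mathrm{Vect}_G$ the true value is $|G|$.
\end{itemize}

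The missing fact is multiplicativity of object dimensions: $\dim(X^\#\otimes Z)=\dim(X^\#)\dim(Z)=\dim(X)\dim(Z)$. The paper uses this implicitly. With it, the left-hand side is $\sum_X \dim(X)/\wdim(X)=\sum_X \bigl(\mathrm{n}(X)\dim(\mathcal{C}^1_1(X,X))\bigr)^{-1}$. Grouping the $\mathrm{n}(X)$ equivalence classes in each connected component turns this into $\dim(\prescript{}{h}{\mathcal{C}^1_1})$. No 1-morphisms $Z\to X$ and no degree shifts by $\mathrm{Ker}(\partial)$ enter at all.

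Your ``main obstacle'', identifying $\dim(\prescript{}{h}{\mathcal{C}^1_1})$ with $\dim(\prescript{}{1}{\mathcal{C}^1_1})$, is resolved in the paper by a double count, with no tensoring argument. Collapse the same sum the other way: sum over $(X,f)$ using the mates $X\to Z\otimes Y^\#$ and Lemma~\ref{Lemma_B4}, together with $\dim(Z\otimes Y^\#)=\dim(Z)\dim(Y)$. This gives $\dim(\prescript{}{k}{\mathcal{C}^1_1})$.

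Hence $\dim(\prescript{}{h}{\mathcal{C}^1_1})=\dim(\prescript{}{k}{\mathcal{C}^1_1})$ whenever the hypotheses can be met. They can be met for any $h,k$: take $e=1$ and $Z$ any simple object of degree $hk$, which exists by finiteness. Setting $k=1$ gives the claim.

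Your proposed sketch via $A\otimes-$ is not carried out. Tensoring with $A$ does not send simples to simples, so the asserted weighted correspondence of connected components would need real work. Given the double count, it is also unnecessary.
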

\begin{proof}
By sphericality and by Lemma \ref{Lemma_B4},  the left-hand side of this equation is equal to 
$$
\sum_{X,Y} \sum_{f\colon X\to Z\otimes Y^\#}  \frac{\dim(f)^2}{\dim(Z)\wdim(X)\wdim(Y)} = \sum_Y \frac{\dim(Z\otimes Y^\#)}{\dim(Z) \wdim(Y)} = \sum_Y \frac{1}{\mathrm{n}(Y) \dim(\mathcal{C}_1^1(Y,Y))} = \dim(\prescript{}{k}{\mathcal{C}_1^1}).
$$
Similarly, the left-hand side is also equal to
$$
\sum_{X,Y} \sum_{f\colon Y\to X^\#\otimes Z}  \frac{\dim(f)^2}{\dim(Z)\wdim(X)\wdim(Y)} = \dim(\prescript{}{h}{\mathcal{C}_1^1}).
$$
As a consequence, we have that $\dim(\prescript{}{h}{\mathcal{C}_1^1}) = \dim(\prescript{}{1}{\mathcal{C}_1^1})$ for all $h\in H$.  This proves the result. 
\end{proof}

\begin{lem}\label{Lemma_B6}
Let $f\colon X\to Y$ be a 1-morphism in $\mathcal{C}$, let $e\in E$, and let $l\in L$. Then
$$
\sum_Z \sum_{h\colon X\to Z} \sum_{g\colon Z \to Y} \dim(\mathrm{Hom}_{\mathcal{C}(X,Y)}^l(g\circ h, f) )\frac{\dim(g) \dim(h)}{\wdim(Z)} = \dim(f),
$$
where $Z$ runs over a representative set of simple objects of $\mathcal{C}^1$, $h$ runs over a representative set of simple 1-morphisms of $\mathcal{C}^1(X,Z)$, and $g$ runs over a representative set of degree-$e$ simple 1-morphisms in $\mathcal{C}^1(Z,Y)$. 
\end{lem}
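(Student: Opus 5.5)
We reduce Lemma~\ref{Lemma_B6} to Lemma~\ref{Lemma_B2} and Lemma~\ref{Lemma_B3}, mirroring the proof of the corresponding statement in \cite[Appendix B]{douglas_fusion_2018}.

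First, fix $Z$ and a simple degree-$e$ 1-morphism $g\colon Z\to Y$ in $\mathcal{C}^1$. Using the adjunction between $g$ and $g^*$ (the units and counits are homogeneous of degree $1$), we identify
$$
\mathrm{Hom}^l_{\mathcal{C}(X,Y)}(g\circ h, f)\cong \mathrm{Hom}^{l'}_{\mathcal{C}(X,Z)}(h, g^*\circ f)
$$
for a suitable $l'\in L$; since the adjunction isomorphism is built from degree-$1$ units and counits together with horizontal composition with $g^*$, one expects $l'=\lvert g\rvert^{-1}\triangleright l$. This identification is compatible with dimensions over $\mathbbm{k}$, because by Lemma~\ref{Lemma SV} these Hom-modules are free of finite rank. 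Applying Lemma~\ref{Lemma_B2} to the 1-morphism $g^*\circ f\colon X\to Z$ with degree $l'$, the sum over $h$ collapses:
$$
\sum_{h\colon X\to Z}\dim(\mathrm{Hom}^{l'}(h,g^*\circ f))\dim(h)=\dim(g^*\circ f).
$$

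Next, the left-hand side of Lemma~\ref{Lemma_B6} becomes
$$
\sum_Z\sum_{g\colon Z\to Y}\dim(g^*\circ f)\dim(g)\wdim(Z)^{-1}.
$$
By sphericality, $\dim(g^*\circ f)=\dim(f^*\circ g)$: the trace is invariant under taking mates/adjoints, and $(g^*\circ f)^*=f^*\circ g$ has the same dimension. Also $\dim(g)=\dim(g^*)$. Hence the expression equals
$$
\sum_Z\sum_{g\colon Z\to Y}\dim(f^*\circ g)\dim(g)\wdim(Z)^{-1}.
$$
This is exactly the left-hand side of Lemma~\ref{Lemma_B3} applied to $f^*\colon Y\to X$ with degree $e$, where $g$ runs over degree-$e$ simple 1-morphisms into $Y$. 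Lemma~\ref{Lemma_B3} gives $\dim(f^*)=\dim(f)$, which is the claim.

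The main obstacle is the first step. One must verify that the graded adjunction really gives an isomorphism of graded pieces, with degree shift $l\mapsto l'$ determined by the crossed-module action in the horizontal-composition grading rule. One must also check that the degree-$1$ representative set of simple $h$ in Lemma~\ref{Lemma_B2} matches the one used here. Since Lemma~\ref{Lemma_B2} holds for every degree, the precise value of $l'$ does not matter, so this step should reduce to bookkeeping.
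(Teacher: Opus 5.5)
Your proposal is correct and follows the paper's proof step for step. The paper likewise uses the adjunction isomorphism $\mathrm{Hom}^l(g\circ h,f)\cong\mathrm{Hom}^{e^{-1}\triangleright l}(h,g^*\circ f)$ (your $l'=\lvert g\rvert^{-1}\triangleright l$ is exactly its degree shift), collapses the sum over $h$ with Lemma~\ref{Lemma_B2}, uses $\dim(g^*\circ f)=\dim(f^*\circ g)$, and concludes by applying Lemma~\ref{Lemma_B3} to $f^*$ together with $\dim(f^*)=\dim(f)$.
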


\begin{proof}
By pivotality, 
$$
\mathrm{Hom}_{\mathcal{C}(X,Y)}^l(g\circ h, f) \cong \mathrm{Hom}_{\mathcal{C}(X,Z)}^{e^{-1}\triangleright l}(h, g^*\circ f).
$$
Using Lemma \ref{Lemma_B2}, we have
$$
\sum_{h\colon X\to Z}\dim(\mathrm{Hom}_{\mathcal{C}(X,Z)}^{e^{-1}\triangleright l}(h, g^*\circ f))\dim(h) = \dim(g^*\circ f) = \dim(f^*\circ g).
$$
Therefore, using Lemma \ref{Lemma_B3}, the left-hand side of the equation becomes
$$
\sum_Z \sum_{g\colon Z \to Y}\frac{\dim(g)\dim(f^*\circ g)}{\wdim(Z)} = \dim(f^*),
$$
and $\dim(f^*) = \dim(f)$. 

\end{proof}

\begin{lem}\label{Lemma_B7}
Let $f\colon X\to Y$ be a 1-morphism in $\mathcal{C}$, let $e\in E$ and $l\in L$.  Then,
$$
\sum_Z  \sum_{h\colon X\to Z} \sum_{g\colon Z\to Y} \frac{\dim(g)\dim(h)}{\wdim(Z)}\sum_{\gamma} \gamma\cdot \hat{\gamma} = 1_f,
$$
where $Z$ runs over a representative set of simple objects of $\mathcal{C}^1$, $h$ runs over a representative set of simple 1-morphisms in $\mathcal{C}^1(X,Z)$, and $g$ runs over a representative set of degree-$e$ simple 1-morphisms in $\mathcal{C}^1(Z,Y)$. Moreover, $\gamma$ runs over a basis of the free module $\mathrm{Hom}^l_{\mathcal{C}(X,Y)}(g\circ h, f)$, and $\hat{\gamma}$ is the dual of $\gamma$ in the basis of \mbox{$\mathrm{Hom}^{l^{-1}}_{\mathcal{C}(X,Y)}(f, g\circ h)$} with respect to the trace pairing. 
\end{lem}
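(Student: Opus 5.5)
The plan is to reduce Lemma \ref{Lemma_B7} to the scalar identity of Lemma \ref{Lemma_B6}, using nondegeneracy of the trace pairing (Lemma \ref{The trace pairing is nondegenerate}). Write $P$ for the left-hand side. It is a degree-1 homogeneous 2-endomorphism of $f$: each $\gamma\cdot\hat\gamma$ has degree $l^{-1}l=1$. By Lemma \ref{The trace pairing is nondegenerate} applied with $l=1$, it suffices to show that $\mathrm{Tr}(P\cdot\beta)=\mathrm{Tr}(\beta)$ for every degree-1 homogeneous 2-endomorphism $\beta\colon f\Rightarrow f$. Indeed, $P-1_f$ then pairs to zero with everything, so it vanishes.

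To reduce further, decompose $f$ as a $1$-direct sum $f=\bigoplus^1_j f_j$ of simple 1-morphisms of $\mathcal{C}^1(X,Y)$, with projections $\pi_j$ and inclusions $\iota_j$.

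The first step is to show that $P$ commutes with every degree-1 2-endomorphism $\beta$ of $f$. For fixed $(Z,h,g)$, set $V=\mathrm{Hom}^l(g\circ h,f)$. The element $\sum_\gamma \gamma\otimes\hat\gamma$ is the canonical copairing for the trace pairing, so it is independent of the chosen basis. Post-composition with $\beta$ acts on $V$, and its transpose with respect to the pairing is pre-composition with $\beta$ on $\mathrm{Hom}^{l^{-1}}(f,g\circ h)$. Cyclicity of the trace then gives $\sum_\gamma (\beta\cdot\gamma)\cdot\hat\gamma=\sum_\gamma\gamma\cdot(\hat\gamma\cdot\beta)$, that is, $\beta\cdot P=P\cdot\beta$.

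Next, $\pi_j P\iota_k$ is a degree-1 map $f_k\Rightarrow f_j$, so it is a scalar multiple of an isomorphism when $f_j\cong f_k$ and zero otherwise. Commutation with the idempotents $\iota_j\pi_j$ then forces $P=\sum_j c_j\,\iota_j\pi_j$, where the scalar $c_j$ depends only on the isomorphism class of $f_j$. Write $c_s$ for the common value on the class of a simple $s$.

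It remains to show $c_s=1$, by computing traces restricted to a single summand. This is where Lemma \ref{Lemma_B6} is needed, but applied to a simple $s$ in place of $f$. Since $P_f$ is additive under direct sums (the copairing splits), it suffices to treat $f=s$ simple. In that case $P_s=c\,1_s$ and
\[
\mathrm{Tr}(P_s)=\sum\frac{\dim(g)\dim(h)}{\wdim(Z)}\sum_\gamma\mathrm{Tr}(\gamma\cdot\hat\gamma)=\sum\frac{\dim(g)\dim(h)}{\wdim(Z)}\dim\bigl(\mathrm{Hom}^l(g\circ h,s)\bigr),
\]
because $\sum_\gamma \mathrm{Tr}(\gamma\cdot\hat\gamma)=\dim V$ by duality of the bases. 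Lemma \ref{Lemma_B6} identifies the right-hand side with $\dim(s)$. Hence $c\dim(s)=\dim(s)$, and $c=1$ because $\dim(s)$ is invertible by Lemma \ref{nonzero dimension of simple 1-morphisms}.

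The main obstacle is justifying that the inner sums over the modules $\mathrm{Hom}^l(g\circ h,-)$ decompose compatibly with $f=\bigoplus^1_j f_j$. This requires $\mathrm{Hom}^l(g\circ h,f)\cong\bigoplus_j\mathrm{Hom}^l(g\circ h,f_j)$, with dual bases obtained by composing with $\iota_j$ and $\pi_j$, which follows from Lemma \ref{Lemma SV}(3) and the freeness of the Hom modules. Alongside this, one must track the $L$-gradings and the finiteness of the sums, which follows from the finiteness axioms, exactly as in Lemma \ref{Lemma_B6}.
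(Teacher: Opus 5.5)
Your proof is correct and is essentially the paper's argument. Both split the left-hand side along a decomposition of $f$ into simple summands and then apply Lemma~\ref{Lemma_B6} to a simple $s$, using that $\dim(s)$ is invertible. The only difference is that the paper writes down the explicit trace-dual basis $\frac{1}{\dim(s_i)}\widehat{\beta}^i_k\cdot\widehat{\alpha}^i_j$ of the product basis $\alpha^i_j\cdot\beta^i_k$, whereas you use basis-independence of the copairing and read off the scalar in $P_s=c\,1_s$ from its trace. Once you have the splitting $P_f=\sum_j\iota_j\cdot P_{f_j}\cdot\pi_j$, your commutation step and the opening reduction via nondegeneracy are not needed. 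That splitting follows directly from the $1$-direct sum relations $\pi_j\cdot\iota_k=\delta_{jk}1_{f_j}$ and $\sum_j\iota_j\cdot\pi_j=1_f$ together with cyclicity of the trace.
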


\begin{proof}
We adapt the proof of \cite[Corollary B.7]{douglas_fusion_2018}.  Note that $\sum_{\gamma} \gamma\cdot \hat{\gamma}$ is independent of the choice of basis of~$\mathrm{Hom}^l_{\mathcal{C}(X,Y)}(g\circ h, f)$. Let $\{s_i\}_{i\in I}$ be a representative set of simple 1-morphisms in $\mathcal{C}^1(X,Y)$.  For each $i\in I$, let~$\{\alpha^i_j\}_{j\in J_i}$ be a basis of $\mathrm{Hom}_{\mathcal{C}(X,Y)}^1(s_i, f)$, and let $\{\beta^i_k\}_{k\in K_i}$ be a basis of $\mathrm{Hom}_{\mathcal{C}(X,Y)}^l(g\circ h, s_i)$.  By Lemma~\ref{Lemma SV}, the set 
$$
\mathcal{B} = \sqcup_{i\in I}\{\alpha^i_j\cdot \beta^i_k \mid j\in J_i, \ k \in K_i \}
$$
is a basis of $\mathrm{Hom}_{\mathcal{C}(X,Y)}^l(g\circ h, f)$.  

Since $s_i$ is simple, there are nondegenerate pairings
$$
\mathrm{Hom}_{\mathcal{C}(X,Y)}^1(f, s_i) \otimes \mathrm{Hom}_{\mathcal{C}(X,Y)}^1(s_i, f)\to \mathbbm{k} \quad \text{and} \quad  \mathrm{Hom}_{\mathcal{C}(X,Y)}^l(g\circ h, s_i) \otimes \mathrm{Hom}_{\mathcal{C}(X,Y)}^{l^{-1}}(s_i, g\circ h)\to \mathbbm{k},
$$
given by the vertical composition of 2-morphisms, where we identify $\mathrm{Hom}_{\mathcal{C}(X,Y)}^1(s_i, s_i)$ with $\mathbbm{k}$.  For these pairings, we denote by~$\{\widehat{\alpha}^i_j\}_{j\in J_i}$ and $\{\widehat{\beta}^i_k\}_{k\in K_i}$ the dual bases of $\{\alpha^i_j\}_{j\in J_i}$ and $\{\beta^i_k\}_{k\in K_i}$, respectively. Direct calculation shows that 
$$
\bigsqcup_{i\in I} \left\{\frac{1}{\dim(s_i)}\widehat{\beta}^i_k\cdot \widehat{\alpha}^i_j \mid k\in K_i, \ j\in J_i\right\}
$$
is a dual basis of $\mathcal{B}$ for the nondegenerate trace pairing (see Lemma \ref{The trace pairing is nondegenerate})
$$
\mathrm{Hom}_{\mathcal{C}(X,Y)}^l(g\circ h, f) \otimes \mathrm{Hom}_{\mathcal{C}(X,Y)}^{l^{-1}}(f, g\circ h)\to \mathbbm{k}.
$$
With this choice of basis, the left-hand side of the equation becomes
$$
\sum_Z  \sum_{h\colon X\to Z} \sum_{g\colon Z\to Y} \frac{\dim(g)\dim(h)}{\wdim(Z)}\sum_{i\in I}\frac{1}{\dim(s_i)}\sum_{j\in J_i}\sum_{k\in K_i}\alpha^i_j\cdot  \beta^i_k\cdot  \widehat{\beta}^i_k\cdot  \widehat{\alpha}^i_j.
$$
Notice that $\sum_{k\in K_i} \beta^i_k\cdot  \widehat{\beta}^i_k = \dim(\mathrm{Hom}_{\mathcal{C}(X,Y)}^l(g\circ h, s_i))1_{s_i}$. Therefore, by Lemma \ref{Lemma_B6}, we get that the left-hand side of the equation is equal to 
$$
\sum_{i\in I}\frac{1}{\dim(s_i)}\sum_{j\in J_i} \alpha^i_j\cdot\widehat{\alpha}^i_j \dim(s_i),
$$
and $\sum_{i\in I}\sum_{j\in J_i} \alpha^i_j\cdot\widehat{\alpha}^i_j = 1_f$. 
\end{proof}

\section{Colored 10j-symbols}\label{Section. 10j-symbols}

\noindent In this section,  let $\sigma$ be a 2-crossed module of groups (see Section \ref{Section. 2-crossed modules}),  let~$(K,\phi)$ be a $\sigma$-colored simplicial complex (see Section \ref{Section. Classification of homotopies}), and let $\mathcal{C}$ be a spherical $\sigma$-fusion 2-category (see Section \ref{Section. Graded-fusion 2-categories}). We introduce a notion of~$\mathcal{C}$-state of $(K,\phi)$ that associates to each edge of $K$ a simple object of $\mathcal{C}$ and to each triangle of $K$ a simple~1-morphism of~$\mathcal{C}$ in a way that is compatible with the coloring $\phi$.  From every such state, we derive colored~10j-symbols that generalize the 10j-symbols for spherical fusion 2-categories defined by Douglas and Reutter~\cite{douglas_fusion_2018}.  This enables us to obtain a scalar,  called the 10j-action of the canonical associated state. 

\subsection{Simplicial skeletons of spherical graded-fusion 2-categories}
Recall that $\mathcal{C}^1$ denotes the 1-subcategory of $\mathcal{C}$, that is the linear monoidal 2-category with the same objects and 1-morphisms as $\mathcal{C}$ and only the 2-morphisms of degree 1, see Section \ref{Subsection_1 and (1,1)-subcategories}.  The spherical $\sigma$-fusion 2-cate\-gory~$\mathcal{C}$ determines a 2-truncated semisimplicial set $\Delta(\mathcal{C})$ with $\Delta(\mathcal{C})_0 = \{*\}$,  $\Delta(\mathcal{C})_1 = \{\text{homogeneous simple objects of }\mathcal{C}^1\}$, and 
$$
\Delta(\mathcal{C})_2 = \{(X,Y,Z, f) \mid X, Y, Z \in \Delta(\mathcal{C})_1,  f \text{ is a simple 1-morphism in }\mathcal{C}^1(X\otimes Y, Z)\}.
$$
A \emph{simplicial skeleton} of a spherical $\sigma$-fusion 2-category $\mathcal{C}$ is a subsemisimplicial set $\mathcal{S} \subset \Delta(\mathcal{C})$ such that:
\begin{itemize}
\item the tensor unit $\mathbbm{1}$ is an element of $\mathcal{S}_1$,
\item if $X\in \mathcal{S}_1$, then $X^\#\in \mathcal{S}_1$,
\item every element of $\Delta(\mathcal{C})_1$ is $(1,1)$-equivalent to exactly one element of $\mathcal{S}_1$, 
\item for every element $X \in \mathcal{S}_1$, $(X,\mathbbm{1}, X, 1_X)$ and $(\mathbbm{1}, X, X, 1_X)$  are elements of $\mathcal{S}_2$,
\item if $(X,Y,Z, f) \in \mathcal{S}_2$, then $(Z, Y^\#, X, (X\otimes e_Y) \circ (f^*\otimes Y^\#)) \in \mathcal{S}_2$,
\item for all $X, Y, Z\in \mathcal{S}_1$, and simple $f\in \mathcal{C}^1(X\otimes Y, Z)$, there exists a unique $(X,Y,Z,g)\in \mathcal{S}_2$ such that $f$ and $g$ are isomorphic in $\mathcal{C}^1$.
\end{itemize}

\subsection{States}

Let $E(K)$ denote the set of ordered edges of $K$, and let $T(K)$ denote the set of ordered triangles of $K$, see Section \ref{Section_Ordered simplices}.  A \emph{$\mathcal{C}$-state} of $(K,\phi)$ is a pair of set maps 
$$
\Gamma = (\Gamma_1 \colon E(K) \to \Delta(\mathcal{C})_1, \ \Gamma_2\colon T(K) \to \Delta(\mathcal{C})_2)
$$ such that for every oriented edge $\langle v_0, v_1\rangle$ of $K$, the object $\Gamma_1\langle v_0, v_1\rangle$ is homogeneous of degree $\phi_1\langle v_0, v_1\rangle$ and 
$$
\Gamma_1\langle v_1, v_0\rangle = \Gamma_1\langle v_0, v_1\rangle^\#,
$$
and for every oriented triangle $\langle v_0, v_1, v_2\rangle$ of $K$ with distinguished vertex $v_0$,  the 1-morphism $\Gamma_2\langle v_0, v_1, v_2\rangle$ goes from $\Gamma_1\langle v_0,v_1\rangle \otimes \Gamma_1\langle v_1, v_2\rangle$ to $\Gamma_1\langle v_0, v_2\rangle$,  is homogeneous of degree $\phi_2\langle v_0, v_1, v_2\rangle$, and verifies
$$
\Gamma_2\langle v_0, v_2, v_1\rangle = \left( \Gamma_1\langle v_0, v_1\rangle \otimes e_{\Gamma_1\langle v_1, v_2\rangle}\right) \circ \left( \Gamma_2\langle v_0, v_1, v_2\rangle^*\otimes \Gamma_1\langle v_1, v_2\rangle^\#\right).
$$
We say that a $\mathcal{C}$-state of $(K,\phi)$ \emph{takes values} in a simplicial skeleton $\mathcal{S}$ of $\mathcal{C}$ if $\Gamma_1(E(K))\subset \mathcal{S}_1$ and $\Gamma_2(T(K))\subset \mathcal{S}_2$.

\subsection{Change of pointing for an oriented triangle}
Let $\mathcal{S}$ be a simplicial skeleton of $\mathcal{C}$, and let $\Gamma$ be a $\mathcal{C}$-state of $(K,\phi)$ taking values in $\mathcal{S}$.  For any ordered triangle $\langle v_0,v_1,v_2\rangle$ of $K$ and for any 1-morphism
$$f\colon \Gamma_1\langle v_0,v_1\rangle \otimes \Gamma_1\langle v_1, v_2\rangle \to \Gamma_1\langle v_0,v_2\rangle$$
in $\mathcal{S}_2$, we denote by $\prescript{\Gamma_1\langle v_1, v_0\rangle}{}{f}\colon \Gamma_1\langle v_1, v_2\rangle \otimes \Gamma_1\langle v_2, v_0\rangle \to \Gamma_1\langle v_1, v_0\rangle$ the 1-morphism in $\mathcal{S}_2$ that is degree-1 isomorphic to 
\begin{align*}
(\Gamma_1\langle v_1, v_0\rangle \otimes e_{\Gamma_1\langle v_0, v_2\rangle}) \circ (\Gamma_1\langle v_1, v_0\rangle \otimes f \otimes \Gamma_1\langle v_2, v_0\rangle)\circ ( i_{\Gamma_1\langle v_0,v_1\rangle} \otimes \Gamma_1\langle v_1, v_2\rangle \otimes \Gamma_1\langle v_2, v_0\rangle).
\end{align*}
Similarly, we denote by $\prescript{\Gamma_1\langle v_2, v_0\rangle}{}{f}\colon  \Gamma_1\langle v_0, v_2\rangle \otimes \Gamma_1\langle v_2, v_1\rangle \to \Gamma_1\langle v_0, v_1\rangle$ the 1-morphism in $\mathcal{S}_2$ that is degree-1 isomorphic to 
\begin{align*}
(e_{\Gamma_1\langle v_2, v_0\rangle }\otimes \Gamma_1\langle v_2, v_1\rangle )\circ (\Gamma_1\langle v_2,v_0\rangle \otimes f \otimes \Gamma_1\langle v_2, v_1\rangle ) \circ (\Gamma_1\langle v_2,v_0\rangle \otimes \Gamma_1\langle v_0,v_1\rangle \otimes i_{\Gamma_1\langle v_2, v_1\rangle}).
\end{align*}
Notice that
$$
\prescript{\Gamma_1\langle v_0, v_1\rangle \Gamma_1\langle v_1, v_0\rangle}{}{f} = f = \prescript{\Gamma_1\langle v_0, v_2\rangle \Gamma_1\langle v_2, v_0\rangle}{}{f}.
$$
This allows the definition of the 1-morphism~$\prescript{\Gamma_1(\gamma)}{}{f}$ for $\gamma$ the homotopy class of a path in the boundary of~$\langle v_0, v_1, v_2\rangle$ that ends at $v_0$ by decomposing $\gamma$ into edges.

\subsection{Associator state spaces}
Let $\Gamma$ be a $\mathcal{C}$-state  of $(K,\phi)$ taking values in a simplicial skeleton~$\mathcal{S}$. For any pointed ordered tetrahedron $(\langle v_0, v_1, v_2, v_3\rangle, \gamma_0, \gamma_1, \gamma_2,\gamma_3)$ of $K$, we write:
\begin{align*}
[(v_0v_1v_2)v_3, \gamma_1, \gamma_3] &=\prescript{\Gamma_1(\gamma_1)}{}{ \Gamma_2\langle v_0,v_2,v_3\rangle} \circ \left(\prescript{\Gamma_1(\gamma_3)}{}{\Gamma_2\langle v_0,v_1,v_2\rangle}\otimes \Gamma_1\langle v_2, v_3\rangle\right)\\
[v_0(v_1v_2v_3), \gamma_0, \gamma_2] &= \prescript{\Gamma_1(\gamma_2)}{}{\Gamma_2\langle v_0,v_1,v_3\rangle} \circ \left(\Gamma_1\langle v_0, v_1\rangle \otimes \prescript{\Gamma_1(\gamma_0)}{}{\Gamma_2\langle v_1, v_2, v_3\rangle}\right).
\end{align*}

Let  $\tau = (\langle v_0, v_1, v_2, v_3\rangle, \gamma_0, \gamma_1, \gamma_2,\gamma_3)$ be a pointed ordered tetrahedron of $K$. The \emph{positive and negative associator state spaces of $\tau$} are
\begin{align*}
V^+(\Gamma, \tau) &=\operatorname{Hom}^{ \phi_3(\tau)}_\mathcal{C}\! \Big([(v_0v_1v_2)v_3, \gamma_1, \gamma_3], [v_0(v_1v_2v_3), \gamma_0, \gamma_2]\Big)
\end{align*}
and 
\begin{align*}
V^-(\Gamma, \tau) &= \operatorname{Hom}^{ \phi_3(\tau)^{-1}}_\mathcal{C}\!\! \Big([v_0(v_1v_2v_3), \gamma_0, \gamma_2], [(v_0v_1v_2)v_3, \gamma_1, \gamma_3]\Big),
\end{align*}
respectively.  Elements in the associator state spaces are depicted graphically as in Figure \ref{State-spaces},  where we use the shorthand $[ij]$ to denote $\Gamma_1\langle v_i, v_j\rangle$, and $[ijk]$ to denote $\prescript{\Gamma_1(\gamma_l)}{}{\Gamma_2\langle v_i, v_j, v_k\rangle}$, where $\{i,j,k,l\} = \{0,1,2,3\}$. 

\begin{figure}[ht]
\begin{subfigure}{.3\textwidth}
  \centering
  \includegraphics[width=.8\linewidth]{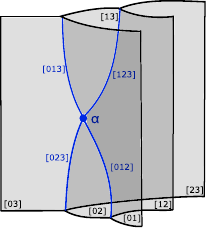}
  \caption*{$\alpha \in V^+(\Gamma, \tau)$}
  \label{fig:sfig1}
\end{subfigure}%
\begin{subfigure}{.3\textwidth}
  \centering
  \includegraphics[width=.8\linewidth]{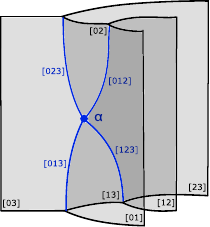}
  \caption*{$\alpha \in V^-(\Gamma, \tau)$}
  \label{fig:sfig2}
\end{subfigure}
\caption{Elements in the graded associator state spaces}
\label{State-spaces}
\end{figure}

\subsection{The canonical associated state}
Let $\Gamma$ be a $\mathcal{C}$-state of $(K,\phi)$ taking values in a simplicial skeleton~$\mathcal{S}$, and let $B_*$ be a choice of ordered simplices of $K$ (see Section \ref{Subsection_Choices of ordered simplices}).  The trace pairing
\begin{equation*}
\langle \cdot , \cdot \rangle_{\Gamma, \tau} \colon V^+(\Gamma, \tau) \otimes V^-(\Gamma, \tau) \to \mathbbm{k}
\end{equation*}
is nondegenerate for any pointed ordered tetrahedron $\tau$ (see Lemma \ref{The trace pairing is nondegenerate}).  Therefore, there is a canonical copairing
\begin{equation*}
\cup_{\Gamma, \tau} \colon \mathbbm{k} \to V^-(\Gamma, \tau) \otimes V^+(\Gamma, \tau). 
\end{equation*}

The \emph{canonical associated state} $\cup_\Gamma$ of $\Gamma$ is the vector
\begin{equation*}
\cup_{\Gamma} = \bigotimes_{\tau \in B_3} \cup_{\Gamma, \tau}(1)\in   \bigotimes_{\tau \in B_3} \left( V^+(\Gamma, \tau) \otimes V^-(\Gamma, \tau) \right),
\end{equation*}
where $\otimes$ denotes the unordered tensor product of~$\mathbbm{k}$-modules. 

\begin{figure}
\begin{subfigure}{.5\textwidth}
  \centering
  \includegraphics[width=0.9\linewidth]{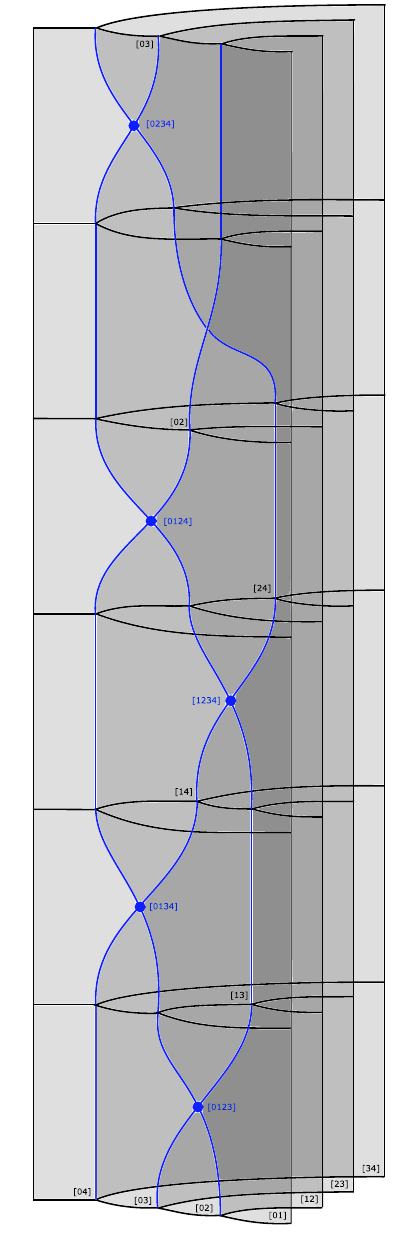}
\end{subfigure}%
\begin{subfigure}{.5\textwidth}
  \centering
  \includegraphics[width=0.9\linewidth]{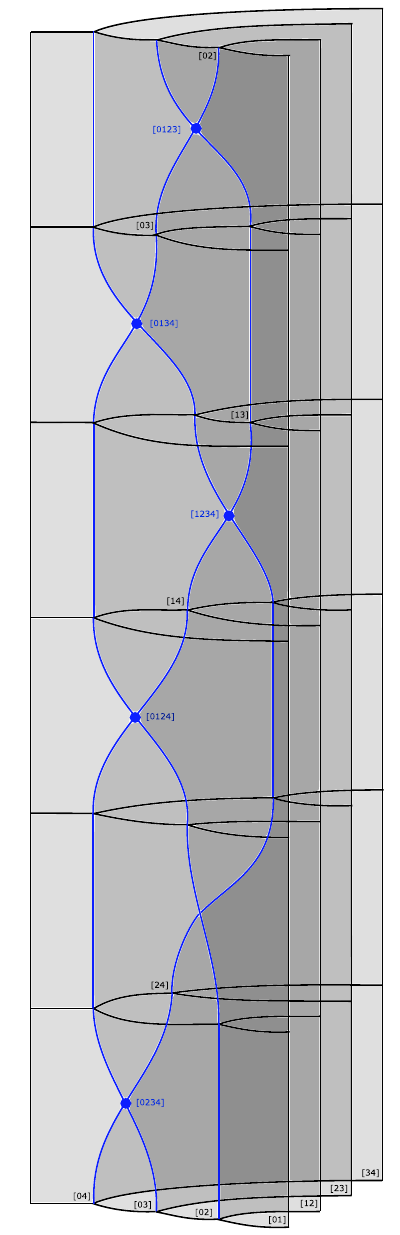}
\end{subfigure}
\caption{The colored 10j-symbols}
\label{10j-symbols}
\end{figure}

\subsection{The colored 10j-symbols}

Let $B_*$ be a choice of ordered simplices of $K$ (see Section \ref{Subsection_Choices of ordered simplices}) and let $\kappa\in B_4$.   We denote by $B_3\cap \kappa$ the set of pointed ordered tetrahedra in~$B_3$ that are included in $\kappa$.  The \emph{sign map}
$$
\varepsilon\colon B_3\cap \kappa \to \{+, -\}
$$
is defined as follows.  Using the linear order on the vertices of $\kappa$, we write $\kappa = \langle 0,1,2,3,4\rangle$.  Let $\tau \in B_3\cap \kappa$ and let~\mbox{$m \in \{0,1,2,3,4\}$} be the vertex of $\kappa$ that is not in $\tau$.  Let $\partial_m(\kappa)$ be the $m$-th face of $\kappa$ with pointing and order induced by restricting the pointing and order of $\kappa$.  Let $\mu$ be the permutation that sends the order of~$\partial_m(\kappa)$ to the order of $\tau$. Then the sign of $\tau$ is defined by
$$
\varepsilon(\tau) = (-1)^m\mathrm{s}(\mu),
$$
where $\mathrm{s}(\mu)$ is the signature of $\mu$.  Moreover, there are isomorphisms of associator state spaces 
$$
\phi_+ \colon V^{\varepsilon (\tau)}(\Gamma, \tau)\ \tilde{\to}\  V^{(-1)^{m}}(\Gamma, \partial_m(\kappa)) \quad \quad \phi_- \colon V^{-\varepsilon (\tau)}(\Gamma, \tau) \ \tilde{\to}\  V^{(-1)^{m+1}}(\Gamma, \partial_m(\kappa))
$$
induced by the isomorphisms in \cite[Lemma 4.2.4]{douglas_fusion_2018} and by the isomorphisms in Equation (\ref{twist_iso}) for the change of pointings. 

Let $\Gamma$ be a $\mathcal{C}$-state of $(K,\phi)$.  The \emph{colored 10j-symbol} associated to a pair $(\Gamma, \kappa)$, where $\kappa \in B_4$, is the linear form $z(\Gamma, \kappa)$ defined as follows.  If the orientation of $\kappa$ (induced by the order on its vertices) coincides with the orientation of $K$, then
$$
z(\Gamma, \kappa) \colon \bigotimes_{\tau \in B_3\cap \kappa} V^{\varepsilon(\tau)}(\Gamma, \tau) \to \mathbbm{k}
$$
is defined as the trace of the 2-morphism depicted on the left in Figure \ref{10j-symbols} composed with the isomorphisms $\phi_+$ described above, where $[ijkl]$ denotes an element of $V^{(-1)^{m}}(\Gamma, \partial_m(\kappa))$.  If the orientation of $\kappa$ (induced by the order on its vertices) does not coincide with the orientation of $K$, then
$$
z(\Gamma, \kappa) \colon \bigotimes_{\tau \in B_3\cap \kappa} V^{-\varepsilon(\tau)}(\Gamma, \tau) \to \mathbbm{k}
$$
is defined as the trace of the 2-morphism depicted on the right in Figure \ref{10j-symbols} composed with the isomorphisms $\phi_-$ described above, where $[ijkl]$ denotes an element of $V^{(-1)^{m+1}}(\Gamma, \partial_m(\kappa))$. 

\subsection{The 10j-action of the canonical associated state}
Let $\Gamma$ be a $\mathcal{C}$-state of $(K, \phi)$ taking values in a simplicial skeleton $\mathcal{S}$, and let $B_*$ be a choice of ordered simplices of $K$. The \emph{10j-action of the canonical associated state} of $\Gamma$ is
\begin{equation*}
Z(\Gamma) = \left( \bigotimes_{\kappa \in B_4} z(\Gamma, \kappa)\right) (\cup_\Gamma ) \in \mathbbm{k}.
\end{equation*}
Notice that for every $\tau\in B_3$, the associator state spaces $V^+(\Gamma, \tau)$ and $V^-(\Gamma, \tau)$ appear exactly once in the domain of $ \otimes_{\kappa \in B_4} z(\Gamma, \kappa)$, and therefore the composition is well defined. 

\section{The state sum for $\sigma$-manifolds}\label{Section. State sum}

\noindent Throughout this section, $\sigma$ is a 2-crossed module of groups (see Section \ref{Section. 2-crossed modules}),  $\mathcal{C}$ is a spherical~$\sigma$-fusion~2-category~(see Section \ref{Section. Graded-fusion 2-categories}) over the nonzero commutative ring $\mathbbm{k}$, and $\mathcal{S}$ is a simplicial skeleton of~$\mathcal{C}$~(see Section \ref{Section. 10j-symbols}). Moreover, we assume that $\mathcal{C}$ has invertible dimensions, which is automatically the case when $\mathbbm{k}$ is an algebraically closed field of characteristic 0 (see Section \ref{Dimensions of 2-categories}). 

We derive from this data an invariant of closed~4-manifolds endowed with a homotopy class of maps to the classifying space of $\sigma$, or equivalently of closed 4-manifolds equipped with a flat 3-bundle for the 3-group modeled by~$\sigma$, see~\cite{Cellot_thesis_2026}. It is defined as a state sum on a triangulation via the colored~10j-symbols.

\subsection{$\sigma$-manifolds}\label{sigma manifolds}

A \emph{$\sigma$-manifold} is a pair $(W,g)$ where $W$ is a closed oriented smooth 4-manifold and $g$ is a homotopy class of maps from $W$ to the classifying space $B\sigma$ (see Section \ref{Section_Classifying space}).  Two $\sigma$-manifolds $(W,g)$ and~$(W',g')$ are \emph{equivalent} if there is an orientation preserving diffeomorphism $\varphi \colon W \to W'$ such that $g' \circ \varphi =g$.  An \emph{invariant} of $\sigma$-manifolds is an invariant of equivalence classes of $\sigma$-manifolds. 

\subsection{Triangulations of $\sigma$-manifolds}\label{Section-Triangulations}

A \emph{combinatorial $4$-manifold} is a finite simplicial complex such that the link of every vertex is PL homeomorphic to the boundary of the standard $4$-simplex.  A \emph{cano\-nical~$4$-manifold} is a canonical simplicial complex (see Section \ref{Canonical simplicial complexes}) whose underlying simplicial complex is a combinatorial \mbox{$4$-mani}\-fold.  An \emph{orientation} of a canonical $4$-manifold $K$ is a choice of orientation of every $4$-simplex of $K$, such that for every tetrahedron of $K$, the orientations induced by the two adjacent~$4$-simplices are opposite.  A canonical $4$-manifold is \emph{oriented} if it is endowed with an orientation.

A map from a simplicial complex $K$ to a smooth manifold $W$ is a \emph{piecewise differentiable homeomorphism} if it is a homeomorphism and its restriction to every simplex of $K$ is a smooth immersion.  A \emph{triangulation} of a smooth manifold $W$ is a pair $(K, \varphi)$, where $K$ is a simplicial complex and $\varphi\colon \lvert K\rvert \to W$ is a piecewise differentiable homeomorphism.  It is a classical result, due to Cairns \cite{cairnsTriangulationManifoldClass1935} and Whitehead \cite{whitehead_c1-complexes_1940}, that any smooth manifold admits a triangulation.  Moreover,  if $(K, \varphi)$ is a triangulation of a smooth manifold, then $K$ is a combinatorial manifold. In fact, the triangulation of a smooth manifold is unique up to PL homeomorphism.  

A \emph{combinatorial $\sigma$-manifold} is a pair $(K,\phi)$, where $K$ is an oriented canonical \mbox{$4$-mani}\-fold, and $\phi$ is a $\sigma$-coloring of $K$. A \emph{$\sigma$-triangulation} (or simply a \emph{triangulation}) of a $\sigma$-manifold $(W,g)$ is a combinatorial $\sigma$-manifold $(K,\phi)$ together with a choice of ordered simplices $B_*$, such that $K$ is a triangulation of $W$ and $B\phi \simeq g$, where~$B\phi$ is the geometric realization of $\phi$ (see Section \ref{geometric realization}).  By Lemma \ref{Lemma Realizability}, every $\sigma$-manifold admits a triangulation.  

\subsection{The state sum}
Let $(W, g)$ be a $\sigma$-manifold. Pick a triangulation $(K,\phi, B_*)$ of $(W,g)$.  Consider the state sum
\begin{align*}
\mathrm{HDR}_\mathcal{C}(W,g) = \dim(\prescript{}{1}{\mathcal{C}}_1^1)^{-\lvert K_0\rvert} \sum_{\Gamma} \Big(\prod_{e \in B_1} \wdim(\Gamma_1(e))^{-1}\Big) \Big(\prod_{t \in B_2}\dim(\Gamma_2(t)) \Big) Z(\Gamma) \in \mathbbm{k},
\end{align*}
where $\Gamma$ runs over the $\mathcal{C}$-states of $(K,\phi)$ taking values in $\mathcal{S}$, $e$ runs over the edges of $K$, and $t$ runs over the triangles of $K$. Here, $\dim(\prescript{}{1}{\mathcal{C}}_1^1)$ denotes the dimension of the neutral component of $\mathcal{C}$ (see Section \ref{Dimensions of 2-categories}), $\wdim(\Gamma_1(e))$ denotes the weighted dimension of the object $\Gamma_1(e)$ (see Section \ref{Weighted dimension}), and $\dim(\Gamma_2(t))$ denotes the dimension of the~1-morphism $\Gamma_2(t)$ (see Section~\ref{Dimensions of objects and 1-morphisms}). 

\begin{thm}\label{maintheorem}
The state sum $\mathrm{HDR}_\mathcal{C}(W,g)$ is an invariant of $\sigma$-manifolds. 
\end{thm}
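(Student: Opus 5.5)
The plan is to show that $\mathrm{HDR}_\mathcal{C}(W,g)$ does not depend on any auxiliary choice, and then to reduce topological invariance to finitely many local moves via the Colored Pachner Theorem (Theorem~\ref{Theorem - Colored Pachner}).

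\emph{Step 1: independence of the combinatorial choices.} The choices to eliminate are the simplicial skeleton $\mathcal{S}$, the canonical structure (distinguished vertices and disks), and the choice of ordered simplices $B_*$.

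For $\mathcal{S}$, two skeleta differ by degree-$1$ equivalences of objects and degree-$1$ isomorphisms of $1$-morphisms. These induce isomorphisms of the associator state spaces. The copairings $\cup_{\Gamma,\tau}$ are defined canonically from the nondegenerate trace pairing (Lemma~\ref{The trace pairing is nondegenerate}), so they are preserved. The 10j-symbols are traces and hence invariant under conjugation. The weights $\wdim$ and $\dim$ are also invariant.

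For $B_*$, there are three kinds of change:
\begin{itemize}
\item reversing the order of an edge replaces $\Gamma_1$ by its dual, which has the same weighted dimension;
\item changing the cyclic order or pointing of a triangle or tetrahedron is handled by the change-of-pointing isomorphisms ${}^{\Gamma_1(\gamma)}f$ and by Equation~(\ref{twist_iso}), which preserve dimensions and intertwine the copairings;
\item reordering a $4$-simplex is controlled by the sign map $\varepsilon$ and the isomorphisms $\phi_\pm$, together with sphericality.
\end{itemize}
The pair $(V^+,V^-)$ is swapped symmetrically under the copairing, so $Z(\Gamma)$ is unchanged.

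For the canonical structure, $\Pi_3(K)$ changes only up to the canonical isomorphism noted after Section~\ref{Section_Fundamental 2-crossed module}. We transport $\phi$ along this isomorphism.

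\emph{Step 2: gauge invariance.} By the Homotopy Classification Theorem, a fixed triangulation $K$ with colorings $\phi,\phi'$ realizing the same $g$ has $\phi,\phi'$ gauge equivalent. Gauge transformations are generated by two kinds of elementary derivation:
\begin{itemize}
\item $(r,1,1)$ concentrated at a single vertex;
\item $(1,s,t)$ concentrated on a single edge or a single triangle.
\end{itemize}
For each generator I would construct an explicit bijection between $\mathcal{C}$-states for $\phi$ and for $\phi'$.

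For a vertex move by $h\in H$, one tensors the edge labels at $v$ with a degree-$h$ simple object. The resulting weight change is absorbed by Lemma~\ref{Lemma_B5}, which shows that $\dim(\prescript{}{h}{\mathcal{C}_1^1})=\dim(\prescript{}{1}{\mathcal{C}_1^1})$, combined with Lemmas~\ref{Lemma_B3}--\ref{Lemma_B4}.

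For an edge or triangle move, one composes triangle labels with a simple $1$-morphism of the required degree $e\in E$, which exists by Lemma~\ref{nonzero_Hom_categories}. The $L$-degree of the tetrahedron spaces is shifted using Lemma~\ref{Lemma SV}(1). Lemmas~\ref{Lemma_B2} and~\ref{Lemma_B6} show that the weighted sum over the new labels reproduces the old one.

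An alternative is to realize gauge moves as colored Pachner moves, if the Colored Pachner Theorem already includes them among its moves. In that case this step is subsumed by Step~3.

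\emph{Step 3: Pachner moves.} By the Colored Pachner Theorem, any two triangulations of equivalent $\sigma$-manifolds are related by $\sigma$-colored Pachner moves of types $(3,3)$, $(2,4)$ and $(1,5)$. So it suffices to check invariance of the state sum under each such move. I would follow Douglas--Reutter:
\begin{itemize}
\item The $(3,3)$ move should follow from the graded analogue of their pentagonator-type identity. Namely, the colored 10j-symbols satisfy an orthogonality/Biedenharn--Elliott relation, which comes from the copairing identity of Lemma~\ref{Lemma_B7}.
\item The $(2,4)$ move follows by summing over the new internal triangle label, using Lemma~\ref{Lemma_B7} and Lemma~\ref{Lemma_B6}.
\item The $(1,5)$ move introduces an interior vertex. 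Summing over its new edge and triangle labels uses Lemmas~\ref{Lemma_B3}, \ref{Lemma_B4} and~\ref{Lemma_B5}, and the resulting factor is cancelled by the normalisation $\dim(\prescript{}{1}{\mathcal{C}_1^1})^{-\lvert K_0\rvert}$. I would isolate this as Lemma~\ref{Pachner_(1,5)}.
\end{itemize}

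\emph{Main obstacle.} I expect the hardest part to be tracking the $L$-gradings and Peiffer-lifting contributions inside the 10j-symbols during the $(3,3)$ and $(1,5)$ moves. The interchangers have degree $\omega(\lvert f\rvert,{}^{\lvert X\rvert}\lvert g\rvert)^{-1}$, and one must check that these degrees match the values of $\phi_3$ forced by the tetrahedron and $4$-simplex identities, so that each composite 2-morphism is a degree-$1$ endomorphism whose trace is defined. I would first do the computation in the linearization $\mathcal{G}_\sigma$ to pin down the bookkeeping, and then run the general argument graphically.
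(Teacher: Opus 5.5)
Your architecture is the paper's. First, independence of the skeleton and of $B_*$ (Sections~\ref{Invariance simplicial skeleton}--\ref{Invariance ordered simplices}). Then Theorem~\ref{Theorem - Colored Pachner} reduces everything to the local $(3,3)$, $(2,4)$, $(1,5)$ identities, obtained by rerunning Douglas--Reutter's proofs with the graded dimension formulas of Section~\ref{Section_Dimension_formulas}. Two small corrections:
\begin{itemize}
\item The paper makes the localization precise through the gluing formula of Lemma~\ref{lemmaboundarysum}.
\item The state-sum identity for the $(1,5)$ move is Lemma~\ref{Lemma_Pachner_(1,5)}. Lemma~\ref{Pachner_(1,5)} is the coloring-extension statement used inside the proof of the Colored Pachner Theorem.
\end{itemize}

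The one thing to change is Step~2: drop it and take your own ``alternative''.

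Theorem~\ref{Theorem - Colored Pachner} as stated already covers a fixed $K$ carrying two gauge-equivalent colorings. Section~\ref{proof colored Pachner} factors any gauge transformation into generating derivations (Lemma~\ref{Lemma gauge-groupoid generators}). It then realizes each generator by colored $(1,5)$, $(2,4)$, $(3,3)$, $(4,2)$, $(5,1)$ moves around a vertex whose link is $\partial\Delta^4$. Changes of canonical structure are likewise absorbed by canonical Pachner moves. By contrast, transporting $\phi$ along $\Pi_3(K)\cong\Pi_3(K')$ does not by itself show that the state sum is unchanged.

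The direct route you sketch would also fail as stated. A $\sigma$-fusion 2-category need not contain invertible objects of degree $h$ or invertible 1-morphisms of degree $e$. So tensoring edge labels with a degree-$h$ simple object, or composing triangle labels with a degree-$e$ simple 1-morphism, generally yields non-simple labels and gives no bijection of $\mathcal{C}$-states. Repairing this would mean decomposing and summing with exactly the weights of the $(1,5)$ move, i.e.\ redoing the Pachner computations.
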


The invariance of the state sum relies on a $\sigma$-colored version of Pachner's theorem that we show in Section~\ref{Section-Pachner}.  We show in Section \ref{Ex.nontrivial} that this invariant is nontrivial. 

\subsection{Example: the Douglas--Reutter invariant}\label{Ex.Douglas-Reutter}

Let $(W,*)$ be a $\sigma$-manifold with $*$ the homotopy class of a constant map. Recall that the neutral component of $\mathcal{C}$ is the 2-category $\prescript{}{1}{\mathcal{C}_1^1}$ with only degree-1 homogeneous objects, 1-morphisms,  and 2-morphisms (see Section \ref{Section-neutral component}).  Notice that $\prescript{}{1}{\mathcal{C}_1^1}$ is a fusion 2-category in the sense of Douglas and Reutter (up to additive and idempotent completion), and
$$
\mathrm{HDR}_\mathcal{C}(W,*) = \mathrm{DR}_{\prescript{}{1}{\mathcal{C}}^1_1}(W)
$$
where $\mathrm{DR}_{\prescript{}{1}{\mathcal{C}}^1_1}(W)$ is the Douglas--Reutter invariant of $W$ defined using $\prescript{}{1}{\mathcal{C}_1^1}$.  In particular, if $\sigma$ is trivial, then 
$$
\mathrm{HDR}_\mathcal{C}(W,*) = \mathrm{DR}_{\mathcal{C}}(W)
$$
since $\mathcal{C} = \prescript{}{1}{\mathcal{C}_1^1}$.

\subsection{Example: nontriviality of the homotopy invariant}\label{Ex.nontrivial}In this section we fix $\mathbbm{k} = \mathbb{Q}$, and we set 
$$
\begin{tikzcd}
\sigma = (\Z/2\Z \arrow[r, "0"] & 0 \arrow[r, "0"] & 0, \ 0).
\end{tikzcd}
$$
The classifying space $B\sigma$ is a $K(\Z/2\Z, 3)$ space.  Let $W = (S^1)^4$ be the product of four circles.  Then the set~$[W, B\sigma]$ is in bijection with~$\mathrm{H}^3(W; \mathbb{Z}/2\mathbb{Z}) \cong (\mathbb{Z}/2\mathbb{Z})^4$. Let $*$ be the trivial class in $[W, B\sigma]$, and pick a nontrivial class $g$.  We construct a $\sigma$-fusion 2-category $\mathcal{C}$ such that 
$$
\mathrm{HDR}_\mathcal{C}(W,*) \neq \mathrm{HDR}_\mathcal{C}(W,g).
$$
Let 
$$
\begin{tikzcd}
\sigma' = (\mathbb{Z}/2\mathbb{Z} \arrow[r, "\mathrm{Id}"] & \mathbb{Z}/2\mathbb{Z} \arrow[r] & 0), 
\end{tikzcd}
$$
and let $\phi = (\phi_1, \phi_2, \phi_3) \colon \sigma' \to \sigma$ be the 2-crossed module morphism with $\phi_3 = \mathrm{Id}$.  Let $\mathcal{G}_{\sigma'}$ be the linearization of $\sigma'$ (see Section~\ref{Ex. associated fusion}), and let $\mathcal{C} = \phi_*(\mathcal{G}_{\sigma'})$ be the pushforward of $\mathcal{G}_{\sigma'}$ (see Section~\ref{Ex. pushforward fusion}).  

We first compute $\mathrm{HDR}_\mathcal{C}(W,*)$.  We have 
$$
\dim(\mathcal{C}_1^1(\mathbbm{1}, \mathbbm{1})) = 2\quad \text{and} \quad \dim(\prescript{}{1}{\mathcal{C}_1^1}) = \frac{1}{2}.
$$
Let $K$ be a triangulation of $W$,  and let $1$ be the trivial $\sigma$-coloring of $K$. Then $(K,1)$ is a $\sigma$-triangulation of~$(W,*)$. Notice that for any $\mathcal{C}$-state $\Gamma$ of $(K,1)$, we have $Z(\Gamma) =1$ if $\Gamma$ induces a $\rho$-coloring of $K$, where
$$
\begin{tikzcd}
\rho = (0\arrow[r, "0"] & \Z/2\Z\arrow[r, "0"] & 0, \ 0),
\end{tikzcd}
$$
and $Z(\Gamma) = 0$ otherwise. Therefore,
$$
\mathrm{HDR}_\mathcal{C}(W,*) = 2^{\lvert K^{(0)}\rvert - \lvert K^{(1)}\rvert} \#\{\rho\text{-colorings of $K$}\}.
$$
In fact, it is a consequence of Section~\ref{Ex.Douglas-Reutter} that~$\mathrm{HDR}_\mathcal{C}(W,*)$ coincides with the (untwisted) Yetter--Dijkgraaf--Witten invariant of $W$ associated to the crossed module $\chi = (\Z/2\Z \to 0)$.  Therefore, by \cite[Theorem~1.21]{martins_yetters_2007}, 
$$
\mathrm{HDR}_\mathcal{C}(W,*) =\sum_{g \in [W, B\chi]} \frac{\lvert\pi_2(\mathrm{TOP}(W, B\chi), g)\rvert}{\lvert\pi_1(\mathrm{TOP}(W, B\chi), g)\rvert},
$$
where $B\chi$ denotes the classifying space of $\chi$ (see \cite{brown_nonabelian_2011}) and $\mathrm{TOP}(W, B\chi)$ denotes the space of maps $W\to B\chi$ with the compact-open topology. For~$n= 1, 2$ and $g \in [W, B\chi]$, we have
\begin{align*}
\pi_n(\mathrm{TOP}(W, B\chi),g) & \cong \pi_n(\mathrm{TOP}(W, B\chi),*)\\
&\cong [S^n, \mathrm{TOP}(W, B\chi)]_*\\
&\cong [(S^n\times W, \{*\}\times W), (B\chi, *)]\\
&\cong H^2(S^n\times W, \{*\}\times W; \Z/2\Z).
\end{align*}
Using the relative Künneth formula, 
$$
\mathrm{HDR}_\mathcal{C}(W,*) = \lvert \Z/2\Z\rvert^{6}\frac{\lvert\Z/2\Z\rvert}{\lvert \Z/2\Z\rvert^{4}} = 8.
$$

We now compute $\mathrm{HDR}_\mathcal{C}(W,g)$.  Let $(K,\varphi)$ be a $\sigma$-triangulation of~$(W,g)$.  The state sum $\mathrm{HDR}_{\mathcal{C}}(W,g)$ is a sum over the $\mathcal{C}$-states of $(K,\varphi)$. By considering the degrees in $\sigma'$, any $\mathcal{C}$-state of $K$ induces a $\sigma'$-coloring $\widetilde{\varphi}$ of~$K$ such that the diagram 
$$
\begin{tikzcd}
& \sigma' \arrow[d, "\phi"]\\
\Pi_3(K) \arrow[ur, dashed, "\widetilde{\varphi}"] \arrow[r, "\varphi"] & \sigma
\end{tikzcd}
$$
commutes.  Since $B\sigma'$ is contractible and $g$ is not homotopically trivial, no such lifts can exist, and therefore
$$
\mathrm{HDR}_{\mathcal{C}}(W,g) = 0.
$$
Therefore, $\mathrm{HDR}_{\mathcal{C}}(W,g) \neq \mathrm{HDR}_{\mathcal{C}}(W,*)$. Also, all the maps $W \to B\sigma$ are phantom maps (meaning that they induce trivial homomorphisms on homotopy groups), because $\pi_3(W) = 0$ and $B\sigma$  is a $K(\Z/2\Z, 3)$ space.  This shows that the invariant $\mathrm{HDR}_\mathcal{C}$ is nontrivial and can distinguish homotopy classes of phantom maps.

\subsection{Relation with other invariants}

Let $G$ be a discrete group. In \cite{mochidaInvariantsFlatConnections2026}, Mochida uses the data of a finite-type involutory quasitriangular Hopf $G$-algebra to derive invariants of flat $G$-bundles over closed oriented smooth 4-manifolds. These invariants are obtained via a 4-dimensional analogue of the Hennings-type construction using~\mbox{$G$-colored} Kirby diagrams.  Notice that the data of a flat $G$-bundle over a closed oriented smooth 4-manifold is equivalent to that of a $(1\to 1\to G)$-manifold.  We expect that in certain cases,  the delooping of the category of representations of an involutory quasitriangular Hopf $G$-algebra should yield a spherical $\sigma$-fusion 2-category. Then, we conjecture that the invariants of Mochida could  be recovered as the state-sum invariants from this $\sigma$-fusion 2-category. 

The invariants of Mochida have been generalized by Bridges and Cui \cite{bridges_involutory_2025} using the data of a finite-type involutory Hopf $G$-triplet.  Their invariants are constructed via a 4-dimensional analogue of the Kuperberg-type construction via $G$-colored trisection diagrams. We also expect that in certain cases, these should be recovered by our state-sum construction. 

\subsection{Extension to an HQFT}

We expect our state-sum invariants to fit into a 4-dimensional HQFT with target the classifying space $B\sigma$.  The $\mathbbm{k}$-module associated to a closed oriented smooth 3-manifold equipped with a homotopy class of maps to $B\sigma$ remains to be determined.

\section{Colored Pachner moves}\label{Section-Pachner}

\noindent In his seminal paper \cite{pachner_pl_1991}, Pachner states a purely combinatorial theorem that plays a central role in low-dimensional topology:

\begin{theorem}[Pachner Theorem]
Two combinatorial manifolds are PL homeomorphic if and only if they are related by a finite sequence of Pachner moves. 
\end{theorem}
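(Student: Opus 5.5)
The easy direction is immediate. A Pachner move replaces a subcomplex $A * \partial B$ by $\partial A * B$, where $A * B = \Delta^{n+1}$ and $\dim A + \dim B = n$. Both pieces are PL $n$-balls with the same boundary $\partial A * \partial B$. So the identity on the complement extends, by coning, to a PL homeomorphism. Consequently any finite sequence of Pachner moves yields a PL homeomorphism.

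For the converse I would follow Lickorish's exposition \cite{lickorish_simplicial_1999}, in three steps.

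\textbf{Step 1: reduce to stellar moves.} By the Alexander--Newman theorem, two PL homeomorphic combinatorial manifolds are related by a finite sequence of stellar subdivisions and their inverses, the stellar welds. A stellar subdivision at a simplex $A$ replaces $\operatorname{Cl}(\operatorname{St}(A)) = A * \operatorname{Lk}(A)$ by $a * \partial A * \operatorname{Lk}(A)$, where $a$ is a new vertex. It therefore suffices to realize a single stellar subdivision as a composite of Pachner moves and their inverses; inverse Pachner moves are again Pachner moves.

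\textbf{Step 2: induct on $n$ via the link.} The induction is on the dimension $n$, and the case $n = 0$ is trivial. In a combinatorial $n$-manifold, $L = \operatorname{Lk}(A)$ is a combinatorial $(n - r - 1)$-sphere, where $r = \dim A$. It is therefore PL homeomorphic to $\partial\Delta^{n-r}$. By the induction hypothesis in dimension $n - r - 1 < n$, there is a finite sequence of Pachner moves transforming $L$ into $\partial\Delta^{n-r}$.

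The key observation is that joining with a fixed complex preserves Pachner moves. Suppose $L \leadsto L'$ is a move $C * \partial D \to \partial C * D$. Then $\partial A * L \leadsto \partial A * L'$ decomposes as a sequence of Pachner moves, and the same holds for $a * \partial A * L$ inside the manifold. I would establish this for a single extra vertex first, where the move simply cones. The general case follows by induction on the number of joined simplices.

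\textbf{Step 3: the base configuration.} Apply the sequence from Step 2 simultaneously before and after the stellar subdivision. This reduces the problem to the case where the link is $\partial\Delta^{n-r}$, that is, the star is $A * \partial B$ with $A * B = \Delta^{n+1}$. Stellar subdivision at $A$ then turns $A * \partial B$ into $a * \partial A * \partial B$. This is precisely the result of the move at the vertex $a$ (a $1$-$(n+1)$ move, $A * \partial B \to a * \partial(A * \partial B)$) composed with suitable moves.

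Concretely, subdivide $\Delta^{n+1} = A * B$ by coning from the new vertex $a$. The complex $a * \partial(A * B)$ is the union of $a * A * \partial B$ and $a * \partial A * B$. Hence, starting from $A * \partial B$, I would perform the single move $A * \partial B \to \partial A * B$. I would then perform the $1$-$(n+1)$ move inserting $a$ into the simplex-fillers, and follow it with the inverse moves needed to arrive at $a * \partial A * \partial B$. The exact bookkeeping is a finite check on the boundary of $\Delta^{n+2}$.

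\textbf{Main obstacle.} I expect the hardest point to be Step 2: proving rigorously that Pachner moves on a link lift to Pachner moves on the join while keeping every intermediate complex a combinatorial manifold. The cone/join commutation must be tracked carefully, because $\partial A * L$ is a join with a sphere and not just a cone. I would first handle $\partial A$ being a single vertex pair, that is, a suspension. I would then iterate, using $\partial A = \partial(\text{edge}) * \dots$ only up to PL homeomorphism, and invoke the induction hypothesis again where necessary.
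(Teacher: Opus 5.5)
The paper does not prove this statement; it quotes it from Pachner and Lickorish, so your argument has to stand on its own. The easy direction and the reduction to stellar subdivisions via Alexander--Newman are fine. Step~2, which carries the weight for general $n$, fails as stated.

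Write $\kappa(X,Y)$ for the move replacing $X*\partial Y$ by $\partial X*Y$. The link $L$ of $A$ cannot be changed while the rest of $M$ stays fixed. The reason is that the boundary $\partial A*L$ of the star is glued to the complement $N$ of the open star. Moves on the $(n-1)$-sphere $\partial A*L$ are therefore not moves of $M$ or $M'$. Coning a move is not a move either: $\kappa(v*C,D)$ replaces $v*C*\partial D$ by $C*D\cup v*\partial C*D$, so it necessarily adds the simplex $C*D$ outside the cone.

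To lift a move $C*\partial D\to\partial C*D$ of $L$, you must perform $\kappa(A*C,D)$ on $M$. This replaces $A*C*\partial D$ by $A*\partial C*D\cup\partial A*C*D$, so $N$ grows by $\partial A*C*D$. On $M'$ you must make the matching replacement $a*\partial A*C*\partial D\to\partial A*C*D\cup a*\partial A*\partial C*D$, which is a string of moves obtained by shelling $\partial A*D$. You then have to check that the two results are again related by starring at $A$. That is what ``simultaneously'' has to mean, and none of it is in your proposal.

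More seriously, all these moves are legal only if $D$ is not already a simplex of $M$. The move on $L$ only guarantees $D\notin L$. But $D$ can be a simplex of $N$ with all its vertices in $L$, and then the lifted move produces a non-manifold. Handling this is the substantive part of the theorem, and your proposal does not address it. Your ``main obstacle'' paragraph looks elsewhere, and its iteration is unavailable anyway, since $\partial A$ is not simplicially an iterated suspension.

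The same issue affects your recipe in Step~3. The first move $\kappa(A,B)$ is illegal when $B\in M$, e.g.\ for $M=\partial\Delta^{n+1}$. There you should instead cone the shellable ball $A*\partial B$: do a $(1,n+1)$-move on one $n$-simplex, then absorb the others along a shelling by moves $\kappa(P,a*Q)$. Every simplex these moves create contains the new vertex $a$, so they are always legal.

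This cone argument already proves the theorem for $n\le4$, which is the case the paper uses. In that range the link of an $r$-simplex with $r\ge1$ is a sphere of dimension at most $2$, hence shellable. So $A*L$ is shellable and no induction on links is needed. For general $n$ you still owe an argument for legality, or some other way of handling non-shellable links.
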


Pachner moves (also called bistellar moves) are local changes in the triangulation of a simplicial complex. For any combinatorial~$n$-manifold, there are only~$n+1$ different types of Pachner moves.  Therefore, in order to check that some quantity is a diffeomorphism invariant of closed 4-manifolds,  it suffices to check that it does not change under the (finite list of) Pachner moves.  

We are interested in invariants of $\sigma$-manifolds (see Section \ref{sigma manifolds}) that are modeled using combinatorial \mbox{$\sigma$-mani}folds (see Section \ref{Section-Triangulations}).  We say that two combinatorial $\sigma$-manifolds $(K,\phi)$, $(L, \psi)$ are \emph{equivalent} if there is a PL homeomorphism $\varphi \colon K \to L$ such that $B\phi$ and $B\psi \circ \varphi$ are homotopic, where $B\phi$ and $B\psi$ are the geometric realizations of $\phi$ and $\psi$, respectively (see Section \ref{geometric realization}).  We show a strengthening of Pachner's theorem:

\begin{thm}[Colored Pachner theorem]\label{Theorem - Colored Pachner}
Two combinatorial $\sigma$-manifolds are equivalent if and only if they are related by a finite sequence of $\sigma$-colored Pachner moves.
\end{thm}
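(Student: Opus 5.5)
The plan has three parts: define the moves so that the "if" direction is immediate, then reduce the "only if" direction to two independent ingredients. These are the classical Pachner theorem for the underlying complexes, and the Homotopy Classification Theorem (Theorem~\ref{homotopy classification theorem}) to control colorings on a fixed complex.

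\textbf{The moves.} A $\sigma$-colored Pachner move will be one of two things:
\begin{itemize}
\item a \emph{gauge move}: $(K,\phi)\mapsto (K,\phi\cdot(r,s,t))$ for a quadratic $\phi$-derivation $(r,s,t)$, possibly supported on a single vertex, edge or triangle, since by Section~\ref{gauge-groupoid} these triples are free;
\item a \emph{colored bistellar move} $(K,\phi)\mapsto (K',\phi')$: here $K'$ is obtained from $K$ by an ordinary Pachner move replacing a ball $A\subset \partial\Delta^5$ with its complement $A'$, and $\phi'$ agrees with $\phi$ on $\Pi_3$ of the common part $K\setminus \mathring A$.
\end{itemize}

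\textbf{The "if" direction.} I check that each move preserves the equivalence class.
\begin{itemize}
\item For gauge moves this is exactly Theorem~\ref{homotopy classification theorem}: gauge equivalent colorings have homotopic geometric realizations.
\item For a colored bistellar move, both $B\phi$ and $B\phi'$ restrict to the same map on $|K\setminus\mathring A|$, up to homotopy, via the restriction of Lemma~\ref{Lemma Realizability}. They then extend over the 4-balls $|A|$ and $|A'|$. Using the PL homeomorphism $|K|\cong|K'|$ that is the identity off the ball, the two extensions differ by an element of the obstruction set $[S^4\text{-type gluing}, B\sigma]$ relative to the boundary sphere. Since $A\cup A'=\partial\Delta^5$, the induced coloring on $\partial\Delta^5$ is a coloring of a 4-sphere. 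I argue that all colorings of the combinatorial $\partial\Delta^5$ that agree on a given boundary are gauge equivalent rel boundary; equivalently, I choose the move definition so that $\phi'$ on $A'$ is any extension.
\item Relative homotopy classes of extensions over the ball $|A'|$ form a torsor for $\pi_4(B\sigma)=0$. Hence the extensions are homotopic rel boundary. This uses that $B\sigma$ is a 3-type, which is essential.
\end{itemize}

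\textbf{The "only if" direction.} Suppose $\varphi\colon K\to L$ is a PL homeomorphism with $B\phi\simeq B\psi\circ\varphi$.
\begin{enumerate}
\item By Pachner's theorem, choose a sequence of uncolored moves $K=K_0\to K_1\to\dots\to K_n\cong L$ realizing $\varphi$ up to isotopy.
\item Transport colorings inductively along this sequence. At each step the common part $K_i\setminus \mathring A$ carries the restricted coloring, and I need an extension to $A'$. An extension exists at the level of maps, since $B\phi_i$ restricted to the boundary sphere extends over $|A'|$; this is a ball, so the map is null-homotopic on the boundary after restriction.
\item Apply Lemma~\ref{Lemma Realizability} to the pair, in a relative form, to get an extending coloring. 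The realizability lemma is stated only absolutely, so I would prove a relative version. The freeness of $\Pi_3$ makes this plausible: colorings are determined by values on $B_1,B_2,B_3$ subject to the triangle, tetrahedron and 4-simplex identities. New edges, triangles and tetrahedra interior to $A'$ get values killing the relevant boundaries, which is possible because $\Pi_3$ of a ball relative to its boundary has trivial homotopy in degrees $\le 3$.
\item This produces a coloring $\phi_n$ on $L$ with $B\phi_n\simeq B\phi\circ\varphi^{-1}\simeq B\psi$.
\item Theorem~\ref{homotopy classification theorem} then gives that $\phi_n$ and $\psi$ are gauge equivalent, which is a finite sequence of elementary gauge moves.
\item A change of canonical structure (distinguished vertices, disks) is absorbed using the isomorphism of fundamental 2-crossed modules noted in Section~\ref{Section_Fundamental 2-crossed module}.
\end{enumerate}

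\textbf{Main obstacle.} The hardest step is the relative extension: showing that a coloring on $K\setminus\mathring A$ extends to $A'$, and that any two extensions differ by a gauge transformation fixing the boundary. I would attack it combinatorially on the explicit five or six moves, using the free basis description of colorings. I would try first to show that the relative fundamental 2-crossed module of $(\Delta^4\text{-ball},\partial)$ is weakly contractible, citing Brown--Loday's van Kampen theorem for $\Pi_3$.
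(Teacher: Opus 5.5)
The genuine gap is that you have changed what a move is. In the paper, a $\sigma$-colored Pachner move is only a canonical bistellar move whose new coloring agrees with the old one off the new simplices. A gauge transformation $\phi\mapsto\phi\cdot(r,s,t)$ is \emph{not} a move.

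Your ``if'' direction matches the paper, which also rests on $\pi_4(B\sigma)=0$. Your transport of colorings along an uncolored Pachner sequence also matches. The paper's Lemmas~\ref{Pachner_(1,5)}--\ref{Pachner_(3,3)} supply the relative extension you flag as the main obstacle, with explicitly free parameters.

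Because you declare gauge moves primitive, your last step (``gauge equivalent, which is a finite sequence of elementary gauge moves'') just restates Theorem~\ref{homotopy classification theorem}. You therefore prove a weaker theorem, and it no longer serves its purpose. The invariance of $\mathrm{HDR}_\mathcal{C}$ is checked only for bistellar moves (Lemmas~\ref{Lemma_Pachner_(3,3)}--\ref{Lemma_Pachner_(1,5)}). Nothing in the paper shows directly that the state sum is unchanged when $\phi$ is replaced by a gauge-equivalent coloring on the same $K$. Relatedly, the second half of your ``main obstacle'' (two extensions differ by a gauge transformation rel boundary) is not what needs proving.

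The missing idea is that gauge transformations are themselves generated by colored bistellar moves. The paper shows this in three steps.
\begin{enumerate}
\item Every morphism of $\mathcal{G}(K,\sigma)$ factors into generating derivations, each nontrivial on a single vertex, edge or triangle (Lemma~\ref{Lemma gauge-groupoid generators}). This is not automatic, because $(1,s,1)\cdot(1,s',1)=(1,s\otimes s',1)\cdot(1,1,\omega^{(s,s')}(d_1))$ brings in the connecting map.
\item One arranges that the support simplex has a vertex $x$ whose link is $\partial\Delta^4$. This is done by Pachner moves $M$ on the link, conjugating as $M^{-1}NM$.
\item One applies an explicit sequence of $(1,5)$, $(2,4)$, $(3,3)$, $(4,2)$, $(5,1)$ moves that replaces $x$ by a new vertex $y$. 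The free parameters from Lemmas~\ref{Pachner_(1,5)}--\ref{Pachner_(3,3)} are chosen as $\phi\langle xy\rangle=r(x)$, or $\phi\langle xy1\rangle={}^{\phi\langle x1\rangle}s(\langle x1\rangle)^{-1}$, or $\phi\langle xy12\rangle=\phi\langle x12\rangle\triangleright t(\langle x12\rangle)$, with trivial values elsewhere. The result is exactly $(K,\phi\cdot(r,s,t))$.
\end{enumerate}
Without an argument of this kind, your proof does not reach the stated theorem.
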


The aim of this section is to prove Theorem \ref{Theorem - Colored Pachner}. We fix\begin{tikzcd}\sigma = (L \arrow[r, "\delta"] & E \arrow[r, "\partial"] & H, \ \omega)\end{tikzcd}a~2-crossed module of groups.

\subsection{4-dimensional Pachner moves} An \emph{$n$-dimensional Pachner move} (or simply a \emph{Pachner move}) replaces a combinatorial $n$-manifold $K$ with a combinatorial manifold $K'$ obtained by replacing a submanifold of $K$ that is isomorphic to a subcomplex $I\subset \partial \Delta^{n+1}$ with the complementary subcomplex $J\subset \partial \Delta^{n+1}$.  There are five types of 4-dimensional Pachner moves denoted by the cardinality of the 4-simplices in $I$ and $J$: the Pachner~$(1,5)$-move, the Pachner $(2,4)$-move, and the Pachner $(3,3)$-move, together with their inverses.  Let~$0,\ldots, 5$ denote the vertices of $\Delta^5$. The following table lists the 4-simplices in $I$ and $J$ for the 4-dimensional Pachner moves:
\begin{center}
\begin{tabular}{c | c | c}
& $I$ & $J$ \\
\hline
Pachner $(1,5)$-move & $(01234)$ & $(01235)$, $(01245)$, $(01345)$, $(02345)$, $(12345)$ \\
\hline 
Pachner $(2,4)$-move & $(01234)$, $(12345)$ & $(01235)$, $(01245)$, $(01345)$, $(02345)$\\
\hline
Pachner $(3,3)$-move & $(01234)$, $(01245)$, $(02345)$ & $(01235)$, $(01345)$, $(12345)$
\end{tabular}
\end{center}
We refer the reader to \cite{lickorish_simplicial_1999} for more details on Pachner moves. 

\subsection{Canonical Pachner moves}

A \emph{canonical Pachner move} on a canonical 4-manifold $K$ is a Pachner move on the underlying combinatorial 4-manifold of $K$ together with an arbitrary choice of distinguished vertex for every newly constructed triangle.  Clearly,  two canonical $4$-manifolds (see Section \ref{Section-Triangulations}) with the same underlying combinatorial 4-manifold are related by a finite sequence of canonical Pachner moves. 

\subsection{Colored Pachner moves}
A \emph{$\sigma$-colored Pachner move} (or simply a \emph{colored Pachner move}) replaces a combinatorial~$\sigma$-manifold $(K,\phi)$ with a combinatorial $\sigma$-manifold $(L,\psi)$, where $L$ is obtained by applying a canonical Pachner move to $K$, and $\psi$ coincides with $\phi$ outside of the new simplices. 

Let $(K, \phi)$ be a combinatorial $\sigma$-manifold, and let $L$ be obtained by applying a canonical Pachner $(1,5)$-move to $K$.  In other words, there is a 4-simplex $(01234)$ in $K$ such that $L$ is obtained from $K$ by replacing~$(01234)$ with the 4-simplices $(01235)$, $(01245)$, $(01345)$, $(02345)$,  and $(12345)$. Notice that all of the 0-, 1-, 2-, and 3-simplices of $K$ are also in $L$.  

\begin{lem}\label{Pachner_(1,5)}
If $L$ is the canonical 4-manifold obtained by applying a canonical Pachner $(1,5)$-move to a canonical 4-manifold $K$ with $\sigma$-coloring $\phi$, then a $\sigma$-coloring $\psi$ of $L$ that coincides with $\phi$ on all pointed ordered 1-, 2-, and 3-simplices of $K$ is uniquely specified by choosing:
\begin{itemize}
\item an element $h\in H$ such that $\psi(\langle 05\rangle) = h$,
\item elements $e_1, e_2, e_3, e_4 \in E$ such that 
\begin{align*}
\psi(\langle 015\rangle) &= e_1, &
\psi(\langle 025\rangle ) &= e_2,\\
\psi(\langle 035\rangle ) &= e_3, &
\psi(\langle 045\rangle ) &= e_4,
\end{align*}
\item elements $l_1, l_2, l_3, l_4, l_5, l_6\in L$ and choices of pointings and orders $\tau_1, \tau_2, \tau_3, \tau_4$,  $\tau_5$, and $\tau_6$ on the simplices $(0125)$, $(0135)$, $(0145)$,  $(0235)$,  $(0245)$ and $(0345)$, respectively, such that $\psi(\tau_i) = l_i$ for all~$i\in \{1,2, 3, 4, 5, 6\}$,
\end{itemize}
with no compatibility conditions. Moreover, there is a bijection between the set of $\sigma$-colorings of $L$ and the set
\[
\{ \text{$\sigma$-colorings of $K$}\}\times H\times E^4\times L^6.
\]
In particular, any $\sigma$-coloring of $L$ induces a $\sigma$-coloring of $K$ by restriction.
\end{lem}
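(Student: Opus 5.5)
We will use the description of $\sigma$-colorings through a choice of ordered simplices given in Section \ref{Subsection_Choices of ordered simplices}. A $\sigma$-coloring is the same as a triple $(c_1,c_2,c_3)$ on $(B_1,B_2,B_3)$ satisfying the triangle, tetrahedron and 4-simplex identities. So we fix a choice of ordered simplices $B_*$ of $K$ and extend it to a choice $B'_*$ for $L$.

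The new simplices of $L$ are as follows: the edges $\langle i5\rangle$ for $i=0,\dots,4$; the triangles $(ij5)$ for $0\le i<j\le 4$, of which there are ten; and the tetrahedra $(ijk5)$, of which there are ten. The 4-simplex $(01234)$ is removed.

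The new generators then split into two groups.
\begin{itemize}
\item A \emph{free part}: the edge $\langle 05\rangle$, the triangles $(0j5)$ for $j=1,\dots,4$, and the tetrahedra $(0jk5)$ for $1\le j<k\le 4$. These are exactly the simplices of the cone from the edge $\langle 05\rangle$.
\item A \emph{determined part}: the edges $\langle i5\rangle$ for $i\ge1$, the triangles $(ij5)$ for $1\le i<j$, and the tetrahedra $(ijk5)$ for $1\le i<j<k$.
\end{itemize}

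\textbf{Step 1 (edges).} The first key step is that each determined generator appears in exactly one identity in which it is the only unknown, and can be solved for.
\begin{itemize}
\item The triangle identity for $(0i5)$ reads $\partial(e_{i})=\widehat{c_1}(d_1(0i5))$, where $d_1(0i5)$ is a word in $\langle 0i\rangle,\langle i5\rangle,\langle 05\rangle$. Since $\langle 0i\rangle$ and $\langle 05\rangle$ are already colored, this determines $c_1\langle i5\rangle$ uniquely from $h$ and $e_i$. This is possible because $H$ is a group and $\langle i5\rangle$ occurs exactly once in the boundary word.
\end{itemize}

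\textbf{Step 2 (triangles).} Similarly, the tetrahedron identity for $(0ij5)$ reads $\delta(l)=\widehat{c_2}(d_2(0ij5))$.
\begin{itemize}
\item In the free 2-crossed module, $d_2(0ij5)$ is a product of conjugates, under the $\pi_1$-action, of the four face generators. These are $(0ij)$, $(0i5)$, $(0j5)$ and $(ij5)$.
\item The face $(ij5)$ appears exactly once, conjugated by an element of $H$ coming from the pointing path.
\item Hence $c_2(ij5)$ is uniquely determined as a product of known elements of $E$ and $\delta(l)$.
\item The triangle identity for $(ij5)$ is then automatic. Applying $\partial$ to the tetrahedron identity and using $\partial\delta=1$ together with the Peiffer/pre-crossed relations recovers the required boundary equation, just as $\partial\circ d_2=1$ holds in $\Pi_3(L)$.
\end{itemize}

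\textbf{Step 3 (tetrahedra).} The 4-simplex identity for $(0ijk5)$ determines $c_3(ijk5)$ in the same way. In $\pi_3$, $d_3(0ijk5)$ is a product of the five face generators, suitably acted upon and corrected by Peiffer liftings. In this product $(ijk5)$ appears once, and $L$ is a group.
\begin{itemize}
\item The tetrahedron identity for $(ijk5)$ then follows from $d_2\circ d_3=1$.
\item The 4-simplex identities for the remaining new 4-simplices are $(01235)$, $(01245)$, $(01345)$, $(02345)$, $(12345)$. For $(0ijk5)$ they hold by construction. For $(12345)$ we must show the identity holds automatically.
\end{itemize}

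\textbf{Main obstacle.} This last point is the step I expect to be the main obstacle. The approach I would try first is topological. The union of the six 4-simplices $(01234)$, $(01235)$, $(01245)$, $(01345)$, $(02345)$, $(12345)$ is $\partial\Delta^5$, a 4-sphere. By the boundary relation in the fundamental 2-crossed module of $\partial\Delta^5$ (equivalently, $\pi_4$-level homotopy addition), the signed product of the six values $\widehat{c_3}(d_3(\cdot))$ is trivial. The factor for $(01234)$ is $1$ because $\phi$ is a coloring of $K$, and the factors for $(0ijk5)$ are $1$ by construction, so the remaining factor for $(12345)$ is $1$ as well.

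Concretely, I would argue that the coloring constructed on the subcomplex $\Lambda=\partial\Delta^5\setminus(12345)$ (a 4-disk) extends to $\partial\Delta^5\setminus\mathrm{int}$ in a way that forces the value on $\partial(12345)$. Since $\pi_3(\Lambda, \Lambda_2^-,\Lambda_2^+)$ receives $d_3(12345)$ as an element that is a product of $d_3$ of the other faces, the identity follows.

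\textbf{Conclusion.} Since every new generator of $L$ is either free or uniquely solved, and all identities hold, the map $\psi\mapsto(\phi, h,(e_i),(l_i))$ is a bijection onto $\{\text{$\sigma$-colorings of $K$}\}\times H\times E^4\times L^6$. Restriction is well defined because all simplices of $K$ other than $(01234)$ lie in $L$. The 4-simplex identity for $(01234)$ holds because, by the sphere relation above, it is equivalent to the five identities in $L$.
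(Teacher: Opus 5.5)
Your proposal is correct. The uniqueness half matches the paper's proof. For existence you take a genuinely different route.

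\textbf{Uniqueness.} In both arguments the triangle identities of $\langle 0i5\rangle$, the tetrahedron identities of $\langle 0ij5\rangle$ and the 4-simplex identities of $\langle 0ijk5\rangle$ successively force the colors of $\langle i5\rangle$, $\langle ij5\rangle$ and $\langle ijk5\rangle$.

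\textbf{Existence.} The paper stays purely algebraic. It checks the triangle identity of $\langle 125\rangle$ and the tetrahedron identity of $\langle 1235\rangle$ by direct expansion. It then proves the 4-simplex identity of $\langle 12345\rangle$ through the long chain $E=1\Leftrightarrow E_1=1\Leftrightarrow\cdots\Leftrightarrow E_7=1$. This uses only the 2-crossed module axioms, the identities of \cite[Lemma 2.2]{martins_fundamental_2011}, and the $\langle 01234\rangle$ identity inherited from $\phi$.

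You argue structurally instead:
\begin{itemize}
\item The new triangle and tetrahedron identities follow by comparing $\partial$ (respectively $\delta$) of the defining identity with the image of $d_1d_2=1$ (respectively $d_2d_3=1$). The two words differ in a single factor.
\item The $\langle 12345\rangle$ identity comes from homotopy addition on $\partial\Delta^5$. Let $X_3$ be the 3-skeleton of $\partial\Delta^5$; it is simply connected. Then $\pi_4(X_4,X_3)\cong H_4(X_4,X_3)$, and the generator of $\pi_4(S^4)$ maps to the signed sum of the six 4-simplices. Its boundary in $\pi_3(X_3)$ therefore vanishes.
\item Basepoint transports are irrelevant because $X_3$ is simply connected. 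Applying $\widehat{c_3}$, which exists once all triangle and tetrahedron identities on the 3-skeleton hold, kills the five known factors and hence the sixth.
\end{itemize}

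\textbf{A correction to the $\Lambda$ paragraph.} In $\pi_3(\lvert\Lambda\rvert,\Lambda_2^-,\Lambda_2^+)$ all six boundary elements are already trivial, so the product relation you want lives in $\pi_3(X_3,X_2^-,X_2^+)$, not there. The $\Lambda$ version works differently: the five known identities make $\psi|_\Lambda$ a coloring of $\Lambda$. So $\widehat{c_3}$ factors through $\pi_3(\lvert\Lambda\rvert,\Lambda_2^-,\Lambda_2^+)$, where $d_3(12345)$ dies because $\pi_3(\lvert\Lambda\rvert)=0$.

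\textbf{What each approach buys.} The paper's computation is self-contained algebra and does not lean on the topological presentation of $\Pi_3$, but it is long and must be redone for each move. Your argument is short and explains why the identity must hold. It handles the $(2,4)$ and $(3,3)$ moves uniformly. Because the sphere relation is symmetric in the six 4-simplices, it also gives directly the restriction statement: every coloring of $L$ satisfies the $\langle 01234\rangle$ identity, which is what makes the stated map a bijection. The price is reliance on relative Hurewicz and on the freeness and presentation results for $\Pi_3$ that the paper only cites in Section 3.
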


We give a proof of Lemma \ref{Pachner_(1,5)} in Section \ref{Section_Proof of Lemma 8.3.1}. The proofs of Lemmas \ref{Pachner_(2,4)} and \ref{Pachner_(3,3)} are very similar and are left to the reader. 

Suppose $K$ and $L$ are canonical 4-manifolds such that $L$ is obtained from $K$ by applying a canonical Pachner~$(2,4)$-move.  In other words, there are two 4-simplices $(01234)$ and~$(12345)$ in $K$ such that $L$ is obtained from $K$ by replacing them with the 4-simplices $(01235)$, $(01245)$, $(01345)$, and~$(02345)$.  Notice that the only simplices of $K$ that are not in $L$ are $(1234)$, $(01234)$, and $(12345)$. 

\begin{lem}\label{Pachner_(2,4)}
If $L$ is the canonical 4-manifold obtained by applying a canonical Pachner $(2,4)$-move to a canonical 4-manifold $K$ with $\sigma$-coloring $\phi$, then a $\sigma$-coloring $\psi$ of $L$ that coincides with $\phi$ on all pointed ordered 1-, 2-, and 3-simplices common to $K$ and $L$ is uniquely specified by choosing:
\begin{itemize}
\item an element $e\in E$ such that $\psi(\langle 015\rangle) = e$,
\item elements $l_1,l_2,l_3\in L$ and choices of pointings and orders $\tau_1$, $\tau_2$,  and $\tau
_3$ on the simplices $(0125)$, $(0135)$, and $(0145)$, respectively, such that $\psi(\tau_i) = l_i$ for $i\in \{1,2,3\}$,
\end{itemize}
with no compatibility conditions.  Moreover, there is a bijection between the set of $\sigma$-colorings of $L$ and the set 
\[
\{ \text{$\sigma$-colorings of $K$}\}\times E\times L^3.
\]
In particular, any $\sigma$-coloring of $L$ induces a $\sigma$-coloring of $K$ by restriction.
\end{lem}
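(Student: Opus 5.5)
We follow the same strategy as for Lemma \ref{Pachner_(1,5)}. We use the description of $\sigma$-colorings from Section \ref{Subsection_Choices of ordered simplices}. Since $\Pi_3(L)$ is free, a $\sigma$-coloring $\psi$ of $L$ is the same as a triple $(c_1,c_2,c_3)$ on a choice of ordered simplices $B_*^L$ satisfying the triangle, tetrahedron and $4$-simplex identities. We choose $B_*^L$ to agree with a choice $B_*^K$ for $K$ on all simplices common to $K$ and $L$. The $(2,4)$-move creates exactly one new edge, $(05)$. Its colour is forced by the triangle identity on, say, $\langle 015\rangle$ once $\psi\langle 01\rangle$, $\psi\langle 15\rangle$ and $\psi\langle 015\rangle$ are fixed. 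The new triangles are $(015),(025),(035),(045)$. The new tetrahedra are $(0125),(0135),(0145),(0235),(0245),(0345)$. The new $4$-simplices are $(01235),(01245),(01345),(02345)$.

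\textbf{Step 1: edges and triangles.} The triangle identity for $\langle 015\rangle$ expresses $\partial(\psi\langle 015\rangle)$ in terms of $\psi\langle 01\rangle$, $\psi\langle 15\rangle$ and $\psi\langle 05\rangle$. Choosing $e=\psi\langle015\rangle$ freely therefore determines $\psi\langle05\rangle\in H$ uniquely. For $i=2,3,4$, the tetrahedron identity on the new tetrahedron $(01i5)$ reads $\delta(\psi(\tau))=\widehat{c_2}(d_2(\tau))$. Here $d_2(\tau)$ is a product of (twisted) boundary triangles in which $\langle 0i5\rangle$ appears exactly once. Hence, once $l_{i-1}=\psi(\tau_{i-1})\in L$ is chosen freely, this identity determines $\psi\langle 0i5\rangle\in E$ uniquely. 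The triangle identity for $\langle 0i5\rangle$ then holds automatically: apply $\partial$ and use $\partial\delta=1$ together with the triangle identities for the other faces, which already hold.

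\textbf{Step 2: the remaining tetrahedra.} The three tetrahedra $(0235),(0245),(0345)$ are faces of exactly the new $4$-simplices $(01235),(01245),(01345)$, respectively. Each such $4$-simplex also has a face not containing $0$, namely an old tetrahedron of $K$ ($(1235)$, etc.), whose value is fixed by $\phi$. Its remaining faces are tetrahedra already coloured in Step 1. The $4$-simplex identity $1=\widehat{c_3}(d_3(s))$ expands, via the 2-crossed module relations and Peiffer liftings, into a product in which the target tetrahedron appears exactly once (up to action and inversion). It therefore determines that tetrahedron's value in $L$ uniquely. Since $\widehat{c_2}$ was built consistently, the tetrahedron identity for the determined tetrahedron follows by applying $\delta$ and using the tetrahedron identities on the other faces, through the homotopy addition relation in $\Pi_3$.

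\textbf{Step 3: the last $4$-simplex identity.} This is the main obstacle. It remains to verify the identity for $(02345)$, which is not used to define anything. Our plan is to use the $4$-simplex identities of $\phi$ on $(01234)$ and $(12345)$ in $K$ and take the relation in $\pi_4$ of the boundary of $\Delta^5$. The fundamental class of $\partial\Delta^5$ gives a homotopy-addition identity among the $d_3$-images of the six $4$-simplices. Concretely, the element $\sum \pm d_3(\text{faces of }\Delta^5)$ is trivial in $\pi_3(K_3,K_2^-,K_2^+)$ after transport along paths (this is the argument of Section \ref{Section_Proof of Lemma 8.3.1}). Five of the six factors are then $1$ under $\widehat{c_3}$, which forces the sixth, $(02345)$, to be $1$ as well. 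The delicate point is bookkeeping of pointings and the action of paths. I would handle it by working in $\Pi_3$ of the closed star, which is free, so that the relation can be computed there.

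Uniqueness and freeness of the choices give the bijection with $\{\sigma\text{-colorings of }K\}\times E\times L^3$. Restriction to $K$ is well defined because the only lost simplices are $(1234)$, $(01234)$ and $(12345)$. The value of $(1234)$ is recovered from the $4$-simplex identity of $(01234)$, which can be rewritten through the new $4$-simplices. Conversely, any $\psi$ determines $\phi$ on $(1234)$ by that same identity.
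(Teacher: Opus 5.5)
Your proposal is correct, and it reaches the crucial final check by a different route from the paper. The paper proves only the $(1,5)$ case, in Section~\ref{Section_Proof of Lemma 8.3.1}, and says the $(2,4)$ case is ``very similar''.

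\textbf{Steps 1 and 2 match the paper.} You solve for each new value from the one triangle, tetrahedron or 4-simplex identity in which it occurs exactly once. You then recover the lower-dimensional identity for the solved-for simplex by applying $\partial$ or $\delta$ and using $d_1d_2=1$ and $d_2d_3=1$.

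\textbf{Step 3 is where you differ.} The paper checks the leftover 4-simplex identity by a long direct manipulation. It uses the 2-crossed module axioms and the identities of \cite[Lemma 2.2]{martins_fundamental_2011}, and feeds in the 4-simplex identity of $\phi$ on the old 4-simplex. You instead take $P=\partial\Delta^5$ and argue as follows.
\begin{itemize}
\item $P_3$ is simply connected, so relative Hurewicz gives $\pi_4(P_4,P_3)\cong H_4(P_4,P_3)$, and the class $[P]$ maps to the alternating sum of the six 4-simplices.
\item By exactness of $\pi_4(P_4)\to\pi_4(P_4,P_3)\to\pi_3(P_3)$, the image of $[P]$ in $\pi_3(P_3)$ is trivial.
\item Hence the six $d_3$-images, suitably transported, multiply to $1$ in $\pi_3(P_3,P_2^-,P_2^+)$.
\item Five of them are killed by $\widehat{c_3}$: those of $(01234)$ and $(12345)$ via $\phi$, and those of $(01235)$, $(01245)$, $(01345)$ by construction. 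This forces the identity on $(02345)$.
\end{itemize}
So Section~\ref{Section_Proof of Lemma 8.3.1} is not ``this argument''; it is the algebraic shadow of it.

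\textbf{Details to fix.}
\begin{itemize}
\item The relation cannot be taken in $\pi_3(K_3,K_2^-,K_2^+)$, because the tetrahedra $(0ij5)$ do not lie in $K$. The ``closed star'' is also not the right subcomplex.
\item What you should do is glue $\phi$ and the partially built $\psi$ into one set of coloring data on the 3-skeleton of $P$ (or of $K\cup L$). They agree on common simplices and satisfy every triangle and tetrahedron identity there.
\item Then use the paper's freeness assertion to obtain $\widehat{c_3}$ on $\pi_3(P_3,P_2^-,P_2^+)$.
\item Finally, use uniqueness of such extensions to see that each $\widehat{c_3}(d_3(s))$ agrees with the value computed inside $K$ or $L$.
\item A minor point: $(0235)$, $(0245)$ and $(0345)$ are also faces of $(02345)$, so ``exactly'' in Step~2 is inaccurate. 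This is harmless for the argument.
\end{itemize}

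\textbf{What each route buys.} Your argument is uniform. It also handles the $(1,5)$ and $(3,3)$ moves, and the converse direction: define $\phi(1234)$ from the identity on $(01234)$, then deduce the identity on $(12345)$ from the same relation. It also explains why the check must succeed. The cost is extra topological input: exactness, homotopy addition, and freeness of $\Pi_3$ for an auxiliary non-manifold complex. The paper's purely algebraic verification avoids all of that, but it is lengthy and must be redone for each move.
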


Suppose $K$ and $L$ are canonical 4-manifolds such that $L$ is obtained from~$K$ by applying a canonical Pachner~$(3,3)$-move.  In other words, there are three 4-simplices $(01234)$,~$(01245)$, and $(02345)$ in $K$ such that $L$ is obtained from $K$ by replacing them with the 4-simplices $(01235)$, $(01345)$, and~$(12345)$. 

\begin{lem}\label{Pachner_(3,3)}
If $L$ is the canonical 4-manifold obtained by applying a canonical Pachner $(3,3)$-move to a canonical 4-manifold $K$ with $\sigma$-coloring $\phi$, then a $\sigma$-coloring $\psi$ of $L$ that coincides with $\phi$ on all simplices common to $K$ and $L$ is uniquely specified by choosing an element $l\in L$ and a pointing and order $\tau$ on $(0135)$ such that~$\psi(\tau) = l$.
\end{lem}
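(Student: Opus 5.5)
Following the approach of Lemma \ref{Pachner_(1,5)} and Lemma \ref{Pachner_(2,4)}, I would use the description of $\sigma$-colorings from Section \ref{Subsection_Choices of ordered simplices}. By freeness of $\Pi_3(L)$, a $\sigma$-coloring of $L$ is the same as a triple $(c_1,c_2,c_3)$ on a choice of ordered simplices $B_*^L$ satisfying the triangle, tetrahedron and 4-simplex identities. I first list the simplices of $L$ that are not in $K$. In a $(3,3)$-move, $K$ and $L$ share the same vertices and edges, because every edge of $\Delta^5$ already lies in one of $(01234)$, $(01245)$, $(02345)$. The triangles of $I$ not on the common boundary form the single interior triangle $(024)$, and those of $J$ form the single interior triangle $(135)$. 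The interior tetrahedra of $I$ are $(0124)$, $(0234)$, $(0245)$, and those of $J$ are $(0135)$, $(1235)$, $(1345)$. I fix $B_*^L$ to agree with $B_*^K$ on all common simplices, and I choose a pointing and order $\tau$ on $(0135)$ together with orders and pointings on $(135)$, $(1235)$, $(1345)$.

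Next I determine the new values step by step. The value $\psi(135)\in E$ is forced by the tetrahedron identity for $\tau=(0135)$, once $\psi(\tau)=l$ is chosen. Indeed, $d_2(0135)$ is a product, in the free pre-crossed module, of conjugates of the four faces of $(0135)$, three of which are common to $K$ and $L$ and already colored. Only the face $(135)$ is new, and it appears exactly once, so the equation $\delta(l)=\widehat{c_2}(d_2\tau)$ can be solved uniquely for $c_2(135)$. The triangle identity for $(135)$ then holds automatically. It reduces to $\partial$ applied to the tetrahedron identity, combined with $\partial\delta=1$ and the triangle identities of the three old faces. Then $\psi(1235)$ and $\psi(1345)$ are determined by the 4-simplex identities for $(01235)$ and $(01345)$. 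Each of these 4-simplices contains exactly one not-yet-colored tetrahedron, besides $(0135)$ and old faces. The boundary $d_3$ of a 4-simplex, written via the Peiffer lifting in the free 2-crossed module, contains that tetrahedron exactly once (up to action and conjugation). So solving $1=\widehat{c_3}(d_3 s)$ gives it uniquely, and the tetrahedron identities for $(1235)$ and $(1345)$ then follow by applying $\delta$ and using the identities already established.

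The main obstacle is the remaining 4-simplex identity for $(12345)$, which has no free variable left. I have to show that it holds automatically. I would argue homotopically rather than by brute-force algebra. The identities for $(01235)$, $(01345)$, $(12345)$ together say that the coloring extends over the 3-ball formed by $J$. By the Homotopy Classification Theorem \ref{homotopy classification theorem} and freeness, a coloring of the boundary $\partial\Delta^5\setminus \mathrm{int}$ pieces extends over a contractible subcomplex. The obstruction to the last identity is an element of $\pi_3(\sigma)=\mathrm{Ker}(\delta)$ measured by the combined boundary of all six 4-simplices of $\partial\Delta^5$. That boundary is trivial because $\partial\Delta^5$ is a 4-sphere: the product of the six $d_3$ images is $1$ in $\Pi_3(\partial\Delta^5)$, by the relation $d_3\circ d_4=1$ from the homotopy long exact sequence of the triad. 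The three 4-simplices of $I$ satisfy their identities under $\phi$, and two of those of $J$ hold by construction. Hence the sixth identity holds. I would make this precise as in the proof of Lemma \ref{Pachner_(1,5)} in Section \ref{Section_Proof of Lemma 8.3.1}, using the bookkeeping of pointings via the change-of-pointing isomorphisms.

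Finally, uniqueness is clear, since every new value was forced by $l$ and $\tau$. Any $l\in L$ is allowed, so there are no compatibility conditions on $l$. Changing the chosen pointing and order on $(0135)$ changes $l$ by a bijection of $L$ given by the action and $\triangleright$, so the parametrization does not depend on that choice.
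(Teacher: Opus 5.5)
Your determination of the new values ($[135]$ from the tetrahedron identity of $\tau$, then $[1235]$ and $[1345]$ from the 4-simplex identities of $(01235)$ and $(01345)$) follows the same scheme as the paper's proof of Lemma~\ref{Pachner_(1,5)}, which the paper says carries over to the $(3,3)$-move. So do your checks of the triangle identity for $(135)$ and of the tetrahedron identities for $(1235)$, $(1345)$ by applying $\partial$, resp.\ $\delta$. The genuine difference is the remaining 4-simplex identity for $(12345)$.

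The paper proves the analogous identity by a long explicit manipulation of Peiffer-lifting identities. You instead argue as follows. Once all triangle and tetrahedron identities hold on the 3-skeleton $X_3$ of $\partial\Delta^5$, the six quantities $\widehat{c_3}(d_3 s_i)$ satisfy a product relation, because $\partial\Delta^5$ is a 4-sphere. Hence five trivial ones force the sixth. This is valid and much shorter, explains why no compatibility condition on $l$ arises, and works verbatim for the $(1,5)$- and $(2,4)$-moves. The price is topological input: you need that the triad homotopy group of the 3-skeleton is the free 2-crossed module on which $\widehat{c_3}$ is defined, and you need relative Hurewicz. The paper's computation uses only the 2-crossed module axioms.

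When you write it up, state the relation in the right place.
\begin{itemize}
\item It holds in $\pi_3(X_3)$, and hence in its image in $\pi_3(X_3,X_2^-,X_2^+)$. It should not be stated in $\Pi_3(\partial\Delta^5)$: there each $d_3(s_i)$ is already trivial, and evaluating $\widehat{c_3}$ there presupposes the very identity you want.
\item It comes from the exact sequence of the pair $(X_4,X_3)$, not of the triad. Since $X_3$ is simply connected, $\pi_4(X_4,X_3)\cong H_4(X_4,X_3)\cong\Z^6$. The signed sum of the six 4-simplices is the image of a generator of $\pi_4(X_4)\cong H_4(X_4)\cong\Z$, so its boundary in $\pi_3(X_3)$ vanishes.
\item The image of $\pi_3(X_3)$ is abelian, and base-point transports act on $L$ by automorphisms, so ordering and pointings are harmless. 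The change-of-pointing isomorphisms of state spaces play no role here.
\item The identities for the three 4-simplices of $I$, known in $K$, transfer to $X_3$ by naturality under inclusion of each closed 4-simplex.
\end{itemize}
Finally, delete the sentence invoking the Homotopy Classification Theorem. A coloring of a 3-sphere need not extend over a 4-ball, since the obstruction lies in $\pi_3(\sigma)$, and your argument does not need that claim. Also, $J$ is a 4-ball, not a 3-ball.
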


\subsection{Proof of Theorem \ref{Theorem - Colored Pachner}}\label{proof colored Pachner}

This section is devoted to the proof of the Colored Pachner Theorem. Let $(K, \phi)$ and $(L, \psi)$ be combinatorial $\sigma$-manifolds. Our aim is to show that $(K, \phi)$ and $(L, \psi)$ are equivalent if and only if $(L, \psi)$ is obtained by applying a finite sequence of $\sigma$-colored Pachner moves to $(K,\phi)$.  If $(L,\psi)$ is obtained by applying a~$\sigma$-colored Pachner move to $(K, \phi)$, then they are equivalent.  Indeed, in that case there exists a PL homeomorphism~$\varphi\colon K\to L$ such that the induced maps $B\phi$ and $B\psi\circ \varphi$ are homotopic because $\pi_4(B\sigma) = \{0\}$. 

Conversely, let us show that if $(K, \phi)$ and $(L, \psi)$ are equivalent, then $(L,\psi)$ is obtained by applying a finite sequence of $\sigma$-colored Pachner moves to $(K,\phi)$. Notice that by the (classical) Pachner Theorem,  if $(K,\phi)$ and~$(L,\psi)$ are equivalent, then in particular $L$ is obtained by applying a finite sequence of Pachner moves to~$K$. Therefore, without loss of generality, we will suppose that $K = L$.  

Moreover,  the Homotopy Classification Theorem (Theorem \ref{homotopy classification theorem}) tells us that if $(K, \phi)$ and $(K, \psi)$ are equivalent then~$\phi$ and $\psi$ are gauge equivalent.  Therefore, it suffices to show that if $\phi$ and $\psi$ are gauge equivalent then~$(K, \psi)$ is obtained by applying a finite sequence of $\sigma$-colored Pachner moves to $(K, \phi)$.  

\begin{lem}\label{Lemma gauge-groupoid generators}
Let $(r,s,t)$ be a morphism in the gauge groupoid $\mathcal{G}(K,\sigma)$ (see Section \ref{gauge-groupoid}), and let $B_*$ denote a choice of ordered simplices of $K$ (see Section \ref{Subsection_Choices of ordered simplices}). There are non-negative integers $l, m, n$ such that 
$$
(r,s,t) = (r_1,1,1)\cdots (r_l, 1, 1)\cdot  (1, s_1, 1)\cdots (1, s_m, 1)\cdot (1,1,t_1)\cdots (1,1,t_n)
$$
where 
\begin{itemize}
\item for all $i\in \{1, \ldots,  l\}$,  there exists a unique $x\in K_0$ such that $r_i(x) \neq 1$, 
\item for all $j\in \{1, \ldots, m\}$, there exists a unique $b_1 \in B_1$ such that $s_j(b_1) \neq 1$,
\item for all $k\in \{1, \ldots, n\}$,  there exists a unique $b_2\in B_2$ such that $t_k(b_2)\neq 1$.
\end{itemize}
\end{lem}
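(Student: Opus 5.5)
The plan is to peel off the three layers of the gauge transformation one at a time, using Lemma~\ref{Lemma composition} and the fact recorded in Section~\ref{gauge-groupoid}: a quadratic derivation is uniquely determined by its restriction $(r, s_{\vert B_1}, t_{\vert B_2})$, and any triple of set maps $K_0\to H$, $B_1\to E$, $B_2\to L$ arises as such a restriction. Since $K$ is finite, $K_0$, $B_1$ and $B_2$ are finite. The statement will follow once we can factor an arbitrary derivation as (pure $r$-part)$\cdot$(pure $s$-part)$\cdot$(pure $t$-part), and then factor each pure part into pieces supported at a single simplex.

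\emph{Step 1 (separating the three layers).} Given $(r,s,t)\colon \phi\to\phi'$, I first write $(r,s,t) = (r,1,1)\cdot(1,s',t')$ for suitable $s',t'$. By Lemma~\ref{Lemma composition} with first factor $(r,1,1)$, we have $r''=r\cdot 1=r$ and $s''(b_1)=1\cdot\prescript{r(y)}{}{s'(b_1)}$. Hence I choose $s'(b_1)=\prescript{r(y)^{-1}}{}{s(b_1)}$. The connecting map $\omega^{(1,\prescript{r}{}{s'})}$ should be trivial, since its recursive definition only produces Peiffer liftings with the trivial element. So $t''(b_2)=\prescript{r(x)}{}{(s'(d_1b_2)^{-1}\triangleright 1)}\,\prescript{r(x)}{}{t'(b_2)}$, and I set $t'(b_2)=\prescript{r(x)^{-1}}{}{t(b_2)}$.

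Next I write $(1,s',t')=(1,s',1)\cdot(1,1,t'')$. Now Lemma~\ref{Lemma composition} gives $t$-component $\omega^{(s',1)}(d_1b_2)\,(1\triangleright 1)\,t''(b_2)$, and I solve $t''(b_2)=\omega^{(s',1)}(d_1b_2)^{-1}t'(b_2)$. The main obstacle is checking these connecting maps carefully, in particular that $\omega^{(s,1)}$ and $\omega^{(1,s)}$ vanish, or otherwise just absorbing them as above. Either way, the $t$-component can always be solved for because $L$ is a group and the formula is of the form (known)$\cdot t''(b_2)$. Uniqueness of derivations with given restrictions then identifies the composite with the original.

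\emph{Step 2 (single-support factors).} The $t$-layer is easiest. For derivations $(1,1,t_1)\cdot(1,1,t_2)$, Lemma~\ref{Lemma composition} gives $r''=1$, $s''=1$ and $t''(b_2)=\omega^{(1,1)}(\cdot)\,(1\triangleright t_1(b_2))\,t_2(b_2)=t_1(b_2)t_2(b_2)$. So pure-$t$ derivations compose by pointwise product, and $t$ factors as the product over $b_2\in B_2$ of the derivations supported at $b_2$. The $r$-layer works the same way: $(r_1,1,1)\cdot(r_2,1,1)$ has $r''=r_1r_2$, with $s''=1$ and $t''=1$, so $r$ is a pointwise product of vertex-supported ones.

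For the $s$-layer, composition gives $s''(b_1)=s_1(b_1)s_2(b_1)$, but the $t$-component is $\omega^{(s_1,s_2)}(d_1b_2)$, which need not vanish. I would therefore factor $(1,s,1)=(1,s_1,1)\cdots(1,s_m,1)\cdot(1,1,\tau)$ with each $s_j$ supported on a single edge of $B_1$ (ordering the edges $b^{(1)},\dots,b^{(m)}$). The correction term $\tau$ is solved for as in Step~1, and $(1,1,\tau)$ is then split into single-triangle factors by the $t$-layer argument.

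Finally, concatenating the three factorizations gives exactly the stated form, with the $r$-factors first, then the $s$-factors, then the $t$-factors.
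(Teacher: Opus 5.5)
Your proposal is correct and is essentially the paper's argument: split off the vertex part to get $(r,s,t)=(r,1,1)\cdot(1,\prescript{r^{-1}}{}{s},1)\cdot(1,1,\prescript{r^{-1}}{}{t})$ because the relevant connecting maps are trivial. Then pure-$r$ and pure-$t$ derivations compose pointwise, while composing pure-$s$ derivations only produces a $t$-correction $\omega^{(s,s')}(d_1)$, which is absorbed into the $t$-layer. You also check that both $\omega^{(1,s)}$ and $\omega^{(s,1)}$ vanish (the paper states only the latter), and you observe that the $t$-component can be solved for in any case, so your first step does not depend on the vanishing and is slightly more robust than the paper's.
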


Lemma \ref{Lemma gauge-groupoid generators} may seem obvious, but is not at all immediate.  Indeed, recall that the composition of morphisms in $\mathcal{G}(K,\sigma)$ makes use of the connecting map $\omega^{(s,s')}$, see Section \ref{gauge-groupoid}.

\begin{proof}
Notice that for all $b_2\in B_2$,  we have $\omega^{(s,1)}(d_1(b_2)) = 1$. Therefore,
$$
(r,s,t) = (r,1,1)\cdot (1, \prescript{r^{-1}}{}{s}, 1)\cdot (1, 1, \prescript{r^{-1}}{}{t}).
$$
Moreover, for all $r, r' \colon K_0 \to H$ and $t, t'\colon B_2\to L$, we have
$$
(r, 1, 1)\cdot (r', 1, 1) = (r r', 1, 1)\quad \text{and}\quad (1,1,t) \cdot (1,1,t') = (1, 1, t t').
$$
Finally, for all $s,s'\colon B_1\to E$, we have
$$
(1,s,1)\cdot (1,s', 1) = (1, s\otimes s', 1)\cdot (1,1,\omega^{(s,s')}(d_1)).
$$
Therefore, we can iteratively apply the previous formulas to write $(r,s,t)$ as a finite composition satisfying the required conditions. 
\end{proof}

We say that a quadratic derivation that is nontrivial on a unique element $b$ of $B_*$ is a \emph{generating quadratic derivation} on $b$.  Lemma \ref{Lemma gauge-groupoid generators} tells us that any quadratic $\phi$-derivation is the composition of generating quadratic derivations. We deduce that it suffices to show that if $\psi = \phi\cdot (r,s,t)$ with $(r,s,t)$ a generating quadratic derivation, then $(K,\psi)$ is obtained by applying a finite sequence of $\sigma$-colored Pachner moves to $(K,\phi)$. 

Suppose $\psi = \phi\cdot (r,s,t)$ with $(r,s,t)$ a generating quadratic derivation on $b\in B_*$. Without loss of generality, we can suppose that $b$ has a vertex $x$ whose link is $\partial \Delta^4$.  Indeed,  since $K$ is a combinatorial~4-manifold, we know that the link of any vertex of $K$ is PL-homeomorphic to $\partial \Delta^4$.  Therefore, by the classical Pachner Theorem,~$\partial \Delta^4$ is obtained by applying a finite sequence of 3-dimensional Pachner moves to the link of $x$.  In fact, these moves can be extended to a sequence of 4-dimensional Pachner moves denoted $M$ such that the link of $x$ in~$M\cdot K$ is~$\partial \Delta^4$, where $M\cdot K$ denotes the combinatorial manifold obtained by applying the moves $M$ to~$K$.  If we arbitrarily extend the moves $M$ to be $\sigma$-colored Pachner moves, our problem amounts to finding a finite sequence of $\sigma$-colored Pachner moves from $(M\cdot K, M\cdot \phi)$ to $(M\cdot K, M\cdot (\phi \cdot (r,s,t)))$, where $M\cdot \phi$ denotes the~$\sigma$-coloring of $M\cdot K$ obtained by applying the moves $M$.  We know that $M\cdot \phi$ and $M\cdot (\phi\cdot (r,s,t))$ are gauge equivalent, therefore there exists a quadratic $(M\cdot \phi )$-derivation $(r',s',t')$ such that 
$$
(M\cdot \phi)\cdot (r',s',t') = M\cdot (\phi \cdot (r,s,t)),
$$
and the maps $r'$, $s'$ and $t'$ are trivial on all basis elements outside of the star of $x$.  Moreover, $(r',s',t')$ is a composition of generating quadratic derivations over simplices that all have a vertex whose link is $\partial \Delta^4$.  Therefore, if we can find a finite sequence of $\sigma$-colored Pachner moves $N$ such that $N\cdot (M\cdot \phi) = (M\cdot \phi) \cdot (r',s',t')$, we will have
$$
M^{-1}NM\cdot \phi = M^{-1}\cdot ((M\cdot \phi) \cdot (r',s',t')) = M^{-1}\cdot (M\cdot (\phi \cdot (r,s,t))) = \phi \cdot (r,s,t).
$$

Now suppose $(r,s,t)$ is a generating quadratic $\phi$-derivation on a simplex $b$ that has a vertex $x$ whose link is $\partial \Delta^4$. We denote the 3-simplices of $\partial \Delta^4$ as $\langle 1234\rangle$, $\langle 1235\rangle$, $\langle 1245\rangle$, $\langle 1345\rangle$, and $\langle 2345\rangle$.  We give an explicit sequence of $\sigma$-colored Pachner moves going from $(K,\phi)$ to $(K,\phi\cdot (r,s,t))$. In the following, we do not specify the distinguished vertices and pointings of simplices to simplify notation.

\textbf{Case 1:} $(r,s,t) = (r,1,1)$. In this case, $r(x') = 1$ for all $x'\in K_0$ such that $x'\neq x$.  
\begin{enumerate}
\item Apply the $\sigma$-colored Pachner $(1,5)$-move to $\langle x1234\rangle$. This yields the 4-simplices $\langle xy123\rangle$, $\langle xy124\rangle$, $\langle xy134\rangle$, $\langle xy234\rangle$, and $\langle y1234\rangle$ with $\sigma$-coloring defined by choosing $\phi\langle xy\rangle = r(x)$, $\phi\langle xyi\rangle =1$ for all $i\in\{1,2,3,4\}$, and $\phi\langle xyij\rangle =1$ for all $i,j \in \{1,2,3,4\}$ such that $i<j$.
\item Apply the $\sigma$-colored Pachner $(2,4)$-move to $\langle x1235\rangle$ and $\langle xy123\rangle$. This yields the 4-simplices $\langle xy125\rangle$, $\langle xy135\rangle$, $\langle xy235\rangle$, and $y1235\rangle$ with $\sigma$-coloring defined by choosing $\phi\langle xy5\rangle =1$ and $\phi\langle xyi5\rangle = 1$ for $i\in\{1,2,3\}$.
\item Apply the $\sigma$-colored Pachner $(3,3)$-move to $\langle x1245\rangle$, $\langle xy124\rangle$, and $\langle xy125\rangle$. This yields the 4-simplices $\langle xy145\rangle$, $\langle xy245\rangle$, and $\langle y1245\rangle$ with $\sigma$-coloring defined by choosing $\phi\langle xy45\rangle = 1$.
\item Apply the $\sigma$-colored Pachner $(4,2)$-move to $\langle xy134\rangle$, $\langle xy135\rangle$, $\langle xy145\rangle$, and $\langle x1345\rangle$. This yields the 4-simplices $\langle xy345\rangle$ and $\langle y1345\rangle$ with no new choices for the $\sigma$-coloring.
\item Apply the $\sigma$-colored Pachner $(5,1)$-move to $\langle x2345\rangle$, $\langle xy234\rangle$, $\langle xy235\rangle$, $\langle xy245\rangle$, and $\langle xy345\rangle$. This yields the 4-simplex $\langle y2345\rangle$ with no new choices for the $\sigma$-coloring.
\end{enumerate}
The resulting combinatorial $\sigma$-manifold is exactly $(K, \phi\cdot (r,1,1))$ (with the vertex $x$ labeled $y$).

\textbf{Case 2:} $(r,s,t) = (1,s,1)$. In this case, $s(b') = 1$ for all $b' \in B_1$ such that $b'\neq b$. We suppose for instance that~$b =\langle x1\rangle$. We apply the same sequence of $\sigma$-colored Pachner moves as in Case 1, with the $\sigma$-colorings determined by the following choices: $\phi\langle xy\rangle =1$, $\phi\langle xy1\rangle = \prescript{\phi\langle x1\rangle}{}{s(\langle x1\rangle)^{-1}}$, $\phi\langle xyi\rangle = 1$ for all $i\in \{2,3,4,5\}$, and~$\phi\langle xyij\rangle = 1$ for all $i,j\in \{1,2,3,4,5\rangle$ such that $i<j$. The resulting $\sigma$-colored combinatorial manifold is exactly $(K, \phi\cdot (1,s,1))$ (with the vertex $x$ labeled $y$).

\textbf{Case 3:} $(r,s,t) = (1,1,t)$. In this case, $t(b') = 1$ for all $b' \in B_2$ such that $b' \neq b$. We suppose for instance that~$b = \langle x12\rangle$.  We apply the same sequence of $\sigma$-colored Pachner moves as in Case 1, with the $\sigma$-colorings determined by the following choices: $\phi\langle xy\rangle = 1$, $\phi\langle xyi\rangle =1$ for all $i\in \{1,2,3,4,5\}$, 
$$
\phi\langle xy12\rangle = \phi\langle x12\rangle \triangleright t(\langle x12\rangle) \quad \text{and}\quad \phi\langle xyij\rangle =1 
$$
for all $i,j\in \{1,2,3,4,5\}$ such that $i<j$ and $(i,j) \neq (1,2)$. The resulting combinatorial $\sigma$-manifold is exactly~$(K, \phi\cdot (1,1,t))$ (with the vertex $x$ labeled $y$).

\section{Proof of Theorem \ref{maintheorem}}\label{Section. Proof of main theorem}

\noindent The aim of this section is to prove Theorem \ref{maintheorem}, that is, to show that the state sum is an invariant of $\sigma$-manifolds.  In Section \ref{Invariance simplicial skeleton} we show that the state sum does not depend on the choice of simplicial skeleton of $\mathcal{C}$. In Section \ref{Invariance ordered simplices} we show that it does not depend on the choice of ordered simplices.  In Section \ref{Section_State sum for manifolds with boundary} we extend the state sum to combinatorial $\sigma$-manifolds with boundary. In Section \ref{Invariance triangulation} we show that it does not depend on the choice of triangulation. 

\subsection{The choice of simplicial skeleton}\label{Invariance simplicial skeleton}

Let $(K,\phi)$ be a combinatorial $\sigma$-manifold with choice of ordered simplices $B_*$.   Two $\mathcal{C}$-states $\Gamma$ and $\Gamma'$ of $(K,\phi)$ are \emph{equivalent} if for every edge $e \in B_1$, there are inverse~$(1,1)$-equivalences
$$
h_e \colon \Gamma(e) \rightleftarrows \Gamma'(e)\colon k_e
$$
and for every triangle $\langle ijk\rangle\in B_2$ with distinguished vertex $i$, there are degree-1 homogeneous 2-isomorphisms
$$
\Gamma\langle ijk\rangle \cong k_{\langle ir\rangle}\circ \Gamma'(t) \circ (h_{\langle ij\rangle } \otimes \Gamma'\langle jk \rangle) \circ (\Gamma\langle ij\rangle\otimes h_{\langle jk \rangle}),
$$
where $h_{\langle ab\rangle }$ denotes the right mate of $k_{\langle ba\rangle}$ if $\langle ba \rangle \in B_1$, and $k_{\langle ab\rangle }$ denotes the right mate of $h_{\langle ba \rangle } $ if $\langle ba \rangle \in B_1$ (see \cite[Definition 2.2.1]{douglas_fusion_2018})

Given two simplicial skeletons $\mathcal{S}$ and $\mathcal{S}'$ of $\mathcal{C}$, for each object $X$ in $\mathcal{S}$, choose inverse $(1,1)$-equivalences
$$
h_X \colon X \leftrightarrows X'\colon k_X
$$
between $X$ and the unique $(1,1)$-equivalent object $X'$ in $\mathcal{S}'$. Similarly, for every 1-morphism $g\colon X\otimes Y \to Z$ in~$\mathcal{S}$, choose a degree-1 homogeneous isomorphism between $h_Z \circ g \circ (k_X\otimes Y) \circ (X'\otimes k_Y)$, and the unique \mbox{degree-1} homogeneous 1-morphism $g'\colon X'\otimes Y' \to Z'$ in $\mathcal{S}$ isomorphic to it.  This construction provides a bijection between the~$\mathcal{C}$-states with values in $\mathcal{S}$ and the~$\mathcal{C}$-states with values in $\mathcal{S}'$, and this bijection takes each state to an equivalent state.  Therefore,  in order to show that the state sum is independent of the choice of simplicial skeleton, it is sufficient to show the following:

\begin{lemref}[\cite{douglas_fusion_2018}]
If two $\mathcal{C}$-states $\Gamma$ and $\Gamma'$ of $(K,\phi)$ are equivalent, then
$$
\Big(\prod_{e\in B_1} \wdim (\Gamma(e))^{-1}\Big) \Big( \prod_{t\in B_2}\dim (\Gamma(t))\Big) Z(\Gamma) = \Big(\prod_{e\in B_1} \wdim (\Gamma'(e))^{-1}\Big) \Big(\prod_{t\in B_2}\dim (\Gamma'(t))\Big) Z(\Gamma'). 
$$
\end{lemref}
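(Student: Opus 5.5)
The plan is to adapt the argument of Douglas--Reutter for the ungraded case. There the independence of the state sum from the chosen skeleton is shown by transporting every piece of data along the equivalences $h_e,k_e$ and the 2-isomorphisms attached to triangles, and by checking that the normalisation factors and the 10j-action change compatibly. The grading enters only through degrees, and every equivalence and isomorphism we transport along is homogeneous of degree $1$. So the proof should go through essentially verbatim inside the $1$-subcategory $\mathcal{C}^1$, while the associator state spaces keep their degree $\phi_3(\tau)$.

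First, the scalar prefactors. Since $h_e$ is a $(1,1)$-equivalence, $\Gamma(e)$ and $\Gamma'(e)$ are equivalent in $\mathcal{C}^1_1$. Hence $\dim(\Gamma(e))=\dim(\Gamma'(e))$, the categories $\mathcal{C}^1_1(\Gamma(e),\Gamma(e))$ and $\mathcal{C}^1_1(\Gamma'(e),\Gamma'(e))$ are equivalent, and $\mathrm{n}(\Gamma(e))=\mathrm{n}(\Gamma'(e))$. It follows that $\wdim(\Gamma(e))=\wdim(\Gamma'(e))$. For a triangle $t$, the defining degree-$1$ 2-isomorphism identifies $\Gamma(t)$ with a composite of $\Gamma'(t)$ with $(1,1)$-equivalences (whiskered by tensoring). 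By sphericality, and because the dimension of an equivalence $1$-morphism between simple objects equals the dimension of the object, Lemma~\ref{Lemma_B1} gives $\dim(\Gamma(t))=\dim(\Gamma'(t))$; the factors $\dim(h)\dim(k)\dim(X)^{-2}$ cancel. So it remains to show $Z(\Gamma)=Z(\Gamma')$.

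Second, for each $\tau\in B_3$, I will build mutually inverse-transpose isomorphisms $V^\pm(\Gamma,\tau)\cong V^\pm(\Gamma',\tau)$. They come from conjugating by the composite degree-$1$ 2-isomorphisms between $[(v_0v_1v_2)v_3,\gamma_1,\gamma_3]$ for $\Gamma$ and the corresponding $1$-morphism for $\Gamma'$ sandwiched by $h$'s and $k$'s (and the same for $[v_0(v_1v_2v_3),\gamma_0,\gamma_2]$). These 2-isomorphisms are built from the triangle isomorphisms, the unit and counit of the adjoint equivalences, and the interchangers. The interchangers have degree $\omega(\lvert h\rvert,\cdot)^{-1}=\omega(1,\cdot)^{-1}=1$, since the $h$'s have degree $1$. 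So conjugation preserves the degree $\phi_3(\tau)$. By Lemma~\ref{The trace pairing is nondegenerate} and cyclicity of the trace, these isomorphisms preserve the trace pairings, so they carry the copairing $\cup_{\Gamma,\tau}$ to $\cup_{\Gamma',\tau}$. The same holds after the change-of-pointing isomorphisms, provided the twisted morphisms ${}^{\Gamma_1(\gamma)}f$ are defined by folds, which $h$ and $k$ intertwine up to degree-$1$ isomorphism.

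Third, I will check that each colored 10j-symbol satisfies $z(\Gamma',\kappa)=z(\Gamma,\kappa)\circ\bigotimes_\tau(\text{transport})$. The trace diagram of Figure~\ref{10j-symbols} for $\Gamma'$ becomes, after inserting $k\circ h\cong 1$ on every sheet, the diagram for $\Gamma$ with cancelling pairs of equivalence sheets. Sphericality then lets us remove these pairs. I expect this to be the main obstacle: it requires keeping track of the coherence of the inserted unit and counit isomorphisms with the interchangers and cusps inside a graded spherical 2-category, and checking that all the inserted 2-cells have degree $1$, so that they do not shift the $L$-grading of the traced 2-morphism. I will handle it exactly as Douglas--Reutter do, noting at each step that the interchangers involving equivalences have trivial degree. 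Combining the three steps gives the claimed equality.
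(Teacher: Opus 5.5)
Your strategy is the one the paper has in mind: transport all the data along the equivalences $h_e,k_e$ and the triangle 2-isomorphisms, as in Douglas--Reutter. The paper gives no argument of its own and simply defers to \cite{douglas_fusion_2018}.

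However, your bookkeeping has a genuine gap. You claim each ingredient is separately invariant: $\dim(\Gamma(t))=\dim(\Gamma'(t))$ for every triangle, the transport $V^\pm(\Gamma,\tau)\cong V^\pm(\Gamma',\tau)$ preserves trace pairings, and $Z(\Gamma)=Z(\Gamma')$. These claims fail in general. The assertion behind the first one, that an equivalence $h\colon X\to X'$ between simple objects has $\dim(h)=\dim(X)$, is false. From the definition of the trace one only gets $\dim(h)=\lambda_h\dim(X')$, where $\lambda_h$ is the unit with $\mathrm{tr}_R(1_h)=\lambda_h 1_{1_{X'}}$, and $\lambda_h$ need not be $1$.

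For example, take $\sigma$ trivial and $\mathcal{C}=B\mathcal{B}$ for a braided spherical fusion category $\mathcal{B}$. An invertible object $g$ of $\mathcal{B}$ is a self-equivalence of $\mathbbm{1}$, and its dimension is its quantum dimension; this is $-1$ for the fermion in $\mathrm{sVec}$. Moreover, in the relation $\Gamma\langle ijk\rangle\cong k_{\langle ik\rangle}\circ\Gamma'\langle ijk\rangle\circ\cdots$ the three equivalences belong to three different edges. So no pair $h_e,k_e$ cancels inside a single triangle, and the factor $\dim(h)\dim(k)\dim(X)^{-2}$ you invoke never arises. Lemma~\ref{Lemma_B1} also does not apply directly across the non-simple objects $\Gamma'\langle ij\rangle\otimes\Gamma'\langle jk\rangle$.

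Concretely, put $h_e=g$ on a single edge $e$ and identities elsewhere, and let $n_i(e)$ be the number of $i$-simplices containing $e$. Then $\dim(\Gamma'(t))=-\dim(\Gamma(t))$ for each of the $n_2(e)$ triangles $t\ni e$. The number $n_2(e)$ can be odd: after a $(1,5)$-move on $\partial\Delta^5$, some edges lie in five triangles. The weighted dimensions do not change, so whenever $Z(\Gamma)\neq0$ the lemma forces $Z(\Gamma')=-Z(\Gamma)$. Hence your Steps 2 and 3 must fail as well:
\begin{itemize}
\item Whiskering a 2-endomorphism by an equivalence inside a trace multiplies the trace by a power of $\lambda_e$, and cyclicity of the trace does not remove it.
\item The closed loop of $h_e$ left on the sheet labelled by $e$ in a 10j-diagram evaluates to such a scalar; it does not simply disappear by sphericality.
\end{itemize}

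The missing step is to track these units through the triangle dimensions, the copairings and the 10j-symbols, and to show that they cancel only globally around each edge. In the example above, take transport maps built from the adjunction data. Then each triangle, each tetrahedron and each 4-simplex containing $e$ contributes a factor $-1$. The total is $(-1)^{n_2(e)+n_3(e)+n_4(e)}=(-1)^{\chi(\operatorname{Lk}(e))}=1$, because $\operatorname{Lk}(e)$ is a triangulated $2$-sphere with $n_2(e)$ vertices, $n_3(e)$ edges and $n_4(e)$ faces.
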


\subsection{The choice of ordered simplices}\label{Invariance ordered simplices}

Let $K$ be a canonical 4-manifold and let $\phi$ be a $\sigma$-coloring of $K$.  We would like to show that the state sum does not depend on the choice of ordered simplices of $K$.  It is sufficient to show the following:

\begin{lem}\label{Lemma choice of ordered simplices}
If $B_*$ and $B_*'$ are choices of ordered simplices of $K$ and $\Gamma$ is a $\mathcal{C}$-state of $(K,\phi)$, then
$$
\Big(\prod_{e\in B_1} \wdim (\Gamma(e))^{-1}\Big) \Big( \prod_{t\in B_2}\dim (\Gamma(t))\Big) Z(\Gamma) = \Big(\prod_{e'\in B'_1} \wdim (\Gamma(e'))^{-1}\Big) \Big(\prod_{t'\in B'_2}\dim (\Gamma(t'))\Big) Z(\Gamma). 
$$
\end{lem}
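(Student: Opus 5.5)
We reduce to elementary changes of $B_*$ and check each factor of the weight separately. Any two choices of ordered simplices $B_*$ and $B'_*$ are related by a finite sequence of moves, each altering the choice on a single simplex:
\begin{enumerate}
\item reversing the order of one edge in $B_1$;
\item changing the cyclic order of one triangle in $B_2$;
\item changing the order or the pointing of one tetrahedron in $B_3$;
\item changing the order or pointing of one 4-simplex in $B_4$.
\end{enumerate}
The pointings of tetrahedra and 4-simplices are homotopy classes of paths in the boundaries of triangles, so they are changed by composing with edges of those triangles. It therefore suffices to prove that the weighted product $\big(\prod \wdim^{-1}\big)\big(\prod \dim\big)Z(\Gamma)$ is unchanged under each elementary move.

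For edges, the state condition gives $\Gamma_1\langle v_1,v_0\rangle=\Gamma_1\langle v_0,v_1\rangle^\#$. We then show $\wdim(X^\#)=\wdim(X)$:
\begin{itemize}
\item $\dim(X^\#)=\dim(X)$ by sphericality;
\item $\mathcal{C}^1_1(X^\#,X^\#)\simeq\mathcal{C}^1_1(X,X)^{\mathrm{op}}$ via taking mates and duals, which preserves dimensions;
\item $\mathrm{n}(X^\#)=\mathrm{n}(X)$, since duality induces a bijection on connected classes.
\end{itemize}
For triangles, the state condition expresses $\Gamma_2\langle v_0,v_2,v_1\rangle$ as a composite built from $\Gamma_2\langle v_0,v_1,v_2\rangle^*$ and a fold. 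Its dimension equals $\dim(\Gamma_2\langle v_0,v_1,v_2\rangle)$ by sphericality, Lemma~\ref{Lemma_B1} and $\dim(f^*)=\dim(f)$. A change of distinguished-vertex reading is handled by the $\prescript{\Gamma_1(\gamma)}{}{f}$ construction, which is built from folds and units and likewise preserves dimension. The factor $Z(\Gamma)$ only sees $B_1$ and $B_2$ through these identifications, so it is unchanged.

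For a tetrahedron $\tau\in B_3$ replaced by $\tau'$, there are isomorphisms $V^{\pm}(\Gamma,\tau)\cong V^{\pm\epsilon}(\Gamma,\tau')$. These are the maps of \cite[Lemma 4.2.4]{douglas_fusion_2018} for reorderings, composed with the change-of-pointing isomorphisms (twist isomorphisms) for pointings, and $\epsilon$ is the sign of the permutation. The key step is that these isomorphisms are compatible with the trace pairing up to a scalar $\lambda$ for $V^+$ and $\lambda^{-1}$ for $V^-$. Then the copairing $\cup_{\Gamma,\tau}$ is carried to $\cup_{\Gamma,\tau'}$ (with the factors swapped when $\epsilon=-1$). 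Moreover, the 10j-symbols $z(\Gamma,\kappa)$ of the two 4-simplices adjacent to $\tau$ are defined through exactly these isomorphisms $\phi_\pm$, so they transform by the same maps. Since the tensor contraction in $Z(\Gamma)$ pairs each $V^\pm$ once, the scalars cancel.

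For a 4-simplex $\kappa\in B_4$, changing its order changes both the signs $\varepsilon(\tau)$ and, possibly, whether the orientation of $\kappa$ agrees with that of $K$. We must check that $z(\Gamma,\kappa)$ is invariant, i.e.\ that the graded 10j-symbol has the full tetrahedral (here $S_5$) symmetry. Following \cite[Section 4]{douglas_fusion_2018}, it suffices to check this on generating transpositions. For a transposition of adjacent vertices, the two pictures in Figure~\ref{10j-symbols} are related by sphericality (isotopy on the 2-sphere) together with the pivotal mate isomorphisms. We additionally verify that all 2-morphisms involved are homogeneous of matching degree in $L$. The action $\triangleright$ and the Peiffer-lifting degrees $\omega(\cdot,\cdot)^{-1}$ of the interchangers should cancel because the colorings satisfy the 4-simplex identity $\widehat{c_3}(d_3(\kappa))=1$. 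Changing the pointing of $\kappa$ only changes the pointings of its faces, which the previous paragraph already covers.

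The main obstacle is the graded bookkeeping in the last two steps: showing that the twist isomorphisms for pointing changes respect the trace pairing, and that the interchanger degrees match so that the symmetry of the 10j-symbol holds in the graded setting. I would first treat the ungraded-in-$L$ case, where it reduces verbatim to Douglas--Reutter, and then track degrees using the equivariance of $\omega$ and the coloring identities.
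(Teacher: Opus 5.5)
Your proposal is correct and follows the paper's proof essentially step for step: the same reduction to elementary changes of order and pointing, the same three-part argument for $\wdim(X^\#)=\wdim(X)$, and the dual-1-morphism argument for triangles; likewise the twist isomorphisms cancel for pointing changes, and reorderings of tetrahedra and 4-simplices are deferred to Douglas--Reutter's Lemmas 4.2.4--4.2.5. One small correction: Lemma~\ref{Lemma_B1} does not apply in the triangle step, because the intermediate object $\Gamma_1\langle v_0,v_1\rangle\otimes\Gamma_1\langle v_1,v_2\rangle\otimes\Gamma_1\langle v_1,v_2\rangle^\#$ is not simple; the equality of dimensions there instead follows from sphericality directly (the fold equivalence $\mathcal{C}(Z,X\otimes Y)\simeq\mathcal{C}(Z\otimes Y^\#,X)$ preserves 2-spherical traces) together with $\dim(f^*)=\dim(f)$.
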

To prove Lemma \ref{Lemma choice of ordered simplices}, it suffices to check that the above quantity is invariant under the change of order and of pointing of 1-, 2-, 3-, and 4-simplices of $K$.  

Suppose that $B_1$ and $B'_1$ are related by the orientation reversal of an edge $\langle ij\rangle \in B_1$. It suffices to check that 
$$
\wdim(\Gamma\langle ij\rangle) = \wdim(\Gamma\langle ji\rangle).
$$
Firstly, by sphericality, $\dim(\Gamma\langle ij\rangle) = \dim(\Gamma\langle ij\rangle^\#) = \dim(\Gamma\langle ji\rangle)$.  Secondly, the fusion category $\mathcal{C}^1_1(\Gamma\langle ij\rangle, \Gamma\langle ij\rangle)$ is equivalent to the category $\mathcal{C}^1_1(\Gamma\langle ij\rangle^\#, \Gamma\langle ij\rangle^\#)^{\mathrm{mop}}$ with opposite monoidal product, therefore 
$$
\dim \mathcal{C}^1_1(\Gamma\langle ij\rangle, \Gamma\langle ij\rangle) = \dim \mathcal{C}^1_1(\Gamma\langle ji\rangle, \Gamma\langle ji\rangle).
$$
Finally, $\mathrm{n}(\Gamma\langle ij\rangle) = \mathrm{n}(\Gamma\langle ji\rangle)$ because simple objects are connected if and only if their duals are connected. 

Suppose that $B_2$ and $B_2'$ are related by the orientation reversal of a triangle $\langle ijk\rangle \in B_2$ with distinguished vertex $i$.  It suffices to check that $\dim(\Gamma\langle ijk \rangle) = \dim(\Gamma\langle ikj\rangle)$.  Recall that 
$$
\Gamma\langle ikj\rangle = (\Gamma\langle ij\rangle \otimes e_{\Gamma\langle jk\rangle})\circ (\Gamma\langle ijk \rangle^*\otimes \Gamma\langle jk\rangle^\#), 
$$
and $\dim(\Gamma\langle ijk\rangle) = \dim(\Gamma\langle ijk \rangle^*)$, therefore $\dim(\Gamma\langle ijk \rangle) = \dim(\Gamma\langle ikj\rangle)$.

Suppose that $B_3$ and $B'_3$ are related by the change of pointing of a 3-simplex $\tau \in B_3$.  For instance, suppose that~$\tau = (\langle ijkl\rangle, \gamma_i, \gamma_j, \gamma_k, \gamma_l)$ and $\tau' = (\langle ijkl\rangle, \gamma_i', \gamma_j, \gamma_k, \gamma_l)$. By induction, it suffices to show the case where
$$
\gamma_i' = \langle jl \rangle \langle lk\rangle \langle kj\rangle \gamma_i.
$$ 
The twist $\theta$ (see \cite[Definition 2.2.4]{douglas_fusion_2018}) induces a 2-isomorphism 
\begin{equation}\label{twist_iso}
\phi\colon \prescript{\Gamma_1(\gamma_0)}{}{\Gamma\langle jkl\rangle} \Rightarrow  \prescript{\Gamma_1(\gamma_0')}{}{\Gamma\langle jkl\rangle}
\end{equation}
that induces isomorphisms between the associator state spaces. The 2-isomorphisms $\phi$ and $\phi^{-1}$ cancel in the expression of $Z(\Gamma)$, which proves the invariance.  The same reasoning shows that $Z(\Gamma)$ is invariant under the change of pointing of a 4-simplex. 

Suppose that $B_3$ and $B'_3$ are related by the change of an order on the vertices of a 3-simplex $\tau\in B_3$.  It suffices to check that $Z(\Gamma)$ is invariant under the transposition of the order of two adjacent vertices of a pointed ordered 3-simplex. An explicit proof of this can be found in \cite[Lemmas 4.2.4 and 4.2.5]{douglas_fusion_2018}. This also proves that $Z(\Gamma)$ is invariant under the change of order on the vertices of a 4-simplex.

\subsection{The state sum for combinatorial $\sigma$-manifolds with boundary}\label{Section_State sum for manifolds with boundary} Following Douglas and Reutter, we extend the state sum to combinatorial $\sigma$-manifolds with boundary.   First, we associate modules to colored closed oriented combinatorial 3-manifolds. Let $T$ be a closed oriented canonical 3-manifold with choice of ordered simplices $B_*$, and let $\phi$ be a $\sigma$-coloring of $T$.  Let~$\varepsilon \colon
 B_3\to \{\pm 1\}$ be such that for any $\tau \in B_3$, $\varepsilon(\tau) = +1$ if and only if the orientation induced by the order on the vertices of $\tau$ coincides with the orientation induced by the orientation of $T$.  To such a colored combinatorial~3-manifold $(T, \phi)$ we assign the following module:
\begin{equation*}
V_\mathcal{C}(T, \phi) = \bigoplus_{\Gamma} \bigotimes_{\tau\in B_3} V^{\varepsilon(\tau)}(\Gamma, \tau),
\end{equation*}
where $\Gamma$ runs over the $\mathcal{C}$-states of $(T,\phi)$ with values in $\mathcal{S}$. 

Let $(-T,\phi)$ be the $\sigma$-colored canonical 3-manifold $(T,\phi)$ with the opposite orientation. We define a nondegenerate pairing $\langle \cdot, \cdot\rangle_T \colon V_\mathcal{C}(-T, \phi) \otimes V_\mathcal{C}(T,\phi) \to \mathbbm{k}$ as follows:
$$
\langle \cdot, \cdot \rangle_{T} = \bigoplus_{\Gamma} \dim(\prescript{}{1}{\mathcal{C}^1_1})^{-\lvert T_0\rvert} \left( \prod_{e\in T^{(1)}}\wdim(\Gamma_1(e))\right)^{-1} \left( \prod_{t\in T^{(2)}}\dim(\Gamma_2(t)) \right)\bigotimes_{\tau\in T^{(3)}}\langle \cdot, \cdot \rangle_{\Gamma, \tau}
$$
where $\Gamma$ runs over the $\mathcal{C}$-states of $(T, \phi)$ with values in $\mathcal{S}$. 

For a combinatorial $\sigma$-manifold with boundary $(K,\phi)$, we define a linear map $$Z_\mathcal{C}(K,\phi) \colon \mathbbm{k}\to V_\mathcal{C}(-\partial K, \phi_{\vert \partial K})$$ as follows
\begin{align*}
Z_\mathcal{C}(K,\phi) & = \bigoplus_{\Sigma} \dim(\prescript{}{1}{\mathcal{C}_1^1})^{-\lvert i(K)_0\rvert}\sum_{\Gamma, \ \Gamma_{\vert \partial K} = \Sigma} \left(\prod_{e\in i(K)^{(1)}}\wdim(\Gamma_1(e)) \right)^{-1}\\
& \quad \quad \left( \prod_{t\in i(K)^{(2)}}\dim(\Gamma_2(t)) \right)\left( \bigotimes_{\kappa \in K^{(4)}}z(\Gamma, \kappa) \right)\circ \left(\bigotimes_{\tau \in K^{(3)}} \cup_{\Gamma, \tau}  \right)
\end{align*}
where $\Sigma$ runs over the $\mathcal{C}$-states of $(\partial K, \phi_{\vert \partial K})$ with values in $\mathcal{S}$,  $\Gamma$ runs over the $\mathcal{C}$-states of $(K,\phi )$ that agree with $\Sigma$ on $\partial K$, and the composition $\circ$ is over all the modules appearing both in the domain of $ \bigotimes_{\kappa \in K^{(4)}}z(\Gamma, \kappa)$ and in the codomain of $\bigotimes_{\tau \in K^{(3)}} \cup_{\Gamma, \tau} $. Therefore, the codomain of $Z_\mathcal{C}(K, \phi)$ agrees with $V_\mathcal{C}(-\partial K, \phi_{\vert \partial K})$

\begin{lem}\label{lemmaboundarysum}
Let $(K,\phi)$ and $(K',\phi')$ be combinatorial $\sigma$-manifolds with boundary and let $f\colon \partial K \to \partial K'$ be an orientation-reversing simplicial isomorphism such that $\phi'\circ f = \phi_{\vert \partial K}$.  Then,
\begin{equation*}
Z_\mathcal{C}(K\cup_f K', \phi\cup_f \phi') = \langle Z_\mathcal{C}(K, \phi) , Z_\mathcal{C} (K', \phi')\rangle_{\partial K}
\end{equation*}
where we used $f$ to identify the modules $V_\mathcal{C}(-\partial K', \phi'_{\vert \partial K'})$ and $V_\mathcal{C}(\partial K, \phi_{\vert \partial K})$. 
\end{lem}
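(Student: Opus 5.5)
The plan is to expand both sides as sums over $\mathcal{C}$-states and match them term by term, exactly as in the corresponding gluing lemma of Douglas and Reutter. Write $M = K\cup_f K'$ and $\Phi = \phi\cup_f\phi'$. The first step is combinatorial. The simplices of $M$ are the disjoint union of the interior simplices $i(K)$ and $i(K')$ together with the simplices of $\partial K\cong \partial K'$. We choose $B_*$ for $M$ to restrict to choices of ordered simplices on $K$, $K'$ and $\partial K$ compatibly with $f$; by Section \ref{Invariance ordered simplices}, $Z_\mathcal{C}(M,\Phi)$ does not depend on this choice. Because a $\sigma$-coloring is determined by its values on $B_*$ (freeness of $\Pi_3$), we have $\Phi|_K=\phi$ and $\Phi|_{K'}=\phi'$. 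Moreover, a $\mathcal{C}$-state $\Gamma$ of $(M,\Phi)$ is exactly a triple $(\Sigma,\Gamma_K,\Gamma_{K'})$ in which $\Sigma$ is a state of $(\partial K,\phi|_{\partial K})$ and $\Gamma_K$, $\Gamma_{K'}$ are states of $K$ and $K'$ restricting to $\Sigma$ and to $\Sigma\circ f^{-1}$. The gradings are checked locally on each simplex, so this identification is a bijection.

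The second step compares the scalar prefactors. On the left, the weight of $\Gamma$ is
$$\dim(\prescript{}{1}{\mathcal{C}_1^1})^{-\lvert M_0\rvert}\prod_{e}\wdim(\Gamma_1(e))^{-1}\prod_t\dim(\Gamma_2(t)).$$
Since $M_0 = i(K)_0\sqcup i(K')_0\sqcup (\partial K)_0$, and similarly for edges and triangles, this weight factors as the $K$-interior factor, times the $K'$-interior factor, times the boundary factor. The boundary factor is exactly the scalar appearing in $\langle\cdot,\cdot\rangle_{\partial K}$ for $\Sigma$. On the right, $\langle Z_\mathcal{C}(K,\phi),Z_\mathcal{C}(K',\phi')\rangle_{\partial K}$ is a direct sum over $\Sigma$. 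In the $\Sigma$-summand, the boundary weight multiplies the product of the two interior sums, and then the tensor product of trace pairings $\bigotimes_{\tau\in\partial K^{(3)}}\langle\cdot,\cdot\rangle_{\Sigma,\tau}$ is applied.

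The third step, which is the main one, compares the tensor-network parts. In $M$, every tetrahedron $\tau$ is either interior to $K$, interior to $K'$, or on the gluing boundary. For interior tetrahedra, the copairings $\cup_{\Gamma,\tau}$ are contracted against the same 10j-symbols on both sides. For a boundary tetrahedron $\tau$, the left-hand side contracts the single copairing $\cup_{\Gamma,\tau}\in V^-\otimes V^+$ against the two 10j-symbols of the adjacent 4-simplices, one in $K$ and one in $K'$. The right-hand side instead takes one copairing from $Z_\mathcal{C}(K,\phi)$ and one from $Z_\mathcal{C}(K',\phi')$, leaves one leg of each uncontracted, and pairs those two legs by $\langle\cdot,\cdot\rangle_{\Sigma,\tau}$. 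The required identity is the standard "snake" relation for a nondegenerate pairing and its dual copairing: $(\mathrm{id}\otimes\langle\cdot,\cdot\rangle\otimes\mathrm{id})\circ(\cup\otimes\cup)=\cup$. This relation holds by Lemma \ref{The trace pairing is nondegenerate} and the definition of $\cup_{\Gamma,\tau}$.

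I expect the main obstacle to be the bookkeeping of signs and orientations in this third step. Because $f$ is orientation-reversing, a boundary tetrahedron has $\varepsilon(\tau)=+1$ in one of $\partial K,\partial K'$ and $-1$ in the other, so the leg of $Z_\mathcal{C}(K,\phi)$ lies in $V^{\pm}$ and the leg of $Z_\mathcal{C}(K',\phi')$ in $V^{\mp}$. One must check that these are precisely the spaces paired by $\langle\cdot,\cdot\rangle_{\Sigma,\tau}$, and that the order of the tensor factors matches the orientation convention of the 10j-symbols, including the isomorphisms $\phi_\pm$ that change pointing and order. I would treat this exactly as in Douglas--Reutter. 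The only new ingredient is the $L$-grading: the copairing lives in $\mathrm{Hom}^{\phi_3(\tau)}\otimes\mathrm{Hom}^{\phi_3(\tau)^{-1}}$, and since both sides use the same coloring value $\phi_3(\tau)$, the degrees agree. Summing over $\Sigma$ and over the compatible interior states then yields the equality.
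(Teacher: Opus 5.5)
Your proposal is correct and follows the route the paper intends. The paper states this lemma without a written proof, deferring to Douglas and Reutter, and your argument is that same gluing argument: split states of $K\cup_f K'$ into a boundary state $\Sigma$ plus interior extensions, factor the weights into the two interior factors times the weight in $\langle\cdot,\cdot\rangle_{\partial K}$, and apply the snake identity for the trace pairing and its copairing on each gluing tetrahedron. You also handle correctly the only graded ingredient, namely that the copairing legs sit in degrees $\phi_3(\tau)^{\pm1}$ fixed by the common boundary coloring.
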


\subsection{The invariance under colored Pachner moves}\label{Invariance triangulation}

Recall that a $\sigma$-colored Pachner move on a combinatorial $\sigma$-manifold $(K,\phi)$ replaces a codimension-0 submanifold of
 $K$ simplicially isomorphic to a subcomplex~$I \subset \partial \Delta^5$ with the complementary subcomplex $J \subset \partial \Delta^5$ and replaces the $\sigma$-coloring $\phi$ with a~$\sigma$-coloring $\psi$ that coincides with $\phi$ outside of $I$. To show invariance of $\mathrm{HDR}_\mathcal{C}$ under such a move, it suffices by Lemma~\ref{lemmaboundarysum} to prove that
\begin{equation*}
Z_\mathcal{C}(I, \phi) = Z_\mathcal{C}(J,\psi),
\end{equation*}
where $\phi$ and $\psi$ are $\sigma$-colorings of $I$ and $J$, respectively, such that $\phi_{\vert \partial I} = \psi_{\vert \partial J}$. Here we suppose that the sub\-complex $I \subset \partial \Delta^5$ carries the orientation induced by the orientation on $\partial \Delta^5$ and that $J$ carries the opposite orientation. 

For any pointed ordered 4-simplex $\kappa$ of a combinatorial $\sigma$-manifold $(K,\phi)$ and for any $\mathcal{C}$-state $\Gamma$ of $(K,\phi)$, recall that the 10j-symbols are defined using the linear maps
$$
z_+(\Gamma, \kappa) \colon \bigotimes_{m = 0}^4 V^{(-1)^m}(\Gamma, \partial_m(\kappa)) \to \mathbbm{k},\quad \quad z_-(\Gamma, \kappa) \colon \bigotimes_{m = 0}^4 V^{(-1)^{m+1}}(\Gamma, \partial_m(\kappa)) \to \mathbbm{k}
$$
depicted in Figure \ref{10j-symbols}. Precomposing with the appropriate maps $\cup \colon \mathbbm{k} \to V^+(\Gamma, \partial_m(\kappa)) \otimes V^-(\Gamma, \partial_m(\kappa))$ (determined by the nondegenerate pairings) leads to linear maps:
\begin{align*}
Z_+(\kappa) &\colon V^+(\Gamma, \partial_4(\kappa)) \otimes V^+(\Gamma, \partial_2(\kappa)) \otimes V^+(\Gamma, \partial_0(\kappa)) \to V^+(\Gamma, \partial_3(\kappa)) \otimes V^+(\Gamma, \partial_1(\kappa)),\\
Z_-(\kappa) &\colon V^+(\Gamma, \partial_3(\kappa)) \otimes V^+(\Gamma, \partial_1(\kappa)) \to V^+(\Gamma, \partial_4(\kappa)) \otimes V^+(\Gamma, \partial_2(\kappa)) \otimes V^+(\Gamma, \partial_0(\kappa)).
\end{align*}
Using these maps, invariance under the $\sigma$-colored Pachner moves can be reexpressed as the following lemmas:

\begin{lem}[Invariance under the colored $(3,3)$-Pachner move]\label{Lemma_Pachner_(3,3)}
Let $I$ and $J$ be the subcomplexes of $\partial \Delta^5$ as defined in Figure \ref{fig:Pachner_(3,3)} and let $\phi$ be a $\sigma$-coloring of $\partial \Delta^5$. Then the following holds for every $\mathcal{C}$-state of $(\partial I,\phi_{\vert \partial I})$:
\begin{equation*}
\sum_{[135]} \dim([135]) Z_+(01235)Z_+(01345)Z_+(12345) = \sum_{[024]} \dim([024])Z_+(02345)Z_+(01245)Z_+(01234),
\end{equation*}
where $[135]$ runs over degree-$\phi(135)$ 1-morphisms in $\mathcal{S}$ from $[13]\otimes [35]$ to $[15]$, and $[024]$ runs over degree-$\phi(024)$ 1-morphisms in $\mathcal{S}$ from $[02]\otimes [24]$ to $[04]$. 
\end{lem}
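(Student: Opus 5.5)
We follow the strategy of Douglas--Reutter for the ungraded $(3,3)$-move and reduce the graded identity to an equality of traces of two 2-morphisms in the 1-subcategory $\mathcal{C}^1$, where sphericality and the dimension formulas of Section~\ref{Section_Dimension_formulas} are available. First we fix the boundary state on $\partial I=\partial J$. This assigns simple objects to all fifteen edges of $\Delta^5$ and simple 1-morphisms to every triangle except $(135)$ and $(024)$. By Lemma~\ref{Pachner_(3,3)} the coloring $\phi$ of $\partial\Delta^5$ is already determined. So the degrees $\phi(135)\in E$ and $\phi(024)\in E$ are fixed, together with the $L$-degrees of all tetrahedra involved. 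Each side is therefore a finite sum over a single internal triangle label, and each $Z_\pm$ is a map between graded associator state spaces $V^\pm$ of fixed $L$-degree.

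Next we unfold both sides as diagrams. On the left, the composite $Z_+(01235)Z_+(01345)Z_+(12345)$ contracts the three internal tetrahedra $(0135)$, $(1235)$, $(1345)$ through the canonical copairings $\cup_{\Gamma,\tau}$. By the nondegeneracy of the graded trace pairing (Lemma~\ref{The trace pairing is nondegenerate}), these copairings are dual-basis sums $\sum_\gamma \gamma\otimes\hat\gamma$ in modules $\mathrm{Hom}^{l}$ of prescribed degree $l=\phi_3(\tau)$. We then apply Lemma~\ref{Lemma_B7} in reverse. Here $f$ is the composite 1-morphism bounding the region around the edge $(15)$, the intermediate object $Z$ ranges over labels of $[15]$-type compositions, and $g,h$ are the two halves, with $e=\phi(135)$ and $l$ the relevant $L$-degree. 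This collapses $\sum_{[135]}\dim([135])\sum_\gamma\gamma\cdot\hat\gamma$ into an identity 2-morphism. After this step the left-hand side becomes the composite of the boundary associators $[0123],[0125],\dots$ with identities inserted. We carry out the same manipulation on the right-hand side around the triangle $(024)$.

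It then remains to show that the two resulting composites agree. Both are 2-morphisms between the same pair of boundary 1-morphisms, obtained by composing the five or six associator 2-morphisms in the two orders dictated by the associahedron on five letters. We expect this to follow from the same graphical calculus as in Douglas--Reutter, namely interchanging 2-morphisms along tensor factors. The new ingredient is that each interchange now carries the nontrivial degree $\omega(\lvert f\rvert,{}^{\lvert X\rvert}\lvert g\rvert)^{-1}$. We must also use the change-of-pointing isomorphisms (\ref{twist_iso}) so that the pointings coming from the various 4-simplices match.

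The main obstacle is degree bookkeeping. We must check that the $L$-degrees of the two composites agree, so that both live in the same $\mathrm{Hom}^l$ and the reduction via Lemma~\ref{Lemma_B7} is legitimate with one consistent $l$. Concretely, we need the product of $\phi_3$ of the tetrahedra on each side, twisted by the actions of $H$ and $E$ and by the Peiffer liftings from interchangers, to be equal. This should be precisely the 4-simplex identity $\widehat{c_3}(d_3(s))=1$ applied to the six 4-simplices of $\partial\Delta^5$, combined with the 2-crossed module axioms relating $\omega$ to the $E$-action. We would first verify this degree identity symbolically using the freeness of $\Pi_3(\partial\Delta^5)$ and the definition of $d_3$ via generalized Whitehead products. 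Only after that would we invoke the ungraded argument verbatim in $\mathcal{C}^1$ after tensoring with appropriate degree-$l$ isomorphisms (Lemma~\ref{Lemma SV}(1)).
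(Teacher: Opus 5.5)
Your overall route is the paper's. The paper proves this lemma only by referring to the proof of Douglas--Reutter's Lemma 4.3.4 and saying it goes through once the graded tools of Section~\ref{Section_Dimension_formulas} are available. However, the central fusion step, as you describe it, uses the wrong identity.

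The $(3,3)$ move has no interior edge. All fifteen edge labels, including $[13]$, $[35]$ and $[15]$, are fixed boundary data, and the only interior sum is over the triangle label. Lemma~\ref{Lemma_B7} contains a sum over an intermediate simple object $Z$ weighted by $\wdim(Z)^{-1}$. Applying it ``in reverse'' would therefore produce an object sum that appears on neither side of the identity. That lemma is the tool for the $(2,4)$ and $(1,5)$ moves, where the edge $(24)$, respectively the edges at the vertex $4$, are interior.

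What you need instead is completeness in the fixed Hom-category $\mathcal{C}([13]\otimes[35],[15])$. For a homogeneous $f$ of degree $\delta(l)\phi(135)$,
\[ \sum_{[135]}\dim([135])\sum_{\gamma}\gamma\cdot\hat\gamma=1_f, \]
where $[135]$ runs over simple degree-$\phi(135)$ 1-morphisms and $\gamma$ over a basis of $\mathrm{Hom}^l([135],f)$. This follows from local $L$-semisimplicity (Lemma~\ref{Lemma SV}) together with $\hat\gamma=\tilde\gamma/\dim([135])$, where $\tilde\gamma$ is the dual basis for vertical composition. It is the completeness relation used inside the proof of Lemma~\ref{Lemma_B7}, rewritten with trace-dual bases, not the lemma itself. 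The same argument applies to $[024]$, and this is exactly what produces the weights $\dim([135])$ and $\dim([024])$ in the statement.

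Second, the ``main obstacle'' you single out is automatic, and the reduction you propose for it is not meaningful. Lemma~\ref{Pachner_(3,3)} does not determine $\phi$: it leaves a free $l\in L$ on $(0135)$. Here, though, $\phi$ is given on all of $\partial\Delta^5$, so the following holds:
\begin{itemize}
\item every composite whose trace defines a 10j-symbol is of degree $1$, which is what the 4-simplex identity encodes;
\item the copairings and the completeness relation are homogeneous;
\item naturality of the interchangers is an equation between 2-morphisms.
\end{itemize}
Hence the two fused composites automatically have matching degrees, and no separate computation with $d_3$ and Whitehead products is needed.

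Composing with degree-$l^{-1}$ isomorphisms from Lemma~\ref{Lemma SV}(1) to move into $\mathcal{C}^1$ changes the 1-morphisms at the corners of the diagram, so you would no longer be looking at the Douglas--Reutter diagram. Instead, run their argument directly in $\mathcal{C}$ with graded Hom-spaces. The only check needed is that sphericality, which is available only for degree-1 endomorphisms, and cyclicity of the trace are applied to degree-1 composites, and they are.

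With these corrections the final comparison is as you indicate. After fusion, each side is a trace around the boundary of one of the two three-pentagon disks in the associahedron on five letters. These two boundaries differ by the three square faces, that is, by naturality of the homogeneous interchangers.
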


\begin{lem}[Invariance under the colored $(2,4)$-Pachner move]\label{Lemma_Pachner_(2,4)}
Let $I$ and $J$ be the subcomplexes of $\partial \Delta^5$ as defined in Figure \ref{fig:Pachner_(2,4)} and let $\phi$ be a $\sigma$-coloring of $\partial \Delta^5$. Then the following holds for every $\mathcal{C}$-state of $(\partial I,\phi_{\vert \partial I})$:
\begin{align*}
Z_+&(01235)Z_+(01345) \\
&= \sum_{[24]}\sum_{\substack{[024],  [124], \\ [234], [245]}} \frac{\dim([024])\wdim([124])\dim([234])\dim([245])}{\wdim([24])}Z_+(01234)Z_+(01245)Z_+(02345)Z_+(12345)
\end{align*}
where $[24]$ runs over degree-$\phi(24)$ objects in $\mathcal{S}$,  and $[ijk]$ runs over degree-$\phi(ijk)$ 1-morphisms in $\mathcal{S}$ from $[ij]\otimes [jk]$ to $[ik]$.  
\end{lem}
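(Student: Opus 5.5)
The plan is to reduce Lemma~\ref{Lemma_Pachner_(2,4)} to the graded analogue of the Douglas--Reutter $(2,4)$-move identity. This reduction uses the dimension formulas of Section~\ref{Section_Dimension_formulas}, in particular Lemma~\ref{Lemma_B7}, which decomposes $1_f$ as a weighted sum over factorizations through simple objects.

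\textbf{Step 1: fix the data on the two sides.} By Lemma~\ref{Pachner_(2,4)}, the $\sigma$-colorings on the $J$-side are obtained from those on the $I$-side by choosing $e=\psi\langle 015\rangle$ and $l_1,l_2,l_3$ on $(0125),(0135),(0145)$ freely. Dually, the internal edge $\langle 24\rangle$, the internal triangles $\langle 024\rangle,\langle 124\rangle,\langle 234\rangle,\langle 245\rangle$, and the internal tetrahedra on the right-hand side all carry colors that are determined by the fixed coloring $\phi$ of $\partial\Delta^5$. So the sums in the statement run only over states $[24],[ijk]$ whose degrees are prescribed by $\phi$. I will first record these degrees. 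This tells me which graded Hom-spaces $\mathrm{Hom}^{l}$, with $l=\phi(\tau)$, appear in each $Z_+$.

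\textbf{Step 2: compose the four maps on the right-hand side.} I will write $Z_+(01234)Z_+(01245)Z_+(02345)Z_+(12345)$ graphically as a single string-diagram-valued 2-morphism. The contractions over the internal tetrahedra $\langle 0124\rangle,\langle 0234\rangle,\langle 1234\rangle,\langle 1245\rangle,\langle 2345\rangle,\ldots$ are realized by the copairings $\cup$. Each such copairing is a sum $\sum_\gamma \gamma\otimes\hat\gamma$ with respect to the nondegenerate trace pairing (Lemma~\ref{The trace pairing is nondegenerate}). Composing these yields, for each factor through the object $[24]$ and the 1-morphisms $[024],[124],[234],[245]$, terms of the form $\gamma\cdot\hat\gamma$ in the relevant graded Hom-spaces.

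\textbf{Step 3: collapse the internal sums.} I will perform the sums over the internal data in stages.
\begin{enumerate}
\item Summing over $[234]$ and $[245]$ with weights $\dim(\cdot)$ uses Lemma~\ref{Lemma_B7} in the fusion category of 1-morphisms into $[24]$ and $[25]$, respectively. This removes those two triangles, leaving identity 2-morphisms on composites.
\item The sum over $[124]$ with weight $\wdim([124])$ is then handled via Lemma~\ref{Lemma_B4}.
\item The sum over $[24]$ with weight $\wdim([24])^{-1}$, together with the remaining sum over $[024]$, is handled via Lemma~\ref{Lemma_B3} and Lemma~\ref{Lemma_B6}. These collapse the factorization through the object $[24]$ into the identity on $[02]\otimes[23]\otimes\cdots$.
\end{enumerate}
The equivariance of the gradings (the interchanger has degree $\omega(\cdot,\cdot)^{-1}$, and horizontal composition twists degrees by $\triangleright$) guarantees two things: the degrees $l$ in each application of these lemmas match the $L$-labels dictated by the tetrahedron identity, and the twist isomorphisms~\eqref{twist_iso} used to change pointings cancel in pairs. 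What remains is, up to isotopy of string diagrams and sphericality, exactly the composite representing $Z_+(01235)Z_+(01345)$.

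\textbf{Main obstacle.} The hard part is Step 3's bookkeeping of gradings. In Douglas--Reutter all 2-morphisms have degree one. Here, however, each application of Lemma~\ref{Lemma_B7} must occur in $\mathrm{Hom}^{l}$ with the specific $l$ prescribed by $\phi$, conjugated by the appropriate $\triangleright$ actions and Peiffer liftings. I would first verify, using the 4-simplex identity $\widehat{c_3}(d_3(s))=1$ for the six 4-simplices, that the total degrees on both sides agree. Only then would I check each intermediate degree, relying on Lemma~\ref{Lemma SV}(3) to split graded Homs through simple 1-morphisms.
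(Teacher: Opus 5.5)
Your framework matches the paper's, which proves this lemma only by deferring to \cite[Lemma~4.3.5]{douglas_fusion_2018} together with the graded formulas of Section~\ref{Section_Dimension_formulas}. The problem is the bookkeeping you commit to in Step~3, which does not work as written.

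First, the factor $\wdim([124])$ in the statement is a typo for $\dim([124])$. $\wdim$ is only defined for simple objects, $[124]$ is a 1-morphism, and the state sum weights every triangle by $\dim$. Your step~(2) takes the typo literally and invokes Lemma~\ref{Lemma_B4}. That lemma is a scalar identity summing over \emph{objects} $X$ with weight $\dim(f)^2\wdim(X)^{-1}$, and nothing of that shape is attached to $[124]$.

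Second, Lemmas~\ref{Lemma_B3}, \ref{Lemma_B4} and~\ref{Lemma_B6} are all scalar identities. The only statement in Section~\ref{Section_Dimension_formulas} that turns $\sum_\gamma\gamma\cdot\hat\gamma$ into an identity 2-morphism while absorbing a factor $\wdim(Z)^{-1}$ is Lemma~\ref{Lemma_B7}. It consumes one object $Z$ together with exactly two 1-morphisms $h\colon X\to Z$ and $g\colon Z\to Y$ composing through it, with weight $\dim(h)\dim(g)\wdim(Z)^{-1}$ and $\gamma$ running over a single Hom space. Here that means the sum over $[24]$ must be absorbed jointly with two of the four internal triangles and the copairing of the internal tetrahedron they span. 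For instance, take $[234]$, the mate $[24]\to[25]\otimes[45]^\#$ of $[245]$, and $(2345)$. The other two triangle sums are then plain resolutions $1_F=\sum_s\dim(s)\sum_\alpha\alpha\cdot\tilde\alpha$ of a 1-morphism into simples, with $\tilde\alpha$ the trace-dual basis. Your staging does not match any of these statements: you apply Lemma~\ref{Lemma_B7} to $[234]$ and $[245]$ separately with no object sum, and then pair $[24]$ with $[024]$ alone via scalar lemmas.

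More importantly, none of these identities can be applied directly to the contracted right-hand side. Each internal triangle lies in three internal tetrahedra; for example $[234]$ lies in $(0234)$, $(1234)$ and $(2345)$, and $[24]$ lies in all six. The four 4-simplices are pairwise adjacent. So after gluing the four traces along three copairings (which nondegeneracy of the trace pairing permits), the remaining three copairings are self-contractions. Every internal label still occurs at several vertices of a single diagram.

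A resolution identity applies only once the summed labels occur solely through one adjacent pair $\gamma,\hat\gamma$, with the rest of the diagram independent of them. Producing that configuration is the actual content of the argument. It uses isotopy, naturality of the interchanger and sphericality, while tracking the degrees $\omega(\cdot,\cdot)^{-1}$ of the interchangers and the twist isomorphisms \eqref{twist_iso}. The last sentence of your Step~3 assumes this outcome rather than deriving it.

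A minor point: Lemma~\ref{Pachner_(2,4)} plays no role in Step~1. Here $\phi$ is a single fixed coloring of $\partial\Delta^5$, and that lemma concerns a differently labelled move.
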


\begin{lem}[Invariance under the colored $(1,5)$-Pachner move]\label{Lemma_Pachner_(1,5)}
Let $I$ and $J$ be the subcomplexes of $\partial \Delta^5$ as defined in Figure \ref{fig:Pachner_(1,5)} and let $\phi$ be a $\sigma$-coloring of $\partial \Delta^5$. Then the following holds for every $\mathcal{C}$-state of $(\partial I,\phi_{\vert \partial I})$:
\begin{align*}
Z_+(01235) = \dim(\prescript{}{1}{\mathcal{C}^1_1})^{-1}&\sum_{\substack{[04], [14], [24], \\ [34], [45]}} \sum_{\substack{[014], [024], [034], [045],  [124], \\ [134], [145], [234], [245], [345]}} \Big ( \prod\wdim([ij])^{-1}\Big ) \Big ( \prod \dim([ijk])\Big )\\
& \mathrm{Tr}_{V^+(0345)}\big (Z_+(02345)Z_+(01245)Z_+(01234)Z_-(12345)Z_-(01345) \big ),
\end{align*}
where $[ij]$ runs over degree-$\phi(ij)$ objects in $\mathcal{S}$, and $[ijk]$ runs over degree-$\phi(ijk)$ 1-morphisms in $\mathcal{S}$ from $[ij]\otimes [jk]$ to $[ik]$.  The trace $\mathrm{Tr}_{V^+(0345)}$ is over the module $V^+(0345)$. 
\end{lem}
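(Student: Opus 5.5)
The plan is to follow the strategy of Douglas--Reutter for the uncolored $(1,5)$-move and reduce the identity to the graded dimension formulas of Section~\ref{Section_Dimension_formulas}. First, rewrite the right-hand side as a single trace of one large 2-morphism. Each $Z_\pm(\kappa)$ is the trace of the 2-morphism in Figure~\ref{10j-symbols}, precomposed with copairings $\cup$. The composite $Z_+(02345)Z_+(01245)Z_+(01234)Z_-(12345)Z_-(01345)$ traced over $V^+(0345)$ therefore becomes a contraction of five 10j-diagrams along the interior tetrahedra. The interior tetrahedra are those containing the vertex $4$ but not lying in $\partial I$; here $4$ plays the role of the new vertex, labelled $5$ in Lemma~\ref{Pachner_(1,5)}. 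Each contraction over the copairing $\cup_{\Gamma,\tau}$ has the form $\sum_\gamma \gamma\otimes\hat\gamma$ over dual bases for the trace pairing.

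I would then eliminate the interior labels in stages, from the highest-dimensional to the lowest. In the first stage, the interior tetrahedra $\langle ij k4\rangle$ are contracted using Lemma~\ref{Lemma_B7}. That lemma turns a sum $\sum \frac{\dim(g)\dim(h)}{\wdim(Z)}\sum_\gamma \gamma\cdot\hat\gamma$ into an identity 2-morphism. It applies once the interior triangles $[ij4]$ are paired with an interior edge $[i4]$ and the correct degree $l$ is chosen. That degree is $\psi_3$ of the relevant interior tetrahedron, which is freely prescribed by Lemma~\ref{Pachner_(1,5)}. The coloring enters only through the degrees, and Lemma~\ref{Pachner_(1,5)} guarantees that the summation ranges are exactly the full sets of simple 1-morphisms of fixed degree $e\in E$ and $l\in L$. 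This is what Lemmas~\ref{Lemma_B3}--\ref{Lemma_B7} require.

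After these contractions, the remaining sums over interior triangles and edges should collapse as follows:
\begin{itemize}
\item Lemma~\ref{Lemma_B6}, or Lemma~\ref{Lemma_B2}, removes the remaining Hom-multiplicities.
\item Lemma~\ref{Lemma_B4} eliminates a pair consisting of an edge and a triangle.
\item Lemma~\ref{Lemma_B5}, with $\partial(e)=|Z|k^{-1}h^{-1}$ guaranteed by the triangle identity for $\psi$, eliminates the last pair $[i4],[j4]$ together with a triangle. It produces a factor $\dim(\prescript{}{1}{\mathcal{C}^1_1})$ that cancels the prefactor $\dim(\prescript{}{1}{\mathcal{C}^1_1})^{-1}$.
\end{itemize}
What remains is the single 10j-diagram for $(01235)$, which is $Z_+(01235)$ with boundary state fixed.

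Throughout, the interchangers appearing in the diagrams carry degrees $\omega(\cdot,\cdot)^{-1}$, and pointings introduce twist isomorphisms as in \eqref{twist_iso}. I would check that all these degrees match between the two sides: the 4-simplex identity $\widehat{c_3}(d_3(s))=1$ for each new 4-simplex says exactly that the total $L$-degree of each 10j-diagram is $1$. This degree check is needed for the 2-morphisms to be homogeneous of degree $1$, so that the traces are defined by sphericality of $\mathcal{C}^1$.

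The main obstacle is the bookkeeping in the first stage. One must identify, for each interior tetrahedron, an explicit isomorphism of associator state spaces $V^\pm$, using \cite[Lemma 4.2.4]{douglas_fusion_2018} together with the change-of-pointing isomorphisms, that puts the contracted pair in the form $\mathrm{Hom}^l(g\circ h,f)\otimes\mathrm{Hom}^{l^{-1}}(f,g\circ h)$ required by Lemma~\ref{Lemma_B7}. This must be done while tracking the grading shifts $e\triangleright l$ coming from mates and pivotality. I would first carry this out diagrammatically in the uncolored case, following \cite[Appendix B]{douglas_fusion_2018}. I would then verify degree by degree that each mate and adjunction move changes the $L$-grading consistently with the 2-crossed module axioms of $\sigma$.
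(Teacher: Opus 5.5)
Your proposal takes the same approach as the paper. The paper's proof of this lemma consists only of the remark that the argument of \cite[Lemma 4.3.6]{douglas_fusion_2018} carries over once the ungraded dimension identities are replaced by the graded formulas of Section~\ref{Section_Dimension_formulas} (Lemmas~\ref{Lemma_B2}--\ref{Lemma_B7}), with the details left to the reader; your outline, including the degree bookkeeping for the interchangers and the degree-$1$ condition on the 10j-diagrams, is more detailed than that.

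One correction: the coloring $\phi$ of $\partial\Delta^5$ is fixed in this lemma, and nothing is summed over colorings. The degree compatibilities that make Lemmas~\ref{Lemma_B7} and~\ref{Lemma_B5} applicable therefore come from the triangle, tetrahedron and 4-simplex identities of $\phi$, not from Lemma~\ref{Pachner_(1,5)}. That lemma would not support your claim anyway, since it leaves only six of the ten interior tetrahedron degrees free.
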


\begin{figure}[h]
\begin{center}
\begin{tabular}{c | c | c}
Pachner $(3,3)$-move& $I$ & $J$ \\
\hline
4-simplices & $(01234)$, $(01245)$, $(02345)$ & $(01235)$, $(01345)$, $(12345)$\\ \hline 
Tetrahedra & $(0124)$, $(0234)$, $(0245)$& $(0135)$, $(1235)$, $(1345)$ \\ \hline
Triangles & $(024)$ &$(135)$ \\
\end{tabular}
\end{center}
\caption{The simplices in the interior of $I$ and $J$ for the $\sigma$-colored Pachner $(3,3)$-move}
\label{fig:Pachner_(3,3)}
\end{figure}

\begin{figure}[h]
\begin{center}
\begin{tabular}{c | c | c}
Pachner $(2,4)$-move& $I$ & $J$ \\
\hline
4-simplices & $(01235)$, $(01345)$ & $(01234)$, $(01245)$, $(02345)$, $(12345)$ \\ \hline 
Tetrahedra & $(0135)$  & $(0124)$, $(0234)$, $(0245)$, $(1234)$, $(1245)$, $(2345)$  \\ \hline
Triangles &   & $(024)$, $(124)$, $(234)$, $(245)$  \\ \hline
Edges & & $(24)$
\end{tabular}
\end{center}
\caption{The simplices in the interior of $I$ and $J$ for the $\sigma$-colored Pachner $(2,4)$-move}
\label{fig:Pachner_(2,4)}
\end{figure}

\begin{figure}[h]
\begin{center}
\begin{tabular}{c | c | c}
Pachner $(1,5)$-move& $I$ & $J$ \\
\hline
4-simplices & $(01235)$ & $(01234)$, $(01245)$, $(01345)$, $(02345)$, $(12345)$  \\ \hline 
Tetrahedra & & $(0124)$, $(0134)$, $(0145)$, $(0234)$, $(0245)$, $(0345)$, $(1234)$, $(1245)$, $(1345)$, $(2345)$  \\ \hline
Triangles &   & $(014)$, $(024)$, $(034)$, $(045)$, $(124)$, $(134)$, $(145)$, $(234)$, $(245)$, $(345)$  \\ \hline
Edges & & $(04)$, $(14)$, $(24)$, $(34)$, $(45)$ \\ \hline
Vertices &  & $(4)$
\end{tabular}
\end{center}
\caption{The simplices in the interior of $I$ and $J$ for the $\sigma$-colored Pachner $(1,5)$-move}
\label{fig:Pachner_(1,5)}
\end{figure}

The proofs of Lemmas \ref{Lemma_Pachner_(3,3)},  \ref{Lemma_Pachner_(2,4)}, and \ref{Lemma_Pachner_(1,5)} follow directly from the proofs of \cite[Lemmas 4.3.4, 4.3.5, and 4.3.6]{douglas_fusion_2018} by using the formulas shown in Section \ref{Section_Dimension_formulas}.  The details are left as an exercise for the reader.

\section{Proof of Lemma \ref{Pachner_(1,5)}}\label{Section_Proof of Lemma 8.3.1}

\noindent In this section we prove Lemma \ref{Pachner_(1,5)}, which is a key step in the proof of the Colored Pachner Theorem. Let~$L$ be the canonical 4-manifold obtained by applying a Pachner $(1,5)$-move to a canonical 4-manifold $K$ with $\sigma$-coloring $\phi$.  Let us show that the map $\psi$ specified above defines a unique $\sigma$-coloring of $L$. 

First we show that these choices determine $\psi$. Recall that $\psi_{\vert K} = \phi$, therefore we only need to check that $\psi$ is uniquely defined on simplices containing the vertex $\langle 5\rangle$. The triangle conditions applied to $\langle 015\rangle$, $\langle 025\rangle$, $\langle 035\rangle$, and $\langle 045\rangle$ determine the values of $\psi\langle 15\rangle$, $\psi\langle 25\rangle$, $\psi\langle 35\rangle$, and $\psi\langle 45\rangle$, respectively. Therefore $\psi$ is uniquely defined on all edges of $L$.  Similarly, the tetrahedron conditions applied to $\langle 0125\rangle$, $\langle 0135\rangle$, $\langle 0145\rangle$, $\langle 0235\rangle $,  $\langle 0245\rangle$,  and~$\langle 0345\rangle $ determine the values of $\psi\langle 125\rangle$,  $ \psi\langle 135\rangle$,  $ \psi\langle 145\rangle$, $\psi\langle 235\rangle $,  $\psi\langle 245\rangle$, and $\psi\langle 345\rangle$, respectively.  Therefore,~$\psi$ is uniquely defined on all triangles of $L$.  The 4-simplex conditions applied to $\langle 01235\rangle$, $\langle 01245\rangle$, $\langle 01345\rangle$, and $\langle 02345\rangle$ determine the values of $\psi\langle 1235\rangle$, $\psi\langle 1245\rangle$, $\psi\langle 1345\rangle$, and $\psi\langle 2345\rangle$, respectively. Therefore $\psi$ is uniquely defined on all $1$-, $2$- and $3$-simplices of $L$. 

Now we check that $\psi$ is a $\sigma$-coloring of $L$.  Since $\psi_{\vert K} = \phi$ and $\phi$ is a $\sigma$-coloring of $K$, $\psi$ verifies the simplex conditions for all $2$-, $3$- and $4$-simplices of $K$.  By construction, $\psi$ already verifies the simplex conditions for $\langle 015\rangle$, $\langle 025\rangle$, $\langle 035\rangle$, and $\langle 045\rangle$.  For any choice of pointing:
\begin{itemize}
\item $\gamma_0$ in $\langle 125\rangle$ from $\langle 1\rangle$ to the distinguished vertex of $\langle 125\rangle$,
\item $\gamma_1$ in $\langle 015\rangle $ from $\langle 0\rangle$ to the distinguished vertex of $\langle 015\rangle$,
\item $\gamma_2$ in $\langle 025\rangle$ from $\langle 0\rangle$ to the distinguished vertex of $\langle 025\rangle$, 
\item $\gamma_3$ in $\langle 012\rangle$ from $\langle 0\rangle$ to the distinguished vertex of $\langle 012\rangle$,
\end{itemize}
we get
\begin{align*}
\partial (\psi\prescript{\gamma_0}{}{\langle 125 \rangle}) &= \phi\langle 01\rangle^{-1} \partial(\prescript{\gamma_1}{}{e_1})^{-1}  \partial (\prescript{\gamma_2}{}{e_2})^{-1} \partial (\phi( \prescript{\gamma_3}{}{\langle 012\rangle}) \phi\langle 01\rangle\\
&= \psi\langle 15\rangle \psi\langle 25\rangle^{-1} \psi\langle 12\rangle^{-1}.
\end{align*}
Therefore $\psi$  verifies the triangle condition for $\langle 125\rangle$. Similar computations show that $\psi$ verifies the triangle conditions for $\langle 135\rangle$, $\langle 145\rangle$, $\langle 235\rangle$, $\langle 245\rangle$,  and $\langle 345\rangle$.  This proves that $\psi$ verifies the triangle conditions for all triangles of $L$. 

By construction, $\psi$ already verifies the tetrahedron conditions for $\langle 0125\rangle$, $\langle 0135\rangle$, $\langle 0145\rangle$, $\langle 0235\rangle$, $\langle 0245\rangle$  and $\langle 0345\rangle$.  Let $(\langle 01235\rangle, \bar{\gamma})$ be a pointed ordered 4-simplex, with 
$$
\bar{\gamma} = (\gamma_{01}, \gamma_{02}, \gamma_{03}, \gamma_{04}, \gamma_{12}, \gamma_{13}, \gamma_{14}, \gamma_{23}, \gamma_{24}, \gamma_{34}).
$$
For vertices $i,j,k,l\in \{0,1,2,3,5\}$, we denote by $(\langle ijkl\rangle, \bar{\gamma})$ the pointed ordered tetrahedron $\langle ijkl\rangle$ induced by those of $(\langle 01235\rangle, \bar{\gamma})$.  By definition,
\begin{multline*}
\psi\prescript{\langle 01\rangle}{}{(\langle 1235\rangle}, \bar{\gamma}) = \psi(\prescript{\gamma_{23}}{}{\langle 015\rangle})^{-1} \triangleright \left[ \psi(\langle 0125\rangle, \bar{\gamma})d(\prescript{\gamma_{13}}{}{\langle 025\rangle})\triangleright \omega(\psi(\prescript{\gamma_{35}}{}{\langle 012\rangle} ) , \psi(\prescript{\langle 01\rangle \langle 12\rangle \gamma_{01}}{}{\langle 235\rangle} ))\psi(\langle 0235\rangle, \bar{\gamma})\right. \\
\left.  \psi(\prescript{\gamma_{12}}{}{\langle 035\rangle }) 
\triangleright \psi( \langle 0123\rangle, \bar{\gamma})^{-1} \psi(\langle 0135\rangle, \bar{\gamma})^{-1} \right].
\end{multline*}
Therefore, 
\begin{equation*}
\begin{split}
\delta (\psi\prescript{\langle 01 \rangle }{}{(\langle 1235\rangle, \bar{\gamma}}) & = \psi(\prescript{\gamma_{23}}{}{\langle 015\rangle})^{-1}\delta \psi(\langle 0125\rangle, \bar{\gamma}) \psi(\prescript{\gamma_{13}}{}{\langle 025\rangle})\delta\omega(\psi(\prescript{\gamma_{35}}{}{\langle 012\rangle} ) , \psi(\prescript{\langle 01\rangle \langle 12\rangle \gamma_{01}}{}{\langle 235\rangle} ))\psi(\prescript{\gamma_{13}}{}{\langle 025\rangle})^{-1}\\
& \quad\quad  \delta \psi(\langle 0235\rangle, \bar{\gamma})\psi(\prescript{\gamma_{12}}{}{\langle 035\rangle})\delta \psi(\langle 0123\rangle, \bar{\gamma})^{-1}\psi(\prescript{\gamma_{12}}{}{\langle 035\rangle })^{-1}\delta \psi(\langle 0135 \rangle, \bar{\gamma})^{-1}\psi(\prescript{\gamma_{23}}{}{\langle 015\rangle}) \\
&= \psi(\prescript{\langle 01\rangle \gamma_{03}}{}{\langle 125 \rangle})\psi(\prescript{\langle 01 \rangle\langle 12\rangle\gamma_{01}}{}{\langle 235\rangle})\psi(\prescript{\langle 01 \rangle \gamma_{05}}{}{\langle 123\rangle })^{-1}\psi(\prescript{\langle 01\rangle \gamma_{02}}{}{\langle 135\rangle})^{-1}.
\end{split}
\end{equation*}
Similarly, $\psi$ verifies the tetrahedron conditions for $\langle 1245\rangle $, $\langle 1345\rangle$,  and $\langle 2345\rangle$. 

By construction, $\psi$ verifies the 4-simplex conditions for $\langle 01235\rangle$, $\langle 01245\rangle$, $\langle 01345\rangle$, and $\langle 02345\rangle$.  It only remains to check that $\psi$ verifies the 4-simplex condition for $\langle 12345\rangle$.  Let 
$$
\bar{\gamma} = (\gamma_{01}, \gamma_{02}, \gamma_{03}, \gamma_{04}, \gamma_{12}, \gamma_{13}, \gamma_{14}, \gamma_{23}, \gamma_{24}, \gamma_{34})
$$
denote a choice of pointing such that $(\langle 12345\rangle, \bar{\gamma})$ is a pointed ordered 4-simplex.  For ease of reading, we introduce the following notations:
\begin{itemize}
\item for any vertices $i,j \in \{0, 1, 2, 3, 4, 5\}$ such that $i< j$, we write $[ij] = \psi\langle ij\rangle$,
\item for any vertices $i,j,k, l, m\in \{0,1,2,3,4,5\}$ such that $i<j<k$ and $l<m$, we write $$[ijk] = \prescript{\psi(\gamma_{lm})}{}{\psi\langle ijk\rangle},$$
\item for any vertices $i,j,k,l\in \{0,1,2,3,4,5\}$ such that $i<j<k<l$, we write $$[ijkl] = \psi(\langle ijkl\rangle, \bar{\gamma}),$$ where $(\langle ijkl\rangle, \bar{\gamma})$ denotes the pointed ordered tetrahedron $\langle ijkl\rangle$ induced by $\bar{\gamma}$. 
\end{itemize}
The 4-simplex condition amounts to $E = 1$, with 
\begin{equation*}
E  = [1235] [135]\triangleright \omega([123], \prescript{[12][23]}{}{[345]})[1345][145]\triangleright [1234]^{-1}[1245]^{-1} [125]\triangleright \prescript{[12]}{}{[2345]}^{-1}.
\end{equation*}
By construction,
\begin{align*}
\prescript{[01]}{}{[1235]} &= [015]^{-1}\triangleright \left([0125][025]\triangleright \omega([012], \prescript{[01][12]}{}{[235]} )[0235][035]\triangleright [0123]^{-1}[0135]^{-1} \right),\\
\prescript{[01]}{}{[1245]} &= [015]^{-1}\triangleright \left([0125][025]\triangleright \omega([012], \prescript{[01][12]}{}{[245]} )[0245][045]\triangleright [0124]^{-1}[0145]^{-1}  \right),\\
\prescript{[01]}{}{[1345]} &= [015]^{-1}\triangleright \left([0135][035]\triangleright \omega([013], \prescript{[01][13]}{}{[345]} )[0345][045]\triangleright [0134]^{-1}[0145]^{-1}  \right),\\
\prescript{[01]}{}{[2345]} &= [025]^{-1}\triangleright \left([0235][035]\triangleright \omega([023], \prescript{[02][23]}{}{[345]} )[0345][045]\triangleright [0234]^{-1}[0245]^{-1}  \right).
\end{align*}
Therefore, 
\begin{multline*}
E = \prescript{[01]^{-1}}{}{[015]^{-1}}\triangleright \prescript{[01]^{-1}}{}{\Big (} [0125][025]\triangleright \omega([012], \prescript{[01][12]}{}{[235]} )[0235][035]\triangleright [0123]^{-1}[0135]^{-1} \\
([015]\prescript{[01]}{}{[135]}) \triangleright \prescript{[01]}{}{\omega\left([123], \prescript{[12][23]}{}{[345]}\right)}[0135][035]\triangleright \omega \left( [013], \prescript{[01][13]}{}{[345]}\right) [0345][045]\triangleright [0134]^{-1}\\
[0145]^{-1}([015]\prescript{[01]}{}{[145]})\triangleright \prescript{[01]}{}{[1234]^{-1}}[0145][045]\triangleright [0124][0245]^{-1}[025]\triangleright \omega\left( [012], \prescript{[01][12]}{}{[245]}\right)^{-1}\\
[0125]^{-1} ([015]\prescript{[01]}{}{[125]}\prescript{[01][12][02]^{-1}}{}{[025]^{-1}})\triangleright \prescript{[01][12][02]^{-1}}{}{\Big (} [0245][045]\triangleright [0234][0345]^{-1}\\
[035] \triangleright \omega\left( [023], \prescript{[02][23]}{}{[345]}\right)^{-1}[0235]^{-1} \Big ) \Big ) .
\end{multline*}
Recall that $\delta [0135] = [015]\prescript{[01]}{}{[135]}[013]^{-1}[035]^{-1}$, therefore 
\begin{equation*}
[015]\prescript{[01]}{}{[135]} = \delta [0135] [035][013].
\end{equation*}
Similarly,
\begin{align*}
[015]\prescript{[01]}{}{[145]} & = \delta [0145] [045] [014],\\
[015]\prescript{[01]}{}{[125]} & = \delta [0125] [025] [012]. 
\end{align*}
Using the fact that for all $l,m \in L$, $(\delta l) \triangleright m = lml^{-1}$, we get
\begin{multline*}
E = \prescript{[01]^{-1}}{}{[015]}^{-1}\triangleright \prescript{[01]^{-1}}{}{\Big (} [0125] [025]\triangleright \omega\left([012], \prescript{[01][12]}{}{[235]} \right) [0235] [035]\triangleright [0123]^{-1} \\([035][013]) \triangleright \prescript{[01]}{}{\omega} \left([123], \prescript{[12][23]}{}{[345]} \right)
[035]\triangleright \omega\left([013], \prescript{[01][13]}{}{[345]} \right)[0345][045]\triangleright [0134]^{-1}\\
([045][014])\triangleright \prescript{[01]}{}{[1234]^{-1}} [045]\triangleright [0124][0245]^{-1}[025]\triangleright \omega\left([012], \prescript{[01][12]}{}{[245]} \right)^{-1}\\
([025][012]\prescript{\partial [012]^{-1}}{}{[025]^{-1}}) \triangleright \prescript{\partial [012]^{-1}}{}{\Big (}[0245] [045]\triangleright [0234][0345]^{-1}[035] \triangleright \omega\left([023], \prescript{[02][23]}{}{[345]} \right)^{-1}\\ 
 [0235]^{-1}\Big ) [0125]^{-1}\Big ).
\end{multline*}
The equation $E = 1$ is equivalent to $E_1 = 1$ with 
\begin{multline*}
E_1 = [025]\triangleright \omega\left([012], \prescript{[01][12]}{}{[235]} \right) [0235] [035]\triangleright [0123]^{-1} ([035][013]) \triangleright \prescript{[01]}{}{\omega} \left([123], \prescript{[12][23]}{}{[345]} \right)\\
[035]\triangleright \omega\left([013], \prescript{[01][13]}{}{[345]} \right)[0345][045]\triangleright [0134]^{-1} ([045][014])\triangleright \prescript{[01]}{}{[1234]^{-1}} [045]\triangleright [0124]\\
 [0245]^{-1}[025]\triangleright \omega\left([012], \prescript{[01][12]}{}{[245]} \right)^{-1} ([025][012]\prescript{\partial [012]^{-1}}{}{[025]^{-1}}) \triangleright \prescript{\partial [012]^{-1}}{}{\Big (}[0245] [045]\triangleright [0234]\\
 [0345]^{-1}[035] \triangleright \omega\left([023], \prescript{[02][23]}{}{[345]} \right)^{-1} [0235]^{-1}\Big )
\end{multline*}
Recall that for all $e,f \in E$, we have $\delta \omega(e,f) = efe^{-1}\prescript{\partial e}{}{f^{-1}}$.  Therefore, 
\begin{equation*}
[025][012]\prescript{\partial [012]^{-1}}{}{[025]^{-1}} = [012] \delta \omega\left( [012]^{-1}, [025]\right).
\end{equation*}
Moreover, recall from \cite[Lemma 2.2]{martins_fundamental_2011} that for all $e\in E$ and $l\in L$, we have
\begin{equation*}
\prescript{\partial e}{}{l} = \left(e\triangleright l\right) \omega(e, \delta( l^{-1})). 
\end{equation*}
Therefore,
\begin{align*}
([025]&[012]\prescript{\partial [012]^{-1}}{}{[025]^{-1}}) \triangleright \prescript{\partial [012]^{-1}}{}{\Big (}[0245] [045]\triangleright [0234]
 [0345]^{-1}[035] \triangleright \omega\left([023], \prescript{[02][23]}{}{[345]} \right)^{-1} [0235]^{-1}\Big )\\ 
&= [012]\triangleright \Big ( \omega ([012]^{-1}, [025]) \prescript{\partial [012]^{-1}}{}{\Big (}0245] [045]\triangleright [0234]
 [0345]^{-1}[035] \triangleright \omega\left([023], \prescript{[02][23]}{}{[345]} \right)^{-1} [0235]^{-1}\Big )\\
 & \quad \quad \omega ([012]^{-1}, [025])^{-1}\Big )\\
 &= [012] \triangleright \omega ([012]^{-1}, [025]) [0245][045]\triangleright [0234][0345]^{-1}[035] \triangleright \omega\left([023], \prescript{[02][23]}{}{[345]} \right)^{-1} [0235]^{-1}\\
 & \quad \quad [012]\triangleright \omega \left([012]^{-1}, [025]\prescript{[02]}{}{[235]}\prescript{[02][23]}{}{[345]}\prescript{[02]}{}{[234]^{-1}}\prescript{[02]}{}{[245]^{-1}} [025]^{-1} \right) [012]\triangleright \omega ([012]^{-1}, [025])^{-1}
\end{align*}
We substitute in $E_1$: 
\begin{multline*}
E_1 = [025]\triangleright \omega\left([012], \prescript{[01][12]}{}{[235]} \right) [0235] [035]\triangleright [0123]^{-1} ([035][013]) \triangleright \prescript{[01]}{}{\omega} \left([123], \prescript{[12][23]}{}{[345]} \right)\\
[035]\triangleright \omega\left([013], \prescript{[01][13]}{}{[345]} \right)[0345][045]\triangleright [0134]^{-1} ([045][014])\triangleright \prescript{[01]}{}{[1234]^{-1}} [045]\triangleright [0124]\\
 [0245]^{-1}[025]\triangleright \omega\left([012], \prescript{[01][12]}{}{[245]} \right)^{-1} [012] \triangleright \omega ([012]^{-1}, [025]) [0245][045]\triangleright [0234][0345]^{-1}\\
 [035] \triangleright \omega\left([023], \prescript{[02][23]}{}{[345]} \right)^{-1} [0235]^{-1} [012]\triangleright \omega \Big ([012]^{-1}, [025]\prescript{[02]}{}{[235]}\prescript{[02][23]}{}{[345]}\prescript{[02]}{}{[234]^{-1}} \\
\prescript{[02]}{}{[245]^{-1}} [025]^{-1} \Big ) [012]\triangleright \omega ([012]^{-1}, [025])^{-1}
\end{multline*}
From the fact that $\phi$ is a $\sigma$-coloring of $K$, we deduce that 
\begin{equation*}
1 = [0124][024]\triangleright \omega\left([012], \prescript{[01][12]}{}{[234]} \right)[0234][034]\triangleright [0123]^{-1}[0134]^{_1}[014]\triangleright \prescript{[01]}{}{[1234]^{-1}}.
\end{equation*}
Therefore, 
\begin{align*}
[045]\triangleright [0134]^{-1}([045][014]) \triangleright \prescript{[01]}{}{[1234]^{-1}}[045]\triangleright [0124] &= ([045][034]) \triangleright [0123][045]\triangleright [0234]^{-1}\\
& \quad \quad ([045][024]) \triangleright \omega \left( [012], \prescript{[01][12]}{}{[234]} \right)^{-1}.
\end{align*}
We substitute in $E_1$:
\begin{multline*}
E_1 = [025]\triangleright \omega\left([012], \prescript{[01][12]}{}{[235]} \right) [0235] [035]\triangleright [0123]^{-1} ([035][013]) \triangleright \prescript{[01]}{}{\omega} \left([123], \prescript{[12][23]}{}{[345]} \right)\\
[035]\triangleright \omega\left([013], \prescript{[01][13]}{}{[345]} \right)[0345]([045][034]) \triangleright [0123][045]\triangleright [0234]^{-1} \\
([045][024]) \triangleright \omega \left( [012], \prescript{[01][12]}{}{[234]} \right)^{-1} [0245]^{-1}[025]\triangleright \omega\left([012], \prescript{[01][12]}{}{[245]} \right)^{-1} \\
 [012] \triangleright \omega ([012]^{-1}, [025]) [0245][045]\triangleright [0234][0345]^{-1}  [035] \triangleright \omega\left([023], \prescript{[02][23]}{}{[345]} \right)^{-1} [0235]^{-1} \\
 [012]\triangleright \omega \Big ([012]^{-1}, [025]\prescript{[02]}{}{[235]}\prescript{[02][23]}{}{[345]}\prescript{[02]}{}{[234]^{-1}} \prescript{[02]}{}{[245]^{-1}} [025]^{-1} \Big ) [012]\triangleright \omega ([012]^{-1}, [025])^{-1}.
\end{multline*}
Recall that for all $l,m\in L$, we have $lm = (\delta l\triangleright m) l$, therefore
\begin{equation*}
[0345]([045][034])\triangleright [0123] = ([035]\prescript{[03]}{}{[345]}) \triangleright [0123] [0345].
\end{equation*}
This yields
\begin{multline*}
E_1 = [025]\triangleright \omega\left([012], \prescript{[01][12]}{}{[235]} \right) [0235] [035]\triangleright [0123]^{-1} ([035][013]) \triangleright \prescript{[01]}{}{\omega} \left([123], \prescript{[12][23]}{}{[345]} \right)\\
[035]\triangleright \omega\left([013], \prescript{[01][13]}{}{[345]} \right)([035]\prescript{[03]}{}{[345]}) \triangleright [0123] [0345][045]\triangleright [0234]^{-1} \\
([045][024]) \triangleright \omega \left( [012], \prescript{[01][12]}{}{[234]} \right)^{-1} [0245]^{-1}[025]\triangleright \omega\left([012], \prescript{[01][12]}{}{[245]} \right)^{-1} \\
 [012] \triangleright \omega ([012]^{-1}, [025]) [0245][045]\triangleright [0234][0345]^{-1}  [035] \triangleright \omega\left([023], \prescript{[02][23]}{}{[345]} \right)^{-1} [0235]^{-1} \\
 [012]\triangleright \omega \Big ([012]^{-1}, [025]\prescript{[02]}{}{[235]}\prescript{[02][23]}{}{[345]}\prescript{[02]}{}{[234]^{-1}} \prescript{[02]}{}{[245]^{-1}} [025]^{-1} \Big ) [012]\triangleright \omega ([012]^{-1}, [025])^{-1}.
\end{multline*}
Recall from \cite[Lemma 2.2]{martins_fundamental_2011} that for all $e,f,g\in E$ we have 
$\omega(e,fg) =\left( (efe^{-1})\triangleright \omega(e,g)\right)\omega(e,f)$, therefore 
\begin{multline*}
\omega \Big ([012]^{-1}, [025]\prescript{[02]}{}{[235]}\prescript{[02][23]}{}{[345]}\prescript{[02]}{}{[234]^{-1}} \prescript{[02]}{}{[245]^{-1}} [025]^{-1} \Big ) = [012]^{-1}[025][012])\triangleright \omega \Big ([012]^{-1}, \prescript{[02]}{}{[235]}\\
\prescript{[02][23]}{}{[345]}\prescript{[02]}{}{[234]^{-1}} \prescript{[02]}{}{[245]^{-1}} [025]^{-1} \Big )\omega([012]^{-1}, [025]).
\end{multline*}
This yields
\begin{multline*}
E_1 = [025]\triangleright \omega\left([012], \prescript{[01][12]}{}{[235]} \right) [0235] [035]\triangleright [0123]^{-1} ([035][013]) \triangleright \prescript{[01]}{}{\omega} \left([123], \prescript{[12][23]}{}{[345]} \right)\\
[035]\triangleright \omega\left([013], \prescript{[01][13]}{}{[345]} \right)([035]\prescript{[03]}{}{[345]}) \triangleright [0123] [0345][045]\triangleright [0234]^{-1} \\
([045][024]) \triangleright \omega \left( [012], \prescript{[01][12]}{}{[234]} \right)^{-1} [0245]^{-1}[025]\triangleright \omega\left([012], \prescript{[01][12]}{}{[245]} \right)^{-1} \\
 [012] \triangleright \omega ([012]^{-1}, [025]) [0245][045]\triangleright [0234][0345]^{-1}  [035] \triangleright \omega\left([023], \prescript{[02][23]}{}{[345]} \right)^{-1} [0235]^{-1} \\
([025][012])\triangleright \omega \Big ([012]^{-1}, \prescript{[02]}{}{[235]} \prescript{[02][23]}{}{[345]}\prescript{[02]}{}{[234]^{-1}} \prescript{[02]}{}{[245]^{-1}} [025]^{-1} \Big )
\end{multline*}
Recall that by definition,
\begin{equation*}
\prescript{[03]}{}{[345]}\triangleright [0123] = [0123] \omega\Big ( [023][012]\prescript{[01]}{}{[123]^{-1}}[013]^{-1}, \prescript{[03]}{}{[345]}  \Big ).
\end{equation*}
Therefore,
\begin{multline*}
E_1 = [025]\triangleright \omega\left([012], \prescript{[01][12]}{}{[235]} \right) [0235] [035]\triangleright [0123]^{-1} ([035][013]) \triangleright \prescript{[01]}{}{\omega} \left([123], \prescript{[12][23]}{}{[345]} \right)\\
[035]\triangleright \omega\left([013], \prescript{[01][13]}{}{[345]} \right)[035]\triangleright [0123] [035]\triangleright \omega\Big ( [023][012]\prescript{[01]}{}{[123]^{-1}}[013]^{-1}, \prescript{[03]}{}{[345]}  \Big )\\
  [0345][045]\triangleright [0234]^{-1} ([045][024]) \triangleright \omega \left( [012], \prescript{[01][12]}{}{[234]} \right)^{-1} [0245]^{-1}[025]\triangleright \omega\left([012], \prescript{[01][12]}{}{[245]} \right)^{-1} \\
 [012] \triangleright \omega ([012]^{-1}, [025]) [0245][045]\triangleright [0234][0345]^{-1}  [035] \triangleright \omega\left([023], \prescript{[02][23]}{}{[345]} \right)^{-1} [0235]^{-1} \\
([025][012])\triangleright \omega \Big ([012]^{-1}, \prescript{[02]}{}{[235]} \prescript{[02][23]}{}{[345]}\prescript{[02]}{}{[234]^{-1}} \prescript{[02]}{}{[245]^{-1}} [025]^{-1} \Big )
\end{multline*}
Recall that for all $e,f,g\in E$, we have $\omega(e,fg) =\left( (efe^{-1})\triangleright \omega(e,g)\right) \omega(e,f)$, therefore
\begin{multline*}
\omega \Big ([012]^{-1}, \prescript{[02]}{}{[235]}\prescript{[02][23]}{}{[345]}\prescript{[02]}{}{[234]^{-1}} \prescript{[02]}{}{[245]^{-1}} [025]^{-1} \Big ) = ([012]^{-1}\prescript{[02]}{}{[235]}[012]) \triangleright \omega\Big ( [012]^{-1}, \prescript{[02][23]}{}{[345]}\\
\prescript{[02]}{}{[234]^{-1}} \prescript{[02]}{}{[245]^{-1}} [025]^{-1} \Big )\omega\Big ( [012]^{-1}, \prescript{[02]}{}{[235]}\Big ).
\end{multline*}
Moreover, recall from \cite[Lemma 2.2]{martins_fundamental_2011} that for all $e,f\in E$, we have $\omega(e,f) = e\triangleright \omega(e^{-1}, \prescript{\partial e}{}{f})^{-1}$, therefore,
\begin{equation*}
\omega\Big ( [012]^{-1}, \prescript{[02]}{}{[235]}\Big ) = [012]^{-1}\triangleright \omega\Big ( [012], \prescript{[01][12]}{}{[235]}\Big )^{-1}.
\end{equation*}
By substituting, we get that $E_1 = 1$ is equivalent to $E_2 = 1$ with 
\begin{multline*}
E_2 = [0235] [035]\triangleright [0123]^{-1} ([035][013]) \triangleright \prescript{[01]}{}{\omega} \left([123], \prescript{[12][23]}{}{[345]} \right) [035]\triangleright \omega\left([013], \prescript{[01][13]}{}{[345]} \right)\\
[035]\triangleright [0123] [035]\triangleright \omega\Big ( [023][012]\prescript{[01]}{}{[123]^{-1}}[013]^{-1}, \prescript{[03]}{}{[345]}  \Big )[0345][045]\triangleright [0234]^{-1} \\
  ([045][024]) \triangleright \omega \left( [012], \prescript{[01][12]}{}{[234]} \right)^{-1} [0245]^{-1}[025]\triangleright \omega\left([012], \prescript{[01][12]}{}{[245]} \right)^{-1} [012] \triangleright \omega ([012]^{-1}, [025]) \\
 [0245][045]\triangleright [0234][0345]^{-1}  [035] \triangleright \omega\left([023], \prescript{[02][23]}{}{[345]} \right)^{-1} [0235]^{-1} \\
([025]\prescript{[02]}{}{[235]}[012])\triangleright \omega \Big ([012]^{-1},  \prescript{[02][23]}{}{[345]}\prescript{[02]}{}{[234]^{-1}} \prescript{[02]}{}{[245]^{-1}} [025]^{-1} \Big ).
\end{multline*}
Recall that for all $l,m \in L$, we have $lm = (\delta l \triangleright m)l$, therefore
\begin{align*}
[0235]^{-1} ([025]\prescript{[02]}{}{[235]}&[012])\triangleright  \omega \Big ([012]^{-1},  \prescript{[02][23]}{}{[345]}\prescript{[02]}{}{[234]^{-1}} \prescript{[02]}{}{[245]^{-1}} [025]^{-1} \Big ) \\
&= ([035][023][012])\triangleright \omega \Big ([012]^{-1},  \prescript{[02][23]}{}{[345]}\prescript{[02]}{}{[234]^{-1}} \prescript{[02]}{}{[245]^{-1}} [025]^{-1} \Big )[0234]^{-1}.
\end{align*}
By substituting, we get that $E_2 = 1$ is equivalent to $E_3 = 1$ with 
\begin{multline*}
E_3 = [035]\triangleright [0123]^{-1} ([035][013]) \triangleright \prescript{[01]}{}{\omega} \left([123], \prescript{[12][23]}{}{[345]} \right) [035]\triangleright \omega\left([013], \prescript{[01][13]}{}{[345]} \right)\\
[035]\triangleright [0123] [035]\triangleright \omega\Big ( [023][012]\prescript{[01]}{}{[123]^{-1}}[013]^{-1}, \prescript{[03]}{}{[345]}  \Big )[0345][045]\triangleright [0234]^{-1} \\
  ([045][024]) \triangleright \omega \left( [012], \prescript{[01][12]}{}{[234]} \right)^{-1} [0245]^{-1}[025]\triangleright \omega\left([012], \prescript{[01][12]}{}{[245]} \right)^{-1} [012] \triangleright \omega ([012]^{-1}, [025]) \\
 [0245][045]\triangleright [0234][0345]^{-1}  [035] \triangleright \omega\left([023], \prescript{[02][23]}{}{[345]} \right)^{-1} \\
 ([035][023][012])\triangleright \omega \Big ([012]^{-1},  \prescript{[02][23]}{}{[345]}\prescript{[02]}{}{[234]^{-1}} \prescript{[02]}{}{[245]^{-1}} [025]^{-1} \Big ).
\end{multline*}
After applying the formula $lm = (\delta l \triangleright m)l$ with $l = [035]\triangleright [0123]^{-1}$ and 
\begin{equation*}
m = ([035][013]) \triangleright \prescript{[01]}{}{\omega} \left([123], \prescript{[12][23]}{}{[345]} \right) [035]\triangleright \omega\left([013], \prescript{[01][13]}{}{[345]} \right),
\end{equation*}
we get 
\begin{multline*}
E_3 = ([035][023][012]\prescript{[01]}{}{[123]^{-1}}) \triangleright \prescript{[01]}{}{\omega} \left([123], \prescript{[12][23]}{}{[345]} \right) ([035][023][012]\prescript{[01]}{}{[123]^{-1}}\\
[013]^{-1})\triangleright \omega\left([013], \prescript{[01][13]}{}{[345]} \right) [035]\triangleright \omega\Big ( [023][012]\prescript{[01]}{}{[123]^{-1}}[013]^{-1}, \prescript{[03]}{}{[345]}  \Big )[0345][045]\triangleright [0234]^{-1} \\
([045][024]) \triangleright \omega \left( [012], \prescript{[01][12]}{}{[234]} \right)^{-1}
  [0245]^{-1}[025]\triangleright \omega\left([012], \prescript{[01][12]}{}{[245]} \right)^{-1} [012] \triangleright \omega ([012]^{-1}, [025]) [0245]\\
  [045]\triangleright [0234][0345]^{-1}
  [035] \triangleright \omega\left([023], \prescript{[02][23]}{}{[345]} \right)^{-1} ([035][023][012])\triangleright \omega \Big ([012]^{-1},  \prescript{[02][23]}{}{[345]}\\
  \prescript{[02]}{}{[234]^{-1}} \prescript{[02]}{}{[245]^{-1}} [025]^{-1} \Big ).
\end{multline*}
Recall from \cite[Lemma 2.2]{martins_fundamental_2011} that for all $e,f,g \in E$, we have $\omega(ef,g) = \left( e\triangleright \omega(f,g)\right) \omega(e,\prescript{\partial f}{}{g})$ and that for all $e,f \in E$, we have $\omega (e,f) = e\triangleright \omega(e^{-1}, \prescript{\partial e}{}{f})^{-1}$, therefore 
\begin{multline*}
\omega\Big ( [023][012]\prescript{[01]}{}{[123]^{-1}}[013]^{-1}, \prescript{[03]}{}{[345]}  \Big ) = ([023][012]\prescript{[01]}{}{[123]^{-1}}[013]^{-1}) \triangleright \omega\Big ( [013], \prescript{[01][13]}{}{[345]}\Big )^{-1}\\
\omega\Big ( [023][012]\prescript{[01]}{}{[123]^{-1}}, \prescript{[01][13]}{}{[345]} \Big ).
\end{multline*}
This yields
\begin{multline*}
E_3 = ([035][023][012]\prescript{[01]}{}{[123]^{-1}}) \triangleright \prescript{[01]}{}{\omega} \left([123], \prescript{[12][23]}{}{[345]} \right) [035]\triangleright \omega\Big ( [023][012]\prescript{[01]}{}{[123]^{-1}}, \prescript{[03]}{}{[345]}  \Big )\\
[0345][045]\triangleright [0234]^{-1}
([045][024]) \triangleright \omega \left( [012], \prescript{[01][12]}{}{[234]} \right)^{-1}
  [0245]^{-1}[025]\triangleright \omega\left([012], \prescript{[01][12]}{}{[245]} \right)^{-1}\\
   [012] \triangleright \omega ([012]^{-1}, [025]) [0245]  [045]\triangleright [0234][0345]^{-1}
  [035] \triangleright \omega\left([023], \prescript{[02][23]}{}{[345]} \right)^{-1} \\
  ([035][023][012])\triangleright \omega \Big ([012]^{-1},  \prescript{[02][23]}{}{[345]} \prescript{[02]}{}{[234]^{-1}} \prescript{[02]}{}{[245]^{-1}} [025]^{-1} \Big ).
\end{multline*}
Similarly, we have 
\begin{multline*}
\omega\Big ( [023][012]\prescript{[01]}{}{[123]^{-1}}, \prescript{[03]}{}{[345]}  \Big ) = ([023][012]\prescript{[01]}{}{[123]^{-1}}) \triangleright \omega\Big ( \prescript{[01]}{}{[123]}, \prescript{[01][13]}{}{[345]}\Big )^{-1}\\
\omega\Big ([023][012], \prescript{[01][12][23]}{}{[345]}\Big ).
\end{multline*}
By substituting, we get 
\begin{multline*}
E_3 = [035] \triangleright \omega\Big ([023][012], \prescript{[01][12][23]}{}{[345]}\Big ) [0345][045]\triangleright [0234]^{-1} ([045][024]) \triangleright \omega \left( [012], \prescript{[01][12]}{}{[234]} \right)^{-1}
 [0245]^{-1}\\
  [025]\triangleright \omega\left([012], \prescript{[01][12]}{}{[245]} \right)^{-1} [012] \triangleright \omega ([012]^{-1}, [025]) [0245]  [045]\triangleright [0234][0345]^{-1}\\
[035] \triangleright \omega\left([023], \prescript{[02][23]}{}{[345]} \right)^{-1} ([035][023][012])\triangleright \omega \Big ([012]^{-1},  \prescript{[02][23]}{}{[345]} \prescript{[02]}{}{[234]^{-1}} \prescript{[02]}{}{[245]^{-1}} [025]^{-1} \Big ).
\end{multline*}
Recall that for all $e,f,g \in E$, we have $\omega(ef,g) = \left( e\triangleright \omega(f,g) \right)\omega(e, \prescript{\partial f}{}{g})$, therefore
\begin{equation*}
 \omega\Big ([023][012], \prescript{[01][12][23]}{}{[345]}\Big ) = [023]\triangleright \omega\Big ( [012], \prescript{[01][12][23]}{}{[345]}\Big ) \omega\Big ([023], \prescript{[02][23]}{}{[345]}\Big ).
\end{equation*}
Recall that for all $e,f,g\in E$, we have $\omega(e,fg) = \left((efe^{-1}) \triangleright \omega(e,g)\right) \omega(e,f)$, therefore
\begin{multline*}
\omega \Big ([012]^{-1},  \prescript{[02][23]}{}{[345]} \prescript{[02]}{}{[234]^{-1}} \prescript{[02]}{}{[245]^{-1}} [025]^{-1} \Big ) = ([012]^{-1}\prescript{[02][23]}{}{[345]}[012])\triangleright \omega\Big ([012]^{-1}, \prescript{[02]}{}{[234]^{-1}}\\
\prescript{[02]}{}{[245]^{-1}}[025]^{-1}\Big ) \omega\Big ( [012]^{-1}, \prescript{[02][23]}{}{[345]}\Big ).
\end{multline*}
Recall that for all $e,f\in E$, we have $\omega(e,f) = e\triangleright \omega(e^{-1}, \prescript{\partial e}{}{f})^{-1}$, therefore
\begin{equation*}
\omega\Big ([012]^{-1}, \prescript{[02][23]}{}{[345]}\Big ) = [012]^{-1}\triangleright \omega\Big ( [012], \prescript{[01][12][23]}{}{[345]}\Big )^{-1}.
\end{equation*}
By substituting the above formulas, we get that $E_3 = 1$ is equivalent to $E_4 = 1$ with 
\begin{multline*}
E_4 = [035] \triangleright \omega\Big ([023], \prescript{[02][23]}{}{[345]}\Big ) [0345][045]\triangleright [0234]^{-1} ([045][024]) \triangleright \omega \left( [012], \prescript{[01][12]}{}{[234]} \right)^{-1}
 [0245]^{-1}\\
  [025]\triangleright \omega\left([012], \prescript{[01][12]}{}{[245]} \right)^{-1} [012] \triangleright \omega ([012]^{-1}, [025]) [0245]  [045]\triangleright [0234][0345]^{-1}\\
[035] \triangleright \omega\left([023], \prescript{[02][23]}{}{[345]} \right)^{-1} ([035][023]\prescript{[02][23]}{}{[345]}[012])\triangleright \omega \Big ([012]^{-1},  \prescript{[02]}{}{[234]^{-1}} \prescript{[02]}{}{[245]^{-1}} [025]^{-1} \Big ).
\end{multline*}
We apply the formula $lm = (\delta l \triangleright m) l$ with 
\begin{align*}
l &= [045]\triangleright [0234] [0345]^{-1}[035]\triangleright \omega\Big ([023], \prescript{[02][23]}{}{[345]}\Big )^{-1} \\
m&= ([035][023]\prescript{[02][23]}{}{[345]}[012])\triangleright \omega\Big ( [012]^{-1}, \prescript{[02]}{}{[234]^{-1}}\prescript{[02]}{}{[245]^{-1}}[025]^{-1}\Big ).
\end{align*}
This yields that $E_4 = 1$ is equivalent to $E_5 = 1$ with 
\begin{multline*}
E_5 = ([045][024]) \triangleright \omega \left( [012], \prescript{[01][12]}{}{[234]} \right)^{-1}
 [0245]^{-1} [025]\triangleright \omega\left([012], \prescript{[01][12]}{}{[245]} \right)^{-1} [012] \triangleright \omega ([012]^{-1}, [025])\\
   [0245] ([045][024]\prescript{[02]}{}{[234]}[012])\triangleright \omega\Big ( [012]^{-1}, \prescript{[02]}{}{[234]^{-1}}\prescript{[02]}{}{[245]^{-1}}[025]^{-1}\Big ) .
\end{multline*}
Recall that for all $e,f,g \in E$, we have $\omega(e,fg) = \left((efe^{-1})\triangleright \omega(e,g)\right) \omega(e,f)$, therefore 
\begin{multline*}
\omega\Big ( [012]^{-1}, \prescript{[02]}{}{[234]^{-1}}\prescript{[02]}{}{[245]^{-1}}[025]^{-1}\Big )  = ([012]^{-1}\prescript{[02]}{}{[234]^{-1}}[012])\triangleright \omega\Big ([012]^{-1}, \prescript{[02]}{}{[245]}^{-1}[025]^{-1}\Big ) \\
\omega\Big ([012]^{-1}, \prescript{[02]}{}{[234]^{-1}}\Big ).
\end{multline*}
Recall that for all $e,f\in E$, we have $\omega(e,f) = (efe^{-1})\triangleright \omega(e,f^{-1})^{-1}$, therefore 
\begin{equation*}
    \omega\Big ([012]^{-1}, \prescript{[02]}{}{[234]^{-1}}\Big ) = ([012]^{-1}\prescript{[02]}{}{[234]^{-1}}[012])\triangleright \omega\Big ([012]^{-1}, \prescript{[02]}{}{[234]}\Big )^{-1}.
\end{equation*}
Recall that for all $e,f\in E$, we have $\omega(e,f)^{-1} = e\triangleright \omega(e^{-1}, \prescript{\partial e}{}{f})$, therefore
\begin{equation*}
    \omega\Big ([012]^{-1}, \prescript{[02]}{}{[234]}\Big )^{-1} = [012]^{-1}\triangleright \omega\Big ( [012], \prescript{[02]}{}{[234]}\Big ).
\end{equation*}
By substituting, we get
\begin{multline*}
E_5 = ([045][024]) \triangleright \omega \left( [012], \prescript{[01][12]}{}{[234]} \right)^{-1}
 [0245]^{-1} [025]\triangleright \omega\left([012], \prescript{[01][12]}{}{[245]} \right)^{-1} [012] \triangleright \omega ([012]^{-1}, [025])\\
   [0245] ([045][024][012])\triangleright \omega \Big ([012]^{-1}, \prescript{[02]}{}{[245]^{-1}}[025]^{-1}\Big ) ([045][024])\triangleright \omega\Big ( [012], \prescript{[02]}{}{[234]}\Big ) .
\end{multline*}
Therefore $E_5 = 1$ is equivalent to $E_6 = 1$ with 
\begin{multline*}
E_6 = 
 [0245]^{-1} [025]\triangleright \omega\left([012], \prescript{[01][12]}{}{[245]} \right)^{-1} [012] \triangleright \omega ([012]^{-1}, [025])\\
   [0245] ([045][024][012])\triangleright \omega \Big ([012]^{-1}, \prescript{[02]}{}{[245]^{-1}}[025]^{-1}\Big ) .
\end{multline*}
By applying the formula $lm = (\delta l\triangleright m)l$ with 
\begin{align*}
    l &= [0245]\\
    m &= ([045][024][012])\triangleright \omega\Big ([012]^{-1}, \prescript{[02]}{}{[245]^{-1}}[025]^{-1}\Big),
\end{align*}
we get that $E_6 = 1$ is equivalent to $E_7 = 1$ with 
\begin{equation*}
E_7 = 
 [025]\triangleright \omega\left([012], \prescript{[01][12]}{}{[245]} \right)^{-1} [012] \triangleright \omega ([012]^{-1}, [025])
   ([025]\prescript{[02]}{}{[245]}[012])\triangleright \omega \Big ([012]^{-1}, \prescript{[02]}{}{[245]^{-1}}[025]^{-1}\Big ) .
\end{equation*}
Recall that for all $e,f,g\in E$, we have $\omega(e,fg) =\left( (efe^{-1})\triangleright \omega(e,g)\right)\omega(e,f)$, therefore
\begin{align*}
\omega\Big ( [012]^{-1}, \prescript{[02]}{}{[245]^{-1}}[025]^{-1}\Big ) &= ([012]^{-1}\prescript{[02]}{}{[245]^{-1}}[025]^{-1}[012])\triangleright \omega \Big ([012]^{-1}, [025]^{-1}\Big ) \omega\Big ([012]^{-1}, \prescript{[02]}{}{[245]^{-1}}\Big ) .
\end{align*}
Recall that for all $e,f\in E$, we have $\omega(e,f) = (efe^{-1})\triangleright \omega(e, f^{-1})^{-1}$, therefore
\begin{multline*}
\omega\Big ( [012]^{-1}, \prescript{[02]}{}{[245]^{-1}}[025]^{-1}\Big ) = ([012]^{-1}\prescript{[02]}{}{[245]^{-1}}[025]^{-1}[012])\triangleright \omega \Big ( [012]^{-1}, [025]^{-1}\Big )\\
 ([012]^{-1}\prescript{[02]}{}{[245]^{-1}}[025]^{-1})\triangleright \omega \Big ([012]^{-1}, \prescript{[02]}{}{[245]}\Big )^{-1}.
\end{multline*}
Recall that for all $e,f\in E$, we have $\omega(e,f)^{-1} = e\triangleright \omega(e^{-1}, \prescript{\partial e}{}{f})$, therefore
\begin{multline*}
\omega\Big ( [012]^{-1}, \prescript{[02]}{}{[245]^{-1}}[025]^{-1}\Big ) = ([012]^{-1}\prescript{[02]}{}{[245]^{-1}}[025]^{-1}[012])\triangleright \omega \Big ( [012]^{-1}, [025]\Big )^{-1}\\
([012]^{-1}\prescript{[02]}{}{[245]^{-1}})\triangleright \omega\Big ( [012], \prescript{[01][12]}{}{[245]}\Big ).
\end{multline*}
By substituting, we get
\begin{align*}
E_7 &= 
 [025]\triangleright \omega\left([012], \prescript{[01][12]}{}{[245]} \right)^{-1} [012] \triangleright  \omega \Big ([012]^{-1}, [025]\Big )
  [012]\triangleright  \omega \Big ( [012]^{-1}, [025]\Big )^{-1}\\
& \quad \quad \quad \quad [025]\triangleright \omega\Big ( [012], \prescript{[01][12]}{}{[245]}\Big )\\
& = 1.
\end{align*}
Therefore the 4-simplex condition is verified for $\langle 12345\rangle$, and $\psi$ is a $\sigma$-coloring of $L$.

\newpage

\bibliographystyle{alpha}
\bibliography{Bibliography.bib}

\end{document}